\newcommand{\intN}{\stackrel{\raisebox{-2pt}[0pt][0pt]{ \scriptsize{\ensuremath{\circ}} }}{N}}
\newlength\fullwidth
\numberwithin{equation}{section}
\DeclareMathSymbol{\leqslant}{\mathalpha}{AMSa}{"36} 
\DeclareMathSymbol{\geqslant}{\mathalpha}{AMSa}{"3E} 
\DeclareMathSymbol{\eset}{\mathalpha}{AMSb}{"3F}     
\renewcommand{\leq}{\;\leqslant\;}                   
\renewcommand{\geq}{\;\geqslant\;}                   
\newcommand{\abs}[1]{ \lvert #1 \rvert}
\newcommand{\norm}[1]{ \lVert #1 \rVert}
\newcommand{\td}[0]{\mathrm{d}}
\newcommand{\Reel}{{\mathrm{I\hspace{-.15em}R}}}
\newcommand{\Complex}{{\mathrm{\hspace{.20em}\mathsf{l}\hspace{-.47em}C}}}
\newcommand{\cst}{\text{cst}}
\newcommand{\rw}{{\rm w}}
\newcommand{\rb}{{\rm b}}
\newcommand{\eps}{\epsilon}
\def\1{\ifmmode {1\hskip -3pt \rm{I}} \else {\hbox {$1\hskip -3pt \rm{I}$}}\fi}
\newcommand{\var}{\operatorname{Var}}
\newcommand{\tmix}{T_{\rm mix}}
\newcommand{\grad}{\nabla}
\newcommand{\g}{\gamma}
\newtheorem{theorem}{Theorem}[section]
\newtheorem{lemma}[theorem]{Lemma}
\newtheorem{proposition}[theorem]{Proposition}
\newtheorem{corollary}[theorem]{Corollary}
\newtheorem{remark}[theorem]{Remark}
\newtheorem{assumption}{Assumption}
\newtheorem{claim}[theorem]{Claim}
\newtheorem{definition}[theorem]{Definition}
\theoremstyle{definition}{
\newtheorem{RG_}[theorem]{Reader's Guide}
}
\def\beginRG{\begin{RG_}\begin{quotation}}
\def\endRG{\end{quotation}\end{RG_}}
\newcommand{\Z}{\mathbb Z}
\newcommand{\cF}{\ensuremath{\mathcal F}}
\newcommand{\cN}{\ensuremath{\mathcal N}}
\newcommand{\cV}{\ensuremath{\mathcal V}}
\newcommand{\bbE}{{\ensuremath{\mathbb E}} }
\newcommand{\bbP}{{\ensuremath{\mathbb P}} }
\newcommand{\bbR}{{\ensuremath{\mathbb R}} }
\newcommand{\bbT}{{\ensuremath{\mathbb T}} }
\newcommand{\bbZ}{{\ensuremath{\mathbb Z}} }
\newcommand{\gep}{\varepsilon}
\newcommand{\gO}{\Omega}
\newcommand{\dd}{\mathrm{d}}
\newcommand{\E}{\mathbb{E}}
\title[Glauber dynamics of perfect matchings]{How quickly can we
  sample a uniform domino tiling of the $2L\times 2L$ square via
  Glauber dynamics?}
\author{Beno\^it Laslier}
\author{Fabio Lucio Toninelli}
\address{\!\!\!\!\!\!\!Universit\'e de Lyon, CNRS and Institut Camille
  Jordan, Universit\'e Lyon 1\newline
43 bd du 11 novembre 1918,
69622 Villeurbanne - France.\newline
\rm {\texttt{laslier@math.univ-lyon1.fr}\newline\rm {\texttt{toninelli@math.univ-lyon1.fr}
}
}
}
\date{}
\keywords{}
\begin{document}

\maketitle

\begin{abstract}
  The prototypical problem we study here is the following. Given a
  $2L\times 2L$ square, there are approximately $\exp(4KL^2/\pi )$
  ways to tile it with dominos, i.e. with horizontal or vertical
  $2\times 1$ rectangles, where $K\approx 0.916$ is Catalan's constant
  \cite{refcarre,refcarre2}. A conceptually simple (even if
  computationally not the most efficient) way of sampling uniformly
  one among so many tilings is to introduce a Markov Chain algorithm
  (Glauber dynamics) where, with rate $1$, two adjacent horizontal
  dominos are flipped to vertical dominos, or vice-versa. The unique
  invariant measure is the uniform one and a classical question
  \cite{Wilson,LRS,LPW} is to estimate the time $\tmix$ it takes to
  approach equilibrium (i.e. the running time of the algorithm). In
  \cite{LRS,RT}, \emph{fast mixing} was proven: $\tmix=O(L^C)$ for
  some finite $C$. Here, we go much beyond and show that $c L^2\le
  \tmix\le L^{2+o(1)}$. Our result applies to rather general domain
  shapes (not just the $2L\times 2L$ square), provided that the
  typical height function associated to the tiling is macroscopically
  planar in the large $L$ limit, under the uniform measure (this is
  the case for instance for the Temperley-type boundary conditions
  considered in \cite{Kdom}).  Also, our method extends to some other
  types of tilings of the plane, for instance the tilings associated
  to dimer coverings of the hexagon or square-hexagon lattices.
  \\
  \\
  2010 \textit{Mathematics Subject Classification: 60K35, 82C20,
    52C20}
  \\
  \textit{Keywords: Domino tilings, Glauber dynamics, Perfect
    matchings, Mean curvature motion}\end{abstract}


\section{Introduction}

Uniform random perfect matchings (or dimer coverings) of a
bipartite, infinite, planar periodic graph $G$ (e.g. $\bbZ^2$ or the hexagonal lattice $\mathcal H$) play a crucial role in
statistical mechanics and combinatorics, and a vast literature exists on the subject (cf. for instance the
classical papers \cite{refcarre,refcarre2} and the much more recent
\cite{KOS}).  On one hand, thanks to the bijection between perfect
matchings and discrete height functions (see Section
\ref{sec:height}), they provide natural and exactly solvable models of
random $(2+1)$-dimensional interfaces (which can be thought of as simplified  models for the interface separating two coexisting thermodynamic phases \cite{Spohn}).  On the other hand, thanks to
their conformal invariance and Gaussian Free Field-like fluctuation
properties in the scaling limit \cite{Kdom,Kdom2,KOS}, they belong,  like the Ising
model at $T=T_c$, to the family  of critical two-dimensional systems.

In contrast, the study of stochastic dynamics of perfect matchings is
a much less developed topic. Typically,
one takes a large but finite portion $G'$ of the graph $G$ and defines
a simple Glauber-type Markov chain such that each update locally modifies
the matching within $G'$. The unique equilibrium measure is
the uniform measure over perfect matchings of $G'$. From the point of
view of theoretical computer science \cite{LRS,LPW,Wilson,RT}, the
interesting question is to understand how quickly, as a function of
the size of $G'$, the Markov chain approaches equilibrium (i.e. how
quickly it samples reliably a uniformly chosen random perfect matching of
$G'$). From the point of view of statistical physics, thanks to the
above mentioned bijection between perfect matchings and height functions, this
Markov chain can be seen as a dynamics for a $(2+1)$-dimensional
interface \ and it is of interest to understand how the geometry of
the interface evolves in time. For instance, when $G=\mathcal H$ 
the evolution of the height function exactly coincides with the
zero-temperature heat-bath dynamics for an interface, separating
``$-$'' from ``$+$'' spins, of the three-dimensional nearest-neighbor
Ising model \cite{CMST}.

Until recently, the best  mathematical result on this dynamics issues was that, when $G=\bbZ^2$ or
$G=\mathcal H$, the total-variation mixing  time
$\tmix$ of the Markov chain is at most polynomial in the size of
$G'$. See Section \ref{sec:previous} for a short review. Results of this type are based on simple but effective coupling
arguments that unfortunately have little chance of providing the sharp behavior of
$\tmix$. (A notable exception is the work \cite{Wilson}, where sharp
bounds on $\tmix$ for $G=\mathcal H$ were given, but for a very particular, spatially non-local,
dynamics).

In \cite{CMT}, instead, in the case where $G=\mathcal H$ it was proven
that, under a certain condition on the shape of the finite region $G'$
(``almost planar boundary condition'' assumption, see below), $\tmix$
behaves like $L^2$, up to logarithmic corrections, if $L$ is the
diameter of $G'$.  As we briefly explain in Section \ref{sec:novelty},
going beyond the case of the hexagonal lattice $\mathcal H$ is a
mathematical challenge, since certain exact identities that hold there
do not survive on more general graphs.  In this work we prove that,
when $G=\bbZ^2$ (and in a few other cases, see below), again under the
``planar boundary condition'' assumption, $\tmix= L^{2+o(1)}$. We
present this result, still informally but with a bit more of detail,
in the next section.

\smallskip

The major 
improvement with respect to \cite{LRS,Wilson} is that both \cite{CMT}
and the present work
use the intuition that the height function should evolve,
on a diffusive time-space scale, according to  a deterministic,
anisotropic mean-curvature
type evolution \cite{Spohn}. More precisely, call $h_t(X,Y)$ the
height function at time $t$, with $(X,Y)$
a bi-dimensional space coordinate on the lattice $G$. Then, one expects that under diffusive
scaling 
(i.e. setting $\tau=t/L^2 $,  $(x,y)=(X,Y)/L$,
$\phi_\tau(x,y)=L^{-1} h_{\tau L^2}(x L,y L)$ and letting $L\to\infty$) the limiting deterministic evolution  of
the height function $\phi$ should
be of the type
\begin{eqnarray}
  \label{eq:5ter}
  \frac{d}{d\tau}\phi= \mu(\nabla\phi)\mathcal L\phi. 
\end{eqnarray}
Here,  $\mu(\nabla\phi)$ is a positive, slope-dependent ``mobility
coefficient'' while 
 $\mathcal L$, directly related to the first variation of the surface
 energy functional, is  a non-linear elliptic operator of the type
 \[
\mathcal
L\phi=a_{11}(\nabla\phi)\partial^2_x\phi+2a_{12}(\nabla\phi)\partial^2_{xy}\phi+a_{22}(\nabla\phi)\partial^2_y\phi
\]
where the matrix ${\bf a}=\{a_{ij}(\nabla\phi)\}_{i,j=1,2}$ (with
$a_{21}=a_{12}$) is positive-definite.
See
\cite{Spohn} for a  discussion of these issues. Remark
that $\mathcal L\phi$ is a linear combination (with slope-dependent
coefficients) of the principal curvatures of the interface, hence the
name ``anisotropic mean-curvature evolution''.
By the way, such intuition suggests the precise
scaling $\tmix\sim const\times L^2\log L$.  Indeed, for
$\tau\to\infty$ the solution of \eqref{eq:5ter} approaches a ``limit
shape'' $\bar\phi$ satisfying $\mathcal L\bar\phi=0$, and one can
consider that equilibrium is reached when
\begin{eqnarray}
  \label{eq:1}
\|\phi_\tau-\bar\phi\|_\infty\approx (\log L)/L  
\end{eqnarray}
(the typical
equilibrium height fluctuations before space rescaling being expected
to be $O(\log L)$, see Remark \ref{rem:refined}). 
Assume for
simplicity that the matrix ${\bf a}$ is the identity and $\mu(\cdot)$ is
constant: then, \eqref{eq:5ter} is just the heat
equation and \eqref{eq:1} is satisfied as soon as $\tau$ is a suitable
constant times $\log L$, i.e. $t$ is some constant times $L^2\log L$.

\subsection{Informal presentation of the main result}\label{sec:plan_article}

A domino tiling of the plane is a covering of $\bbR^2$ with $2\times
1$ non-overlapping vertical or horizontal rectangles (dominos), with vertices sitting at
points of $(\bbZ^2)^*=\bbZ^2+(1/2,1/2)$. Domino tilings are in one-to-one correspondence with
perfect matchings (or simply ``matchings'' in the following) of $G=
\bbZ^2$, i.e. subsets of edges (called dimers) of $\bbZ^2$ such that each vertex is
contained in exactly one dimer (to see the correspondence, just draw a
segment of unit length inside each domino, parallel to its longer
side, with endpoints on $\bbZ^2$). Similarly, tilings of a finite portion $P$ of the plane correspond
to matchings of a finite subset $G'$ of $\bbZ^2$. From now on, we will abandon the tiling language and adopt the matching one. Typically, if the
set $G'$ does admit matchings (an obvious necessary condition is that its
cardinality is even) and its area  is large, the number $Z(G')$ of
matchings grows like the exponential of a constant times its
area. This is for instance the case when $G'=\{1,\dots,2L\}\times \{1,\dots,2L\}$, in
which case $(1/L^2)\log Z(G')\sim 4K/\pi $, where $K\approx 0.916$ is Catalan's constant
  \cite{refcarre,refcarre2}. 

A natural way to uniformly sample one among so many matchings (even if
  computationally not the most efficient, see Section \ref{moreffi}) is
to run a Markov chain where, with unit rate, two vertical
dimers belonging to the same square face of
$\bbZ^2\cap G'$ are flipped to vertical, or vice-versa. The unique stationary
(and reversible) measure is the uniform measure over all matchings
of $G'$ and a classical question
in theoretical computer science \cite{LRS,LPW,Wilson} is to evaluate
how quickly the Markov chain reaches equilibrium, as a function of the
diameter (call it $L$) of $G'$. This is measured for instance via the
so-called total-variation mixing
time $\tmix$, defined as the first time $t$ such that, uniformly in the
initial condition, the law of the chain at time $t$ is within
variation distance $1/(2e)$ from equilibrium
(see Section \ref{sec:dynamics} for a
definition in formulas).

In the present work we prove that $\tmix=L^{2+o(1)}$, under a
non-trivial restriction (``almost-planar boundary height'' condition)
on the shape of the region $G'$, that we briefly introduce now.  The
lattice $\bbZ^2$ being a bipartite graph, it is possible to associate
in a canonical way (see Section \ref{sec:height}) a discrete height
function (defined on faces of $G'$) to each matching of $G'$. The
height along the boundary $\partial G'$ of $G'$ is instead independent
of the matching and depends only on the shape of $G'$. We say
that \emph{the boundary height of $G'$ is ``almost planar''} if the
graph of the height function, restricted to $\partial G'$, is within
distance of order $1$ from some plane of $\bbR^3$.  In this case, for
$L$ large the height function of a typical matching of $G'$ (sampled
from the uniform measure) is macroscopically planar not only along
$\partial G'$ but also in the interior of $G'$ (see Theorem
\ref{th:stimeflutt}).

The almost-planar boundary height
hypothesis  is verified for instance when $G'$
is the $2L\times 2L$ square as above. More general domain shapes that
verify this hypothesis are introduced in \cite{Kdom} (``Temperley
boundary conditions'') and in that case the height function fluctuations are  proven
to converge to the Gaussian Free Field \cite{Kdom,Kdom2}.
 
Our main result can be informally stated as follows (see Sections \ref{sec:height} and \ref{sec:dynamics} for a precise statement of the hypothesis and of the result):
\begin{theorem}
\label{th:informal}
If the diameter of  $G'$ is $L$ and the boundary height
is almost planar then, as $L$ goes to infinity, 
   \begin{eqnarray}
     \label{eq:informale}
c L^2 \leq \tmix \leq L^{2+o(1)} . 
   \end{eqnarray}
The result holds also when $\bbZ^2$ is replaced by the hexagon or square-hexagon lattices of Fig. \ref{graphes_et_polygones_newton}. 
\end{theorem}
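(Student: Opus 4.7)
The inequality $\tmix\geq cL^2$ follows from a classical distinguishing-observable argument. Fix a face $x_0$ deep inside $G'$ and let $f(h)=h(x_0)$. Under the heat-bath dynamics, $f(h_t)$ evolves by jumps of size at most a constant, contributed only by flips on the four plaquettes containing $x_0$, so the total rate of change of $f$ is bounded uniformly in $L$ and $\bE[(f(h_t)-f(h_0))^2]=O(t)$. Choosing the initial condition to be the maximal height function $h^{\sqcap}$ places $f(h_0)$ at macroscopic distance $\Theta(L)$ from its equilibrium mean, while the equilibrium fluctuations of $f$ are only $O(\log L)$ by Theorem \ref{th:stimeflutt}. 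Bringing $f$ within $O(\log L)$ of the equilibrium mean therefore forces $t\gtrsim L^2$.

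\medskip

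\noindent\textbf{Upper bound, overall strategy.} I would exploit the monotone grand coupling, which preserves the partial order $h\leq h'$ on height functions sharing the same boundary data. Then $\tmix$ is controlled by the coalescence time of the two extremal chains $h^{\sqcap}_t$ and $h^{\sqcup}_t$ started from the maximal and minimal admissible heights. The target is to show that after time $L^{2+o(1)}$ both chains lie within $(\log L)^{O(1)}$ of the deterministic macroscopic profile $L\bar\phi(\cdot/L)$ furnished by the almost-planar assumption; combined with the equilibrium fluctuation estimate of Theorem \ref{th:stimeflutt}, this forces $h^{\sqcap}$ and $h^{\sqcup}$ to sandwich each other and hence to coalesce with high probability.

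\medskip

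\noindent\textbf{Upper bound, implementation.} The initial overshoot $h^{\sqcap}_0-L\bar\phi(\cdot/L)$ is of order $L$ and, heuristically, evolves according to the anisotropic mean-curvature flow \eqref{eq:5ter}, which contracts it on the diffusive scale. To make this rigorous I would induct on length scales. The central input is a \emph{local mixing statement}: on a mesoscopic box $B_\ell\subset G'$ of side $\ell$ whose boundary height is within $O(\ell^{1-\alpha})$ of a linear profile of slope $(u,v)$, the heat-bath dynamics restricted to $B_\ell$ with frozen boundary mixes in time $\ell^{2+o(1)}$, uniformly in $(u,v)$ inside the interior of the Newton polygon. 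Peres--Winkler censoring then propagates the bound from scale $\ell$ to scale $2\ell$: after local equilibration in each subbox the coarse-grained profile is flat up to $O((\log\ell)^{O(1)})$ and can be fed as an almost-planar boundary condition at the next scale. Iterating $O(\log L)$ doublings from a mesoscopic starting scale up to $\ell\sim L$ gives $\tmix\leq L^{2+o(1)}$, the $o(1)$ in the exponent absorbing the polylog losses at each stage.

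\medskip

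\noindent\textbf{Main obstacle.} The heart of the argument is the local mixing estimate for an arbitrary slope $(u,v)$. In the hexagonal case \cite{CMT}, this rested on an exact representation of the equilibrium measure as non-intersecting lattice paths whose single-path marginal has a tractable harmonic/determinantal description, yielding sharp drift bounds via one-path computations. On $\bbZ^2$ and on the square-hexagon lattice no such one-family identity is available. One must therefore substitute it with precise equilibrium fluctuation estimates uniform in the slope, together with monotone comparison to auxiliary dynamics on slightly enlarged or shrunk domains in order to transfer mixing information across scales. Controlling the dependence of the implicit constants on $(u,v)$, particularly near the facets of the Newton polygon where the equilibrium measure degenerates, is the delicate point I expect to absorb the bulk of the technical effort; the scale induction and the final sandwiching step are comparatively standard once the local statement is in hand.
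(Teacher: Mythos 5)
Your lower bound relies on the claim that $\bE[(h_t(x_0)-h_0(x_0))^2]=O(t)$, which you justify only by the boundedness of the jump rate of $h_t(x_0)$. That implication is not valid: a bounded jump rate controls the \emph{quadratic variation}, not the mean-square displacement, and $h_t(x_0)$ is not a martingale (starting from the maximal configuration it must relax downward, so it has a strictly negative state-dependent drift). A bounded jump rate alone only gives $\bE[(h_t(x_0)-h_0(x_0))^2]=O(t^2)$, hence your argument as written only yields $\tmix\gtrsim L$. To gain the missing factor of $L$ one must show that the drift of some suitable observable is $O(L^{-1})$ per unit time; the paper achieves this by tracking the \emph{total volume} under an auxiliary asynchronous fast dynamics in a specially chosen ``pyramid'' domain, and proving (Proposition \ref{prop:driftpiramide}, via Claim \ref{claimvendredi} and Remark \ref{rq:borne_inf_drift}) that the volume drift is at least $-CL$ because interior contributions cancel and only $O(L)$ boundary faces can contribute. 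This cancellation is invisible at the level of a single-site observable, which is precisely what your approach lacks; note also that the heat-equation intuition that would justify $\sqrt{t}$ decay of the single-site height requires Wilson's identity, which fails off the honeycomb lattice (Remark \ref{rem:calore}).

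For the upper bound, your plan correctly identifies the need for a ``local, confined'' mixing statement and an iteration, but you leave that local statement unproven and label it the main obstacle. That obstacle is exactly what the paper resolves in Theorem \ref{th:wafer}: the matching is rewritten as a bead configuration along threads, a non-local synchronous fast dynamics is introduced, the volume $V_t$ between coupled extremal evolutions is shown to be a supermartingale (Proposition \ref{prop:volume_drift}), and its conditional quadratic variation is bounded below by $cV_t/H^6$ (Lemma \ref{lemme:variance_volume}); an optional-stopping iteration (Lemma \ref{lemme:temps_martingale}) then gives coalescence in time $L^2 H^{O(1)}$, and a comparison estimate (Proposition \ref{prop:ts}) transfers this to the local Glauber dynamics. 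This supermartingale-plus-variance mechanism is the substitute for the heat-equation eigenfunction that is unavailable on $\bbZ^2$ and the square-hexagon lattice. Your iteration scheme also differs from the paper's — you propose scale doubling from mesoscopic to macroscopic boxes, whereas the paper mimics mean-curvature relaxation from the maximal configuration using a decreasing family of spherical-cap barriers and the already established confined mixing bound (Section \ref{sec:meancur}) — but without some version of the local/confined estimate neither iteration can be launched, so this is the critical gap.
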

Based on the ``mean curvature motion'' heuristics mentioned above, we
conjecture the true behavior to be $\tmix\sim const\times L^2\log L$
for reasonably regular domains $G'$.

As we explain in Section \ref{sec:entro},  there are good
reasons why we cannot consider general bipartite periodic planar
graphs (for instance, why our method necessarily fails for the
square-octagon graph of Fig. \ref{graphes_et_polygones_newton}). This
is related to the existence for such graphs of so-called ``gaseous
phases'' in their phase diagram \cite{KOS}. In a gaseous phase, the
height function looks qualitatively like a $(2+1)$-dimensional low
temperature Solid-on-Solid interface (the interface is rigid, height
fluctuations have bounded variance and their spatial correlations
decay exponentially. In the scaling limit, the interface does not
behave like the Gaussian Free Field in this case).

\subsubsection{Review of previous results}

\label{sec:previous}

The first mathematical result we are aware of on this problem is in
\cite{LRS}, where dynamics of perfect matchings of either $\mathbb
Z^2$ or $\mathcal H$ are studied.  There, the authors introduced and
analyzed a non-local Markov dynamics whose updates can involve an
unbounded number of dimer rotations (cf. Section \ref{sec:fd}). Via a
coupling argument, they managed to prove that the mixing time
$\tmix^\star$ of such dynamics is $\tmix^\star\le const\times L^6$ (no
lower bound was given). Subsequently, in the case of the hexagonal
lattice $\mathcal H$ this result was sharpened to $c_1 L^2\log L\le
\tmix^\star\le c_2 L^2\log L$ by Wilson \cite{Wilson}.  Via the
application of comparison arguments for Markov chains, these upper bounds
for $\tmix^\star$ imply polynomial upper bounds on the mixing time
$\tmix$ of the local Glauber dynamics: indeed, it was deduced in
\cite{RT} that $\tmix\le L^C$ for some finite $C$.  In this case, in
the theoretical computer science language, the Markov chain is said to
be ``rapidly mixing'' (slow mixing would correspond to $\tmix$ being
super-polynomial in $L$).  In the particular case of the hexagonal
lattice, using results of \cite{Wilson} on the spectral gap of the
non-local dynamics and the comparison arguments of \cite{RT}, one
obtains $\tmix\le const\times L^6$.

The results we mentioned so far do not require any restriction on the
boundary height. If instead one assumes the boundary height to be
almost-planar,  for the hexagonal lattice the upper bound in
\eqref{eq:informale} was proven in \cite{CMT} (in the stronger form
$\tmix=O(L^2(\log L)^{12})$), while the best known lower bound was
$\tmix\ge L^2/(c\log L)$ (based on \cite{CMST}).  We are not aware of
previous results for the square-hexagon lattice.

\begin{remark}
  The main reason why in Theorem \ref{th:informal} we require the
  boundary conditions to be almost-planar is that in this case the
  height fluctuations at equilibrium (i.e. under the uniform measure)
  are well-controlled, see Theorems \ref{th:clt} and
  \ref{th:stimeflutt}. In the case of general boundary conditions,
  only partial results are known (e.g. \cite{Petrov,K_non_planaire}) and these are not sufficient
  to implement our scheme. We emphasize that instead the $\tmix=O(L^C)$ result
  of \cite{LRS} does not require boundary conditions to be
  almost-planar.
\end{remark}

\subsubsection{Alternative ways of quickly sampling random perfect
  matchings}
\label{moreffi}
There are several known algorithms that sample uniform perfect
matchings.  The main reason why we focus on the Glauber algorithm
is its above-mentioned connection with the three-dimensional zero temperature Ising
dynamics and with interface motion in non-equilibrium statistical
mechanics. However, there are more efficient algorithms in terms of
 running time.

Let us first of all observe that in algorithmic terms, our Theorem
\ref{th:informal} says that the running time of the Glauber dynamics with
almost-planar boundary conditions is $L^{4+o(1)}$, i.e. it requires at most that many updates to approach the uniform measure (our Markov chain
was defined in continuous time, so that there are of order $L^2$
elementary updates per unit time). There are at least two families of  more
efficient methods to sample random perfect matchings.

In \cite{MuchaSankowski} (see also \cite{WilsonDet}) it is proven that
one can sample uniform perfect matchings of planar graphs $G'$ in a
time $O(L^\omega)$, where $\omega \leq 2.376$ (matrix multiplication exponent) is the exponent of the
running time of the best known algorithm to multiply two $L\times L$
matrices.  In   \cite{MuchaSankowski,WilsonDet}  there is  essentially no restriction on the domain
$G'$ (i.e. no assumption on the boundary height), apart from obviously requiring
that the number of vertices is of order $L^2$. This algorithm  can even be used to
find a maximum matching for domains that do not admit perfect
matchings. The starting point is a classical formula, the analog of the one in 
Theorem \ref{thm:proba_evn_fini} but for finite domains, that expresses the probability of local dimer events in terms of minors of the adjacency matrix of the graph.
In the proof of the  $O(L^\omega)$ bound then \cite{MuchaSankowski} cleverly uses the planarity of the graph and the fact that the adjacency matrix is sparse, to efficiently compute the minors.
 
The second class of algorithms is based on the mapping between perfect
matchings of $G'$ and spanning trees of a related graph (T-graph) that
has approximately the same size \cite{KPWdimerTree,KenTree}. Then one
can sample a spanning tree using algorithms based on random walks
\cite{Wilsonalgo}, whose running time is expressed in terms of the
mean hitting time of the random walk.
For reasonable domains (boundary heights)
one can deduce a
$O(L^2\log L)$ bound on the algorithm running time. In the general
case the same bound should still hold but it seems delicate to
precisely estimate the mean hitting time in complete generality.


\subsection{Sketch of the proof and novelty}
\label{sec:novelty}
Here we briefly sketch how the proof of Theorem \ref{th:informal} works,
and we point out the main novelties, especially with respect to 
\cite{LRS,Wilson,CMT}.

The idea of \cite{CMT} (see also \cite{CMST}) is to break the proof of the upper bound $\tmix\le L^{2+o(1)
}$  into two steps:
\begin{enumerate}[(i)]
\item first prove that $\tmix\le c(\epsilon) L^{2+\epsilon
}$ when the height function is constrained for all times between a
``floor'' and a ``ceiling'' that are at small mutual distance, say
$L^{\epsilon/10}$. Here $\epsilon$ is an arbitrarily small, positive, $L$-independent constant;
\item then, via an iterative procedure that mimics the mean curvature motion that should emerge in the diffusive limit, deduce $\tmix\le c(\epsilon) L^{2+\epsilon
}$ for the unconstrained dynamics.
\end{enumerate}

While this general scheme is robust and will be employed also here,
step (i) is very much model-dependent. In particular, in
\cite{CMST,CMT} for the hexagonal graph $\mathcal H$ its
implementation was based on the crucial observation (by D. Wilson
\cite{Wilson}) that, for the  non-local  dynamics
introduced in \cite{LRS} (cf. Section \ref{sec:fd}), one can write
explicitly an eigenfunction of the generator, and the evolution of the
height function is controlled by the discrete heat equation. As we
explain in Remark \ref{rem:calore}, this fact fails for graphs other
than $\mathcal H$ and it has to be replaced by a more robust argument.

The ``more robust argument'' starts from the observation that, under
the non-local dynamics, the mutual volume $V_t$ between two evolving
height functions, that can be seen as an integer-valued random walk,
is  on average non-increasing with time $t$. This was already realized in
\cite{LRS}, but without any further input this only implies a polynomial
upper bound $O(L^{4+\epsilon})$ on the mixing time $\tmix^\star$ of the non-local
dynamics. The argument is as follows. The maximal volume
between two configurations in a region of diameter $L$, constrained between floor and ceiling at mutual distance $L^{\epsilon/10}$, is of
order $L^{2+\epsilon/10}$. The random walk $V_t$ has non-positive
drift and it is not hard to see that 
the variance increase
$\lim_{\delta\searrow0}\frac1\delta\bbE[(V_{t+\delta}-V_t)^2|\mathcal F_t]$  is bounded away from
zero as long as $V_t\ne 0$, where $\mathcal F_t$ is the sigma-algebra
generated by the non-local dynamics up to time $t$.  A simple
martingale argument  (see Lemma \ref{lemme:temps_martingale}) implies
then that $V_t$ 
will hit the value $0$ in a time  of order
$(L^{2+\epsilon/10})^2\le L^{4+\epsilon}$. When the volume is zero,
the two configurations have coalesced and  a simple coupling argument
allows us to conclude that $\tmix^\star\le L^{4+\epsilon}$.  The new input
we provide for the proof of point (i) (see Section \ref{sec:mart}) is
that $\lim_{\delta\searrow0}\frac1\delta\bbE[(V_{t+\delta}-V_t)^2|\mathcal F_t]$ can be \emph{lower bounded}
essentially by $V_t$ itself: then, an iterative application of 
Lemma \ref{lemme:temps_martingale} allows to
conclude that the coalescence time for the non-local dynamics is of
the (essentially optimal) order
$L^{2+\epsilon/2}$ and not $L^{4+\epsilon}$. Via a  comparison
argument that relates the mixing times for the local and non-local dynamics (Proposition \ref{prop:ts}) one finally deduces
$\tmix=O(L^{2+\epsilon})$ for the \emph{local} dynamics (always
constrained between ``floor'' and ``ceiling'' at distance $L^{\eps/10}$).

To prove the bounds on drift and variance of $V_t$, we introduce a
mapping between perfect matchings and configurations of what we call a
``bead model''. This mapping turns out to be convenient in that it makes the proofs visually clear. 
In particular, the definition of the non-local dynamics looks somewhat more natural in this language than in the ``non-intersecting-path'' language 
\cite{LRS}.

A last comment concerns the mixing time lower bound in Theorem
\ref{th:informal}, which is better (by a factor $\log L$) than the
lower bound $\tmix\ge C\, L^2/\log L$ found in \cite{CMT} and based on an idea developed in \cite{CMST}. First of
all, the proof of \cite{CMST} would not extend for instance to
$G=\bbZ^2$, again because it is based on Wilson's eigenfunction
argument that fails there. Moreover, even for $G=\mathcal H$ where
Wilson's argument does work, removing the $\log L$ in the denominator
involves a genuinely new idea, see Section \ref{sec:lower_bound}: one
needs to prove that the drift of the volume $V_t$ under the non-local dynamics, which as we
mentioned is non-positive, is not smaller than the size of the boundary of $G'$,
times some negative constant.

\subsection{Organization of the paper} All the definitions and results
the reader needs about perfect matchings, height functions and
translation-invariant infinite measures of a given slope are in Section
\ref{sec:back}
 (results are given for general
periodic bipartite planar graphs and not just for the square, hexagon and
square-hexagon graphs). The dynamics is precisely defined in Section
\ref{sec:dynamics} and its monotonicity properties are discussed in
Section \ref{sec:monotonie}.  In Section \ref{sec:billes} we map
height functions into the configurations of  a ``bead
model''. In Section \ref{sec:dyn_billes} we rewrite the dynamics in
terms of beads and we introduce two auxiliary, spatially non-local, dynamics, that are
essential in proving the mixing time estimates of Theorem
\ref{th:informal}: the mixing time upper bound is proven in Section
\ref{sec:upper_bound} and the lower bound in Section
\ref{sec:lower_bound}.

\section{Some background on perfect matchings}

\label{sec:back}
\subsection{Dimer coverings, height functions and uniform measures}
\label{sec:dimers}

We follow the notations of \cite{KOS}.  Let $G=(V,E)$ be an infinite,
$\Z^2$-periodic, bipartite planar graph. ``Bipartite'' means that its
vertices can be colored black or white in such a way that white vertices
have only black neighbors and vice-versa. $\Z^2$-Periodicity means
that $G$ can be embedded in the plane in such a way that $\Z^2$ acts
as a color-preserving isomorphism.  The dual graph of $G$, whose
vertices are the faces $f$ of $G$, is denoted $G^*$.


We let $G_1$ (the fundamental domain) denote $G/\Z^2$, which is a
finite and periodic bipartite graph, embedded on the two-dimensional
torus.  See Fig. \ref{graphes_et_polygones_newton} for some
classical examples (the square, hexagon, square-octagon and
square-hexagon lattice) together with their fundamental domains.

\begin{figure}[tb]
   \centering
   \includegraphics[width = 5cm]{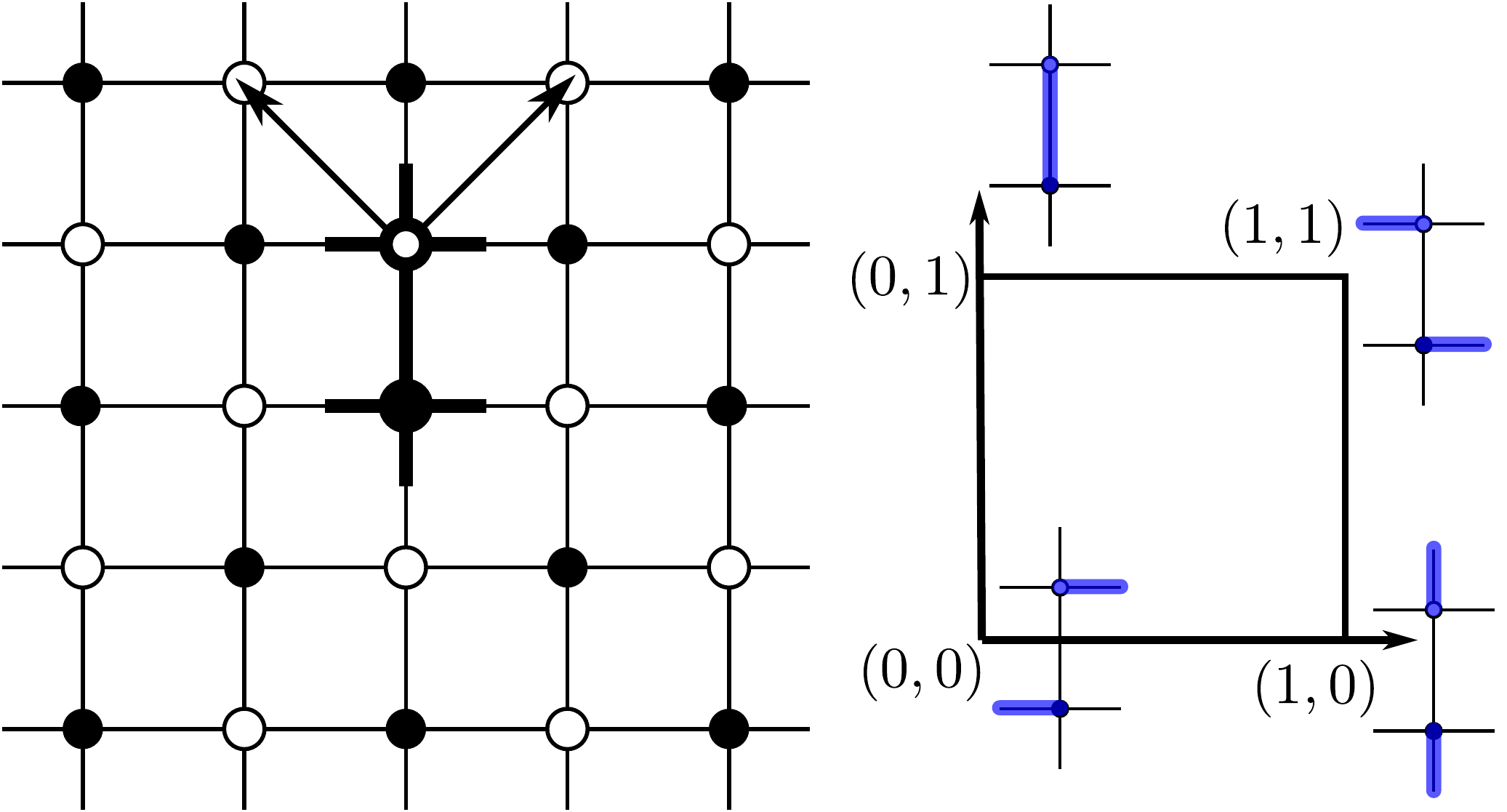}
\quad
\includegraphics[width=5cm]{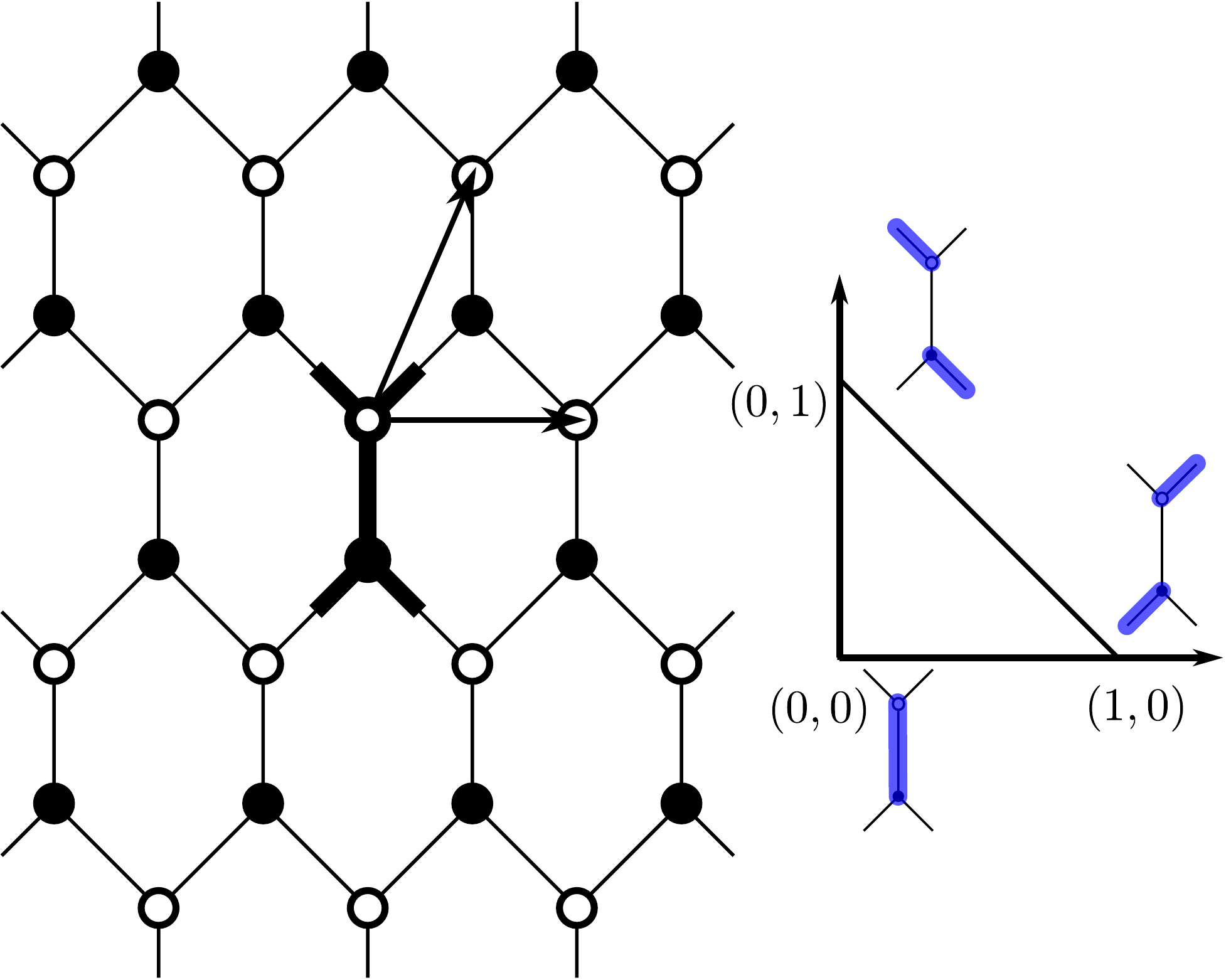}
 \\
    \medskip
    \includegraphics[width=5cm]{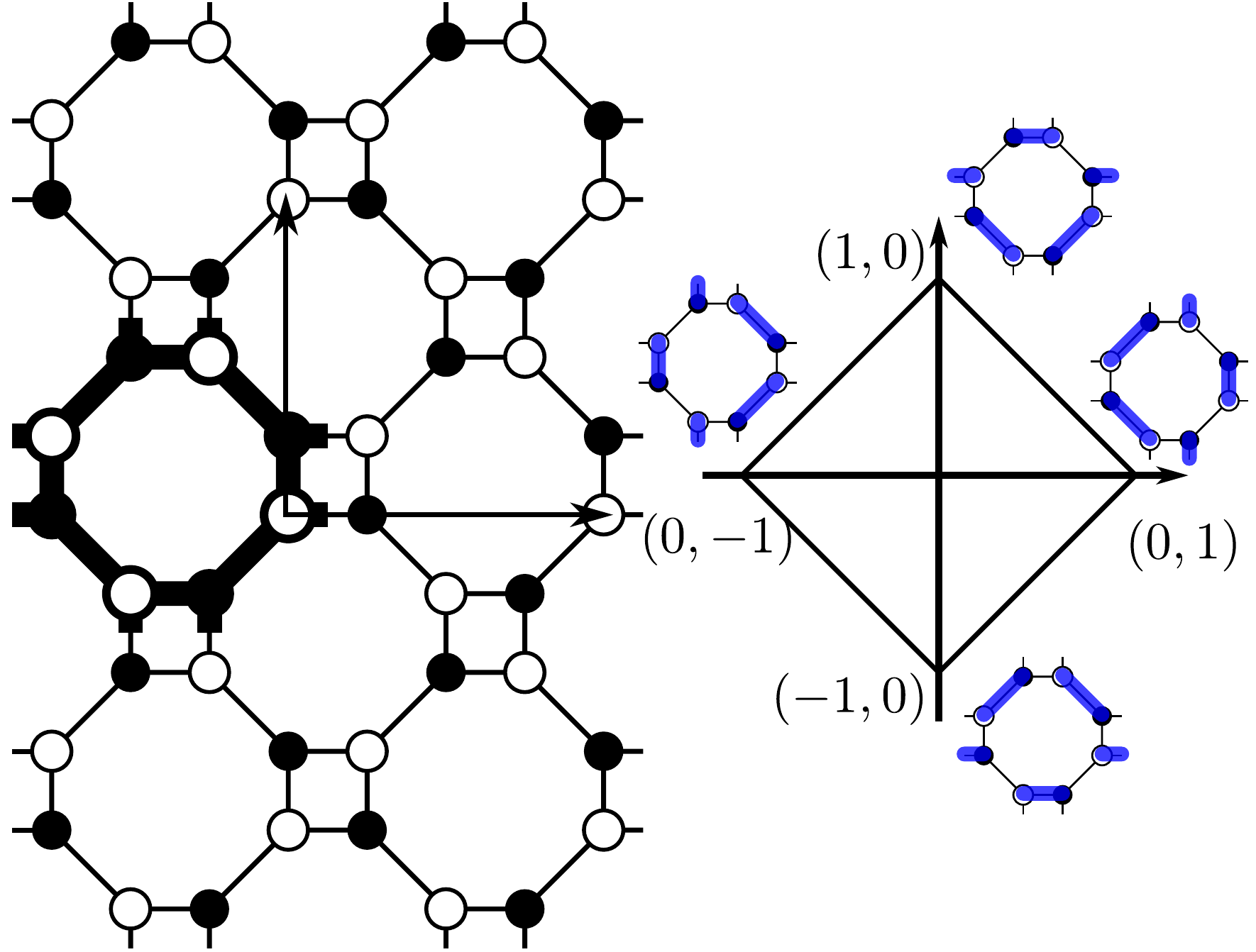}
\quad
    \includegraphics[width=5cm]{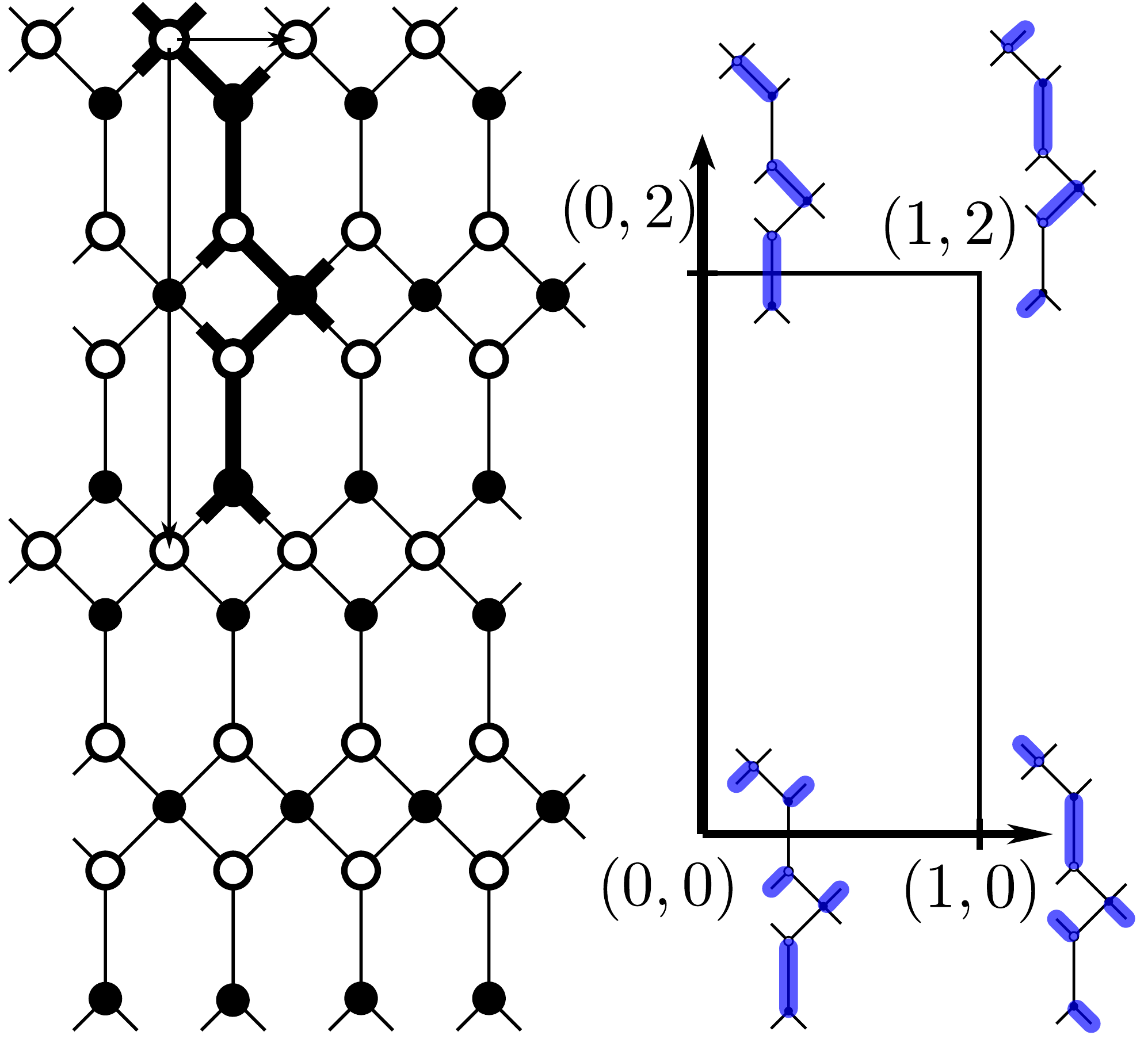}
    \caption{Some examples of $\mathbb{Z}^2$-periodic bipartite graphs
(square, hexagon, square-octagon, square-hexagon)
      and their Newton polygons (cf. Theorem \ref{th:Newton}).  The fundamental domain is indicated with
      thicker lines in the graph while the action of $\Z^2$ is
      represented by two arrows. Near each vertex of the Newton
      polygon is indicated the associated matching of the fundamental
      domain. \label{graphes_et_polygones_newton}}
\end{figure}

Note that there is a certain degree of arbitrariness in the embedding of
$G$ in the plane and as a consequence a certain arbitrariness in the
choice of the fundamental domain. For instance, in Fig.
\ref{graphes_et_polygones_newton} the fundamental domain of $\Z^2$
contains two sites, but with a different embedding it could contain
the four sites around a face (in this case the two axes of
$\bbZ^2$ would be horizontal and vertical). In general, it is
convenient to work with the smallest possible fundamental domain, as in Fig.
\ref{graphes_et_polygones_newton}.

A perfect matching of $G$ is
a subset of edges, $M\subset E$, such that each vertex of $G$ is
contained in one and exactly one edge in $M$.  It is known that $G$
admits a matching (which is implicitly assumed from now on) if and
only if $G_1$ does, and Fig. \ref{graphes_et_polygones_newton}
shows that the fundamental domains of
the four graphs we mentioned
do admit several matchings. We denote $\Omega$ the set of matchings of $G$.

\begin{assumption}\label{hyp_pas_deterministe}
To avoid trivialities, we will assume that for every edge $e$ of $G$
there exists $M\in\Omega$ such that $e\in M$ and $M'\in\Omega$ such that $e\notin M'$ (one can easily construct pathological examples where this fails, but the edges in question can be simply removed
and the matching problem is unchanged).
\end{assumption}

We will often refer to paths on the dual graph $G^*$:
\begin{definition}
\label{def:periodico}
A path $\gamma$ on $G^*$ is a possibly infinite sequence
$(\dots,f_{-1},f_0,f_1,\dots)$ of faces of $G$, such that $f_i$ is a
neighbor of $f_{i+1}$. An infinite path $\gamma$ is called periodic if
there exists a finite path $\gamma_0$ and $v\in \bbZ^2$ such that $\gamma$ is 
the concatenation of $ \{T_{n v}\gamma_0\}_{n\in\bbZ}$, with $T_v$ the translation
by $v$.
\end{definition}

\subsubsection{Height function and uniform measure}
\label{sec:height}
A flux is a function on the oriented edges of $G$, which is
antisymmetric under the change of orientation of the edges.  To each
$M\in\Omega$ is associated a flux $\omega_M$: edges contained in $M$ carry
unit flux, oriented from the white to the black vertex. Edges not
contained in $M$ carry zero flux.  Note that the divergence of
$\omega_M$ is $1$ at white vertices and $-1$ at black vertices.

Fix now a reference matching $M_0$ (typically, a $\bbZ^2$-periodic matching, but the following definition would work for any flux of divergence $+1/-1$ at white/black
vertices).
 $M_0$ allows to associate to $M$ a height function
$h_M$ on $G^*$, as follows. Fix some face $f_0\in G^*$ (``the origin'')
and set $h_M(f_0)$ to some value, say $0$. For every $f\ne f_0$, let $\g$ be a path on $G^*$
starting at $f_0$ and ending at $f$. Then, $h_M(f)-h_M(f_0)$ is the total
flux of $\omega_M-\omega_{M_0}$ (say from right to left) across $\g$.
Note that $h_M$ does not depend on the choice of the path (because
$\omega_M-\omega_{M_0}$ has zero divergence) and that the height
difference between two matchings $M,M'$ is independent of the choice
of the reference matching $M_0$.

In the following, the set of perfect matchings $\Omega$ will denote
equivalently the set of all admissible height functions (it is
understood that $M_0$ and $f_0$ are fixed). For lightness of notation,
we will often write $h$ instead of $h_M$.

\begin{definition}
\label{def:G'}
Let $U$ be a simply connected open subset set of $[-1/2,1/2]^2$, $L > 0$ and $U_L=L\,
U$. We let $G'$ be the finite subset of $G$ obtained by keeping all
the vertices and edges belonging to faces which are entirely contained in $U_L$.
\end{definition}

Given $m\in \Omega$ (called the ``boundary condition'', with
corresponding height function $h_m$) and $G'=(E',V')$ as in Definition
\ref{def:G'}, let
\begin{eqnarray}
  \label{eq:Omg}
  \Omega_{m,G'}=\left\{M\in \Omega:M|_{G\setminus G'}=m|_{G\setminus G'}\right\}
\end{eqnarray}
be the finite collection of matchings that coincide with $m$ outside
of $G'$.  Equivalently, we can identify $ \Omega_{m,G'}$ as the set of
height functions that coincide with $h_m$ except on the faces
of $G'$.  We will implicitly assume
(without loss of generality) that the reference face $f_0$ is
\emph{not} one of the faces of $G'$.  Clearly, $\Omega_{m,G'}$ is
non-empty (it includes at least $m$) and we will let $\pi_{m,G'}$
denote the uniform measure over $\Omega_{m,G'}$.

\subsection{Pure phases}\label{sec:phase_pure}

In this section we review known results about measures on the infinite
graph $G$ whose typical height functions are close to a plane.  First
we will identify the set of ``natural'' measures of fixed average
slope, then give a classification into three ``phases'' with very
different correlation properties, and finally give their
``microscopic'' behavior, i.e. the probabilities of events depending
on a finite subset of edges. Most results come from \cite{KOS}.


\subsubsection{Ergodic Gibbs measure of fixed slope}

Fix a reference matching $M_0$ assumed to be $\bbZ^2$-periodic.  A
measure $\mu$ on $\gO$ is said to have slope $(s,t)$ if its expected
height function is a linear function, with slope $(s,t)$: for all faces $f$, if
$f'$ denotes the translate of $f$ by $(x,y)\in \bbZ^2$, then $\mu[
h(f')-h(f)] = sx+ty$. $\mu$ is said to be a Gibbs measure if its
conditional distributions on finite sub-graphs are uniform, $\mu( \,
\cdot \, |M\in \gO_{m,G'}) = \pi_{m,G'}(\cdot)$ (\emph{DLR property}). It is ergodic if it
is not a linear combination of other Gibbs measures. Ergodic Gibbs
measures of fixed slope can be thought of as the natural uniform
measures on matchings of $G$ conditioned on their average
slopes. The following theorem due to Sheffield \cite{Sheffield}
classifies all of them :

\begin{theorem}
\label{th:Newton}
  There exists a closed, convex polygon $N$ in $\bbR^2$ such that, for all
  $(s,t)$ in its interior $\intN$, there exists a unique ergodic Gibbs
  measure $\mu_{s,t}$ of slope $(s,t)$. The vertices of $N$ are
  determined by the slopes of some $\bbZ^2$-periodic matchings of $G$ (i.e.  matchings of the fundamental domain) and thus are integer
  points. For $(s,t)\in \partial N:={N} \setminus \intN$, there exists an ergodic
  Gibbs measure but it may not be unique.
\end{theorem}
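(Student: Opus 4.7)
The result is attributed to Sheffield \cite{Sheffield}, so I only sketch the scheme one would follow. The plan is to identify the polygon $N$ explicitly, then construct measures of each interior slope via a tilted thermodynamic limit, and finally prove uniqueness via a double-dimer argument. First, I would define $N$ as the convex hull of the slopes of $\bbZ^2$-periodic matchings of $G$: there are only finitely many matchings of the fundamental domain $G_1$, and to each such matching $\sigma$ one lifts to a $\bbZ^2$-periodic matching $M^\sigma$ of $G$ and associates a slope $(s_\sigma,t_\sigma)$ given by the flux of $\omega_{M^\sigma}-\omega_{M_0}$ across the two primitive lattice translations. These slopes are integer vectors, so their convex hull $N$ is a closed convex polygon with integer vertices. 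Any ergodic Gibbs measure $\mu$ must have expected slope lying in $N$, because the slope is a translation-invariant average of a linear functional of the flux whose pointwise values are constrained by the combinatorics of local matchings and are realized at extremes by periodic configurations.

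Second, for existence when $(s,t)\in\intN$, I would construct a family of tilted Boltzmann measures on the toroidal quotient $G_n:=G/(n\bbZ)^2$: weight each matching $M$ of $G_n$ by $\exp(B_x \Phi_x(M)+B_y\Phi_y(M))$, where $\Phi_x,\Phi_y$ are the net fluxes of $\omega_M-\omega_{M_0}$ across the two primitive cycles of the torus. By Kasteleyn theory these partition functions are computable as a sum of four Pfaffians of a modified adjacency matrix, and their logarithms converge as $n\to\infty$ to an explicit convex free energy $F(B_x,B_y)$ whose Legendre dual is the surface tension $\sigma(s,t)$. For each interior slope one tunes $(B_x,B_y)$ so that $\nabla F(B_x,B_y)=(s,t)$ and extracts a weak limit point of the tilted measures; the limit is a translation-invariant Gibbs measure, and passing to an ergodic component does not change the expected slope, so one obtains a candidate $\mu_{s,t}$.

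Third, the hardest step and main obstacle is uniqueness in the interior. The standard tool is the \emph{double-dimer} construction: given two ergodic Gibbs measures $\mu,\mu'$ with the same slope $(s,t)\in\intN$, let $M$ and $M'$ be independent samples and consider the multigraph $M\cup M'$, which decomposes into doubled edges and disjoint simple cycles. The key claim is that every cycle is finite almost surely when $(s,t)$ is interior: an infinite cycle would carry a nonzero asymptotic flux and would force the slope of at least one of the two measures to lie on $\partial N$, contradicting the interior assumption. Once all cycles are finite, exchanging $M$ and $M'$ along any finite cycle produces another coupling of $(\mu,\mu')$, and a standard cluster-swapping / exchange argument as carried out in \cite{Sheffield} forces $\mu=\mu'$. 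Finally, for $(s,t)\in\partial N$ existence follows by taking weak subsequential limits of the $\mu_{s',t'}$ along $(s',t')\to(s,t)$ from the interior, whereas uniqueness may genuinely fail because the cycle-finiteness argument breaks down on a facet of $N$, reflecting phase coexistence between the extreme matchings whose slopes span that facet.
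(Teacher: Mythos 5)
The paper does not prove this theorem: it is stated as Sheffield's result \cite{Sheffield} and used as a black box, so there is no ``paper's own proof'' to compare against. Your sketch is nonetheless a reasonable high-level account of the route Sheffield (and, for the existence part, Kenyon--Okounkov--Sheffield \cite{KOS}) follows. A few notes. On a bipartite torus the Kasteleyn partition function is usually written as a signed sum of four determinants of the modified adjacency matrices evaluated at $(\pm 1,\pm 1)$ rather than Pfaffians (the Pfaffian form is for the general, non-bipartite case), though this is cosmetic. Sheffield's original construction of $\mu_{s,t}$ goes through a variational principle, minimizing specific free energy over translation-invariant measures of a fixed slope, rather than through tilted weak limits; the tilted-torus approach you describe yields the same object and is closer in spirit to \cite{KOS}, but is not literally Sheffield's route. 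Finally, your description of uniqueness via the double-dimer / cluster-swapping argument captures the mechanism, but the step that an infinite cycle in $M\cup M'$ would force an extremal slope is precisely where Sheffield's careful topological bookkeeping lives and is far from automatic; as a sketch, though, you have correctly located where the real work is and why it breaks down on $\partial N$.
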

$N$ is called the Newton polygon, see Fig. \ref{graphes_et_polygones_newton}. 

\subsubsection{Phase classification}
\label{sec:phaseclass}

As proved in \cite{KOS}, ergodic Gibbs measures come in three possible phases:
solid, liquid and gas, depending on the position of $(s,t)$ in
$N$.

\begin{itemize}
\item Solid phases correspond to slopes in $\partial N$. For any side $\ell$ of $\partial N$, there exists at
  least one infinite periodic path $\gamma$ on $G^*$ (cf. Definition
\ref{def:periodico}) such that the
  configuration of the edges crossed by $\gamma$ (or by any of its
  translates) is deterministic and is the same for all measures with
  slope $(s,t)\in\ell$. The path $\gamma$ is said to be frozen.

The asymptotic direction of $\gamma$ is determined as follows. All the
planes with slope in $\ell$ and containing the origin of $\bbR^3$
intersect in a straight line. The direction of this line, when
projected on the $(x,y)$ plane, is the direction of $\gamma$.

  At a vertex of the Newton polygon, which is the intersection of two
  sides of $\partial N$,  there are two
  families of frozen paths with different directions, which form so to
  speak a grid on $G^*$. The components of the complement
  of the frozen paths are finite sets of faces. Heights are clearly
  independent in two distinct components, and the fluctuations of the
  height difference between two faces $f_1,f_2$ are bounded
  deterministically and uniformly in the distance between them.

\item Liquid phases correspond to generic points of $\intN$. In these
  phases, heights fluctuations behave like a Gaussian free field in
  the plane. In particular the variance of the height difference
  between $f_1 $ and $f_2$ grows like $1/\pi$ times the logarithm of
  the distance, while edge correlations decay slowly (as the inverse
  of the square of the distance). Liquid phases are discussed in finer
  detail in the next section.

\item Gaseous phases have exponentially decreasing edge correlations;
  the height difference fluctuations are not deterministically
  bounded, but their variance is bounded, uniformly with the distance
  of the faces.  Gaseous phases may (but do not necessarily) occur
  when the slope $(s,t)$ is an integer point in $\intN$. The condition
  for the occurrence of a gaseous phase at an integer slope $(s,t)\in\intN$ is
  discussed in Section \ref{sec:Asy}.

\end{itemize}

In the example of Fig. \ref{graphes_et_polygones_newton}, only the
square-octagon graph has a gaseous phase which has slope $(0,0)$. 

\subsubsection{Edge probabilities}

When $(s,t)\in\intN$ (i.e. for liquid and gaseous phases) there is an
explicit expression of edge probabilities under $\mu_{s,t}$.
\begin{theorem}\label{thm:proba_evn_fini}\cite{KOS}
  Fix $(s,t) \in \intN$. There exists an infinite periodic matrix
  $K_{s,t}=\{K_{s,t}(\rw,\rb)\}_{\rw,\rb} $ with $\rb$ (resp. $\rw$) ranging on
  black (resp. white) vertices of $G$ and an infinite periodic matrix
  $K_{s,t}^{-1}=\{K^{-1}_{s,t}(\rb,\rw)\}_{\rw,\rb} $ satisfying $K_{s,t}K^{-1}_{s,t}=Id$ such
  that, for any finite subset $ \{ e_1=(\rw_1,\rb_1), \ldots,
  e_l=(\rw_l,\rb_l) \}$ of edges of $G$, the $\mu_{s,t}$-probability
  of seeing all of them occupied is:
\[
  \mu_{s,t}\bigl(e_1\in M,\dots,e_l\in M\bigr)= \Biggl( \prod_{j=1}^l K_{s,t}(\rw_j,\rb_j) \Biggr) \det\bigl(K_{s,t}^{-1}(\rb_k,\rw_i) \bigr)_{1\leq i,k \leq l}.
  \]
\end{theorem}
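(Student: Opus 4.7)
The plan is to reconstruct the formula by the Kenyon–Okounkov–Sheffield scheme: build the measures $\mu_{s,t}$ explicitly as infinite-volume limits of uniform measures on large tori, where bipartite Kasteleyn theory gives determinantal edge statistics, and then tune the slope via magnetic fields. The claimed matrices $K_{s,t}$ and $K_{s,t}^{-1}$ will emerge as the symbol and the Fourier-theoretic inverse of a biperiodic Kasteleyn operator on $G$.

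First, I would work on the $n\times n$ torus $G_n := G/(n\Z^2)$. Fix a Kasteleyn orientation (equivalently, a real sign/phase $\nu(e)$ on each edge) so that for any face $f$ the product of signs around $\partial f$ equals $(-1)^{|\partial f|/2+1}$. For bipartite $G$ the Kasteleyn matrix $\ttK_n$ can then be written in block form with a white-to-black block $K_n$ such that, for any finite collection of edges $e_j=(\rw_j,\rb_j)$ in a single fundamental domain, one has
\[
Z_n\cdot\mu^{(n)}(e_1,\dots,e_l\in M)=\Big(\prod_j |K_n(\rw_j,\rb_j)|\Big)\,\bigl|{\det\bigl(K_n^{-1}(\rb_k,\rw_i)\bigr)_{1\le i,k\le l}}\bigr|\cdot Z_n',
\]
where $Z_n$ is the torus partition function (which in the bipartite case can be written as a signed sum of four determinants $\det K_n^{\theta\tau}$ coming from the four spin structures), and local edge probabilities reduce after taking the quotient to a single determinantal formula in $K_n$ and $K_n^{-1}$, up to boundary signs that vanish in the large-$n$ limit.

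Next, to select the slope $(s,t)$, I would introduce magnetic fields: multiply edge weights by $e^{B_x/n}$ or $e^{B_y/n}$ whenever the edge crosses a fixed horizontal or vertical reference cycle of the torus. The modified Kasteleyn matrix $K_n^{B_x,B_y}$ is diagonalized by the discrete Fourier transform over $\frac{1}{n}\Z/\Z$, yielding eigenvalues which are precisely evaluations at $(z,w)=(e^{ix+B_x},e^{iy+B_y})$ of the characteristic polynomial $P(z,w):=\det K_1(z,w)$ of the fundamental-domain Kasteleyn matrix. Choose $(B_x,B_y)$ so that the expected horizontal/vertical fluxes of $\omega_M-\omega_{M_0}$ match the slope $(s,t)$; by the results of \cite{KOS} such a choice exists and is unique whenever $(s,t)\in\intN$, and it corresponds to a point $(B_x(s,t),B_y(s,t))$ on the \emph{amoeba} of the spectral curve $\{P=0\}$ where the Ronkin function is differentiable with gradient $(s,t)$.

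Finally, I would pass to the $n\to\infty$ limit. Define $K_{s,t}$ as the infinite periodic Kasteleyn operator with the boosted weights $e^{B_x(s,t)},e^{B_y(s,t)}$ absorbed, and define $K_{s,t}^{-1}$ by Fourier inversion,
\[
K_{s,t}^{-1}(\rb,\rw)=\iint_{\mathbb T^2}\bigl(K_1(z,w)\bigr)^{-1}_{\rb,\rw}\,z^{-\Delta x}w^{-\Delta y}\,\frac{dz}{2\pi i z}\frac{dw}{2\pi i w},
\]
where $(\Delta x,\Delta y)$ is the lattice translation from the fundamental domain of $\rw$ to that of $\rb$. Convergence of $K_n^{-1}$ to $K_{s,t}^{-1}$ uniformly on fundamental domains, together with the determinantal identity on the torus, yields a limiting measure $\nu_{s,t}$ on $\Omega$ whose local edge statistics are exactly the stated determinantal formula. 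It remains to identify $\nu_{s,t}$ with $\mu_{s,t}$. Translation invariance is clear by construction. The DLR/Gibbs property on any finite $G'$ follows from the fact that, conditionally on a matching outside $G'$, the determinantal formula reduces (by standard complementation identities) to the uniform measure on $\Omega_{m,G'}$. The average slope equals $(s,t)$ by the magnetic-field tuning. By Theorem~\ref{th:Newton}, the ergodic Gibbs measure of slope $(s,t)$ in $\intN$ is unique; hence an ergodic decomposition of $\nu_{s,t}$ is concentrated on measures of slope $(s,t)$, each of which coincides with $\mu_{s,t}$, so $\nu_{s,t}=\mu_{s,t}$.

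The main obstacle is the last identification step: showing that the infinite-volume limit has the correct slope almost surely (not just on average) and that it is Gibbs, so that Sheffield's uniqueness forces it to be $\mu_{s,t}$. Concentration of the slope is the delicate point and is typically obtained from the exponential decay of correlations of the flux variables, itself a consequence of the determinantal representation together with the decay of the Fourier integral defining $K_{s,t}^{-1}$ away from the zero set of $P(z,w)$ on the unit torus.
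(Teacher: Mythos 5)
The paper does not supply a proof of this statement; it is quoted verbatim from Kenyon, Okounkov and Sheffield \cite{KOS}, so there is no in-paper argument to compare against. Your reconstruction — torus approximation $G_n=G/n\Z^2$, bipartite Kasteleyn theory (four spin structures whose discrepancy vanishes as $n\to\infty$), magnetic field tuning $(B_x,B_y)$ to hit the prescribed slope on the amoeba, Fourier inversion to define the biperiodic $K_{s,t}^{-1}$, and identification of the limiting measure with $\mu_{s,t}$ via Sheffield's uniqueness of ergodic Gibbs measures of slope $(s,t)\in\intN$ — is indeed the route taken in \cite{KOS}, and the overall scheme is correct.

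One detail in the last paragraph needs repair. You justify concentration of the slope (and hence the applicability of Sheffield's uniqueness) by invoking \emph{exponential} decay of correlations coming from the Fourier integral for $K_{s,t}^{-1}$. Exponential decay of $K_{s,t}^{-1}(\rb,\rw)$ holds only when $P_{s,t}(z,w)$ has no zeros on $\bbT^2$, i.e.\ in the gaseous phase, which can occur only at integer slopes. For a generic $(s,t)\in\intN$ one is in the liquid phase: $P_{s,t}$ vanishes on $\bbT^2$ and $K_{s,t}^{-1}(\rb,\rw)$ decays only like the inverse of the distance (cf.\ Theorem \ref{th:asy}), so edge-edge covariances decay like the inverse square of the distance. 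That polynomial rate is still summable enough that the variance of the spatial average of the flux over an $n\times n$ box is $O((\log n)/n^2)\to0$, which gives the concentration of the slope you need; but the mechanism is polynomial, not exponential, decay, and the argument should be phrased so that it covers both the liquid and the gaseous regimes rather than appealing to a rate that fails in the main case of interest.
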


$K_{s,t}$ is called a Kasteleyn matrix. It is a weighted and signed
version of the adjacency matrix, so in particular $K_{s,t}(\rw,\rb)$
can be non-zero only if $(\rb,\rw)$ is an edge of $G$. The signs
(which are independent of the slope $(s,t)$) are chosen so that their
product around any face $f$ is $-1$ if $f$ has $0 \mod 4$ sides and
$+1$ if it has $2\mod 4$
sides
. We will not need to specify the explicit choice of signs, see
\cite{KOS}. 
Periodicity means that $K_{s,t}(\rw+(x,y),\rb+(x,y))=K_{s,t}(\rw,\rb)$ for
every $(x,y)\in \bbZ^2$, and similarly for $K^{-1}_{s,t}$.

Given $K_{s,t}$ and two complex numbers $w,z$, we define a
\emph{finite} matrix ${\bf K}_{s,t}(z,w)$ from white to black vertices
of the fundamental domain $G_1$, as follows.  Consider $G_1$ as a
weighted periodic bipartite graph on the torus, where the weight of an edge is
the one induced by $K_{s,t}$,   and note that it can contain
multiple edges between two vertices, even if the infinite graph $G$
does not (see e.g. Fig. \ref{graphes_et_polygones_newton}). Consider a
path $\gamma_x$ (resp. $\gamma_y$) winding once horizontally
(resp. vertically) along the torus and multiply by $z$
(resp. $1/z$) the weight  of each edge crossed by
$\gamma_x$ with the black vertex on the left (resp. on the right) and
similarly by $w,1/w$ the edges crossed by $\gamma_y$.
Then, ${\bf K}_{s,t}(z,w)$ is the adjacency matrix of $G_1$, with 
these modified  weights. With the usual graph theory convention, this means that
the $(\rw,\rb)$ element of ${\bf K}_{s,t}(z,w)$ (with $\rw$ (resp. $\rb$) a 
white (resp. black) vertex of $G_1$) is the sum of the weights of the edges 
joining $\rw$ to $\rb$.
Let $Q_{s,t}(z,w)$ (a matrix from black to white vertices of $G_1$) be the adjugate matrix of ${\bf K}_{s,t}(z,w)$  so that $[Q_{s,t}
{\bf K}_{s,t}] (z,w) = P(z,w)
 \text{Id}$ where $P(z,w) = \det( {\bf K}_{s,t}(z,w))$. 
The $(\rb,\rw)$ element of $Q_{s,t}(z,w)$ is
denoted $Q^{\rb,\rw}_{s,t}(z,w)$.

We can now give a formula for the inverse infinite Kasteleyn matrix
$K_{s,t}^{-1}$:
\begin{theorem}\cite{KOS}
  Let $\rb$ and $\rw$ be a black and a white vertex in $G_1$. The following holds for $(x,y)\in \bbZ^2$:
\[
   K_{s,t}^{-1}(\rb,\rw+(x,y)) = \frac{1}{(2i\pi)^2} \int_{\bbT^2} \frac{Q^{\rb,\rw}_{s,t}(z,w)}{P_{s,t}(z,w)} w^x z^y \frac{\td w}{w}\frac{\td z}{z}
\]
where $\bbT^2= \{ z,w \in \Complex^2, \abs{z}=\abs{w}=1 \}$ is the unit complex torus.
\end{theorem}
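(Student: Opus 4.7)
The plan is to invert $K_{s,t}$ by Fourier analysis, exploiting its $\bbZ^2$-periodicity. First, I would write a generic white vertex of $G$ as $\rw+(a,b)$ with $\rw\in G_1$ and $(a,b)\in\bbZ^2$, and similarly for black vertices. The plane waves $e_{\rw,z,w}(\rw'+(a,b))=\delta_{\rw,\rw'}z^a w^b$, indexed by $(z,w)\in\bbT^2$ and $\rw\in G_1$, then form a complete family of joint quasi-eigenvectors for the $\bbZ^2$-translations. The specific prescription in the definition of ${\bf K}_{s,t}(z,w)$ (multiplication by $z^{\pm 1},w^{\pm 1}$ across the two winding paths $\gamma_x,\gamma_y$) is engineered precisely so that $K_{s,t}$ acts on these plane waves through the finite matrix ${\bf K}_{s,t}(z,w)$: each edge of $G$ either lies inside a single translate of $G_1$ (and keeps its bare weight) or crosses the boundary between two translates (in which case the accumulated factor of $z$ or $w$ exactly accounts for the translation). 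Verifying this edge by edge on $G_1$ is bookkeeping.

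Second, whenever $P_{s,t}(z,w)=\det({\bf K}_{s,t}(z,w))\ne 0$, the finite matrix ${\bf K}_{s,t}(z,w)$ is invertible with inverse $Q_{s,t}(z,w)/P_{s,t}(z,w)$. Fourier inversion then formally produces the candidate
\[
M(\rb,\rw+(x,y)) := \frac{1}{(2i\pi)^2}\int_{\bbT^2}\frac{Q^{\rb,\rw}_{s,t}(z,w)}{P_{s,t}(z,w)}\, w^x z^y\,\frac{\td w}{w}\frac{\td z}{z}.
\]
One checks $K_{s,t}M=\mathrm{Id}$ by substituting and combining the algebraic identity $[{\bf K}_{s,t}Q_{s,t}](z,w)=P_{s,t}(z,w)\,\mathrm{Id}$ with the orthogonality $(2i\pi)^{-2}\int_{\bbT^2}z^y w^x\,(\td z/z)(\td w/w)=\ind_{(x,y)=(0,0)}$.

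The main obstacle is the possible vanishing of $P_{s,t}$ on $\bbT^2$, which is actually the algebraic signature of the trichotomy of phases. In the gaseous phase, $P_{s,t}$ has no zeros on $\bbT^2$; the integrand is smooth, $M$ decays exponentially, and it is unambiguously the unique $\ell^1$ inverse of $K_{s,t}$. In the liquid phase (generic $(s,t)\in\intN$), $P_{s,t}$ has exactly two conjugate simple zeros on $\bbT^2$, the integrand has non-integrable $1/|\cdot|$ singularities, and one must interpret the integral via a small deformation of the torus contour. The correct deformation — which selects \emph{the} inverse relevant to $\mu_{s,t}$ among the one-parameter family of candidate inverses — is fixed by requiring that $M$ arise as a thermodynamic limit.

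To pin this down, I would work on the family of finite toroidal quotients $G_n=G/(n\bbZ^2)$: Kasteleyn's theorem produces there an explicit linear combination (over the four spin structures) of determinants of discrete analogues of ${\bf K}_{s,t}$ evaluated at $n$-th roots of unity, and by standard computations the resulting inverse is a Riemann sum that converges, as $n\to\infty$, to the stated integral. The selection of which root of $P_{s,t}$ to place on which side of the contour is then dictated by the sign conventions coming from the dominant spin structure for slope $(s,t)$. Finally, to identify the limiting measure as $\mu_{s,t}$ (so that Theorem \ref{thm:proba_evn_fini} indeed computes $\mu_{s,t}$-probabilities), one invokes Sheffield's uniqueness theorem (Theorem \ref{th:Newton}) together with the computation of the slope of the limit measure from a derivative of $\log P_{s,t}$. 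The delicate passage is the treatment of the two zeros on $\bbT^2$ in the liquid regime, which is exactly where the logarithmic height fluctuations and the Gaussian Free Field scaling limit ultimately originate.
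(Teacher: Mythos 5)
The paper does not actually prove this statement; it is cited verbatim from \cite{KOS}, so there is no in-paper proof to compare against. Your sketch does reproduce the broad architecture of the KOS argument: Fourier block-diagonalization of the $\bbZ^2$-periodic Kasteleyn operator via the magnetically twisted matrix ${\bf K}_{s,t}(z,w)$, formal inversion using the adjugate identity and orthogonality of characters, and identification of the resulting inverse as the one associated to $\mu_{s,t}$ by passing through the toroidal quotients $G/(n\bbZ^2)$ (four spin structures) and invoking Sheffield's uniqueness theorem for ergodic Gibbs measures of a given slope.

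However, there is a concrete error in your treatment of the singularities of $P_{s,t}$. You assert that in the liquid phase the integrand has ``non-integrable $1/|\cdot|$ singularities'' and that ``one must interpret the integral via a small deformation of the torus contour,'' with the choice of deformation selecting the inverse. This is not the case. The torus $\bbT^2$ is a \emph{two-real-dimensional} manifold, and a simple zero of $P_{s,t}$ on $\bbT^2$ means $P_{s,t}$ vanishes to first order in two real directions; hence $|P_{s,t}|\sim r$ near the zero and $1/|P_{s,t}|\sim 1/r$, which is absolutely integrable against the two-dimensional Lebesgue measure $r\,\td r\,\td\theta$. The integral in the statement is therefore absolutely convergent with no regularization or contour prescription needed, and the ``selection among inverses'' is \emph{not} achieved by placing roots on one side or the other of a deformed contour. (That picture is appropriate for one-dimensional dimer chains where the singularities sit on a one-dimensional contour; it does not transfer to codimension-two point singularities on $\bbT^2$.) In KOS the identification of $M$ as the right inverse is made directly: the torus partition functions and correlations are computed exactly as (combinations of) determinants, the resulting Riemann sums converge to the stated absolutely convergent integral, and the limiting measure is identified with $\mu_{s,t}$ by computing its slope and applying uniqueness. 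Your thermodynamic-limit mechanism is thus correct, but the contour-deformation language around it is a misdiagnosis of where the subtlety lies.
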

Here, to avoid confusion, it can be useful to emphasize that
$[Q_{s,t}/P_{s,t}](z,w)$ is the inverse of the finite matrix ${\bf
  K}_{s,t}(z,w)$, while $K_{s,t}^{-1}$ is an inverse of the infinite matrix $K_{s,t}$, with no dependence on $z,w$
(i.e. with the original edge weights).

\subsection{Asymptotics of $K_{s,t}^{-1}$ and Gaussian fluctuations in the ``liquid phase''}
\label{sec:Asy}

It is shown in \cite{KOS} that, for an integer slope $ (s,t) \in \intN
\cap \bbZ^2 $, the Laurent polynomial $ P_{s,t}$ has either no zeros on the
unit torus (in which case $\mu_{s,t}$ corresponds to a gaseous phase)
or has a unique zero of order two (which corresponds to a liquid
phase).  For any non-integer slopes in $\intN \setminus \bbZ^2$, instead,
$P_{s,t}$ has exactly two conjugate simple zeros. In this case Theorem
\ref{th:asy} gives the asymptotics of $K_{s,t}^{-1}(\rb,\rw)$ when the two
vertices $\rb,\rw$ are far apart. We emphasize that in our applications (i.e. in
the proof of Theorem \ref{th:clt}), we will have to consider only
cases where the Newton polygon has no integer points in its interior.

\begin{theorem}\cite{KOS}
\label{th:asy}
   Fix $(s,t)$ a non-integer slope in $\intN$, so that $P_{s,t}$ has
   two simple zeros $(z_0,w_0)$ and $(\bar{z}_0,\bar{w}_0)$ on
   $\bbT^2$. Let $\alpha = \tfrac{\partial }{\partial z}
   P_{s,t}(z_0,w_0)$ and $\beta = \tfrac{\partial}{\partial w}
   P_{s,t}(z_0,w_0)$ and define $\phi(x,y) = x\alpha z_0 - y\beta
   w_0$. Then the map $\phi : \Reel^2 \mapsto \Complex$ is
   invertible, the matrix $Q_{s,t}(z_0,w_0)$ is of rank $1$ and can be
   written as $U_{s,t} V_{s,t}^T$ where the column vector $U_{s,t}$
   (resp. $V_{s,t}$) is indexed
   by the black (resp. white) vertices of $G_1$. Moreover, we have
   \begin{eqnarray}
     \label{eq:K-1asy}
   K^{-1}_{s,t} \bigl(\rb,\rw+(x,y) \bigr) = -\Im \left( \frac{w_0^x z_0^y U_{s,t}(\rb) V_{s,t}(\rw)}{\pi \phi(x,y)} \right) + O\left(\frac{1}{x^2+y^2}\right)
   \end{eqnarray}
where $O((x^2+y^2)^{-1})$ has to be understood as
$\frac{h(x,y)}{x^2+y^2+1}$ with $h$ bounded on $\bbZ^2$ and $\Im(z)$
denotes the imaginary part of $z$.
\end{theorem}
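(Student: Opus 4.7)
The plan is to perform a residue and saddle-point analysis on the double integral
\[
K^{-1}_{s,t}(\rb,\rw+(x,y)) = \frac{1}{(2i\pi)^2}\iint_{\bbT^2}\frac{Q^{\rb,\rw}_{s,t}(z,w)}{P_{s,t}(z,w)}\,z^{y-1}w^{x-1}\,\td z\,\td w,
\]
following the strategy of \cite{KOS}. Under the hypothesis, $P_{s,t}$ has exactly two simple zeros $(z_0,w_0)$ and $(\bar z_0,\bar w_0)$ on $\bbT^2$, so the integrand $Q^{\rb,\rw}_{s,t}/P_{s,t}$ has two isolated Cauchy-type singularities. I split it by a smooth partition of unity into a piece supported away from these two points and two pieces localized near them. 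The first, being smooth on the compact torus, contributes a Fourier coefficient that decays faster than any polynomial in $(x,y)$ by repeated integration by parts, and so is absorbed into the error term.

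For the localized piece near $(z_0,w_0)$, I integrate first in $z$ at fixed $w$ close to $w_0$. Since $(z_0,w_0)$ is a simple zero, the implicit function theorem yields an analytic branch $z_*(w)$ of the root equation $P_{s,t}(z_*(w),w)=0$ with $z_*(w_0)=z_0$ and $z_*'(w_0)=-\b/\a$. Deforming the $z$-contour (inside the unit disk if $y\geq 0$, outside otherwise) picks up a residue at $z_*(w)$ precisely when $z_*(w)$ lies on the correct side of $|z|=1$, i.e.\ for $w$ on one side of $w_0$ along the unit circle. The value of this residue at $w=w_0$ equals $U_{s,t}(\rb)V_{s,t}(\rw)/\a$, using the rank-one factorization $Q_{s,t}(z_0,w_0)=U_{s,t}V_{s,t}^T$. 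This factorization is itself forced by $Q_{s,t}{\bf K}_{s,t}=P_{s,t}\cdot\mathrm{Id}$: at a simple zero, the right-hand side vanishes to first order, so the image of $Q_{s,t}(z_0,w_0)$ must be contained in the one-dimensional kernel of ${\bf K}_{s,t}(z_0,w_0)$.

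After this reduction, setting $w=w_0e^{i\theta}$ and expanding $z_*(w)\simeq z_0\exp(-i(\b w_0/(\a z_0))\theta)$, one arrives at the oscillatory one-dimensional integral
\[
\frac{U_{s,t}(\rb)V_{s,t}(\rw)\,w_0^x z_0^y}{2\pi\,\a z_0}\int_{-\delta}^{\delta}\exp\!\Bigl[\tfrac{i\theta}{\a z_0}\bigl(\a z_0\, x-\b w_0\, y\bigr)\Bigr]\,\td\theta,
\]
up to an $O((x^2+y^2)^{-1})$ error coming from the quadratic corrections in the expansion of $P_{s,t}$ and from the boundary terms at $\pm\delta$. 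The standard Cauchy/distributional interpretation of this integral produces the Riesz-type kernel $1/\phi(x,y)$, with $\phi(x,y)=\a z_0 x-\b w_0 y$ as defined in the statement. Combining the contribution from $(z_0,w_0)$ with its complex conjugate from $(\bar z_0,\bar w_0)$ converts the resulting complex expression into the purely real $-\Im[\,\cdot\,]$ displayed in \eqref{eq:K-1asy}.

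The main obstacle is careful book-keeping of signs: the direction of $z$-contour deformation and the side of $w_0$ on which the residue is collected both depend on the signs of $x$ and $y$, and they combine non-trivially with the sign conventions of the Kasteleyn matrix to produce the precise minus sign in front of $\Im$ in \eqref{eq:K-1asy}. Obtaining the error term $O(h(x,y)/(x^2+y^2+1))$ with $h$ bounded uniformly on $\bbZ^2$ requires handling all four sign configurations of $(x,y)$ separately and monitoring the constants near directions where $\phi(x,y)$ is small, but no idea beyond the standard steepest-descent machinery is needed.
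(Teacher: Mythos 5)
This theorem is not proved in the paper: it is stated and attributed to \cite{KOS}, so there is no internal proof for comparison. Your sketch is a reasonable reconstruction of the argument in \cite{KOS}, and it follows essentially the same route (localize the double contour integral near the two conjugate simple zeros of $P_{s,t}$, integrate out $z$ by residues to get a one-sided oscillatory $w$-integral, expand the root $z_*(w)$ to first order to produce the $1/\phi$ kernel, and combine the two zeros' contributions into the real quantity $-\Im(\cdot)$). Two small remarks: the displayed one-dimensional integral should run over a half-arc (say $\theta\in[0,\delta]$) rather than $[-\delta,\delta]$, since the residue is collected only for $w$ on the side of $w_0$ where $z_*(w)$ crosses the unit circle --- as written $\int_{-\delta}^\delta e^{ic\theta}\,\td\theta=2\sin(c\delta)/c$ is purely oscillatory and would not isolate the $1/\phi$ pole; and the uniformity of the $O((x^2+y^2)^{-1})$ remainder hinges on the smooth partition of unity absorbing the boundary term at $\theta=\delta$ (a sharp cutoff would give only $O(1/|\phi|)$ there), which you mention but should be made a load-bearing part of the argument rather than an afterthought. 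Your derivation of the rank-one factorization from $Q_{s,t}(z_0,w_0){\bf K}_{s,t}(z_0,w_0)={\bf K}_{s,t}(z_0,w_0)Q_{s,t}(z_0,w_0)=0$ and the one-dimensionality of $\ker {\bf K}_{s,t}(z_0,w_0)$ at a simple zero is correct.
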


\begin{remark}
  The invertibility of $\phi$ is a consequence of the fact that $\alpha
z_0$ is not collinear with $\beta w_0$. This is not proved explicitly
in \cite{KOS} but the argument is simple: Both the torus $\bbT^2$ and
$P_{s,t}^{-1}(\{0\})$, the set of zeros of $P_{s,t}$, are two-dimensional manifolds in $\Complex^2$ which contain
$(z_0,w_0)$. The tangent space of $\bbT^2$ at  $(z_0,w_0)$
is given by
\[
p_1=\{  a(i z_0,0) + b (0,iw_0) + (z_0,w_0), (a,b)\in \bbR^2\}
\]
and the tangent space of $P_{s,t}^{-1}(\{0\})$ is
\[
p_2=\{\zeta (\beta, -\alpha) + (z_0,w_0),\zeta\in \Complex \}.
\]
Since $(z_0,w_0)$
is a simple
zero of $P_{s,t}$ seen as a function on $\bbT^2$, one necessarily has
$p_1\cap p_2=\{(z_0,w_0)\}$ and it is easy to check that this fails if
$\alpha z_0=\lambda \beta w_0$ with $\lambda\in \bbR$.
\end{remark}

The asymptotic  expression  \eqref{eq:K-1asy} is the main tool for the
following result, that is proven in Appendix \ref{sec:moments}:
\begin{theorem}
   \label{th:clt}
Fix a non-integer slope $(s,t)\in\intN$ and
$g\in G^*$. Under the Gibbs
measure $\mu_{s,t}$, the moments of the variable
  \begin{eqnarray}
\frac{h(f)-h(g)-(\mu_{s,t}(h(f))-\mu_{s,t}(h(g)))}{\sqrt{{\var}_{\mu_{s,t}}(h(f)-h(g))}}\sim
\pi\;\frac  {h(f)-h(g)-(\mu_{s,t}(h(f))-\mu_{s,t}(h(g)))}{\sqrt{\log  |\phi(f)-\phi(g)|}}
  \end{eqnarray}
tend as
  $|\phi(f)-\phi(g)|\to\infty$ to those of a standard Gaussian $\cN(0,1)$.
\end{theorem}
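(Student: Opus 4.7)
The plan is to apply the method of cumulants, using the determinantal formula of Theorem \ref{thm:proba_evn_fini} together with the asymptotics \eqref{eq:K-1asy}. Fix a dual path $\gamma = (g = f_0, f_1, \ldots, f_n = f)$ on $G^*$ and let $e_i = (w_i, b_i)$ be the edge of $G$ crossed between $f_{i-1}$ and $f_i$, with sign $\epsilon_i \in \{\pm 1\}$ coming from the orientation convention of Section \ref{sec:height}. The centered height difference is then a signed linear statistic of dimer occupation variables,
\[
X := (h(f)-h(g)) - \mu_{s,t}[h(f)-h(g)] = \sum_{i=1}^n \epsilon_i \bigl(\mathbf{1}_{e_i \in M} - \mu_{s,t}(e_i \in M)\bigr).
\]
We will show that $\var_{\mu_{s,t}}(X) = (1/\pi^2)\log R + O(1)$ with $R=|\phi(f)-\phi(g)|$, and that $\kappa_k(X) = o((\log R)^{k/2})$ for all $k\geq 3$. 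The method of cumulants then yields convergence of all moments of $X/\sqrt{\var_{\mu_{s,t}}(X)}$ to those of $\mathcal{N}(0,1)$.

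For the variance, Theorem \ref{thm:proba_evn_fini} with $\ell=2$ gives
\[
\mathrm{Cov}_{\mu_{s,t}}\bigl(\mathbf{1}_{e\in M},\mathbf{1}_{e'\in M}\bigr) = -K_{s,t}(w,b)K_{s,t}(w',b')K_{s,t}^{-1}(b,w')K_{s,t}^{-1}(b',w).
\]
Substituting \eqref{eq:K-1asy} and expanding the product of two imaginary parts via $2i\,\Im(z) = z - \bar z$, one gets four terms: two carry phases $w_0^{\pm 2\Delta x}z_0^{\pm 2\Delta y}$ that are genuinely oscillatory (their contribution is $O(1)$ by discrete summation by parts along $\gamma$, using that $w_0,z_0\notin\{\pm 1\}$ at non-integer slope), while the other two ``diagonal'' terms have cancelling phases and reduce to a Riemann sum for the GFF kernel converging to $(1/\pi^2)\log R + O(1)$, as in the classical computation of \cite{Kdom,KOS}. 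The subleading $O(1/(x^2+y^2))$ corrections in \eqref{eq:K-1asy} contribute only $O(1)$.

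For $k\geq 3$, expanding the $k\times k$ determinant of Theorem \ref{thm:proba_evn_fini} by Leibniz and regrouping terms according to the cycle structure of the permutation yields
\[
\kappa_k(X) = \frac{(-1)^{k-1}}{k}\sum_{i_1,\ldots,i_k}\prod_{r=1}^{k}\epsilon_{i_r}K_{s,t}(w_{i_r},b_{i_r})K_{s,t}^{-1}(b_{i_r},w_{i_{r+1}}),
\]
with cyclic indices $i_{k+1}=i_1$. Substituting \eqref{eq:K-1asy} and again splitting each $\Im$-factor into $z-\bar z$, each cyclic product expands into $2^k$ terms indexed by signs $\sigma_r\in\{\pm 1\}$, carrying phase $\prod_r w_0^{\sigma_r(x_{i_{r+1}}-x_{i_r})}z_0^{\sigma_r(y_{i_{r+1}}-y_{i_r})}$. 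When the $\sigma_r$ are not all of the same sign, the phase is genuinely oscillatory (again because $w_0,z_0$ are not roots of unity at non-integer slope), and multi-dimensional summation by parts along $\gamma$ gives a contribution of $O(1)$. The ``diagonal'' terms where all $\sigma_r$ agree produce a non-oscillating kernel of magnitude $\prod_r 1/|\phi(e_{i_r}-e_{i_{r+1}})|$, which summed along the one-dimensional path $\gamma$ is $O((\log R)^{k-2})$; a sharper analysis based on the fact that $\gamma$ is effectively one-dimensional, combined with the log-type integrability of the kernel around the cyclic diagonal, improves this to $O(1)$ for odd $k$ and $O(\log R)$ for even $k\geq 4$, in any case $o((\log R)^{k/2})$.

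The main obstacle is the cumulant bound in the last step: the naive inequality $|K^{-1}|=O(1/|\phi|)$ together with crude counting for $k$-tuples on $\gamma$ is too weak, and one must combine the oscillatory cancellation in the ``off-diagonal'' $\sigma$-terms (relying essentially on the non-integer slope hypothesis, which guarantees that $(w_0,z_0)$ is not a torsion point of $\bbT^2$) with sharp cyclic path-integral estimates for the ``diagonal'' ones. Once these estimates are in place, convergence to $\mathcal{N}(0,1)$ in moments follows by standard cumulant-to-Gaussian arguments.
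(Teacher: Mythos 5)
Your proposal captures the right high-level shape (determinantal structure, $\Im$ expansion, separating oscillatory from ``same-phase'' cyclic terms), but there is a genuine gap in the treatment of the diagonal cyclic contributions, and the single-path set-up introduces a singularity that the paper's construction is specifically designed to avoid.

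The paper does not bound the diagonal (all-$A$ or all-$A^*$) cyclic terms by a magnitude estimate at all: it kills them exactly, via the algebraic identity
\[
\sum_{\sigma\in \tilde S^c_\ell}\prod_{n=1}^\ell\frac{1}{\phi_n-\phi_{\sigma(n)}}=0 \qquad (\ell>2),
\]
which is \cite[Lemma 7.3]{K_non_planaire} and appears as Lemma \ref{lemm:1ciclo}. This is the crux: without it, the diagonal terms do \emph{not} obey the bound you claim. In fact your estimate $O((\log R)^{k-2})$ is incorrect. On a single path $\gamma$ the two inverse-Kasteleyn entries $K^{-1}(\rb_{i_r},\rw_{i_{r+1}})$ behave like $1/|d_{i_r}-d_{i_{r+1}}|$ where $d_i$ is arclength, and the cyclic product has a singularity near the diagonal $d_{i_r}=d_{i_{r+1}}$. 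A direct magnitude computation gives
\[
\sum_{d_1,\dots,d_k\le N}\prod_{r=1}^k \min\!\Bigl(1,\frac{1}{|d_r-d_{r+1}|}\Bigr)=O(N),
\]
not $O((\log N)^{k-2})$, because by translation invariance one variable is free and the remaining constrained sum is $O(1)$. So the ``sharper analysis based on the one-dimensionality of $\gamma$'' cannot recover $o((\log R)^{k/2})$ from a magnitude estimate: the cancellation must come from summing over cyclic orderings, i.e.\ from the algebraic identity.

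Relatedly, the paper uses $k$ \emph{distinct} paths $C_1,\dots,C_k$ from $f$ to $f'$ (Section \ref{section_presentation_chemin}), chosen so that near the endpoints they diverge with different asymptotic directions; the point of this construction is that then $|\phi_i-\phi_j|\asymp d_i+d_j$ (never vanishing when $i,j$ lie on different paths), which is precisely what makes the combinatorial estimates of Lemma \ref{lem:F} close. Your single-path choice loses this: $|\phi_i-\phi_j|\asymp|d_i-d_j|$ can be arbitrarily small, so the asymptotic formula \eqref{eq:K-1asy} does not apply uniformly, and even where it applies, the resulting sums have near-diagonal singularities not present in the paper's set-up. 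As a minor additional issue, the cyclic formula you write for $\kappa_k$ is not quite the correct one for a determinantal point process — for instance $\kappa_2=\operatorname{Tr}(Kf^2)-\operatorname{Tr}[(Kf)^2]$ has an ``on-diagonal'' correction term — and the paper sidesteps this entirely by working with moments and the exact identity of Lemma \ref{lemme:somme_sans_pt_fixe}. To repair the proposal one would need to (a) pass to $k$ separated paths as in Fig.~\ref{fig:chemins_fluctuations}, (b) invoke the exact algebraic vanishing for cyclic products of length $>2$, and (c) control the remaining oscillatory and remainder terms with bounds of the type of Lemma \ref{lem:F} — at which point one has essentially reproduced the paper's argument.
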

Here and later, when we write $\phi(f)$ we mean $\phi(x,y)$ if $f$ is the $(x,y)$ translate of a face in the fundamental domain $G_1$.
The fact that the variance of $h(f)-h(g)$ behaves like
$(1/\pi^2)\log |\phi(f)-\phi(g)|$ is proved in \cite{KOS}.

For the hexagonal lattice and under the assumption that $f$ and $f_0$ are along
the same column of hexagons, convergence of the moments is proven in
 \cite{Kenyon_lectures}. 
The general case is qualitatively more difficult and requires
 non-trivial work (see the discussion at the beginning of Appendix \ref{sec:moments}; our proof uses ideas from
 \cite[Sec. 7]{K_non_planaire}
 but the setting
here is more general and we give a more explicit control of the
``error terms'').



\subsection{Almost-planar boundary conditions}

A central role will be played by ``almost planar'' boundary
conditions.

We say that $h\in \Omega$ is an almost-planar height function with slope
${(s,t)}\in\intN$  if
there exists $C$ such that, for every $f\in G^*$,
\begin{eqnarray}
  \label{eq:almost}
  |h(f)-\mu_{s,t}(h(f))|\le C.
\end{eqnarray}

We will sketch briefly in Section \ref{sec:gen} a proof that almost-planar
boundary conditions actually exist for every ${( s,t)}\in \intN$ (even with $C=1$).

Theorem \ref{th:clt} implies the following:
\begin{theorem}
  \label{th:stimeflutt}
  Fix a non-integer slope ${(s,t)}\in \intN$ and let $m$ be an almost-planar boundary
  condition with slope ${ (s,t)}$. Let the finite graph $G'$ be as in
  Definition \ref{def:G'}.  One has for every $\gep>0$ and $n>0$:
\begin{eqnarray}
  \label{eq:stimeflutt}
  \pi_{m,G'}(\exists f\in G^*:|h(f)-\mu_{ s,t}(h(f))|\ge L^\gep)=O(L^{-n}).
\end{eqnarray}
\end{theorem}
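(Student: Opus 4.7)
The plan is to establish the desired deviation estimate first under the infinite-volume Gibbs measure $\mu_{s,t}$ using the Gaussian moment control supplied by Theorem \ref{th:clt}, and then transfer it to the finite-volume measure $\pi_{m,G'}$ via the DLR equation and stochastic monotonicity in the boundary condition. The almost-planar hypothesis \eqref{eq:almost} will play its role in the transfer step, where it guarantees that the fixed boundary height $h_m$ differs from the typical boundary height of an $\mu_{s,t}$-sample by at most $O(L^{\gep/2})$ on $\partial G'$.

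For step one, Theorem \ref{th:clt} combined with the variance asymptotics from \cite{KOS} yields, for every integer $k$, a constant $C_k$ such that
\[
\mu_{s,t}\bigl((h(f)-\mu_{s,t}(h(f)))^{2k}\bigr)\leq C_k(\log L)^k
\]
for every face $f$ at distance $O(L)$ from the reference face $f_0$; there are $O(L^2)$ such faces in $(G')^*\cup\partial G'$. Markov's inequality combined with a union bound then gives, for $k$ chosen large enough depending on $n$,
\[
\mu_{s,t}\bigl(\exists f\in (G')^*\cup\partial G':\ |h(f)-\mu_{s,t}(h(f))|\geq L^{\gep/2}\bigr)=O(L^{-n}).
\]

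For step two, I would use the DLR equation $\mu_{s,t}(\cdot)=\int \pi_{m',G'}(\cdot)\,\td\mu_{s,t}(m'|_{G\setminus G'})$ together with monotonicity of $\pi_{m',G'}$ with respect to the boundary height: if $h_{m_1}\leq h_{m_2}$ on $\partial G'$, then the bulk height under $\pi_{m_1,G'}$ is stochastically dominated by the bulk height under $\pi_{m_2,G'}$ (and shifting a boundary condition by a constant simply shifts the entire height function by the same constant). Set $\Delta:=L^{\gep/2}+C$ and let $A$ be the event that $|h_{m'}(f)-h_m(f)|\leq \Delta$ for all $f\in\partial G'$. By step one together with \eqref{eq:almost}, $\mu_{s,t}(A^c)=O(L^{-n})$. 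For every $m'\in A$ and every $f\in (G')^*$, monotonicity yields
\[
\pi_{m,G'}\bigl(h(f)-\mu_{s,t}(h(f))\geq L^\gep\bigr)\leq \pi_{m',G'}\bigl(h(f)-\mu_{s,t}(h(f))\geq L^\gep-\Delta\bigr).
\]
Integrating over $A$ against $\mu_{s,t}$, invoking DLR, and applying step one once more at the threshold $L^\gep-\Delta\geq L^\gep/2$ bounds the upper deviation. The lower deviation is symmetric, and a final union bound over $f\in (G')^*$ yields \eqref{eq:stimeflutt}.

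The main obstacle is step one: the moment bound has to be uniform in $f$, whereas Theorem \ref{th:clt} as stated only asserts pointwise convergence of moments to Gaussian ones. This requires an effective version of that theorem with explicit control of the error terms uniformly in the distance $|\phi(f)-\phi(f_0)|$, which in turn rests on the asymptotic expansion of the inverse Kasteleyn matrix of Theorem \ref{th:asy}; the paper defers this technical point to Appendix \ref{sec:moments}. The monotonicity in boundary conditions used in step two, by contrast, is a classical property of uniform measures on dimer coverings and is addressed separately in Section \ref{sec:monotonie}.
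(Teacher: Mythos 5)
Your proposal is correct and follows essentially the same route as the paper: control of Gaussian moments under $\mu_{s,t}$ via Theorem \ref{th:clt}, then transfer to $\pi_{m,G'}$ by monotonicity in boundary conditions combined with the DLR property, using almost-planarity to compare $h_m$ with a typical $\mu_{s,t}$-sampled boundary height. The only cosmetic difference is that the paper shifts the reference height of a random boundary condition $m'\sim\mu_{s,t}$ by $L^\gep/2$ so that $h_{m'}\ge h_m$ on $\partial G'$ with high probability, while you instead condition on the good event $A$ and shift afterwards; you also correctly flag the one real technical issue (uniformity in $f$ of the moment estimates, handled in Appendix \ref{sec:moments}).
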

\begin{remark}
\label{rem:refined}
  The maximal equilibrium height fluctuation with respect to the
  average height should be of order $\log L$ with high probability,
  but we will not need such a refined result.
\end{remark}

\begin{proof}[Proof of Theorem \ref{th:stimeflutt} given Theorem \ref{th:clt}]
  For the hexagonal lattice this is given in detail in \cite{CMST}
  (see Proposition 4 there). For general graphs the proof is almost
  identical and we recall just the basic principle.  

By monotonicity (see Section \ref{sec:monotonie}),
  the event $E_f=\{h(f)-\mu_{ s,t}(h(f))\ge L^\gep\}$ is more likely
  if we change the boundary condition for a higher one, i.e. if we
  replace $m$ with $m'$ such that $h_{m'}(f')\ge h_{m}(f')$ for every
  face $f'\notin (G')^*$ adjacent to some face in $(G')^*$ (this
  set of faces is denoted here $\partial G'$, and $(G')^*$ is the
  collection of faces of $G'$). Assume without loss of generality that
  the reference face $f_0$ where heights functions are fixed to zero
  belongs to $\partial G'$. Then choose a \emph{random} boundary
  condition $m'$ from the measure $\mu_{s,t}$, and this time fix its
  height at the reference face as $h_{m'}(f_0)=
  h_m(f_0)+L^\gep/2=L^\gep/2$. Thanks to Theorem \ref{th:clt}, one has
  $h_{m'}\ge h_m $ on $\partial G'$, except with probability
  $O(L^{-n})$ for any given $n$. Finally, with such random boundary
  condition, by the DLR property the probability of $E_f$ is nothing but
  $\mu_{s,t}(h(f)-\mu_{ s,t}(h(f))\ge L^\gep/2)$, which is also
  $O(L^{-n})$, again thanks to Theorem \ref{th:clt}.
\end{proof}

\subsection{Perfect matchings, capacities and maximal configurations}
\label{sec:gen}

\subsubsection{Linear characterization of height functions}\label{sec:characterisation_lineaire}

The set of height functions corresponding to a perfect matching of a
finite subset of $G$ can be characterized by linear inequalities as follows.

Consider as in Definition \ref{def:G'} a finite sub-graph $G'$ of $G$ and a
boundary condition $m \in \gO$. In this subsection we will use $m$
(even if it is not necessarily periodic) as reference matching for the
definition of height functions. For any two neighboring faces $f,f'$
with a common edge $e$ oriented positively (i.e. such that going from
$f$ to $f'$ one crosses $e$ leaving the white vertex on the right),
let the oriented capacities $d(f,f')$ and $d(f',f)$ be defined as
follows:
\begin{eqnarray}
  \label{eq:def_capacites}
  d(f,f')=
\begin{cases}
    0 \text{ if }  e\notin G'\\
0 \text{ if }  e\in G' \text{ and } e\in m\\
1 \text{ if }  e\in G' \text{ and } e\notin m;
  \end{cases}
\quad\quad
d(f',f)=
\begin{cases}
    0 \text{ if }  e\notin G'\\
1 \text{ if }  e\in G' \text{ and } e\in m\\
0 \text{ if }  e\in G' \text{ and } e\notin m  
\end{cases}
.
\end{eqnarray}
Now for any pair of faces
$f,f'$ (not necessarily neighbors) let $D(f,f')$ be the \emph{minimum} over
all paths $f=f_1, \ldots, f_n=f'$ in $G^{*}$ of the sum of the
$d(f_i,f_{i+1})$ (the minimum is well defined, the capacities being
non-negative).
\begin{proposition}
\label{prop:fournier}
An integer-valued function $h$ on $G^*$ is the height function (with reference matching $m$) of a
matching in $\Omega_{m,G'}$ if and only if
   \begin{eqnarray}
     \label{eq:2}
  \ D(f,f') \geq h(f')-h(f) \text{ for every }  f,f'\in G^*.
   \end{eqnarray}
\end{proposition}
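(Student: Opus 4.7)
The plan is to prove the two implications of the iff separately. The ``only if'' direction is a direct telescoping argument, while the ``if'' direction, where a matching is reconstructed from $h$, is where the content lies; the key obstacle will be verifying that the reconstructed set of edges is a \emph{perfect} matching.

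For the ``only if'' direction, suppose $h=h_M$ for some $M\in \Omega_{m,G'}$. By the definition of the height function as a flux integral of $\omega_M-\omega_m$, the increment across any edge $e$ between adjacent faces $f,f'$ (with $f\to f'$ positively oriented, i.e.\ with the white endpoint on the right) equals $\delta_M(e)-\delta_m(e)$, where $\delta_N(e)\in\{0,1\}$ indicates whether $e\in N$. Since $M$ coincides with $m$ outside $G'$, a brief case analysis in the three cases of \eqref{eq:def_capacites} ($e\notin G'$; $e\in G'\cap m$; $e\in G'\setminus m$) gives $h(f')-h(f)\leq d(f,f')$ for every pair of neighbors. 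Summing along any path realizing the shortest-capacity distance from $f$ to $f'$ then extends this to \eqref{eq:2}.

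For the ``if'' direction, assume $h$ satisfies \eqref{eq:2}. Applying the inequality both to $(f,f')$ and to $(f',f)$ for neighbors, combined with \eqref{eq:def_capacites}, forces the increment $h(f')-h(f)$ to lie in $\{0\}$, $\{-1,0\}$ or $\{0,1\}$ in the three cases above, respectively. Define a candidate $M$ by the rule
\[
\delta_M(e) := \delta_m(e) + \bigl(h(f')-h(f)\bigr) \in \{0,1\}\qquad\text{for each edge }e\in G',
\]
where $(f,f')$ are the positively oriented neighbors of $e$, and set $M=m$ on edges outside $G'$. The integrality of $h$ together with the range above makes this well-posed and ensures $M$ agrees with $m$ on edges outside $G'$.

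The crux is to check that $M$ is perfect. Fix a vertex $v$ of $G$ with incident edges $e_1,\dots,e_k$, arranged cyclically between the incident faces $f_1,\dots,f_k$. As one crosses these edges in cyclic order around $v$, the white endpoint of each $e_i$ always lies on the same side of the crossing direction (inward if $v$ is white, outward if $v$ is black), so the sign relating $h(f_{i+1})-h(f_i)$ to $\delta_M(e_i)-\delta_m(e_i)$ is the same (call it $\sigma_v\in\{\pm1\}$) for every $i$ incident to $v$. Since $h$ is a single-valued function on $G^*$, the telescoping sum around $v$ vanishes and
\[
0 \;=\; \sum_{i=1}^{k}\bigl(h(f_{i+1})-h(f_i)\bigr) \;=\; \sigma_v\sum_{i=1}^{k}\bigl(\delta_M(e_i)-\delta_m(e_i)\bigr);
\]
together with the fact that $m$ is perfect ($\sum_i\delta_m(e_i)=1$), this yields $\sum_i\delta_M(e_i)=1$, i.e.\ exactly one edge of $M$ meets $v$, so $M$ is a perfect matching. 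Finally, since $d\equiv 0$ on edges outside $G'$ and the complement of $(G')^*$ is connected in $G^*$ (because $U$ is simply connected), $h$ is constant on that complement, equal to the normalized value $h(f_0)=0=h_m(f_0)$; so $M\in\Omega_{m,G'}$, and $h_M=h$ follows because the two functions have identical increments across every edge and agree at $f_0$.
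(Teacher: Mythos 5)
Your proof is correct and takes essentially the same route as the paper: both directions construct and analyze the same candidate matching $M$ (your $\delta_M(e)=\delta_m(e)+h(f')-h(f)$ is exactly the paper's "marked edges"), and the verification that $M$ is perfect is a local count around each vertex. Your telescoping identity $\sum_i(\delta_M(e_i)-\delta_m(e_i))=0$ is a cleaner way to phrase the paper's case-by-case reasoning via paths around $v$, but the underlying idea is identical.
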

\begin{proof} The proof is in the spirit of \cite[Theorem 1]{Fournier}.
The ``only if'' part is trivial since, going back to Section \ref{sec:height}, 
it is immediate to see that the
\emph{maximal} possible height difference $h(f')-h(f)$ between neighboring faces $f,f'$, for any matching in
$\Omega_{m,G'}$, does not exceed $d(f,f')$.  
As for the ``if'' part, remark first of all that, thanks to
\eqref{eq:2}, for every neighboring faces $f,f'$ one has
$h(f)-h(f')\in\{-1,0,1\}$. Let us ``mark'' all edges $e \in G'$ between faces $f,f'$ (with $e$ oriented positively from $f$ to $f'$) such that
$h(f') - h(f)=d(f,f')$, together with edges $e \in G \setminus G'$ such that $e \in m$. Let $M$ be the union of all marked edges and let us prove it is a matching (note that, automatically, $M\equiv m$ outside of $G'$). For any white (resp. black) vertex $v$, let
$e_v$ be the unique edge incident to $v$ which belongs to $m$. From
\eqref{eq:2} and considering paths that turn counterclockwise (resp. clockwise) around $v$,
it is easy to see that:
\begin{itemize}
\item either all the faces $f$ sharing vertex $v$ have the same value of
$h(f)$ and $e_v$ is the single marked edge around $v$; 

\item or there exists a single marked edge $e'_v\ne e_v$, incident to $v$,
such that $h(f')=h(f)-1$, with $f,f'$ neighboring faces sharing $e'_v$, 
such that
$v$ is on the left (resp. right) when going from $f$ to $f'$.
\end{itemize}
$M$ is thus a matching and by construction 
$M\equiv m$ outside $G'$. 
In conclusion, $M\in
\Omega_{m,G'}$ and of course its height function is just $h$.
\end{proof}


\subsubsection{Maximal and minimal configurations}
\label{sec:minmax}
The characterization of height functions provided by Proposition
\ref{prop:fournier} shows the existence of a unique maximal (resp.
minimal) height function $h_{max}$ (resp. $h_{min}$) in
$\Omega_{m,G'}$. ``Maximal'' means that for any other height function
$h$ in $\Omega_{m,G'}$ satisfying $h(f_0)=0$ (recall from Section
\ref{sec:phase_pure} that the height is fixed to zero at some face
$f_0$ outside of $G'$) one has $h(f)\le h_{max}(f)$ for every $f\in
G^*$.  Indeed, define $h_{max}(f) := D(f_0,f)$ on $G^*$. This
satisfies \eqref{eq:2} (since $D(\cdot,\cdot)$ satisfies the
triangular inequality) and maximality is a consequence of the fact
that $d(f,f')$ is the maximal possible height difference between
neighboring faces. Similarly, one has $h_{min}(f)=-D(f,f_0)$.  Observe
that the height functions $h_{max},h_{min}$ (with respect to the
reference configuration $m$) vanish outside $G'$ as they should (this
is because the set of faces of $G$ not belonging to $G'$ is connected,
recall Definition \ref{def:G'}).

\subsubsection{Free paths and possible rotations}


\begin{definition}
Fix a matching $M\in \Omega$. We say that an oriented path $\gamma$ in $G^*$ is a
\emph{free path} (relative to $M$) if all edges crossed by $\gamma$ are free
(i.e. not occupied) and have the same orientation (i.e. either all of them have their white vertex
on the right of $\gamma$ or all of them on the left).
If white vertices are on the right (resp. left) then $\gamma$ is called a
positive (resp. negative) free path.
\end{definition}
See Fig. \ref{exemple_chemin_pente}.
\begin{figure}[h]
   \centering
   \includegraphics[width=12cm]{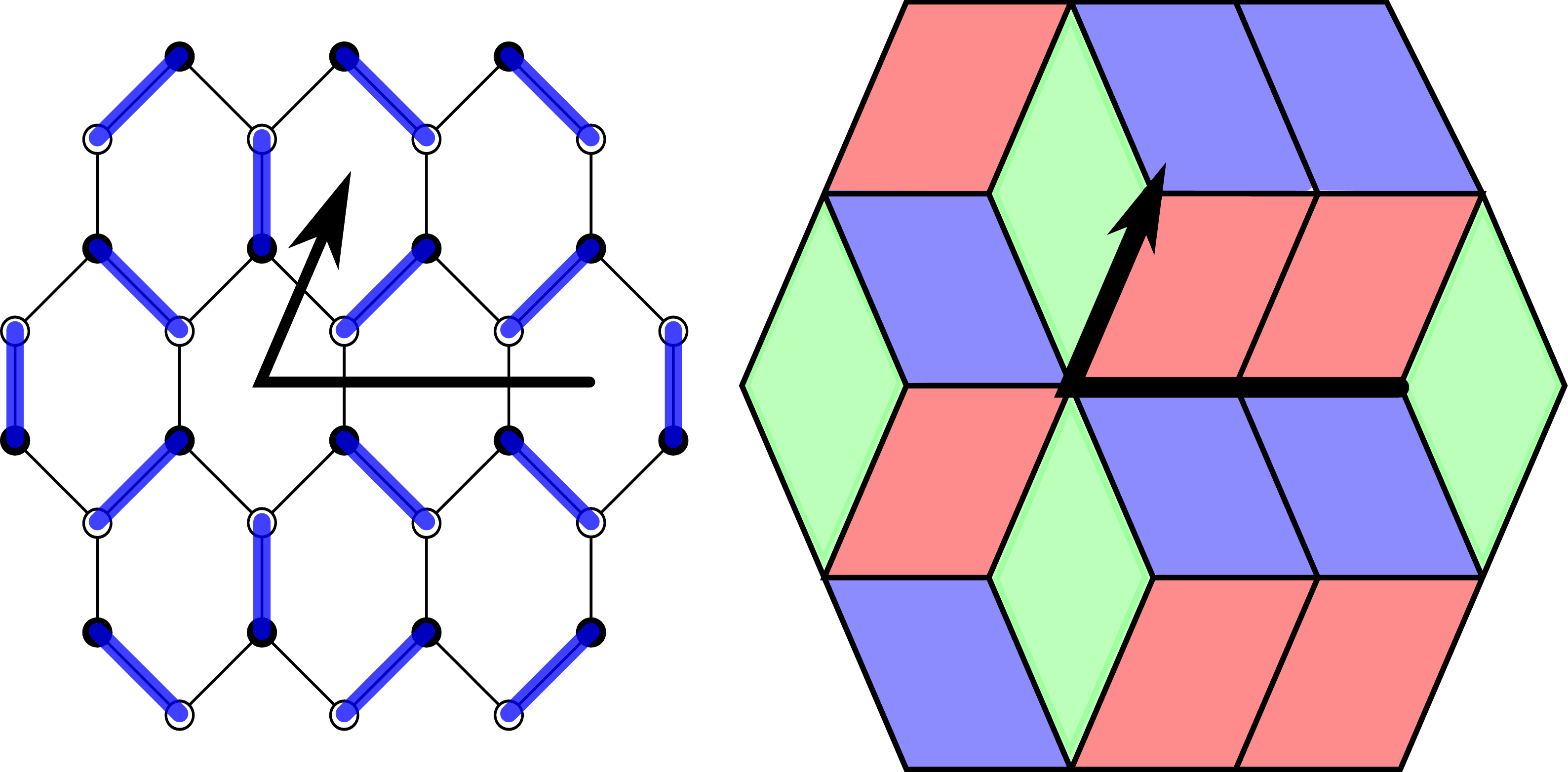}
   \caption{The relation between height function and dimer
     configuration in the case of the honeycomb
     graph. It is easy to see the right-hand drawing as a stepped
     surface in $3$ dimensions. It should be clear from the right-hand
     drawing that the positive free path (thick line) moves away at
     constant speed from the $(1,1,1)$ plane; at its endpoint (marked
     by an arrow) the free path cannot be possibly continued, and a
     cube can be added there (i.e. a rotation can be performed in the left-hand
     drawing).
     }
   \label{exemple_chemin_pente}
\end{figure}

A first observation is that free paths  cannot form loops:
\begin{proposition}
\label{prop:noloop}
  Let $\gamma$ be a free path relative to some $M\in \Omega$,
  and assume that $\gamma$ forms a simple loop. Then, for every
  $M'\in\Omega$ the edges crossed by $\gamma$ are free.
\end{proposition}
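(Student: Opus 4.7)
The plan is to invoke the Jordan curve theorem together with a short bipartite counting argument on the bounded region enclosed by $\gamma$. Since $\gamma$ is a simple loop in $G^*$, drawing it in the plane (by joining the centres of the consecutive dual faces) produces a simple closed curve; let $R$ denote the bounded connected component of its complement and write $V_R^{\mathrm w}$, $V_R^{\mathrm b}$ for the sets of white and black vertices of $G$ lying in $R$. Because consecutive faces of $\gamma$ share an edge of $G$, the edges of $G$ crossed by $\gamma$ are precisely those joining $V_R:=V_R^{\mathrm w}\cup V_R^{\mathrm b}$ to its complement.

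Next I would exploit the orientation hypothesis. The ``positive free path'' assumption (the negative case is identical, by reversing the orientation of $\gamma$) says that every edge crossed by $\gamma$ has its white endpoint on the same side of the curve. Up to swapping $R$ with its complement in the argument below, we may assume that these white endpoints all lie in $V_R^{\mathrm w}$ and the black endpoints outside $R$. The crucial geometric consequence is then: no edge of $G$ connects a vertex of $V_R^{\mathrm b}$ to a white vertex outside $R$, since such an edge would cross $\gamma$ with the wrong orientation. Therefore, in \emph{any} perfect matching of $G$, every vertex of $V_R^{\mathrm b}$ must be matched to a partner in $V_R^{\mathrm w}$.

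Now I carry out the count. Since by hypothesis all edges crossed by $\gamma$ are free in $M$, every vertex of $V_R$ is matched inside $V_R$ under $M$, which forces $|V_R^{\mathrm w}|=|V_R^{\mathrm b}|$. Given an arbitrary $M'\in\Omega$, let $k$ be the number of edges crossed by $\gamma$ that belong to $M'$; each such edge pairs a white vertex of $V_R^{\mathrm w}$ with a black vertex outside $R$. The remaining $|V_R^{\mathrm w}|-k$ white vertices of $V_R$ must be matched inside $V_R$, while by the preceding observation all $|V_R^{\mathrm b}|$ black vertices of $V_R$ are also matched to partners in $V_R^{\mathrm w}$. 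Matching these pairs together yields $|V_R^{\mathrm w}|-k=|V_R^{\mathrm b}|$, hence $k=0$, as required.

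The one point requiring care, and the only place where the planarity and periodicity of $G$ actually enter, is the identification of ``edges crossed by $\gamma$'' with ``edges of $G$ having exactly one endpoint in $R$''; once this Jordan-type separation is in place, the argument is pure bookkeeping on a finite bipartite graph and no further estimate is needed.
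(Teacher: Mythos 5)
Your argument is correct, and it takes a genuinely different route from the paper. The paper's proof is a one-liner with the height function: fix a face $f$ on $\gamma$, use $M$ as the reference matching, and observe that the net height change along the closed loop $\gamma$ (which must be zero) equals the number of $M'$-dimers crossed minus the number of $M$-dimers crossed, since every edge of $\gamma$ is traversed with the same orientation; the $M$-count is zero by hypothesis, so the $M'$-count is too. You instead invoke the Jordan curve theorem to split the plane along $\gamma$, observe that every crossed edge has its white endpoint on a fixed side, deduce that all black vertices of the bounded region $V_R$ are forced to be matched internally, and then run a $|V_R^{\mathrm w}|$ versus $|V_R^{\mathrm b}|$ count comparing $M$ and $M'$. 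Both proofs are valid and they are morally dual: the height function is precisely the signed book-keeping device that makes your counting automatic. What the paper's route buys is brevity and no appeal to the Jordan curve theorem or any finiteness of $R$; it works verbatim for infinite graphs since it never leaves the loop. What your route buys is a more elementary, self-contained argument, independent of the height-function formalism. One small wrinkle in your write-up: ``swapping $R$ with its complement'' is a bit loose, since the complement is unbounded and $V_R$ in that case would be infinite; what you actually want (and what your count in fact delivers) is the symmetric version of the argument in which the \emph{black} endpoints of the crossed edges are the ones in the bounded region $R$. You should also note (as you essentially do) that the identification ``edges crossed by $\gamma$ equals edges from $V_R$ to its complement'' uses that the dual-path segment from the centre of $f_i$ to the centre of $f_{i+1}$ crosses only the one primal edge they share, which holds for the standard embeddings of the periodic planar graphs considered here.
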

Together with Assumption \ref{hyp_pas_deterministe}, this excludes loops.

\begin{proof}[Proof of Proposition \ref{prop:noloop}]
Let $h$ be the height function of $M'$, with reference matching
$M$. Let $f$ be a face along $\gamma$. By symmetry, suppose $\gamma$ is a positive path. Since all edges are traversed
with the positive orientation, we have
\begin{eqnarray*}
h(f)=h(f)+|\{\text{edges crossed by $\gamma$ and occupied in $M'$}\}|\\
-|\{\text{edges crossed by $\gamma$ and occupied in $M$}\}|.
\end{eqnarray*}
Since $\gamma$ crosses no occupied edge of $M$ by assumption, it crosses no
occupied edge of $M'$ either.
\end{proof}

A second observation is that, since only the reference matching
(however it is chosen) makes a contribution to the height
difference along a free path  $\gamma$, the height function
is non-increasing (resp. non-decreasing) if $\gamma$ is a positive
(resp. negative) free path.
An important consequence, that we will need in Section \ref{sec:dyn_billes}
to upper bound the equilibration time of the dynamics, is
the following:
\begin{proposition}
\label{prop:strictdecr}
Fix $(s,t)\in\intN$.
Let $M\in\Omega$ be such that the corresponding height function stays
between two planes of slope $(s,t)$ and mutual distance $H$. All free paths
 relative to $M$ have length at most $C H$, where the
constant $C$ depends only on $(s,t)$.
%
\end{proposition}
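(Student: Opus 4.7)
The plan is to compare, along a positive free path $\gamma$ (the negative case being symmetric), the actual height variation to the one dictated by the ambient slope $(s,t)$, using several well-chosen periodic reference matchings simultaneously. Suppose $\gamma$ has length $\ell$ and goes from $f_1$ to $f_n$, with displacement $(\Delta x,\Delta y)$ between the two endpoints. For each vertex $(s_i,t_i)$, $i=1,\dots,k$, of the Newton polygon $N$, Theorem \ref{th:Newton} provides a $\bbZ^2$-periodic matching $M_0^{(i)}$ of slope $(s_i,t_i)$. Since $(s,t)\in\intN$, I can pick strictly positive convex weights $p_1,\dots,p_k>0$ with $\sum_i p_i=1$ and $\sum_i p_i(s_i,t_i)=(s,t)$.

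Using $M_0^{(i)}$ as reference, the height function $h^{(i)}$ of $M$ has slope $(s-s_i,t-t_i)$ and, thanks to the plane hypothesis combined with the fact that the height function of the periodic $M_0^{(i)}$ is within $O(1)$ of its own mean plane, it is squeezed between two planes of slope $(s-s_i,t-t_i)$ at mutual distance $H+O(1)$. Because $\gamma$ crosses no edges of $M$ and each crossed edge has its white vertex on the right, the height variation of $h^{(i)}$ along $\gamma$ is entirely accounted for by the reference flux:
$$h^{(i)}(f_n)-h^{(i)}(f_1)=-N_i(\gamma),$$
where $N_i(\gamma)$ is the number of edges of $M_0^{(i)}$ crossed by $\gamma$. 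Combining this identity with the sandwich bounds I get, for every $i$,
$$\bigl|\,N_i(\gamma)+(s-s_i)\Delta x+(t-t_i)\Delta y\,\bigr|\leq 2H+O(1).$$
Averaging these inequalities with the weights $p_i$, the terms linear in $\Delta x,\Delta y$ cancel thanks to the choice of weights, leaving $\sum_i p_i N_i(\gamma)\leq 2H+O(1)$.

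The last ingredient is the matching lower bound on $\sum_i p_i N_i(\gamma)$. Writing $e_1,\dots,e_\ell$ for the edges crossed by $\gamma$, one has
$$\sum_i p_i N_i(\gamma)=\sum_{j=1}^{\ell}\sum_i p_i\,\mathbf{1}[e_j\in M_0^{(i)}].$$
For each of the lattices relevant here (square, hexagonal, square-hexagonal), one checks by inspection that every edge of $G$ belongs to at least one of the periodic matchings $M_0^{(i)}$, so the inner sum is bounded below by a constant $c=c(s,t)>0$ (determined by the minimum of $\sum_i p_i\mathbf{1}[e\in M_0^{(i)}]$ over the finitely many edge-types in the fundamental domain). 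This gives $c\,\ell\leq 2H+O(1)$, hence $\ell\leq CH$ with $C=C(s,t)$, as claimed. I expect the main obstacle to be precisely this \emph{coverage} property: for a general $\bbZ^2$-periodic bipartite planar graph it is not quite tautological that every edge lies in some periodic matching whose slope is a vertex of $N$, and justifying it would require invoking the limiting behaviour of $\mu_{s',t'}$ as $(s',t')$ tends to a vertex of $N$ (KOS theory). A robust fall-back, yielding the same bound up to an $O(\sqrt{\log})$ factor, is to replace the deterministic periodic references by a reference matching sampled from $\mu_{s,t}$ and to control its height fluctuations via Theorem \ref{th:clt}.
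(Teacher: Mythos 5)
Your proof is correct (for the graphs the paper actually uses, granting the coverage check you flag) but takes a genuinely different route from the paper's. The paper compares $h$ to the deterministic function $\mu_{s,t}(h(\cdot))$: along a positive free path the increments of $h-\mu_{s,t}(h)$ are exactly $-\mu_{s,t}(e\in M)$ over the crossed edges $e$, and by walking until a translated face is reached (at most $k$ steps, $k$ the number of face types) and repeating that segment periodically, one shows that the $\mu_{s,t}$-average number of crossed dimers per $k$ steps is bounded below by some $\gep_{s,t}>0$, since if it vanished the slope of $\mu_{s,t}$ along that periodic direction would be extremal, contradicting $(s,t)\in\intN$. You replace the probabilistic reference by a fixed convex combination $\sum_i p_i M_0^{(i)}$ of vertex-slope periodic matchings and obtain the same per-step lower bound from the coverage constant $\min_e\sum_i p_i\mathbf{1}[e\in M_0^{(i)}]>0$. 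The paper's route is cleaner in that it applies to any periodic bipartite $G$ (which is the level of generality at which the proposition is stated) and sidesteps coverage entirely; yours is more elementary and combinatorial, but, as you correctly note, the coverage property is not automatic for a general $\bbZ^2$-periodic bipartite planar graph, so as written you prove the statement only for the square, hexagon and square-hexagon lattices. Your suggested fallback (random reference sampled from $\mu_{s,t}$, fluctuations controlled by Theorem \ref{th:clt}) is in the same spirit as what the paper does, except the paper works with the deterministic mean $\mu_{s,t}(h(\cdot))$ rather than a random sample, which requires no fluctuation control and gives the clean $O(H)$ bound with no logarithmic loss.
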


\begin{proof}[Proof of Proposition \ref{prop:strictdecr}] Since the
  graph $G$ is periodic, there exists only a finite number $k$ of
  types of faces that are not obtained by integer translation of each
  other.  Let $\gamma$ be a positive free path  (if it is a
  negative free path, the argument is similar) relative to some matching
  $M$. We claim that,
  \begin{gather}
    \label{eq:claim}
  \text{  if one walks $n$ steps along $\gamma$,
the
  function $f\mapsto h(f)-\mu_{s,t}(h(f))$ decreases by at least}\\
\nonumber\text{$-\lfloor n/k\rfloor
  \gep$ for some $\gep=\gep_{s,t}>0$.}
  \end{gather}
Then, the proposition follows
  (with the constant $C$ being inversely proportional to $\gep/k$)
  because the function $\mu_{s,t}(h(\cdot))$ on $G^*$ is essentially planar
  with slope $(s,t)$.

  To prove \eqref{eq:claim}, observe first that the matching $M$ gives
  no contribution to the variation of $h$ along $\gamma$ (all crossed
  edges are free) so that the variation of $h-\mu_{s,t}(h)$ is simply
  minus the $\mu_{s,t}$-average number of crossed edges which are
  covered by dimers.  Fix some face $f\in \gamma$ and walk along
  $\gamma$ until a face $f'$ which is a translate of $f$ is reached
  (the number of steps is at most $k$). The $\mu_{s,t}$-average of
  crossed dimers between $f$ and $f'$ is non-negative and we will
  actually prove that it is strictly positive and independent of the
  type of face $f$, which implies the claim.  Indeed, let
  $\tilde\gamma$ be the infinite periodic path on $G^*$ obtained by
  repeating periodically the finite portion of $\gamma$ which joins
  $f$ to $f'$. If the average of crossed edges is zero, then clearly
  the slope of the height under the measure $\mu_{s,t}$ along the
  asymptotic direction of $\tilde\gamma$ is extremal, which
  contradicts the assumption that $(s,t)$ is in the interior of the
  Newton polygon $N$. Uniformity w.r.t. the type of the face $f$ is
  just a consequence of the fact that the number of different face
  types is finite.
\end{proof}


For any face $f$ of $G$, there exist exactly two ways to perfectly
match its vertices among themselves. Label ``$+$'' one of the two
matchings, and ``$-$'' the other (according to some arbitrary
rule). If $M$ is a matching of $G$ such that the vertices of $f$ are
matched only among themselves, we call ``rotation around $f$'' the
transformation which consists in leaving $M$ unchanged outside of $f$,
and in flipping from ``$-$'' to ``$+$'' (or vice-versa) the matching of
the edges of $f$. If some vertices of $f$ are matched to vertices not
belonging to $f$, then the rotation is not possible.

Free paths yield a way to find a face where an elementary rotation is
possible. Given $M\in \Omega$, we pick an arbitrary face $f_1$ and we
construct a growing sequence $\{\gamma_n\}_{n\ge 1}=\{(f_1,\dots,f_n)\}_{n\ge 1}$, of
positive free paths, with $\gamma_1\equiv(f_1)$ (an analogous
construction gives a growing sequence of negative free paths). Given
$\gamma_n$, consider all faces $f$ which are neighbors of $f_n$ and
such that going from $f_n$ to $f$ one crosses a free edge with white
vertex on the right. Choose $f_{n+1}$ (according to some arbitrary
rule) among such faces. If there are no such faces available, we say
that the procedure stops at step $n$. In this case, it means that
every second edge around $f_n$ is occupied by a dimer, and this is
exactly the condition so that a rotation at $f_n$ is possible.
Altogether, we have proven:
\begin{proposition}
\label{prop:rotationpossible}
  Fix $(s,t)\in\intN$.
Let $M\in\Omega$ be such that the corresponding height function stays
between two planes of slope $(s,t)$ and mutual distance $H$. Within
distance $C_{s,t}\,H$ from any face $f$ there exists a face $f^+$
(resp. $f^-$) where a
rotation is possible; such a rotation increases (resp. decreases) the
height at $f^+$ (resp. $f^-$) by $1$ and $h(f^+) \leq h(f) \leq h(f^-)$.
\end{proposition}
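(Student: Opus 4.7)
The plan is to assemble the growing free path construction described in the paragraphs immediately preceding the statement with the length bound of Proposition~\ref{prop:strictdecr}; essentially nothing else is required. To produce $f^+$, I would set $f_1 := f$ and iteratively grow a positive free path $\gamma_n = (f_1,\dots,f_n)$ according to the greedy rule given just before the proposition: at each step choose any neighbor $f_{n+1}$ of $f_n$ such that the edge shared by $f_n$ and $f_{n+1}$ is free in $M$ and is oriented with its white vertex on the right of the traversal; stop at the first $n$ for which no such $f_{n+1}$ exists.

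The key quantitative input is Proposition~\ref{prop:strictdecr}: because $h$ lies in a slab of slope $(s,t)$ and width $H$, every free path (and a fortiori $\gamma_n$) has length at most $C_{s,t} H$. Hence the procedure terminates at some $n \le C_{s,t} H$, so that $f^+ := f_n$ sits within graph distance $C_{s,t} H$ of $f$. At this terminal face, the very obstruction to extending the path---namely, the absence of any free outgoing edge with white vertex on the right---is, by the paragraph preceding the statement, exactly the condition that every second edge around $f^+$ is a dimer of $M$, so that the vertices of $f^+$ are matched amongst themselves and a rotation at $f^+$ is available. Moreover, $h$ is non-increasing along a positive free path (observation made in Section~\ref{sec:gen} from the fact that along such a path only the reference matching contributes to the height increments), so $h(f^+) \le h(f_1) = h(f)$.

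The remaining point is to identify which of the two rotations at $f^+$ is the one permitted by $M$ and to verify that it raises $h(f^+)$ by $1$. Around $f^+$ the occupied edges are precisely those which, if one tried to cross them outward, would have the white vertex on the \emph{left}; flipping these to the complementary matching of $f^+$'s vertices therefore changes every local height increment at $f^+$ in the direction that increases $h(f^+)$ by $1$ (consistently with the picture of Fig.~\ref{exemple_chemin_pente}, where the endpoint of a positive free path is precisely a location where a cube can be added). The existence of $f^-$ with $h(f^-)\ge h(f)$ follows by the entirely symmetric argument applied to a growing negative free path, using that $h$ is non-decreasing along negative free paths and that the terminal rotation there lowers the height by $1$.

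The only non-trivial ingredient is the length bound on free paths, which has already been proved as Proposition~\ref{prop:strictdecr}; everything else is bipartite-graph bookkeeping, so I do not expect any serious obstacle beyond keeping the orientation conventions straight.
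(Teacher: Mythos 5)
Your construction is essentially the paper's own: grow a positive (resp.\ negative) free path from $f$, invoke Proposition~\ref{prop:strictdecr} to cap its length at $C_{s,t}H$, and note that the terminal face admits a rotation because every other edge around it is occupied. The paper presents exactly this argument in the paragraph that concludes ``Altogether, we have proven:'' just before the statement, and you reproduce it faithfully, including the monotonicity of $h$ along free paths.

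One orientation slip in your final paragraph: at the terminal face $f^+$ of a \emph{positive} free path, the occupied edges around $f^+$ are precisely those whose white vertex is on the \emph{right} when crossed outward from $f^+$ --- these are exactly the directions in which the path fails to extend, since extension requires a free positively-oriented edge --- not on the left as you wrote. Your stated conclusion (the rotation raises $h(f^+)$ by $1$) is nevertheless correct; the intermediate claim is simply inverted. To see the sign cleanly: for a neighbor $g$ of $f^+$ across a positively oriented outward edge $e$, if $e\in M$ then $h(g)-h(f^+)\in\{0,1\}$, and the rotation removes $e$ from the matching while leaving $h(g)$ unchanged (a path from $f_0$ to $g$ can avoid all edges of $f^+$), so $h(g)-h(f^+)$ drops by $1$ and hence $h(f^+)$ rises by $1$. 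As written, your reasoning would yield the opposite sign if followed literally, so it is worth correcting even though the end result happens to match.
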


\subsubsection{Almost planar height functions}
Here we prove that almost-planar height functions satisfying
\eqref{eq:almost} with $C=1$ do exist (under the assumption that $(s,t)\in\intN$ is a non-integer slope).  Indeed from Theorem
\ref{th:clt} and Borel-Cantelli we get that, for every fixed
$\delta>0$, almost all configurations from $\mu_{s,t}$ satisfy
\begin{eqnarray}
  \label{eq:BC}
|h(f)-\mu_{s,t}(h(f))|\le B+ \delta |\phi(f)-\phi(f_0)|
\end{eqnarray}
for some random $B$, where $f_0$ is the face where the heights are fixed to zero. Take one of these configurations. Let $A_n$ the
set of faces at graph-distance at most $ n$ from $f_0$ and suppose that
$h(f)-\mu_{ s,t}(h(f))<-1$ (the argument is similar if the difference
is $>1$) for some $f\in A_n$.  The same argument that led to
Proposition \ref{prop:strictdecr} shows that, if $\delta$ is chosen
small enough (say much smaller than the constant $\gep_{s,t}$ in
\eqref{eq:claim}), any positive free path  $\gamma$ starting from $f$
is of length $O(\delta n/\gep_{s,t})\le n$ for $n$ large enough.  Therefore, the
last face $f'$ of $\gamma$ is in $A_{2n}$ and (by the properties of
positive free paths) one has $h(f')-\mu_{s,t}(h(f'))< -1$. By
Proposition \ref{prop:rotationpossible}, a rotation is possible at
$f'$ and it increases $h(f')-\mu_{s,t}(h(f'))$ by $1$. The
configuration thus obtained clearly still verifies \eqref{eq:BC} with
the same $B$ and the quantity \[\Delta=\sum_{f\in A_{2n}}|h(f)-\mu_{s,t}(h(f))|{\bf 1}_{|h(f)-\mu_{s,t}(h(f))|>1}
\]
decreased by $1$. Since $\Delta$ is finite, the
procedure can be repeated a finite number of times until there is no
point left in $A_n$ with $|h(f)-\mu_{s,t}(h(f))|>1$.  One concludes
easily using the fact that $n$ can be taken arbitrarily large.  \qed

\section{Dynamics and mixing time}
\label{sec:dynamics}

The dynamics we consider lives on the set $\Omega_{m,G'}$ of matchings on
a finite subset $G'\subset G$ (as in Definition \ref{def:G'}) with boundary condition $m\in \Omega$. Every
face $f$ of $G'$ has a mean-one, independent Poisson
clock. When the clock at $f$ rings, if the rotation around $f$ is
allowed, flip a fair coin: if ``head'' then choose the ``$+$'' matching
of the edges of $f$, if ``tail'' then choose the ``$-$'' matching. In other words, perform the rotation around $f$  with probability $1/2$.

Call $\mu_t^M$ the law of the dynamics at time $t$, started from $M$.
\begin{proposition}
\label{prop:connect}
For $t\to\infty$,  $\mu_t^M$ converges to the uniform measure
$\pi_{m,G'}$.
\end{proposition}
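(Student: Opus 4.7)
I would reduce the claim to the standard convergence theorem for irreducible continuous-time Markov chains on finite state spaces. Since $\Omega_{m,G'}$ is finite and aperiodicity is automatic in continuous time (the semigroup satisfies $P_t(M,M)>0$ for every $t>0$), the problem reduces to checking stationarity of $\pi_{m,G'}$ and irreducibility of the chain.

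Stationarity is most cleanly obtained as a reversibility statement, by direct inspection of the rates. Whenever $M, M' \in \Omega_{m,G'}$ differ by a rotation at a single face $f$, the rotation is admissible from each of them (rotatability at $f$ is a local property of the pattern of occupied edges around $f$, and the rotation toggles between the two admissible alternating patterns), so the transition rate in either direction equals $1 \cdot \tfrac12$, i.e., Poisson rate $1$ times fair-coin probability $\tfrac12$. Detailed balance with respect to the uniform measure follows immediately.

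The substantive ingredient is irreducibility. I would approach it by a monotone descent to the unique maximum $h_{max}$ of $\Omega_{m,G'}$ constructed in Section \ref{sec:minmax}: proving by induction on the non-negative integer $\Phi(h) = \sum_{f \in (G')^*}(h_{max}(f) - h(f))$ that any configuration can be transformed to $h_{max}$ by a finite sequence of elementary upward rotations. Once this monotone route is available, the fact that every elementary rotation is a self-inverse transition of the chain lets us connect any pair of matchings $M, M''$ through $h_{max}$, yielding irreducibility.

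The main obstacle is the inductive step, which requires the following local combinatorial lemma: if $h \in \Omega_{m,G'}$ with $h \neq h_{max}$, there exists a face $f^*$ at which an upward rotation is admissible and yields a new configuration $h'$ still sandwiched between $h$ and $h_{max}$ pointwise, so that $\Phi(h') = \Phi(h) - 1$. A natural candidate is to look inside the nonempty set $S = \{f : h(f) < h_{max}(f)\}$ and pick $f^*$ there using the capacity description $h_{max}(f) = D(f_0, f)$ from Section \ref{sec:minmax}, together with the admissibility constraint \eqref{eq:2} and the definition \eqref{eq:def_capacites} of the capacities $d(\cdot,\cdot)$. The inequalities at the neighbors of $f^*$ should then force the dimer configuration $M$ to realize the ``lower'' of the two internally matched patterns around $f^*$, making an upward rotation at $f^*$ admissible and producing $h'$ with $h(f^*) < h'(f^*) \leq h_{max}(f^*)$. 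This is essentially the classical fact that the poset $\Omega_{m,G'}$, ordered pointwise by heights, is a distributive lattice whose covering relations are exactly the elementary rotations (a statement going back to Thurston); the local verification is mildly face-type dependent and would be carried out separately for the square, hexagonal, and square-hexagonal lattices considered in the paper.
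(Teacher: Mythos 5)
Your overall strategy (stationarity via reversibility, irreducibility via climbing to the unique maximum $h_{max}$) is exactly the paper's, and your reversibility argument is fine. Where you diverge is the mechanism for the key inductive step — locating a face where an upward rotation is admissible and keeps you below $h_{max}$. You propose to extract this from the linear characterization of Proposition~\ref{prop:fournier} and the lattice-theoretic (Thurston) structure, and you flag that this local combinatorial verification looks face-type dependent and would be done case by case for each lattice. The paper instead uses the free-path machinery built in Section~\ref{sec:gen}: taking $M_{max}$ as the reference matching, the height $h$ of the current $M$ is non-positive and vanishes outside $G'$; from any face with $h\le -1$ one grows a positive free path, which (by Proposition~\ref{prop:noloop} and finiteness of $G'$) must terminate at a face $f'$ inside $G'$, and by construction a height-increasing rotation is admissible there, so the potential $\sum_f(h_{max}(f)-h(f))$ drops by one. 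That route is both complete and lattice-agnostic — no case analysis — which is precisely why the free-path apparatus was set up in Section~\ref{sec:gen}. Your sketch is plausible and would work (it is indeed the classical Thurston fact), but as written it leaves the substantive step ("the inequalities at the neighbors of $f^*$ should then force...") unverified; if you want to complete your version without free paths, you would still need to specify \emph{which} face in $\{f:h(f)<h_{max}(f)\}$ to rotate (an arbitrary one won't do), and prove admissibility there — which amounts to re-deriving what the free-path argument gives you directly.
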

\begin{proof}
  It is obvious that $\pi_{m,G'}$ is invariant and reversible, so one
  should only check that the dynamics connects all the configurations
  in $\Omega_{m,G'}$. This is done by using the free paths of Section
  \ref{sec:gen}.

  Let $M\in \gO_{m,G'}$ and let $M_{max}\in \gO_{m,G'}$ be the
  matching corresponding to the maximal height function $h_{max}$
  introduced in Section \ref{sec:minmax}. The height function $h$ of
  $M$ with reference matching $M_{max}$ is clearly non-positive and
  vanishes outside $G'$.  Pick a face $f$ such as $h(f) \le -1$ and
  consider a positive free path $\gamma$ growing from $f$ (as in the
  proof of Proposition \ref{prop:rotationpossible}). Along $\gamma$
  the height function $h$ cannot grow and, $G'$ being finite, $\gamma$
  has to stop after a finite number of steps. The last face $f'$ of
  $\gamma$ clearly is inside $G'$ (since the height is zero outside)
  and we have already discussed that a rotation is possible at $f'$
  and it increases $h(f')$ by $1$.  By recursion, $M$ can be
  transformed into $M_{max}$ by a finite sequence of elementary
  rotations inside $G'$. Arbitrariness of $M$ allows to conclude.
\end{proof}

As usual \cite{LPW}, an informative way to quantify the speed of approach to
equilibrium is via the mixing time, defined as
\begin{eqnarray}
  \label{eq:tmix}
  \tmix=\tmix(m,G')=\inf\{t>0:\max_{M\in \Omega_{m,G'}}\|\mu_t^M-\pi_{m,G'}\|<1/(2e)\}
\end{eqnarray}
where $\|\mu-\nu\|$ is the total variation distance of measures
$\mu,\nu$ and the choice of the value $1/(2e)$ is conventional (any
other value smaller than $1/2$ would do). With this choice, one has
\cite{LPW}
\begin{eqnarray}
  \label{eq:sottomol}
  \max_{M\in \Omega_{m,G'}}\|\mu_t^M-\pi_{m,G'}\|\le e^{-\lfloor t/\tmix\rfloor}.
\end{eqnarray}

\begin{wrapfigure}{L}{6.5cm}
   \includegraphics[width=5cm]{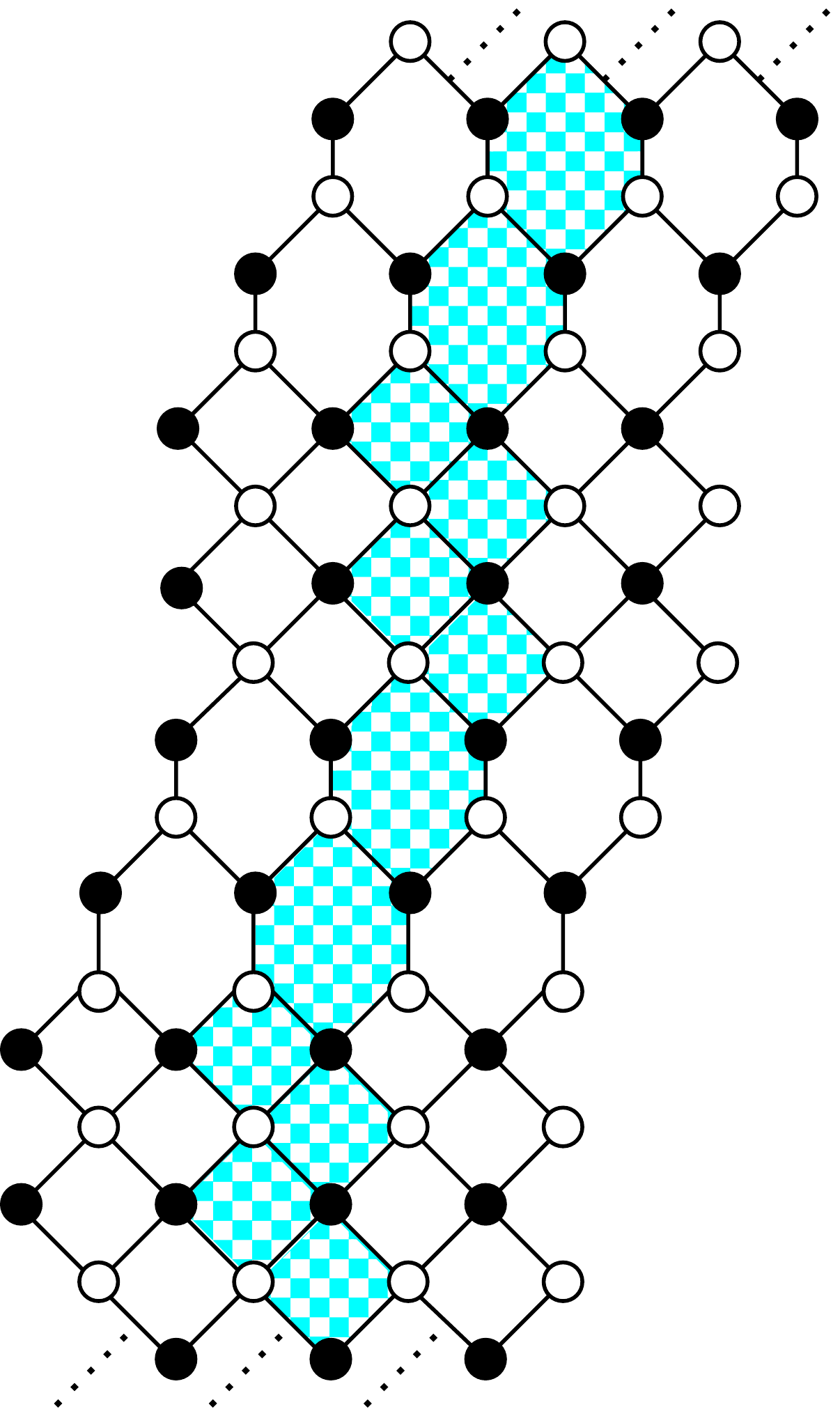}
    \captionsetup{width=6cm}
   \caption{An example of the class of graphs where our results could be extended, see Remark \ref{rem:layers}. The shaded region is a \emph{thread} (cf. Section 
\ref{sec:billes}). Layers of squares and hexagons can be of arbitrary vertical thickness and the periodicity in the ``vertical'' direction can be arbitrarily large. } 
   \label{fig:graph_general}
\vspace{0cm}
\end{wrapfigure}

We will study the mixing time when the boundary conditions are
almost planar.  The following is the main result of this
work: 
\begin{theorem}
\label{th:tmixU}
Fix $(s,t)\in \intN$, $m$ an almost-planar boundary condition of slope
$(s,t)$ and let $G'$ be as in Definition \ref{def:G'}.  If $G$ is
either the square, hexagon or square-hexagon lattice
(cf. Fig. \ref{graphes_et_polygones_newton}) then there exists some
$c>0$ such that
\begin{eqnarray}
  \label{eq:tmixU}
\quad\quad \quad\quad \quad\quad \quad\quad c L^2\le \tmix\le  L^{2+o(1)}.
\end{eqnarray}
\end{theorem}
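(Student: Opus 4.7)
I would prove the upper and lower bounds of \eqref{eq:tmixU} separately, adapting the two-step scheme outlined in Section \ref{sec:novelty} beyond the hexagonal setting of \cite{CMT}. For the upper bound $\tmix\le L^{2+o(1)}$, I would first establish it under the extra constraint that the height function is trapped between a ``floor'' and a ``ceiling'' at mutual distance $L^{\gep/10}$, and then remove the constraint via an iterative mean-curvature-motion argument. For the constrained step, I would not analyze the local Glauber dynamics directly: I would introduce an auxiliary spatially non-local dynamics (in the spirit of \cite{LRS,Wilson}) acting on the bead-model reformulation of Section \ref{sec:billes}, prove a $L^{2+o(1)}$ coalescence bound there, and transfer it to the local chain via a comparison estimate (Proposition \ref{prop:ts}).

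\textbf{Key mechanism.} The core is to control two copies of the non-local dynamics run under a monotone coupling, while monitoring the mutual volume $V_t$ between the two evolving height functions. Here $V_t$ is an $\cF_t$-adapted nonnegative integer process whose drift is already known to be non-positive \cite{LRS}. The novelty is a pointwise lower bound on the infinitesimal variance of the form
\[
\lim_{\delta\downarrow 0}\tfrac{1}{\delta}\,\bbE\bigl[(V_{t+\delta}-V_t)^2\,\big|\,\cF_t\bigr]\;\geq\; c\,V_t,
\]
up to boundary corrections. Intuitively, the number of non-local updates that move the two heights by different amounts is proportional to the current discrepancy, and the bead picture makes this geometrically transparent. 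An iterative application of the martingale hitting-time estimate (Lemma \ref{lemme:temps_martingale}) along dyadic scales of $V_t$ would then produce a coalescence time $L^{2+\gep}$ rather than the $L^{4+\gep}$ obtained from the drift-free diffusive bound used in \cite{LRS}.

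\textbf{Implementation and lower bound.} I would carry out the drift and variance estimates in the bead picture of Section \ref{sec:billes}, where a non-local move corresponds to resampling a whole bead ``thread''. The equilibrium inputs I need are Theorem \ref{th:stimeflutt} on almost-planar height fluctuations, Proposition \ref{prop:strictdecr} bounding the length of free paths, and Proposition \ref{prop:rotationpossible} on the availability of legal rotations. Once the constrained mixing bound is in hand, the passage to the unconstrained one follows the \cite{CMT} strategy: on blocks of side $\ell$, the constrained result forces the profile to match the solution of the associated mean-curvature PDE after time $\ell^{2+o(1)}$, and one iterates along a geometric hierarchy of scales up to $\ell=L$. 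For the lower bound $\tmix\ge c L^2$, I would test the chain on $h(f)$ at a face $f$ macroscopically far from $\partial G'$ and exploit a sharper control of the drift of $V_t$ under the non-local dynamics: namely, that the non-positive drift is in absolute value at most a constant times $|\partial G'|\sim L$. This is enough to guarantee that a macroscopic initial height perturbation survives for time of order $L^2$, and transfers easily to the local chain.

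\textbf{Main obstacle.} The hardest ingredient is the variance lower bound for $V_t$ on lattices other than $\mathcal H$. For the hexagonal lattice, Wilson's explicit eigenfunction identifies the non-local evolution with a discrete heat equation (see Remark \ref{rem:calore}), and this is what drives the analysis of \cite{CMST,CMT}. Such an exact identity is unavailable on $\bbZ^2$ and on the square-hexagon graph, so it must be replaced by a robust, purely geometric lower bound counting how often a non-local update creates an actual discrepancy of size proportional to the local height gap. Producing this quantitative replacement in a way that is stable under the slope $(s,t)$ and the shape of $G'$ is the principal new technical challenge.
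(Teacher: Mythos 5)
Your overall scheme for the upper bound matches the paper's: prove $\tmix\le L^{2+o(1)}$ when trapped between floor and ceiling at distance $H=L^{\gep/10}$ via a martingale/variance argument on the mutual volume $V_t$ of two non-local evolutions, transfer from the non-local to the local chain by censoring (Proposition \ref{prop:ts}), and remove the floor/ceiling by the mean-curvature iteration of \cite{CMT}. One small inaccuracy: the variance bound the paper actually proves (Lemma \ref{lemme:variance_volume}) is not $\gtrsim V_t$ but $\gtrsim V_t/H^6$, obtained via a dichotomy between the two sets $B_t^{\ne}$ (beads whose accessible intervals genuinely differ, contributing real variance) and $B_t^{=}$ (beads at different positions in equal intervals, which contribute through the drift term in the second moment). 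Your intuition ``number of distinguishing updates is proportional to $V_t$'' is the right idea, but one must account for the possibility that most discrepant beads sit in $B_t^{=}$, in which case the variance per update can be much smaller than $1$, and the $H^{-6}$ loss is exactly what absorbs this. Since $H=L^{\gep/10}$, this does not change the final exponent, so this is a technical point, not a flaw in your plan.

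The genuine gap is in the lower bound. You state the key drift estimate — that the (non-positive) drift of $V_t$ under the non-local dynamics is at least $-C\,|\partial G'|\sim -CL$ — as if it holds directly for the given domain $G'$ with the almost-planar boundary condition $m$, and you then conclude that a macroscopic volume survives for time $\gep L^2$. But the paper is quite explicit (Section \ref{sec:novelty} and Remark \ref{rq:borne_inf_drift}) that establishing this drift lower bound is the new technical difficulty. The issue is that the drift is computed by telescoping the volume difference between two configurations over a chain $h_{(0)},\dots,h_{(k)}$ of intermediate height functions differing by single rotations, and one needs to show that each rotation \emph{in the bulk} contributes exactly zero. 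This requires arguing that the extra transverse edge $e_7$ that becomes available for a neighboring bead is never cut off by the boundary constraints, which depends delicately on the geometry of the domain and on the (arbitrary!) intermediate configurations, not only on the almost-planar boundary condition. The paper does not attempt this directly for $G'$ and $m$: it introduces a special ``pyramid'' boundary condition $p$ on a larger domain $W_\ell\supset G'$ whose bead structure is rigid enough (exactly $L+1-|i|$ beads on thread $\gamma_i$, with boundary cutoffs $e^\pm_i$ located precisely at $\partial W_L$) to make Claim \ref{claimvendredi} provable by a direct case analysis, proves the $-CL$ drift bound there starting from the maximal configuration (Proposition \ref{prop:driftpiramide}), and only then transfers to $(G',m)$ by monotone coupling. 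Without the pyramid device or some equivalent way to prove the zero-drift claim away from the boundary for general $(G',m)$ and general intermediate configurations, your lower-bound argument is incomplete; it asserts the hard estimate rather than proving it.
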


We refer to Section \ref{sec:previous} above for a discussion of previously
known results.



\begin{remark} The proof of the lower bound in \eqref{eq:tmixU}
  actually shows the following: if the dynamics is started from the
  maximal configuration, which has an excess volume  $c\, L^3$
  with respect to the typical (almost flat) equilibrium configuration,
  it takes a time $c_1 L^2$ before the excess volume becomes smaller
  than say $(c/2)\,L^3$ (which is still very large w.r.t. typical volume fluctuations). In this sense, the equilibration time lower bound is optimal.
\end{remark}

\begin{remark}
\label{rem:layers}
Our result could be extended to a class of graphs obtained by
alternating periodically layers of squares and hexagons (see Fig.
\ref{fig:graph_general}). On the other hand, we will explain in
Section \ref{sec:entro} why our method does not (and should not!) work
for general periodic bipartite graphs $G$, in particular not for graphs like
the square-octagon lattice which possesses a ``gaseous phase''.
\end{remark}

\subsection{Monotonicity}\label{sec:monotonie}


It is natural to introduce the following partial order on $\gO$ : $M
\geq M'$ if and only if  $h_M(f) \geq h_{M'}(f)$ for every $f\in G^*.$ As usual, the reference face $f_0$ is assumed to be fixed
once and for all.  Note that the partial order does not depend on the
reference matching used to define the height. We say that an event $ A
\subset \gO$ is increasing if $M \geq M'$ and $M' \in A$ implies $M\in
A$. We define in the usual way stochastic domination: $\mu \succeq \mu'$
if $\mu(A) \geq \mu'(A)$ for every increasing event $A$.

\begin{proposition}\label{prop:monotonie_dynamique}
   The dynamics defined in Section \ref{sec:dynamics} is monotone, that is
$\mu_t^M \succeq \mu_t^{M'}$ for every  $t$  and every $M\ge M'$.
\end{proposition}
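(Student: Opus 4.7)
The plan is to exhibit a monotone grand coupling of the two dynamics driven by common randomness, and then deduce the stochastic domination $\mu_t^M \succeq \mu_t^{M'}$ from the pointwise inequality of the coupled chains. Concretely, for every face $f$ of $G'$ I will take a rate-$1$ Poisson clock $(T_n^f)_n$ and, at each ring, an independent Uniform$[0,1]$ variable $U_n^f$; all these randomness sources are mutually independent and are shared between the two chains $X_t$ and $X'_t$ started respectively from $M$ and $M'$.

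The coupling rule at a ring $T_n^f$ will be the following. For each configuration $Y\in\{X_{T_n^f-},X'_{T_n^f-}\}$, let $h_{\max}^Y(f)$ and $h_{\min}^Y(f)$ denote the largest and smallest values of $h(f)$ compatible with the heights $(h_Y(f'))_{f'\neq f}$ in the sense of Proposition~\ref{prop:fournier}. The rotation at $f$ is possible in $Y$ exactly when $h_{\min}^Y(f)<h_{\max}^Y(f)$, and in that case the two admissible heights are precisely $h_{\min}^Y(f)$ and $h_{\max}^Y(f)$, corresponding to the two internal matchings of the vertices of $f$. I then update $h_Y(f)$ to $h_{\max}^Y(f)$ if $U_n^f>1/2$ and to $h_{\min}^Y(f)$ otherwise, provided rotation is possible; otherwise do nothing. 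The paper's symbolic labels ``$+$'' and ``$-$'' are in an arbitrary face-dependent bijection with $\{h_{\max}^Y(f),h_{\min}^Y(f)\}$, so the marginal law matches the dynamics of Section~\ref{sec:dynamics}.

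The inductive step is to show that, assuming $h_X\geq h_{X'}$ just before a ring at $f$, the same inequality persists afterwards. The first observation is that a rotation at $f$ leaves $h(f')$ unchanged for every $f'\neq f$: the two matchings before and after the rotation differ only on the edges of $f$, so their flux difference is supported on those edges and is divergence-free (both matchings have the prescribed $\pm1$ divergence at white/black vertices). Integrating this flux difference along any path from $f_0$ to $f'$ that stays inside $G^*\setminus\{f\}$ (connected by planarity) gives zero, proving that $h_Y(f')$ is unchanged. Hence it remains to verify $h_X(f)\geq h_{X'}(f)$ after the update.

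The essential ingredient is the following monotonicity lemma, easily derived from Proposition~\ref{prop:fournier}: if $h_X(f')\geq h_{X'}(f')$ for every $f'\neq f$, then $h_{\min}^X(f)\geq h_{\min}^{X'}(f)$ and $h_{\max}^X(f)\geq h_{\max}^{X'}(f)$. Indeed, the proposition gives the explicit expressions
\[
h_{\max}^Y(f)=\min_{f'\neq f}\bigl(h_Y(f')+D(f',f)\bigr), \qquad h_{\min}^Y(f)=\max_{f'\neq f}\bigl(h_Y(f')-D(f,f')\bigr),
\]
and the oriented capacities $D(\cdot,\cdot)$ from \eqref{eq:def_capacites} depend only on the fixed reference matching, not on $Y$; both expressions are then manifestly non-decreasing in the $h_Y(f')$. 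With the lemma in hand, the case analysis is routine: if rotation is possible in both chains, the common $U_n^f$ sends $h_X(f)$ and $h_{X'}(f)$ to the same extremum and the lemma gives the inequality; if possible in $X$ only, then $h_{X'}(f)$ is unchanged and equals $h_{\min}^{X'}(f)=h_{\max}^{X'}(f)$, and the lemma yields $h_{\min}^X(f)\geq h_{X'}(f)$, which dominates both possible new values of $h_X(f)$; the case where rotation is possible in $X'$ only is symmetric, using $h_X(f)=h_{\max}^X(f)\geq h_{\max}^{X'}(f)$; the ``impossible in both'' case is trivial. The only conceptual step is the key monotonicity lemma, but once the capacity formulas of Proposition~\ref{prop:fournier} are available it is essentially immediate; the rest is bookkeeping between the paper's $+/-$ labels and the coupling's higher/lower convention.
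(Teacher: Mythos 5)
Your proof is correct and follows the same strategy as the paper: a grand coupling with shared Poisson clocks and shared randomness, followed by an inductive check that the pointwise order $h_X\ge h_{X'}$ is preserved across each ring. The only difference is cosmetic: where the paper argues directly that if $h_M(f)=h_{M'}(f)$ and the height-raising rotation at $f$ is available for $M'$ then the edges of $f$ must coincide in $M$ and $M'$ (so the two chains react identically), you instead package the same local observation via Proposition~\ref{prop:fournier}, writing $h_{\max}^Y(f)=\min_{f'}(h_Y(f')+D(f',f))$ and $h_{\min}^Y(f)=\max_{f'}(h_Y(f')-D(f,f'))$ and noting these are monotone in the off-$f$ heights; both implementations of the coupling step are correct and equivalent up to a face-dependent relabelling of the coin.
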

\begin{proof}
  Couple the dynamics started from $M$ and $M'$ by using the same
  clocks and the same coin tosses. Partial order is preserved along time. Indeed, it suffices to observe that
if $h_M(f)=h_{M'}(f)$ and a rotation at $f$ that increases the height by 
$1$ is possible for $M'$, then necessarily the configuration of the edges 
of $f$ in $M$ is the same as in $M'$, otherwise at some face $f'$ neighboring
$f$ one would have $h_M(f')<h_{M'}(f')$.
\end{proof}

\begin{remark}
\label{rem:GMC}
As in \cite[Sec. 2.2]{CMT}, one can realize all the evolutions $M_t^{M_0}$ for all possible initial conditions $M_0$ on the same probability space, with the property that if $M_0\le M_0'$ then almost surely $M_t^{M_0}\le M_t^{M_0'}$ for every $t\ge0$. This construction is called
\emph{global monotone coupling}.
\end{remark}

\begin{proposition}\label{prop:monotonie_mesure}
  If $A$ is an increasing event, then
$
   \pi_{m,G'}( \cdot | A) \succeq \pi_{m,G'}.
$
\end{proposition}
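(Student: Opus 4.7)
The plan is to compare the Glauber dynamics of Section \ref{sec:dynamics} to a censored variant and exploit the monotonicity of Proposition \ref{prop:monotonie_dynamique}. Let $(\tilde M_t)_{t\ge 0}$ denote the Markov chain on $A\cap\Omega_{m,G'}$ that evolves according to the same clocks, rotation rules and coin tosses as in Section \ref{sec:dynamics}, with the single modification that any proposed rotation which would move the configuration out of $A$ is rejected. Since $A$ is an increasing event, only downward (``$-$'') rotations can ever be rejected, while every upward rotation starting in $A$ ends in $A$ automatically.

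First I would verify that this censored chain is reversible with respect to $\pi_{m,G'}(\cdot\tc A)$ and irreducible on $A\cap\Omega_{m,G'}$. Reversibility is immediate from detailed balance: rejected moves only concern exits from $A$, so between any two neighboring configurations $M,M'$ both lying in $A$ the transition rates are unchanged, and $\pi_{m,G'}(\cdot\tc A)$ inherits uniformity on $A\cap\Omega_{m,G'}$. For irreducibility, the construction used to prove Proposition \ref{prop:connect} shows that from any $M\in A\cap\Omega_{m,G'}$ there is a sequence of upward rotations reaching $M_{max}$; every intermediate configuration dominates $M$ and therefore still lies in the increasing set $A$. Reversing such paths one can then travel between any two configurations of $A\cap\Omega_{m,G'}$ while staying in $A$ at all times.

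Next I would couple $(\tilde M_t)$ and the uncensored chain $(M_t)$ via common Poisson clocks and coin tosses, both started from $M_{max}$, and show inductively that $\tilde M_t\ge M_t$ almost surely for every $t\ge 0$. At each update at a face $f$ with coin value $+$, both chains either both perform or both skip the upward rotation (upward rotations never leave $A$), so the order is preserved by exactly the argument of Proposition \ref{prop:monotonie_dynamique}. At an update with coin $-$, either the censored chain executes the downward rotation (and so does the uncensored one, again by Proposition \ref{prop:monotonie_dynamique}), or the censored chain rejects it, in which case $\tilde M_t$ is unchanged while $M_t$ can only decrease at $f$; both scenarios preserve $\tilde M_t\ge M_t$.

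To conclude, letting $t\to\infty$ gives $M_t\Rightarrow \pi_{m,G'}$ by Proposition \ref{prop:connect} and $\tilde M_t\Rightarrow \pi_{m,G'}(\cdot\tc A)$ by standard ergodic theory for irreducible reversible chains. For every increasing event $B$ the coupling yields $\bbP(\tilde M_t\in B)\ge \bbP(M_t\in B)$, and passing to the limit gives $\pi_{m,G'}(B\tc A)\ge \pi_{m,G'}(B)$, which is the desired stochastic domination. The main delicate point is the case analysis for the monotone coupling at an attempted downward rotation, where one must combine Proposition \ref{prop:monotonie_dynamique} with the observation that censoring can only raise the tilde chain relative to the uncensored one; once this is checked carefully, the remainder of the argument is routine.
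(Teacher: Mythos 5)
Your proof is correct and follows essentially the same route as the paper's: start both the uncensored dynamics and the $A$-censored ("reflected") dynamics from the maximal configuration, couple them by common clocks and coin tosses, observe via the connectivity argument of Proposition~\ref{prop:connect} that the censored chain is irreducible on $A$ and converges to $\pi_{m,G'}(\cdot\,|\,A)$, and show that the censored chain always dominates. The only small imprecision is the claim that if the censored chain executes a downward rotation then the uncensored one does too; what the monotone-coupling argument of Proposition~\ref{prop:monotonie_dynamique} actually gives is that the ordering $\tilde M_t\geq M_t$ is preserved regardless of which chain moves, which is what you use in the end, so this does not affect the conclusion.
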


\begin{proof}
  Remark that in the proof of Proposition \ref{prop:connect} we
  showed that the maximal configuration can be reached from any other
  by a chain of rotations that increase the height, so $h_{\max} \in
  A$ and $A$ is connected. Consider the original dynamics started from $h_{\max}$ and the
  reflected dynamics (again started from $h_{\max}$) where each update
  that would leave $A$ is canceled. It is clear that they converge to
  $\pi_{m,G'}$ and $\pi_{m,G'}(\cdot| A)$ respectively and that, when
  coupled by using the same clocks and coin tosses, the second always
  dominates the first.
\end{proof}

Monotonicity allows to apply ``censoring inequalities'' of Peres and
Winkler \cite{PW} which, roughly speaking, say the following: if the
dynamics is started from the maximal or minimal configuration, deleting
some updates along the evolution in a pre-assigned way
(i.e. independently of the actual realization of the dynamics)
increases the variation distance from equilibrium. The precise
statement we need (cf. Corollary \ref{th:PW} below) is a bit more general
than what is proven in \cite{PW} but the proof is almost identical, so
we will just point out where some modification is needed.

Consider a probability measure $\pi$ on $\gO$ and $P^{(v)}, v \in \cV$
a set of transition kernels that satisfy reversibility
($\pi(\sigma)P^{(v)}(\sigma \rightarrow
\eta)=\pi(\eta)P^{(v)}(\eta\rightarrow \sigma)$) and monotonicity. We
define a dynamics on $\gO$ by assigning a Poisson clock of rate $c_v$
to each $v\in \mathcal V$ and applying $P^{(v)}$ when $v$ rings. The dynamics of
Section \ref{sec:dynamics} corresponds to $\cV= (G')^*$, $c_v=1$ and $P^{(f)}$ the kernel that corresponds to a
rotation around $f$ with probability $1/2$ (if allowed).

\begin{theorem}
\label{th:prePW}
  Let $\nu_0$ be a probability measure on $\Omega$ such that $\frac{ \td
    \nu_0}{\td \pi}$ is increasing. Consider $\nu_t$ the law at time
  $t$ of the dynamics started from $\nu_0$. Then, for every $t\ge0$, $\frac{ \td
    \nu_t}{\td \pi}$ is increasing and, if
 $\{\mu_t\}_{t\ge0}$ is a family of probability measures such
  that $\nu_t \preceq \mu_t$ for all $t$, one has
\[
    \norm{\nu_t-\pi} \leq \norm{\mu_t - \pi}.
\]
\end{theorem}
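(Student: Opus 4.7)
My plan is to prove the two assertions in sequence, with the first feeding the second. The key point is that an ``increasing density with respect to $\pi$'' is a property preserved by each elementary kernel $P^{(v)}$, hence by the whole dynamics.

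\emph{Preservation of increasing density.} Consider first one elementary kernel $P^{(v)}$. By reversibility, if $\nu=f\pi$ then
\[
(\nu P^{(v)})(\eta)=\sum_\sigma \pi(\sigma)f(\sigma)P^{(v)}(\sigma,\eta)=\pi(\eta)\sum_\sigma P^{(v)}(\eta,\sigma)f(\sigma)=\pi(\eta)(P^{(v)}f)(\eta),
\]
so $P^{(v)}$ acts on densities exactly as it does on functions. Monotonicity of $P^{(v)}$, namely $P^{(v)}(\sigma,\cdot)\preceq P^{(v)}(\sigma',\cdot)$ whenever $\sigma\le\sigma'$, is equivalent to $P^{(v)}$ mapping increasing functions to increasing functions (test against $\mathbf 1_A$ for $A$ increasing). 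Hence the ``increasing density'' property is preserved under a single application of any $P^{(v)}$. To pass to the continuous-time semigroup, Poissonize: with $\lambda=\sum_v c_v$ and $\tilde P=\lambda^{-1}\sum_v c_v P^{(v)}$, independence of the clocks yields
\[
\nu_t=\sum_{n\ge 0}e^{-\lambda t}\frac{(\lambda t)^n}{n!}\,\nu_0\tilde P^n.
\]
Since $\tilde P$ is a convex combination of the $P^{(v)}$, it is itself reversible and monotone, so iteratively each $\nu_0\tilde P^n$ has increasing density, and the convex mixture over $n$ inherits the property. This proves assertion (1).

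\emph{Total-variation comparison.} Set $f_t=d\nu_t/d\pi$ and $A_t=\{f_t>1\}\subset\Omega$. By (1), $A_t$ is an increasing event. Maximization of $\nu_t(B)-\pi(B)=\int_B(f_t-1)\,d\pi$ over $B$ gives
\[
\|\nu_t-\pi\|=\nu_t(A_t)-\pi(A_t),
\]
and the stochastic domination $\nu_t\preceq\mu_t$ applied to the increasing event $A_t$ yields $\nu_t(A_t)\le\mu_t(A_t)$. Hence
\[
\|\nu_t-\pi\|\le\mu_t(A_t)-\pi(A_t)\le\|\mu_t-\pi\|,
\]
which is the desired bound.

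\emph{Main obstacle.} All of the substance is concentrated in (1): once the ``increasing density'' property is preserved along the evolution, the total-variation comparison is essentially formal. The only genuine modification with respect to the original discrete-time Peres--Winkler argument is the Poissonization, which reduces the continuous-time semigroup to a convex mixture of iterates of a single reversible, monotone kernel $\tilde P$; this is why the authors of \cite{PW} can skip most of the work and the proof ``is almost identical''.
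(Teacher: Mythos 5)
Your proposal is correct and follows the same two-step structure as the paper: first show that each $P^{(v)}$ preserves the property of having increasing density with respect to $\pi$ (your reversibility computation is the paper's Lemma \ref{lemme:derivee_croissante} in slightly different notation), then deduce the total-variation comparison from stochastic domination (the paper cites \cite[Lemma 2.4]{PW}, which you instead prove explicitly via the maximizing event $A_t=\{f_t>1\}$). The only genuine addition is that you make explicit the uniformization $\nu_t=\sum_n e^{-\lambda t}\frac{(\lambda t)^n}{n!}\nu_0\tilde P^n$ that passes from the single-kernel statement to the continuous-time semigroup; the paper leaves this step implicit. Both steps are sound, so there is nothing to object to.
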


\begin{corollary}
\label{th:PW}

Let $\nu_0$ be as in Theorem \ref{th:prePW}. Suppose that for all $v\in
\cV$, for all $\nu$ such that $\frac{\td \nu}{\td \pi}$ is increasing, we have $P^{(v)} \nu \preceq \nu$. Let $\mu_t$ be the law at time $t$ of the
dynamics
started from $\nu_0$, where the rates $c_v$ of the Poisson clocks
are replaced by deterministic time-dependent rates $\tilde c_v(s)$, such that
$0\le \tilde c_v(s)\le c_v$ for every $0\le s\le t$. Then,
\[
  \text{for every } t\ge0,\quad \nu_t \preceq \mu_t \text{ and } \norm{\nu_t-\pi} \leq \norm{\mu_t - \pi}.
\]
\end{corollary}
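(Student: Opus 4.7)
The plan is to reduce everything to the stochastic domination $\nu_t \preceq \mu_t$. Indeed, Theorem \ref{th:prePW} guarantees that the density $\td \nu_t / \td \pi$ remains increasing under the uncensored dynamics, so once the domination is in hand, the last assertion of that theorem gives $\norm{\nu_t - \pi} \leq \norm{\mu_t - \pi}$ as an immediate consequence. The entire effort therefore concentrates on comparing $\nu_t$ and $\mu_t$ in stochastic order.

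I would first prove a discrete-time analogue via the standard Peres--Winkler induction. Fix an ordered sequence $v_1, \ldots, v_n \in \cV$ and a set of ``kept'' indices $S \subseteq \{1,\ldots, n\}$. Let $\nu^{(k)} = P^{(v_k)} \cdots P^{(v_1)} \nu_0$ and let $\mu^{(k)}$ be obtained in the same way but with $P^{(v_j)}$ replaced by the identity whenever $j \notin S$. By induction on $k$ one shows that both $\td\nu^{(k)}/\td\pi$ and $\td\mu^{(k)}/\td\pi$ remain increasing and that $\mu^{(k)} \succeq \nu^{(k)}$. Preservation of the increasing-density property at each step is a single-application instance of Theorem \ref{th:prePW}. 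For the comparison, the case $k \in S$ is handled by monotonicity of $P^{(v_k)}$ applied to both sides, whereas for $k \notin S$ one writes
\[
    \mu^{(k)} = \mu^{(k-1)} \succeq \nu^{(k-1)} \succeq P^{(v_k)} \nu^{(k-1)} = \nu^{(k)},
\]
where the first domination is the inductive hypothesis and the second is precisely the hypothesis $P^{(v)} \rho \preceq \rho$ assumed in the corollary, applied to $\rho = \nu^{(k-1)}$ (whose density is increasing by induction).

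To lift this to continuous time with the time-dependent rates $\tilde c_v(s) \leq c_v$, I would use a Poisson thinning construction on a common probability space: for each $v \in \cV$ take an independent rate-$c_v$ Poisson process and, independently, retain each of its points at time $s$ with probability $\tilde c_v(s)/c_v \in [0,1]$. Conditional on a realization $\sigma$ of these clocks and keep/discard decisions on $[0,t]$, both evolutions reduce to a finite ordered sequence of updates and to the sub-sequence kept by the thinning; the discrete statement above, applied conditionally on $\sigma$, yields $\mu_t^\sigma \succeq \nu_t^\sigma$ almost surely, and averaging over $\sigma$ preserves stochastic domination of probability measures, producing $\nu_t \preceq \mu_t$. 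The only real subtlety, which accounts for the text's remark that the proof is ``almost identical'' to that of \cite{PW}, is the handling of genuinely time-dependent censoring rates rather than a fixed schedule; this is precisely what the thinning construction absorbs, and the argument is otherwise routine.
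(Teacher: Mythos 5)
Your proposal is correct and follows essentially the same route as the paper: decompose (thin) the rate-$c_v$ Poisson clocks so that the censored dynamics is realized as a subset of the uncensored updates, prove the stochastic domination by induction over updates (kept updates preserved by monotonicity of $P^{(v)}$, censored updates handled by the hypothesis $P^{(v)}\nu\preceq\nu$ together with Lemma~\ref{lemme:derivee_croissante}), and then invoke Theorem~\ref{th:prePW} to pass from domination to the total-variation inequality. The paper's proof is just a more compressed statement of the same argument; your thinning construction and the paper's decomposition into PPPs $X$ and $Y$ are the same device written in two ways.
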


\begin{remark}
  The hypothesis of ${\td \nu_0}/{\td \pi}$ increasing is immediate if the
  dynamics is started from the maximal configuration $h_{\max}$, since in that case $\nu_0$ is
concentrated on $h_{max}$.
\end{remark}

As in \cite{PW}, the proof of Theorem \ref{th:prePW} follows directly
from the following two lemmas.
\begin{lemma}\label{lemme:derivee_croissante}
  With the above definitions, for any probability measure $\mu$, if
  $\frac{\td \mu}{\td \pi}$ is increasing then $\frac{\td P^{(v)}
    \mu}{\td \pi}$ is increasing.
\end{lemma}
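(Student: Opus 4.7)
The plan is to reduce the statement to the fact that monotone kernels preserve increasing functions, using reversibility to identify the Radon-Nikodym derivative after one step of the kernel with the action of the kernel (as a Markov operator) on the original Radon-Nikodym derivative.

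Concretely, write $f=\frac{\td\mu}{\td\pi}$ and, for any $\eta\in\Omega$, compute
\[
(P^{(v)}\mu)(\eta)=\sum_{\sigma}\mu(\sigma)\,P^{(v)}(\sigma\to\eta)=\sum_\sigma f(\sigma)\,\pi(\sigma)\,P^{(v)}(\sigma\to\eta).
\]
Applying reversibility $\pi(\sigma)P^{(v)}(\sigma\to\eta)=\pi(\eta)P^{(v)}(\eta\to\sigma)$ inside the sum, one obtains
\[
(P^{(v)}\mu)(\eta)=\pi(\eta)\sum_\sigma P^{(v)}(\eta\to\sigma)f(\sigma)=\pi(\eta)\,(P^{(v)}f)(\eta),
\]
so that $\frac{\td(P^{(v)}\mu)}{\td\pi}(\eta)=(P^{(v)}f)(\eta)$, where $P^{(v)}$ now acts on functions in the usual way.

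It then suffices to check that, if $f$ is increasing in the partial order on $\Omega$ and $P^{(v)}$ is a monotone kernel, then $P^{(v)}f$ is increasing. Recall that monotonicity of $P^{(v)}$ means: for $\sigma\leq\sigma'$, the measure $P^{(v)}(\sigma,\cdot)$ is stochastically dominated by $P^{(v)}(\sigma',\cdot)$. Since stochastic domination is characterized by $\nu\preceq\nu'\iff \nu(g)\leq\nu'(g)$ for every bounded increasing $g$, applying this with $g=f$ yields
\[
(P^{(v)}f)(\sigma)=\sum_\tau P^{(v)}(\sigma\to\tau)f(\tau)\leq\sum_\tau P^{(v)}(\sigma'\to\tau)f(\tau)=(P^{(v)}f)(\sigma'),
\]
which is exactly the claim.

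There is essentially no obstacle: the lemma is a packaging of reversibility (to turn the evolved Radon-Nikodym derivative into a ``backward'' average) and monotonicity (to preserve the increasing character under averaging). The only small point to keep in mind, if one worked in a continuous rather than finite state space, would be technicalities about densities and versions, but in the present finite combinatorial setting every sum is finite and no measurability issue arises.
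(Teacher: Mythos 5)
Your argument is correct and is essentially the same as the paper's: you use reversibility to rewrite $\frac{\td P^{(v)}\mu}{\td\pi}(\sigma)$ as $\bbE^{(v)}_\sigma\bigl[\frac{\td\mu}{\td\pi}(X)\bigr]$, i.e.\ as the action of the kernel (as a Markov operator) on the original Radon--Nikodym derivative, and then invoke monotonicity of $P^{(v)}$ to conclude that this average of an increasing function is increasing. You only spell out the last step a bit more explicitly than the paper does.
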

This replaces Lemma 2.1 of \cite{PW}, which uses explicitly the fact that
the dynamics is of ``heat-bath'' type.
\begin{proof}
\begin{align}
\frac{\td P^{(v)} \mu}{\td \pi}(\sigma) & = \frac{1}{\pi(\sigma)} \sum_s \mu(s) P^{(v)}(s \rightarrow \sigma)
          =  \frac{1}{\pi(\sigma)} \sum_s \frac{\mu(s)}{\pi(s)} \pi(s) P^{(v)}(s \rightarrow \sigma) \\
         & =  \frac{1}{\pi(\sigma)} \sum_s \frac{\mu(s)}{\pi(s)} \pi(\sigma) P^{(v)}( \sigma \rightarrow s) \label{eq:usage_reversibilite}
          =  \bbE^{(v)}_{\sigma} \bigl[ \frac{\td \mu}{\td \pi} (X) \bigr]
\end{align}
where $X$ is the state after one action of $P^{(v)}$, starting  from
$\sigma$. The third equality uses the
reversibility and the monotonicity of $P^{(v)}$ shows that the last
expression is increasing in $\sigma$.
\end{proof}

\begin{lemma}\cite[Lemma 2.4]{PW}\label{lemme:peres_winkler}
  If $\mu$, $\nu$ are two probability measures on $\Omega$ such that
  $\frac{\td \nu}{\td \pi}$ is increasing and $\nu \preceq \mu$, then
  $\norm{\nu-\pi} \leq \norm{\mu - \pi}$.
\end{lemma}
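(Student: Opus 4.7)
The plan is to use the variational characterization of total variation distance together with the observation that monotonicity of $\td\nu/\td\pi$ forces the ``optimal'' event in that characterization to be an increasing event, on which stochastic domination by $\mu$ can then be exploited.

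More concretely, I would start by recalling the identity
\[
   \norm{\nu-\pi} = \max_{A\subset\Omega}\bigl(\nu(A)-\pi(A)\bigr),
\]
with the maximum attained at $A^{\star}=\{\sigma\in\Omega : \nu(\sigma)\ge \pi(\sigma)\}=\{\sigma : \tfrac{\td\nu}{\td\pi}(\sigma)\ge 1\}$. The first key step is to notice that, since $\tfrac{\td\nu}{\td\pi}$ is increasing by hypothesis, the super-level set $A^{\star}$ is an up-set, i.e.\ an \emph{increasing} event in $\Omega$ for the partial order used to define stochastic domination. Thus
\[
   \norm{\nu-\pi} = \nu(A^{\star})-\pi(A^{\star}).
\]

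The second key step is to invoke the assumption $\nu\preceq\mu$: applied to the increasing event $A^{\star}$, it yields $\nu(A^{\star})\le \mu(A^{\star})$. Combining this with the trivial upper bound $\mu(A^{\star})-\pi(A^{\star})\le \max_A(\mu(A)-\pi(A))=\norm{\mu-\pi}$, we obtain
\[
   \norm{\nu-\pi} = \nu(A^{\star})-\pi(A^{\star}) \le \mu(A^{\star})-\pi(A^{\star}) \le \norm{\mu-\pi},
\]
which is the desired conclusion.

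There is essentially no obstacle here: the proof is a two-line manipulation once one recognizes that the monotonicity of the density $\td\nu/\td\pi$ makes the optimal discriminating event for $\nu$ an increasing one. The only point that deserves a brief check is that $\Omega$ is finite (which it is, since $\Omega_{m,G'}$ is a finite set of matchings), so the maximum in the variational formula is attained and there are no measurability subtleties. The argument does not use any property of the dynamics, only the definition of stochastic domination and the variational formula for total variation distance.
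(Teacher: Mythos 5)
Your proof is correct, and it is the standard argument behind this lemma. The paper itself does not reproduce a proof but simply cites Peres--Winkler; the argument you give (identify the optimal discriminating event $A^{\star}=\{\tfrac{\td\nu}{\td\pi}\ge 1\}$, observe it is increasing because the density is increasing, apply $\nu\preceq\mu$ to that event, and close with the variational bound for $\mu$) is exactly the proof in that reference, so there is nothing to flag.
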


\begin{proof}[Proof of Corollary \ref{th:PW}]
Decompose the  Poisson point process (PPP) of density $c_v$ on $\mathbb R^+$
as the union of two independent PPPs, $X$ and $Y$, of non-constant densities
$\tilde c_v(s)$ and $c_v-\tilde c_v(s)\ge0$. The dynamics $\mu_t$ is obtained
by erasing the updates from the process $Y$.
  From Lemma \ref{lemme:derivee_croissante} we get that $\frac{ \td \nu_t}{\td \pi}$ is increasing. Censoring an update $P^{(v)}$ at time $t$ conserves the stochastic domination because, by induction, $ \nu_t=P^{(v)} \nu_{t^-} \preceq \nu_{t^-} \preceq \mu_{t^-} = \mu_t$.
\end{proof}

\section{Mapping to a ``bead model''}

\subsection{From dimers to ``beads''}
\label{sec:billes}

From this point onward, we will assume that the graph $G$ is either
 the square, hexagon  or square-hexagon graph (see Fig.
\ref{graphes_et_polygones_newton}) since we will use some of their geometric
properties.

The set $\Omega$ of matchings of $G$ can be mapped into the configurations
of what we call a \emph{bead model}. Such a correspondence is valid for
more general graphs than the square, hexagon and square-hexagon,
provided that the graph in question possesses a certain ``fibration''
with fibers (or \emph{threads}) satisfying the properties described below.  See Remark \ref{rem:layers} for a  more general class of graphs where this
construction would work.

\begin{figure}[h]
   \centering
   \includegraphics[width=5cm]{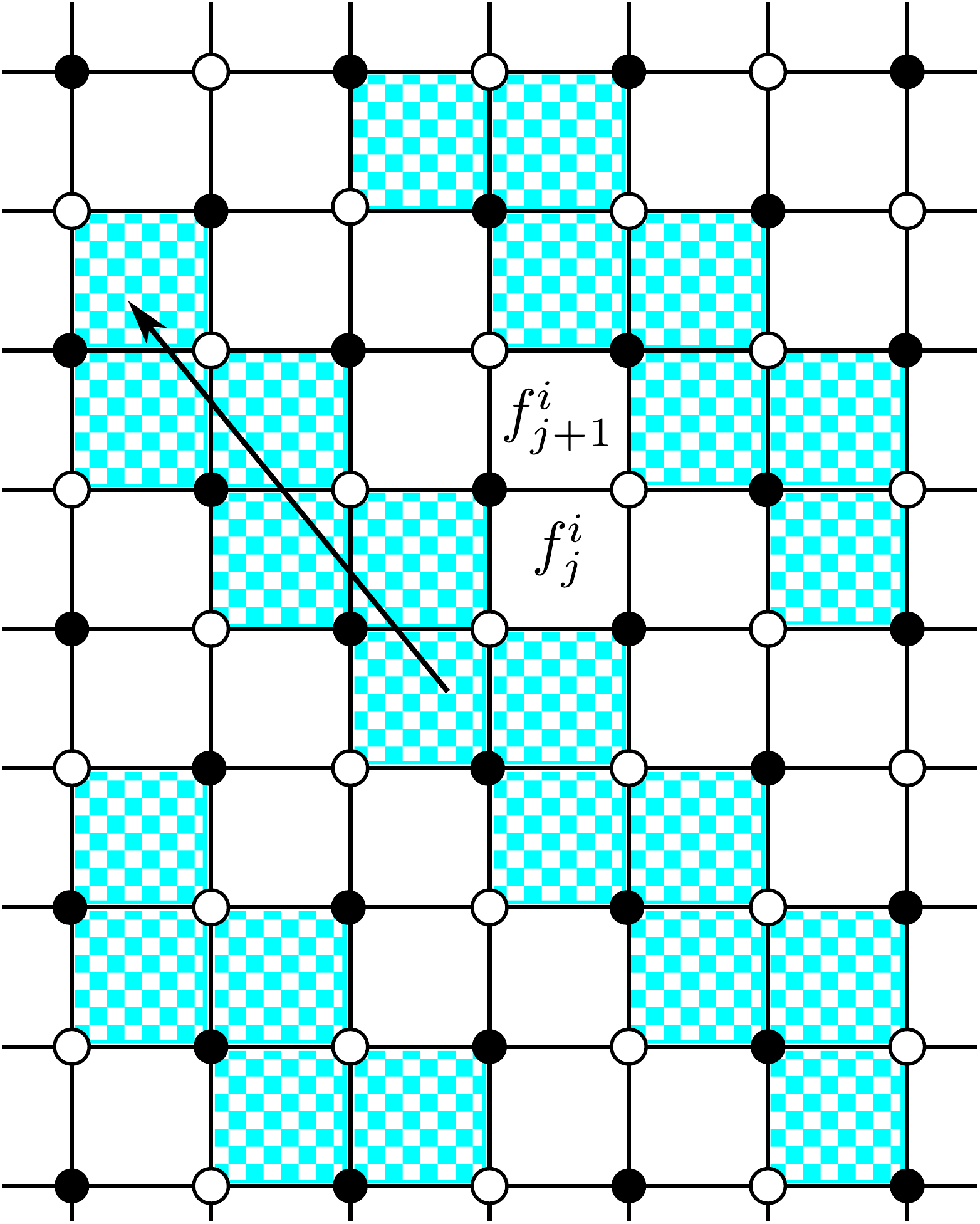}
\includegraphics[width=5cm]{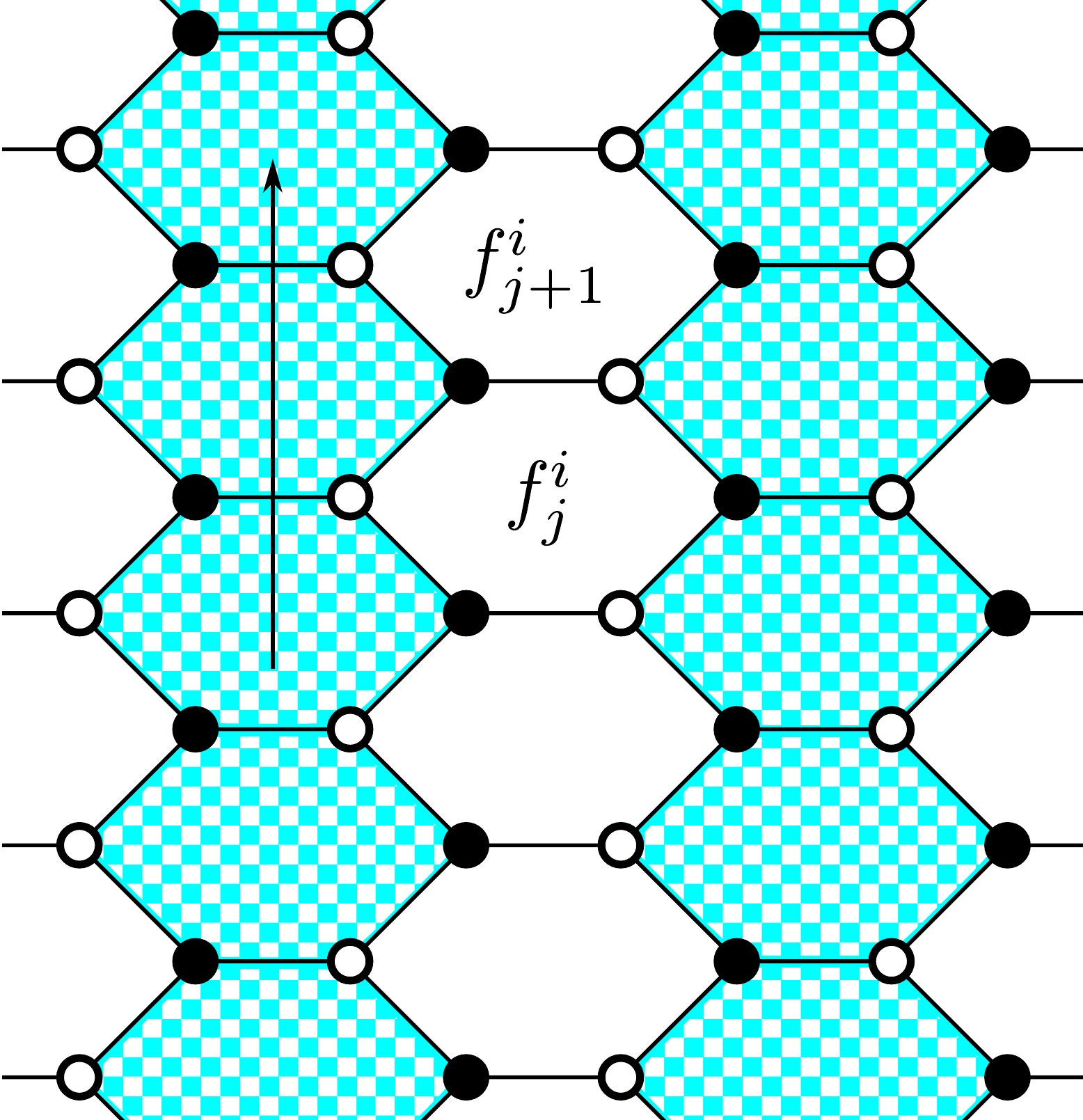}
\includegraphics[width=4cm]{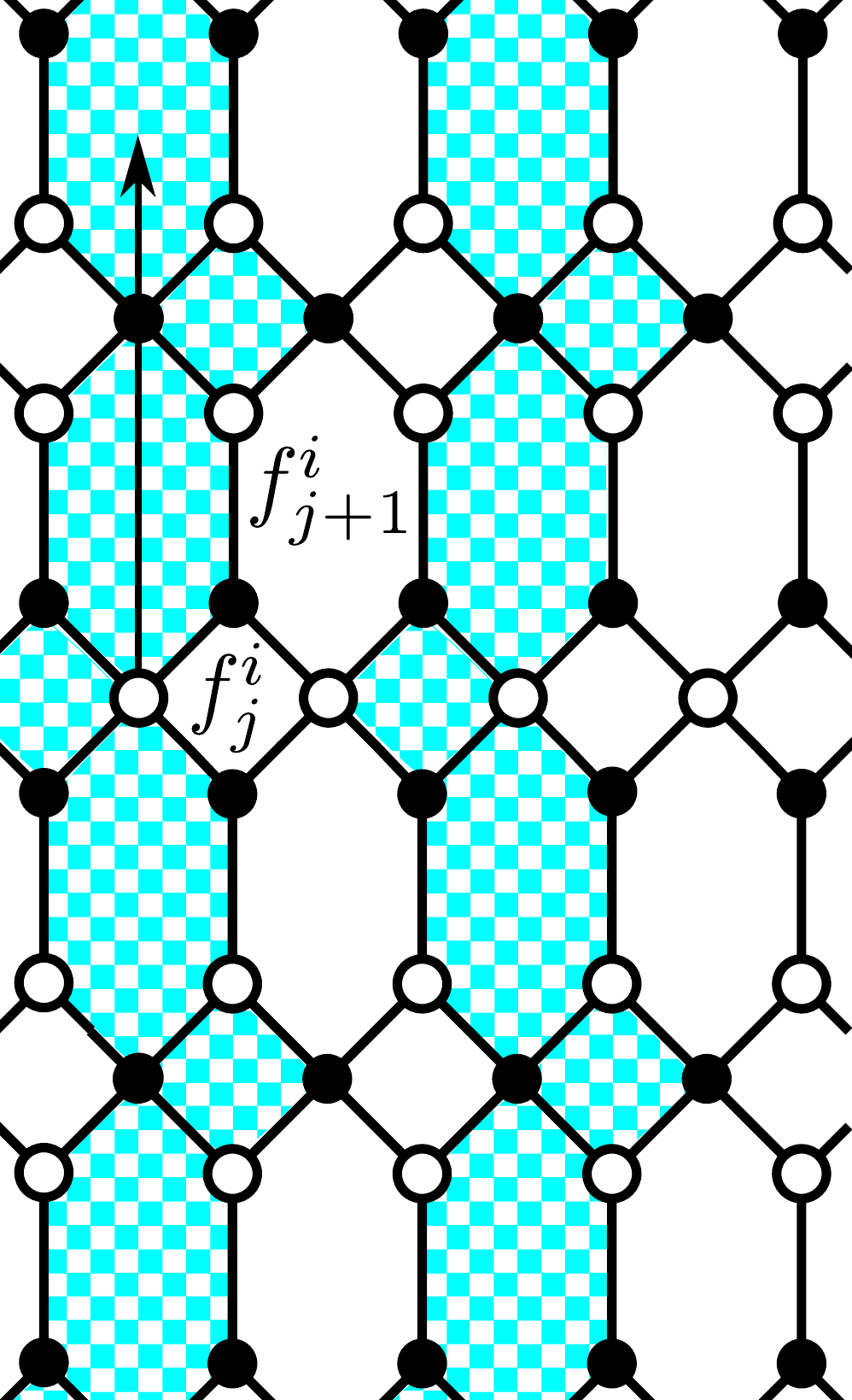}
   \caption{The paths called threads and used in the definition of the bead model for the square, hexagon and square hexagon graph. The arrow shows the orientation of the threads.
     }
   \label{fig:beads}
\end{figure}

As is apparent from Fig. \ref{fig:beads}, for the three types of graph we are
considering, there exists a family of directed periodic paths $\{\gamma_i\}_{i\in\bbZ}$ (called \emph{threads}) on $G^*$ such that
\begin{enumerate}[(i)]
\item  labeling the faces along thread $\gamma_i$ as $\{f^i_j\}_{j\in \bbZ}$,  face $f^i_j$ neighbors only $f^i_{j\pm1}$
and some faces of $\gamma_{i\pm1}$;

\item going from the face $f^i_j$ to $f^i_{j+1}$, one crosses an edge
  of $G$ (call it $e^i_j$) which is positively orientated. Such edges
  are called \emph{transverse edges} and dimers on transverse edges
  are called \emph{beads};

\item threads $\gamma_i$ are obtained one from the other by a suitable $\bbZ^2$ translation
and $\cup_i\gamma_i=G^*$.
\end{enumerate}

\begin{proposition}
  Consider a finite sub-graph $G'\subset G$ as in Definition \ref{def:G'}
(recall that $G\setminus G'$
  is connected) and a boundary condition $m \in \gO$. A matching in
  $\gO_{m,G'}$ is uniquely determined by the position of its
  beads. Furthermore the number of beads in $G'$ on each thread $\gamma_i$ is the same
  for every $M\in\Omega_{m,G'}$.
\end{proposition}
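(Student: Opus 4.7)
The plan is to reduce both statements to a simple propagation of the height function along each thread. Fix once and for all a $\bbZ^2$-periodic reference matching $M_0$. By the construction of the height function (Section \ref{sec:height}) and property (ii), for any matching $M$ the height increment across the transverse edge $e^i_j$, oriented from $f^i_j$ to $f^i_{j+1}$ with white vertex on the right, equals the flux from right to left of $\omega_M-\omega_{M_0}$ across $e^i_j$, namely
\[
h_M(f^i_{j+1}) - h_M(f^i_j) \;=\; \ind_{e^i_j \in M} \;-\; \ind_{e^i_j \in M_0}.
\]
In particular this increment depends on $M$ only through whether $e^i_j$ carries a bead of $M$, and it changes by exactly $\pm 1$ when the bead status is toggled.

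For uniqueness, suppose $M,M' \in \Omega_{m,G'}$ share the same beads. Both coincide with $m$ outside $G'$, so $h_M = h_{M'}$ on every face of $G^* \setminus (G')^*$. Since $G'$ is bounded (Definition \ref{def:G'}) and $\gamma_i$ is an infinite periodic path in $G^*$, the portion of $\gamma_i$ lying in $(G')^*$ splits into finitely many maximal segments, each preceded and followed along the thread by faces in $G^* \setminus (G')^*$. Starting from such a preceding boundary face, where $h_M=h_{M'}$ is pinned by $m$, and applying the displayed identity one transverse edge at a time, we propagate $h_M=h_{M'}$ across the whole segment, because at each step the two heights change by the same amount. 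Iterating over all segments of all threads covers $(G')^*$ by property (iii), so $h_M\equiv h_{M'}$ on $G^*$ and $M=M'$.

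For the invariance of the bead count, fix $i$ and a maximal segment of $\gamma_i \cap (G')^*$ whose bounding indices on $\gamma_i$ are $j_0$ and $j_1$ (with $f^i_{j_0}, f^i_{j_1} \in G^*\setminus (G')^*$). Telescoping the displayed identity yields
\[
h_M(f^i_{j_1}) - h_M(f^i_{j_0}) \;=\; \#\bigl\{j_0 \le j < j_1 : e^i_j \in M\bigr\} \;-\; \#\bigl\{j_0 \le j < j_1 : e^i_j \in M_0\bigr\}.
\]
The left-hand side is fixed by $m$ (both endpoints lie in $G^* \setminus (G')^*$), and the second count on the right is fixed by $M_0$; hence the number of beads of $M$ on this stretch is the same for every $M \in \Omega_{m,G'}$. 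Since $M$ agrees with $m$ on any transverse edges of this stretch that lie outside $G'$, subtracting these fixed contributions gives a bead count inside $G'$ that is independent of $M$. Summing over the finitely many maximal segments of $\gamma_i$ in $(G')^*$ yields the statement. No serious obstacle arises; the only mild subtlety is that a thread can enter and leave $(G')^*$ multiple times, but this is handled segment by segment since boundary-adjacent faces always carry the $m$-prescribed height.
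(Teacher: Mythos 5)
Your proof is correct and follows essentially the same route as the paper: you use that the height increment across each transverse edge is determined by the bead status of that edge, propagate heights along each thread starting from faces outside $G'$ where the height is pinned by $m$, and telescope to fix the bead count from the boundary heights. You just spell out the explicit increment identity and the segment-by-segment covering of $(G')^*$, which the paper leaves implicit.
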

\begin{proof}
  From the definition of height function and using property (ii) above
  of threads, which says that transverse edges are all crossed with
  the same orientation, the height at some $f \in (G')^*$ belonging to
  thread $\gamma_i$ is determined by the number of beads on $\gamma_i$
  between the boundary of $G'$ and $f$. Hence the position of
  beads uniquely determines the height function and thus the
  matching. The total number of beads on $\gamma_i\cap (G')^*$ is
  determined by the height difference between two faces $f_1,f_2$ of
  $\gamma_i\setminus (G')^*$, such that the portion of $\gamma_i$
  between $f_1$ and $f_2$ includes $\gamma_i\cap (G')^*$. This height
  difference is clearly independent of the particular chosen matching
  in $\Omega_{m,G'}$ (because $f_1,f_2$ are outside $G'$ and $G\setminus G'$ is connected).
\end{proof}

In order to have a complete picture, we have to determine the
condition for a set of beads' positions to correspond to a matching of
$G$, that is the kind of constraints beads impose on each other. 
We first need some notations (see Fig. \ref{fig:exemple_billes}).


\begin{definition}
\label{def:ordering}
Let as above $e^i_j$ denote the $j^\text{th}$ transverse edge of the
thread $\gamma_i$ and let $\rb_j^i$ (resp. $\rw_j^i$) denote its black
(resp. white) vertex. Let $\Gamma^i$ be the set of vertices of $G$
which belong both to $\gamma_i$ and to $\gamma_{i+1}$, and order
vertices in $\Gamma^i$ following the same direction as for the faces
along the threads. Note that $\Gamma^i$ contains the white vertices of
transverse edges of $\gamma_i$ and the black vertices of transverse
edges of $\gamma_{i+1}$. Given transverse edges
$e^i_{j},e^{i+1}_{j'}$ on $\gamma_i,\gamma_{i+1}$, we write
$e^i_j\prec e_{j'}^{i+1}$ (resp $e^i_j\succ e_{j'}^{i+1}$) if
$\rw_j^i$ is below (resp. above) $\rb_{j'}^{i+1}$.
\end{definition}

\begin{proposition}\label{prop:modele_billes}
  A set of bead positions corresponds to a matching in $\Omega$ if and
  only if, for any two consecutive beads on the same thread
  (i.e. beads on transverse edges $e^i_a,e^i_b, a<b$ with no bead
  between them along the same thread $\gamma_i$), there is a unique
  bead in thread $\gamma_{i-1}$ and a unique bead in  thread $\gamma_{i+1}$ such
  that their positions $e^{i-1}_c ,e^{i+1}_d$ satisfy $e^i_a\prec
  e^{i-1}_c \prec e^i_b$ and $e^i_a\prec e^{i+1}_d \prec e^i_b$.
\end{proposition}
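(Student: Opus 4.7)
The plan is to prove the two implications separately, by examining the local structure in the \emph{strip} of vertices between a pair of consecutive beads on a single thread.

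\emph{Necessity.} Suppose $M \in \Omega$ and fix two consecutive beads of $M$ on $\gamma_i$, at transverse edges $e^i_a$ and $e^i_b$, $a<b$. For every $a<j<b$ the edge $e^i_j$ is not a dimer, so its white endpoint $\rw^i_j \in \Gamma^i$ must be matched by some other edge. Inspecting the local geometry of each of the three admissible graphs shows that the only black neighbors of $\rw^i_j$ in $G \setminus \{e^i_j\}$ lie in $\Gamma^i$, i.e.\ the edge used must run ``upward'' into the strip between $\gamma_i$ and $\gamma_{i+1}$. Symmetrically, a black vertex of $\Gamma^i$ lying strictly between $\rw^i_a$ and $\rw^i_b$ in the order of Definition~\ref{def:ordering} is either matched to a white vertex of $\Gamma^i$, or it is the black endpoint $\rb^{i+1}_{d}$ of a bead on $\gamma_{i+1}$ (whose white partner then sits in $\Gamma^{i+1}$, outside the strip). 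Thus the vertices of $\Gamma^i$ in the interval $(\rw^i_a, \rw^i_b)$ pair up internally except for precisely the black ends of the beads of $\gamma_{i+1}$ with $e^i_a \prec e^{i+1}_d \prec e^i_b$. A parity/counting comparison of white and black vertices in this interval (using that $\rw^i_a$ and $\rw^i_b$ flank the range and that vertices of $\Gamma^i$ alternate in a known pattern) pins the number of such beads down to exactly one. The analogous argument applied to $\Gamma^{i-1}$ gives uniqueness of the $\gamma_{i-1}$ bead.

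\emph{Sufficiency.} Conversely, given bead positions satisfying the interlacing condition, declare every bead to be a dimer and complete the matching strip by strip. Between two consecutive beads at $e^i_a, e^i_b$ on $\gamma_i$, the unique bead $e^{i+1}_d$ of $\gamma_{i+1}$ with $e^i_a \prec e^{i+1}_d \prec e^i_b$ splits the portion of $\Gamma^i$ strictly between $\rw^i_a$ and $\rw^i_b$ into two segments of alternating vertices, one on either side of $\rb^{i+1}_d$. In each of the three graphs the non-transverse edges available in the strip are rigid enough that each such segment admits a unique internal perfect matching. The prescriptions coming from the strips $(\gamma_{i-1},\gamma_i)$ and $(\gamma_i,\gamma_{i+1})$ are mutually compatible because the interlacing hypothesis, applied to both pairs of threads, matches the resulting constraints at each vertex of $\Gamma^i$. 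A vertex-by-vertex check then confirms that the assembled set of dimers covers each vertex of $G$ exactly once.

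\emph{Main obstacle.} The real work is the local verification in each strip: one must check that in each of the square, hexagon and square-hexagon graphs the non-transverse edges in a segment of $\Gamma^i$ flanked by a bead endpoint on each side admit exactly one perfect matching, and that adjacent strips agree. A unified shortcut is to reformulate the bead model as a non-intersecting lattice-path system (as in \cite{LRS}), under which the interlacing condition becomes the non-crossing constraint; even so, the translation between the two pictures relies on the same case-by-case inspection of the fundamental domain.
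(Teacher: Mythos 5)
Your proof is correct and takes essentially the same approach as the paper's: for necessity you count black versus white vertices of $\Gamma^i$ between $\rw^i_a$ and $\rw^i_b$ and use that unmatched white endpoints of free transverse edges of $\gamma_i$ must pair off inside $\Gamma^i$, leaving exactly one black vertex to carry a bead of $\gamma_{i+1}$; for sufficiency you observe that the designated bead splits that stretch of $\Gamma^i$ into two alternating paths, each of which has a unique internal matching. The paper's version is a bit more direct in stating the one-more-black-than-white count and in noting that the two segments are paths with equal numbers of white and black vertices (hence exactly one matching), whereas you phrase these as a "parity/counting comparison" and "rigidity"; your extra compatibility discussion between adjacent strips is harmless but not needed, since the interiors of the $\Gamma^j$ are disjoint.
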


\noindent \emph{Proof.} We advise the reader to keep an eye on Fig. \ref{fig:exemple_billes} while
reading this proof. 

Proof of the ``only if'' part. Without loss of generality we can
consider threads $\gamma_i$ and $\gamma_{i+1}$. Note that following
$\Gamma^i$ between $\rw^i_a$ and $\rw^i_b$ there is necessarily
exactly one more black vertex than white vertex (because $G$ is
bipartite).  Since by assumption there are no beads on $\gamma_i$
between $e^i_a$ and $e^i_b$, all the white vertices have to be matched
within $\Gamma^i$. This leaves exactly a single black vertex which has
to be matched along a transverse edge in $\gamma_{i+1}$. The
corresponding dimer is the unique bead such that its position
$e^{i+1}_d$ satisfies $e^i_a\prec e^{i+1}_d\prec e^i_b$.


  Proof of the ``if'' part. Suppose that the bead positions are given
  and that they satisfy the properties above. This automatically fixes
  which  transverse edges are occupied and which are free. To
  see that the rest of the matching is also (uniquely) determined,
  proceed as follows.  With the same notations as above, consider the
  vertices of $\Gamma^i$ between $\rw^i_b$ and $\rb^{i+1}_d$.  These
  vertices are all matched with each other (because by construction
  $e^{i+1}_d$ is the ``highest'' bead in $\gamma_{i+1}$ such that
  $e^{i+1}_d\prec e^i_b$) and they form a path with an equal number of alternating
  white and black vertices, so there is a unique way of matching them. The same goes for vertices between $\rw^i_a$ and $\rb^{i+1}_d$. \qed
\begin{remark}
  \label{rem:rotazioni}

Fix $G'\subset G$ and a boundary condition $m\in\Omega$.  Under the
measure $\pi_{m,G'}$, conditionally on the positions of the beads in
$\gamma_{i\pm1}$, the beads of $\gamma_i$ are independent and each has
a \emph{uniform distribution} in a certain finite set of adjacent
transverse edges (two transverse edges being adjacent if they are of
the form $e^i_j,e^i_{j+1}$; remark that on the square lattice they can
actually share a vertex, while on the hexagonal lattice they
cannot). Indeed, given a bead on the transverse edge $e^i_a$, it is
possible to move it up to $e^i_{a+1}$ (resp. down to $e^i_{a-1}$) via
a rotation of the face $f^i_{a+1}$ (resp. $f^i_{a}$), provided that
$f^i_{a+1}$ (resp. $f^i_{a}$) is within $G'$ and that the new position
does not violate the ordering properties of Proposition
\ref{prop:modele_billes}.  Uniformity of the distribution is trivial
from uniformity of the unconditional measure $\pi_{m,G'}$.
Note also that moving a bead up (resp. down) implies changing by $-1$ (resp. $+1$) the height of the face just above (below) it along the thread.
\end{remark}

\begin{wrapfigure}{r}{5cm}
\centering
   \includegraphics[width=5cm]{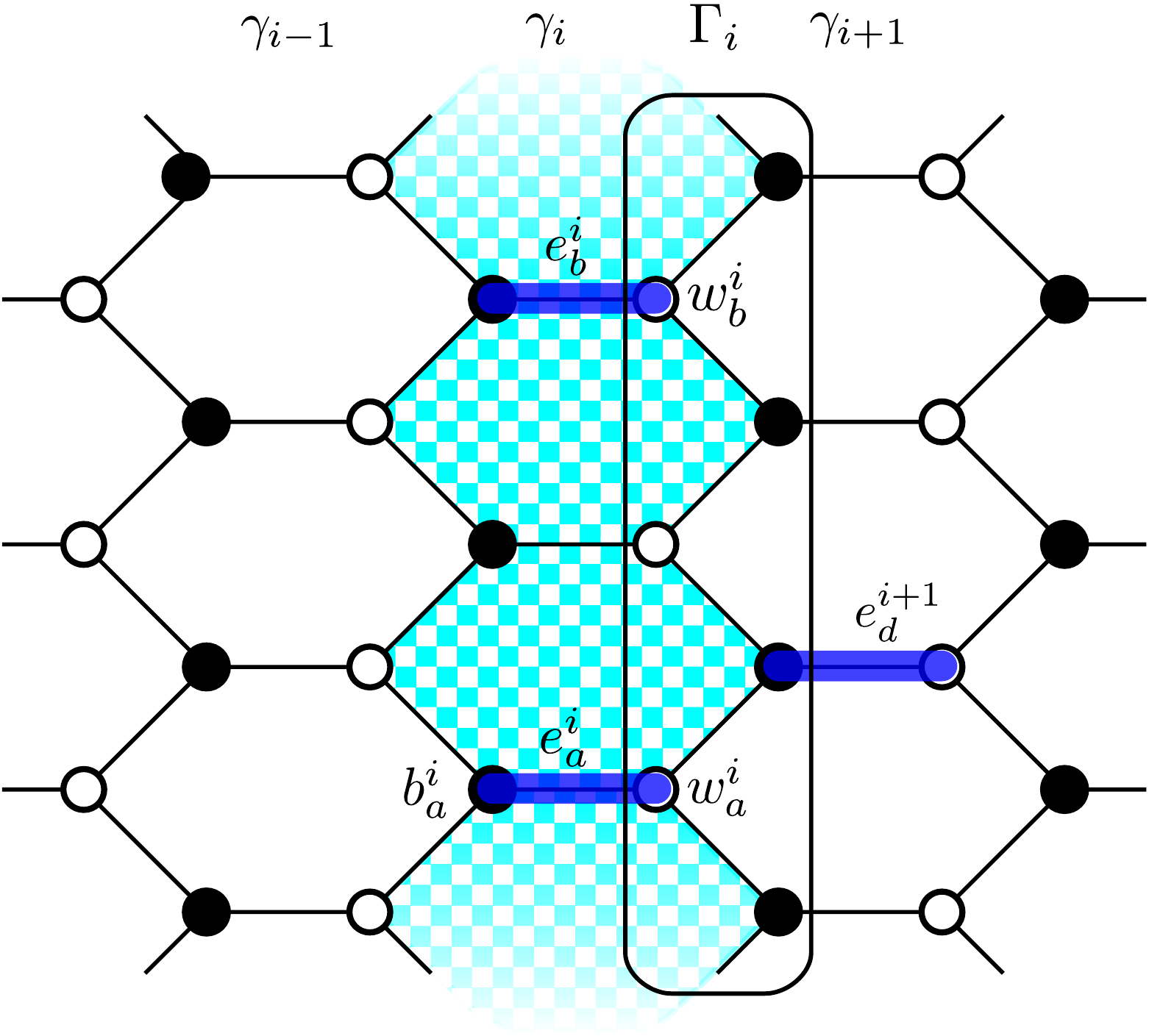}
   \captionsetup{width=5cm}
   \caption{An example of the situation in the proof of proposition
     \ref{prop:modele_billes}. Beads on the edges $e_a^i$ and $e_b^i$
     are indicated by thick lines.  }
   \label{fig:exemple_billes}
\end{wrapfigure}


\subsection{Dynamics in terms of  beads}\label{sec:dyn_billes}

The Glauber dynamics defined in Section \ref{sec:dynamics} has a
simple interpretation in terms of the bead model. As observed in Remark
\ref{rem:rotazioni}, a rotation is equivalent to moving a bead to an
adjacent transverse edge in the same thread (in particular, rotations
are possible only at faces adjacent to a bead, and the configuration of
beads outside $G'$ is frozen).  We can then redefine
the dynamics as follows. Each bead in $G'$ has a mean-one independent Poisson
clock; when it rings, with probabilities $1/2,1/2$ move the bead
either up or down to the adjacent transverse edge if allowed by the
boundary conditions and by the ordering
properties. 

\subsubsection{Fast dynamics}
\label{sec:fd}
As mentioned in the introduction, we will not work directly with the
original Glauber dynamics but rather with an auxiliary one.  We will
actually need \emph{two} auxiliary dynamics: one, that we call
\emph{synchronous fast dynamics}, will be useful to upper bound the mixing
time, while the \emph{asynchronous fast dynamics} will provide a lower bound.

\begin{definition}\
\label{def:sincro}
\begin{enumerate}[(i)]
\item We define the \emph{synchronous fast dynamics} as follows. We have two
independent mean-1 Poisson clocks. When the first (resp. second) one
rings we resample all the bead positions on even-labelled
(resp. odd-labelled) threads following the equilibrium measure
$\pi_{m,G'}$  conditioned on the state of beads on odd
(resp. even) threads.  
\item The \emph{asynchronous fast dynamics} is
defined instead by giving each bead in $G'$ an independent mean-1 Poisson
clock. When a clock rings, the position of the corresponding bead is
resampled from $\pi_{m,G'}$ conditioned on the position of all other beads.
Note that, by construction, beads outside $G'$ are frozen.
\end{enumerate}
\end{definition}

Thanks to monotonicity of the original Glauber dynamics, one sees
easily that both synchronous and asynchronous fast dynamics are
monotone.

\begin{remark}\label{rq:description_updates}
  Recall Remark \ref{rem:rotazioni}: the positions accessible to a
  single bead, given the beads of neighboring threads and the boundary
  conditions, form a segment
  of the transverse edges of its thread and these segments are mutually
  non-intersecting. Thus, under both synchronous and asynchronous
  dynamics each bead is resampled with the uniform law on a finite segment.
\end{remark}

\subsubsection{Comparisons of mixing times, and constrained dynamics}\label{sec:comp_dyn}


As announced, the asynchronous dynamics will provide a mixing time
lower bound for the original one. 
\begin{proposition}
\label{prop:ta}
   Fix $G'\subset G$ as in Definition \ref{def:G'} and a boundary condition $m$, not necessarily almost planar.
   Let $\tmix$ be the mixing time for the original dynamics and let $T^a$ be the first time such that the law of the asynchronous dynamics, started
from the maximal configuration, is within variation distance $1/(2e)$ from equilibrium. Then, $\tmix\ge T^a$.

\end{proposition}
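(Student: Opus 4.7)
The strategy is to compare the Glauber dynamics $(M_t)$ to the asynchronous fast dynamics $(N_t)$, both started from the maximal configuration $M_{\max}$, and to deduce the desired inequality on variation distances from the censoring-type lemmas of Section~\ref{sec:monotonie}. Write $\nu_t$ for the law of $N_t$ and $\mu_t^{M_{\max}}$ for the law of $M_t$, using the notation of Section~\ref{sec:dynamics}, and set $\pi := \pi_{m,G'}$. A preliminary observation is that $\td \nu_t/\td\pi$ is increasing for every $t \geq 0$: the initial density is the indicator of the increasing event $\{h=h_{\max}\}$, and each asynchronous update is a single-bead heat-bath, which is reversible and monotone, so Lemma~\ref{lemme:derivee_croissante} propagates the property in time.

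The main step is to produce a coupling of $(M_t)$ and $(N_t)$, both started at $M_{\max}$, such that $M_t \geq N_t$ almost surely for every $t\geq 0$. I would attempt this by assigning each bead of $G'$ a common rate-$1$ Poisson clock and an i.i.d.\ sequence of uniform $[0,1]$ marks shared between the two processes. When the clock at bead $b$ rings with mark $U$: perform a one-step $\pm 1$ Glauber move in $M_t$ according to whether $U<1/2$ (subject to the constraints in $M_t$), and use $U$ via inverse CDF to resample bead $b$ uniformly on its allowed interval in $N_t$ (as in Remark~\ref{rq:description_updates}). Starting from $M_{\max}$, both processes can only decrease heights over time, and since $M_t\geq N_t$ forces the neighbouring beads in $M_t$ to sit in a configuration that makes the allowed interval of bead $b$ in $M_t$ lie weakly above the interval in $N_t$ (Proposition~\ref{prop:modele_billes}), the single heat-bath sample in $N_t$ ends up weakly below the Glauber outcome in $M_t$; the verification reduces to a finite case analysis on the value of $U$ and on the relative positions of the beads in the two configurations.

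Once the coupling is in place, stochastic domination $\nu_t \preceq \mu_t^{M_{\max}}$ follows, and Lemma~\ref{lemme:peres_winkler} yields
\[
  \|\nu_t-\pi\| \leq \|\mu_t^{M_{\max}} - \pi\| \leq \max_{M\in\Omega_{m,G'}} \|\mu_t^M - \pi\|.
\]
For every $t<T^a$ the left-hand side is $\geq 1/(2e)$ by definition of $T^a$, hence the right-hand side is too, so $t \leq \tmix$; letting $t \nearrow T^a$ gives $\tmix \geq T^a$. The main obstacle is precisely the coupling step: Glauber and asynchronous have qualitatively different updates (a $\pm 1$ move versus a uniform jump on a potentially long interval), and while the intuition that heat-bath equilibrates a single bead in one shot whereas Glauber needs many steps to do the same is very convincing, turning it into a formal monotone coupling requires a careful matching of the endpoints of the allowed intervals in the two processes, based on the explicit bead combinatorics of each of the three admissible lattices of Fig.~\ref{graphes_et_polygones_newton}.
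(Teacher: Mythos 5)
The overall scaffolding of your argument is correct and matches the paper's: show $\td\nu_t/\td\pi$ stays increasing, establish the stochastic domination $\nu_t\preceq\mu_t^{M_{\max}}$, invoke Lemma~\ref{lemme:peres_winkler}, and unwind the definitions of $T^a$ and $\tmix$. The gap is exactly the step you flag as the "main obstacle", and it is fatal as you have set it up: the pathwise coupling you describe does \emph{not} preserve $M_t\geq N_t$. Here is a concrete failure. Suppose at some time both processes agree and bead $b$ sits at the top of a common allowed interval $\{0,\dots,k\}$ (say $k=9$, position $9$; this is reachable since beads drift upward from the maximal configuration as heights decay). If the mark is $U=0.1$, the Glauber move in $M$ sends $b$ to position $8$, while the inverse-CDF heat-bath resample in $N$ sends $b$ to position $1$. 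Since moving a bead up by one transverse edge decreases the height of the adjacent face (Remark~\ref{rem:rotazioni}), this means $N$ now has a strictly \emph{higher} height function near $b$ than $M$ — the asynchronous trajectory has overshot the Glauber one in the wrong direction. A heat-bath update can jump the full length of the interval in a single step, and no assignment of marks can make a $\pm1$ step dominate such a jump pointwise. (As a side remark, your sentence about $M_t\geq N_t$ forcing the allowed interval in $M_t$ to lie \emph{above} the one in $N_t$ also has the orientation backwards: $M\geq N$ in the height order means $M$'s beads are weakly \emph{below} $N$'s.)

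The paper avoids a pathwise coupling altogether. It only ever compares \emph{laws}, one clock ring at a time. The key single-update inequality is $\tilde P^{(b)}\nu\preceq P^{(b)}\nu$ whenever $\td\nu/\td\pi$ is increasing: conditioning on all beads except $b$, the conditional law of $b$ under $\nu$ lives on an interval $\{0,\dots,k\}$ on which $\pi$ is uniform; the heat-bath kernel $\tilde P^{(b)}$ replaces that conditional by the uniform law (so $\tilde P^{(b)}\nu=\pi$ on that interval), while $P^{(b)}\nu$ still has an increasing density relative to $\pi$ (Lemma~\ref{lemme:derivee_croissante}), and any measure with increasing density relative to the uniform measure dominates it. Combined inductively with the monotonicity of $P^{(b)}$ (if $\nu_{t^-}\preceq\mu_{t^-}$ then $P^{(b)}\nu_{t^-}\preceq P^{(b)}\mu_{t^-}$), this gives $\nu_t=\tilde P^{(b)}\nu_{t^-}\preceq P^{(b)}\nu_{t^-}\preceq P^{(b)}\mu_{t^-}=\mu_t$ at each ring, hence $\nu_t\preceq\mu_t$ for all $t$ and the variation-distance comparison follows from Lemma~\ref{lemme:peres_winkler} exactly as you write. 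So the fix is: drop the attempted trajectory coupling and replace it with this distributional argument; the rest of your proof is fine.
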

\begin{proof}
  Recall the definition \eqref{eq:tmix} of mixing time: to get a lower
  bound, we can just look at the evolution from the maximal
  configuration $h_{max}$, which we can therefore assume to be the
  initial configuration of both original and asynchronous fast
  dynamics.  From the description of the asynchronous fast dynamics in
  Section \ref{sec:fd}, we see that we can couple the two dynamics
  using the same Poisson clocks for each bead. To prove that the asynchronous
  dynamics approaches equilibrium faster than the original one it is
  enough to show that if $\frac{\td \nu}{\td \pi}$ is increasing then,
  writing $P^{(b)}$ and $\tilde P^{(b)}$ for the kernels corresponding
  to an update of the bead $b$ according to the original and
  asynchronous fast dynamics respectively, one has 
  \begin{eqnarray}
    \label{bs}
    \tilde P^{(b)} \nu \preceq
    P^{(b)} \nu. 
  \end{eqnarray} Indeed, together with monotonicity 
  this guarantees that the height function is stochastically lower
  under the asynchronous dynamics than under the original one and
  then Theorem
  \ref{th:prePW}
  can be applied (with $\mu_t$ the law of the original dynamics and $\nu_t$ that of the asynchronous one).

  By conditioning on all the beads except $b$, we can assume that
  $\nu$ is a measure on some interval $\{0, \ldots, k\}$ and that
  $\pi$ is the uniform measure on the same interval (cf. Remark
  \ref{rem:rotazioni}) in which case it is trivial to check
  \eqref{bs}.  Indeed, $\tilde P^{(b)} \nu=\pi$ for every $\nu$, and
  $\td P^{(b)} \nu/\td\pi$ is increasing (cf. Lemma
  \ref{lemme:derivee_croissante} and recall the assumption $\td
  \nu/\td\pi$ increasing), which implies  $\pi\preceq P^{(b)} \nu$.
\end{proof}


\subsubsection{Constrained dynamics}

Next, we bound $\tmix$ from above using the synchronous dynamics: this
works well only if the dynamics is constrained between two
configurations whose height functions are not too different. Given two
matchings $M_-\le M_+$ in $\Omega_{m,G'}$ ($M_-$ will be
called ``the floor'' and $M_+$ ``the ceiling'') the constrained
dynamics is defined in the subset $\Omega^{M_\pm}_{m,G'}\subset\Omega_{m,G'}$
such that $M_-\le M\le M_+$: it is obtained from the original
dynamics, erasing all updates which would exit $\Omega^{M_\pm}_{m,G'}$.
It is elementary to check that monotonicity still holds, and the equilibrium measure is of course
$\pi_{m,G'}^{M_\pm}:=\pi_{m,G'}(\cdot|M_-\le \cdot \le M_+)$.  The
distance between floor and ceiling is defined as $H=\max_{f\in
  G^*}(h_{M_+}(f)-h_{M_-}(f))$. To avoid a proliferation of notations,
we still call $\tmix$ the mixing time  of the dynamics constrained
between $M_-$ and $M_+$, and
$\mu_t^M$ its law at time $t$, with initial condition $M$.

To estimate $\tmix$ within logarithmic multiplicative errors, we can
restrict ourselves to the evolution started from the extreme
configurations (see for instance \cite[Eq. (6.5)]{CMT}):
\begin{lemma}\label{lemme:mixage_depuis_maximale}
 Consider the dynamics constrained
between $M_-$ and $M_+$.   For any $t > 0$ and any $M\in \Omega_{m,G'}^{M_\pm}$,
\[
   \norm{\mu_t^{M} - \pi_{m,G'}^{M_\pm}} \leq 2 H \abs{(G')^*} \max\left[
\norm{\mu_t^{M_+} -\pi_{m,G'}^{M_\pm} },\norm{\mu_t^{M_-} -\pi_{m,G'}^{M_\pm} }
\right]
\]
with $ \abs{(G')^*}$ the number of faces of $G'$.
\end{lemma}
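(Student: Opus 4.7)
The plan is to use the global monotone coupling of Remark \ref{rem:GMC} (which is available for the constrained dynamics since the proof of monotonicity goes through unchanged once updates leaving $\Omega_{m,G'}^{M_\pm}$ are erased), together with the simple observation that in the state space $\Omega_{m,G'}^{M_\pm}$ the height at any face $f$ takes values in an interval of length at most $H$.

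Concretely, I would realize on the same probability space the four evolutions starting respectively from $M_-$, $M$, $\pi_{m,G'}^{M_\pm}$ (i.e.\ from an equilibrium-distributed initial condition) and $M_+$, driven by the same Poisson clocks and the same coin tosses. By monotonicity and the assumption $M_-\le M\le M_+$, at every time $t$ one has almost surely
\[
h_t^{M_-}(f)\;\le\; h_t^{M}(f),\; h_t^{\pi}(f)\;\le\; h_t^{M_+}(f) \qquad \text{for every } f\in G^*,
\]
where $h_t^{\pi}$ denotes the height function at time $t$ of the copy started from equilibrium (which has marginal law $\pi_{m,G'}^{M_\pm}$ for every $t$).

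The standard coupling bound gives
\[
\|\mu_t^M-\pi_{m,G'}^{M_\pm}\|\;\le\;\mathbb P(M_t^M\ne M_t^\pi)\;\le\;\sum_{f\in (G')^*}\mathbb P(h_t^M(f)\ne h_t^{\pi}(f)),
\]
where in the second step I use that a matching in $\Omega_{m,G'}$ is uniquely determined by its height function and that heights outside $(G')^*$ are frozen by the boundary condition. For each face $f$, the sandwiching above forces
\[
\mathbb P(h_t^M(f)\ne h_t^{\pi}(f))\;\le\;\mathbb P(h_t^{M_+}(f)\ne h_t^{M_-}(f))\;\le\;\mathbb E\bigl[h_t^{M_+}(f)-h_t^{M_-}(f)\bigr],
\]
the last bound coming from Markov's inequality applied to the non-negative integer-valued variable $h_t^{M_+}(f)-h_t^{M_-}(f)$.

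Finally, I would write
\[
\mathbb E\bigl[h_t^{M_+}(f)-h_t^{M_-}(f)\bigr]=\bigl(\mathbb E_{\mu_t^{M_+}}[h(f)]-\mathbb E_{\pi_{m,G'}^{M_\pm}}[h(f)]\bigr)+\bigl(\mathbb E_{\pi_{m,G'}^{M_\pm}}[h(f)]-\mathbb E_{\mu_t^{M_-}}[h(f)]\bigr),
\]
and bound each non-negative term by $H$ times the corresponding total variation distance: this uses that, on the state space $\Omega_{m,G'}^{M_\pm}$, the random variable $h(f)$ takes values in the interval $[h_{M_-}(f),h_{M_+}(f)]$ of length at most $H$, so $|\mathbb E_\mu[h(f)]-\mathbb E_\nu[h(f)]|\le H\,\|\mu-\nu\|$ whenever both $\mu$ and $\nu$ are supported on $\Omega_{m,G'}^{M_\pm}$. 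Summing over the $|(G')^*|$ faces yields the claimed bound. There is no real obstacle here: the only thing worth checking carefully is that the four processes can indeed be coupled monotonically for the constrained dynamics, which follows from Proposition \ref{prop:monotonie_dynamique} applied update-by-update, since a canceled update preserves the ordering automatically.
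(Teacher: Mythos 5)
Your proof is correct and is essentially the standard monotone-coupling argument that the paper defers to \cite[Eq.\ (6.5)]{CMT} rather than reproving: couple all processes with the global monotone coupling, reduce to the probability that the top and bottom evolutions still disagree somewhere, union-bound over faces, apply Markov's inequality to the nonnegative integer $h_t^{M_+}(f)-h_t^{M_-}(f)$, insert the equilibrium mean, and use that an expectation difference of a variable with range $\le H$ is at most $H$ times the total variation distance.

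One remark on a step you flagged but did not spell out: the nonnegativity of each term after inserting $\mathbb E_{\pi_{m,G'}^{M_\pm}}[h(f)]$ rests on the stochastic dominations $\mu_t^{M_+}\succeq\pi_{m,G'}^{M_\pm}\succeq\mu_t^{M_-}$, which in turn use that $\pi_{m,G'}^{M_\pm}$ is stationary and supported on $\Omega_{m,G'}^{M_\pm}$ (so its initial condition is sandwiched between $M_-$ and $M_+$) together with Proposition~\ref{prop:monotonie_dynamique} for the constrained chain. Strictly speaking the final estimate only needs the absolute-value bound $\lvert\mathbb E_\mu[h(f)]-\mathbb E_\nu[h(f)]\rvert\le H\,\norm{\mu-\nu}$, so nonnegativity is not essential, but it is good that you observed it. Your sanity check at the end—that a canceled update preserves the ordering so the monotone coupling survives the floor/ceiling constraint—is exactly the point the paper makes ("It is elementary to check that monotonicity still holds") and is needed for the argument to apply.
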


In the next result, $\tmix^s$ denotes the mixing time of the
  synchronous dynamics constrained between $M_-$ and $M_+$ (just
  take the Definition \ref{def:sincro} of  the synchronous dynamics and replace $\pi_{m,G'}$ by $\pi^{M_\pm}_{m,G'}$ there):
\begin{proposition}
\label{prop:ts}
  Fix $G'\subset G$ as in Definition \ref{def:G'} and boundary condition $m$. Suppose $m$
  is almost planar with slope $(s,t)\in\intN$ and consider
  floor/ceiling $M_\pm\in\Omega_{m,G'}$, at distance
  $H$ from each other. 
  We have
\[
   \tmix \leq \bigl( C_{s,t} H^2  \log^3(H \abs{(G')^*}) \bigr) \tmix^s.
\]

\end{proposition}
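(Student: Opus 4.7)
The plan is to simulate the synchronous fast dynamics step by step via the local constrained dynamics, bounding the simulation error using Peres--Winkler censoring (Corollary \ref{th:PW}). By Lemma \ref{lemme:mixage_depuis_maximale}, up to the easily absorbed multiplicative factor $2H|(G')^*|$ in the variation distance, it suffices to bound $\|\mu_t^{M_\pm}-\pi^{M_\pm}_{m,G'}\|$ when the initial condition is one of the extremal matchings; say $M_+$, the other case being symmetric. Since $\delta_{M_+}$ trivially has a monotone Radon--Nikodym derivative with respect to the equilibrium, Corollary \ref{th:PW} applies and any deterministic censoring schedule can only increase the distance from equilibrium.

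Set $\tau := C_{s,t}' H^2 \log^2(H|(G')^*|)$ with $C_{s,t}'$ to be adjusted, and let $N$ be determined below. Partition $[0,N\tau]$ into $N$ sub-intervals of length $\tau$, alternately labelled ``even'' and ``odd''. In each ``even'' (resp.\ ``odd'') sub-interval, censor all Poisson clocks at faces whose rotation would move a bead on an odd (resp.\ even) thread. This schedule being deterministic, Corollary \ref{th:PW} gives $\|\mu_{N\tau}^{M_+}-\pi^{M_\pm}_{m,G'}\|\le\|\tilde\mu_{N\tau}^{M_+}-\pi^{M_\pm}_{m,G'}\|$, where $\tilde\mu$ denotes the censored law.

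The main estimate is a block equilibration bound inside a single sub-interval. In an ``even'' sub-interval the positions $\eta$ of the odd-thread beads are frozen throughout, and by Remark \ref{rq:description_updates} the conditional equilibrium $\pi^{M_\pm}_{m,G'}(\cdot\tc\text{odd}=\eta)$ factorises as a product, one factor per even-thread bead, of uniform distributions on disjoint segments of transverse edges. Remark \ref{rem:rotazioni} together with the floor/ceiling constraint forces each such segment to have at most $2H$ positions, because shifting a bead by $k$ positions changes the height of some face by $k$ and the height is squeezed between $h_{M_-}$ and $h_{M_+}$. The censored local dynamics thus acts on each even-thread bead as an independent lazy simple random walk reflected on a segment of length $\le 2H$, whose TV distance to uniform at time $\tau$ decays as $C\exp(-c\tau/H^2)$. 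A union bound over the at most $|(G')^*|$ beads shows that, conditional on $\eta$, the censored distribution at the end of the sub-interval is within TV distance $\epsilon:=|(G')^*|\exp(-c\tau/H^2)$ of a perfect resample of the even parity.

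To conclude, couple $\tilde\mu_{N\tau}^{M_+}$ with the discrete-time synchronous dynamics $\nu_N$ that strictly alternates perfect resamples of even and odd parity, started from $M_+$, by declaring a ``failure'' at any sub-interval which does not produce a perfect resample: the failure probability per step is at most $\epsilon$, so $\|\tilde\mu_{N\tau}^{M_+}-\nu_N\|\le N\epsilon$. A Poissonization argument relating the strictly alternating discrete chain to the continuous-time synchronous fast dynamics with its two Poisson(1) clocks shows that $N=C'\tmix^s$ ensures $\|\nu_N-\pi^{M_\pm}_{m,G'}\|\le 1/(4e)$ via the submultiplicative estimate \eqref{eq:sottomol}. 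Using any trivial polynomial bound on $\tmix^s$ in terms of the state space size makes $\log\tmix^s=O(\log^2(H|(G')^*|))$, and then choosing the constant in $\tau$ large enough makes $N\epsilon\le 1/(4e)$ as well. Triangle inequality combined with the reduction of the first paragraph yields the announced $\tmix\le C_{s,t}H^2\log^3(H|(G')^*|)\,\tmix^s$. The technical heart of the proof is the block equilibration step: it hinges on the product structure of the conditional equilibrium provided by the bead model of Section \ref{sec:billes} (valid only thanks to the thread fibration specific to the square, hexagon and square-hexagon graphs) together with the $O(H)$ cap on single-bead state spaces, which is precisely the reason why it is useful to confine the dynamics between a floor and a ceiling at mutual distance $H$.
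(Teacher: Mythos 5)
Your overall strategy is the same as the paper's: censor the local constrained dynamics so that it alternates between resampling even and odd threads, show that within a block of length $\tau = O(H^2\log^2(H\abs{(G')^*}))$ the censored dynamics performs a near-perfect conditional resample (because it factors into independent one-dimensional random walks on intervals of length $O(H)$ by Remark~\ref{rq:description_updates} and the floor/ceiling cap on bead ranges), couple the censored chain to the synchronous chain with per-block failure probability $\epsilon$, and apply Corollary~\ref{th:PW}. This matches the paper's structure almost exactly.

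There is, however, one step where your justification is wrong. You need $N\epsilon$ to be small with $N$ of order $\tmix^s$ (times an extra $\log(H\abs{(G')^*})$ to kill the prefactor in Lemma~\ref{lemme:mixage_depuis_maximale}) and $\epsilon=\abs{(G')^*}\exp(-c\tau/H^2)$, so you need $\log\tmix^s=O(\log^2(H\abs{(G')^*}))$. You claim this follows from ``any trivial polynomial bound on $\tmix^s$ in terms of the state space size''. But the state space $\Omega_{m,G'}^{M_\pm}$ has cardinality up to $(H+1)^{\abs{(G')^*}}$, so a polynomial-in-state-space bound only gives $\log\tmix^s=O(\abs{(G')^*}\log H)$ --- much larger than what you need, and large enough to destroy the $H^2\log^3$ prefactor. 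The paper sidesteps this by writing ``negligible total variation error term'' without justification; to make the argument rigorous one should invoke (or prove beforehand) a bound on $\tmix^s$ that is polynomial in $L$ and $H$, such as the bound $\tmix^s = O(L^2H^7\log(LH))$ which is established in the proof of Theorem~\ref{th:wafer} via Lemmas~\ref{lemme:temps_martingale} and~\ref{lemme:variance_volume} and does not rely on Proposition~\ref{prop:ts}, so no circularity results. With that substitution $\log\tmix^s=O(\log(H\abs{(G')^*}))$ and your bookkeeping closes.

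Two minor remarks. First, with $N=C'\tmix^s$ your censored time is $N\tau = O(H^2\log^2(H\abs{(G')^*})\,\tmix^s)$, not the claimed $\log^3$; the third logarithm comes precisely from the $2H\abs{(G')^*}$ factor in Lemma~\ref{lemme:mixage_depuis_maximale}, which forces you to take $N$ of order $\tmix^s\log(H\abs{(G')^*})$ via \eqref{eq:sottomol} rather than ``easily absorb'' it. Second, to invoke Corollary~\ref{th:PW} one must also verify that the local update kernels $P^{(b)}$ satisfy $P^{(b)}\nu\preceq\nu$ whenever $\td\nu/\td\pi$ is increasing; the paper checks this via a short summation-by-parts computation, and your proof should include that verification (it is elementary, but not entirely automatic since $P^{(b)}$ is a single $\pm1$ step rather than a full resample).
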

\begin{proof}
  A complete proof for the hexagonal graph is given in \cite[Section
  6.2]{CMST} and since for the square or square-hexagon graph not much
  changes, we will be somewhat sketchy.

  For simplicity we will write $\pi$ for $ \pi_{m,G'}^{M_\pm}$.  One
  first proves that a single update of the synchronous fast dynamics
  can be realized by letting the original dynamics evolve for a time
  $O(H^2(\log H)( \log \abs{(G')^*}))$ while censoring some updates.
  Indeed, consider the dynamics obtained by starting from $M$ and setting to
  $0$ the rate of updates of beads on, say, odd threads in the
  original dynamics. It is clear that this auxiliary evolution converges to the
  uniform measure on configurations of beads on even thread
  conditioned by the beads on odd threads, which is exactly the
  measure after one ``even update'' of the synchronous fast
  dynamics, and Remark \ref{rq:description_updates} allows us to easily
  compute its mixing time. Beads on odd
  threads are frozen and those on even threads are completely
  independent so we have to
  compute the mixing time for a set of independent one-dimensional
  simple random walks on domains of the type $\{0, \ldots, k_i\}$. By
  Proposition \ref {prop:strictdecr} the $k_i$ are bounded by $C_{s,t}
  H$ (because, if a bead can be moved $n$ steps up, necessarily there
  is a length-$n$ free path along the thread) so the mixing time for
  each walk is $O(H^2\log H)$ and for $O(\abs{(G')^*})$ such walks it
  becomes $O(H^2 (\log H )(\log \abs{(G')^*}))$.

  It is then clear that the law of the synchronous fast dynamics at
  time $t$, call it $\nu_t^M$, coincides (except for a negligible
  total variation error term), with that of the original dynamics at
  time $ n(t)\Delta$ after censoring suitable updates, where
  \[\Delta=C_{s,t} H^2 (\log H)( \log \abs{(G')^*})\le C_{s,t} H^2 \log^2(H
  |(G')^*|) \] and $n(t)$ is a Poisson random variable of average $2t$
  (this is the number of updates within time $t$ for the synchronous
  dynamics). Since we will take $t$ large, we can replace
  $n(t)$ with its average (we skip details). If Corollary \ref{th:PW}
  is applicable (see below), we obtain then that $ \norm{\mu^{M_\pm}_{2t \Delta}
    -\pi}\le \norm{\nu^{M_\pm}_t-\pi}.  $ Then, using Lemma
  \ref{lemme:mixage_depuis_maximale} and \eqref{eq:sottomol},
\begin{gather}
\sup_M\norm{\mu^M_{2A \Delta \tmix^s}-\pi}\le
2H|(G')^*|\max\left[\norm{\mu_{2A \Delta \tmix^s}^{M_+} -\pi
  },\norm{\mu_{2A \Delta \tmix^s}^{M_-} -\pi } \right] \\
\le 2H|(G')^*|\max\left[
\norm{\nu^{M_+}_{A \tmix^s}-\pi},\norm{\nu^{M_-}_{A \tmix^s}-\pi}
\right]\le 2H|(G')^*|e^{-\lfloor A\rfloor}
\end{gather}
which is smaller than $1/(2e)$ for some $A$ of order $\log (H
|(G')^*|)$.  

To see that Corollary \ref{th:PW} is applicable, we have
to check that ${\dd \mu}/{\dd \pi}$ increasing implies
$P^{(b)}\mu\preceq \mu$ with $P^{(b)}$ the kernel of the update of
bead $b$ under the original dynamics, where beads move by $\pm1$ along
their respective thread. Conditioning on all other beads, we can assume that
$\mu$ is a probability on an interval $\{0,\dots,k\}$ and that $\pi$
is the uniform measure on the same interval. Then, summation by parts
shows that
\[
P^{(b)}\mu (f)-\mu(f)=-\frac12\sum_{x=1}^k[\mu(x)-\mu(x-1)][f(x)-f(x-1)]
\]
which is negative if $f$ is increasing ($\mu$ is also increasing).
\end{proof}

\subsubsection{Volume drift}\label{sec:drift_volume}

In this section we study the time evolution of the volume between two
configurations under the (a)synchronous fast dynamics.  This will be
the key to evaluate their mixing time and thus, thanks to Propositions
\ref{prop:ta} and \ref{prop:ts}, the mixing time of the original
dynamics. Note that in Proposition \ref{prop:volume_drift} we \emph{do not}
require the boundary condition to be almost-planar.

\begin{proposition}\label{prop:volume_drift}
  Let $M^1, M^2 \in \gO_{m,G'}$ be such that $M^1 \leq M^2$ and let
  $M^i_t, i=1,2$ denote the evolution starting from $M^i$ and
  following the fast dynamics (synchronous or asynchronous: we use the
  same notation). Letting $\cF_t$ denote the filtration induced by
  $\{M^i_s,i=1,2,s\le t\}$ and $V_t = \sum_{f \in G^*} [ h_{M^2_t}(f)
  - h_{M^1_t}(f)]$, then $V_t$ is a supermartingale:
\[
  \E \bigl[ V_t | \cF_{t'} \bigr] \leq V_{t'} \text{ for every } t\ge t'.
\]
The same holds if the fast dynamics is constrained between a floor $M_-$ and
a ceiling $M_+$.
\end{proposition}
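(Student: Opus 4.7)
The strategy is to use the global monotone coupling (Remark~\ref{rem:GMC}) so that $M^1_t\leq M^2_t$ for every $t\geq 0$ almost surely (in particular $V_t\geq 0$), translate $V_t$ to the bead language of Section~\ref{sec:billes}, and verify that the infinitesimal drift of $V_t$ under the fast dynamics is non-positive. By the bijection between height functions and bead positions, if $p^{(i),t}_b$ denotes the position at time $t$ of bead $b$ in $M^i_t$, then up to an additive constant depending only on the fixed boundary data,
\[
  V_t \;=\; C + \sum_b \bigl(p^{(1),t}_b - p^{(2),t}_b\bigr),
\]
and the sign convention is such that $M^1\leq M^2$ forces $p^{(1)}_b\geq p^{(2)}_b$ for every bead $b$ in $G'$. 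I would realize the fast dynamics via the quantile coupling on each resampling event: for every bead that is resampled, draw a single uniform $U\in[0,1)$ and pick the corresponding quantile in both $I^{(1)}_b$ and $I^{(2)}_b$; since the intervals are monotonically ordered (a consequence of the current monotonicity $M^1\leq M^2$ together with Proposition~\ref{prop:modele_billes}), this preserves the partial order, so the coupling is consistent with Remark~\ref{rem:GMC}.

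Next, I would compute the infinitesimal drift. In the asynchronous case, bead $b$ is resampled at rate $1$ and the new value is uniform on $I^{(i)}_b$ (Remark~\ref{rem:rotazioni}), so the expected change in $V$ per update is
\[
  \bigl(\mathrm{mean}(I^{(1)}_b)-\mathrm{mean}(I^{(2)}_b)\bigr)-\bigl(p^{(1)}_b-p^{(2)}_b\bigr).
\]
The synchronous case resamples an entire parity class of threads at rate $1$, but by the conditional independence in Remark~\ref{rem:rotazioni} the expected change decomposes into the same per-bead contributions. Hence the total infinitesimal drift of $V_t$ is a sum (over $b\in G'$, with rate $1$ each) of these expressions, and the supermartingale property reduces to showing
\[
  \sum_b \bigl(\mathrm{mean}(I^{(1)}_b)-\mathrm{mean}(I^{(2)}_b)\bigr)\;\leq\;\sum_b \bigl(p^{(1)}_b-p^{(2)}_b\bigr) \;=\; V.
\]

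This last step is the main obstacle. The key combinatorial input is that by Proposition~\ref{prop:modele_billes} together with the $\prec$-ordering of Definition~\ref{def:ordering}, the endpoints $a_b, b_b$ of the interval of a bead on $\gamma_i$ are determined by specific neighbor beads on $\gamma_{i\pm1}$, and as one sums $\mathrm{mean}(I_b)$ over the beads of a given thread a telescoping occurs: the upper endpoint for bead $j$ and the lower endpoint for bead $j{+}1$ are constrained by the unique pair of neighbor beads lying between them. Rearranging this expresses $\sum_b \mathrm{mean}(I_b)$ in terms of positions of beads on adjacent threads plus a boundary term that depends only on the fixed data $m$ (hence cancels between $M^1$ and $M^2$); the drift inequality then follows from the ordering $M^1\leq M^2$. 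The constrained case (floor $M_-$ and ceiling $M_+$) adds only restrictions to the intervals $I^{(i)}_b$ while keeping the conditional resampling uniform on a sub-interval, and the same telescoping argument applies without modification. The hard part will be to carry out the combinatorial bookkeeping cleanly enough that it works uniformly for the square, hexagon, and square-hexagon lattices, each of which has a slightly different $\prec$-structure.
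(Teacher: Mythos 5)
Your setup is sound and matches the paper's framework: pass to bead coordinates, write the drift of $V_t$ as a sum over beads of ``mean of the accessible interval minus current position'', and note that the drift is the same for the synchronous and asynchronous fast dynamics (though I would phrase that as a consequence of linearity of expectation, as the paper does, rather than of conditional independence — the conditional independence is what makes the synchronous dynamics well-defined, not what makes its drift decompose additively). The needed inequality $\sum_b \bigl(\mathrm{mean}(I^{(1)}_b)-\mathrm{mean}(I^{(2)}_b)\bigr)\le \sum_b \bigl(p^{(1)}_b-p^{(2)}_b\bigr)$ is also the correct target.

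However, the proposed ``telescoping of interval means'' is where the argument breaks, and it breaks for a specific structural reason. The accessible interval of a bead $b$ on thread $\gamma_i$ is the \emph{intersection} of a constraint coming from $\gamma_{i-1}$ and a constraint coming from $\gamma_{i+1}$: its upper (resp.\ lower) endpoint is a $\min$ (resp.\ $\max$) of two quantities, each depending on different neighbor beads. Because of this $\min/\max$ structure, $\sum_b \mathrm{mean}(I_b)$ is \emph{not} a linear (or affine) functional of the bead positions plus a boundary constant, and the telescoping you describe does not collapse it to anything comparable to $\sum_b p_b$ using only the order $M^1\le M^2$. For the hexagon, the two constraints have a symmetric structure and the situation is more forgiving (this is related to Wilson's exact heat-equation identity, Remark~\ref{rem:calore}); for the square and especially the square-hexagon graph the two sides play genuinely different roles (compare the two contributions in Case 2 of the paper's proof: $\gamma_{i+1}$ gives $0$ and $\gamma_{i-1}$ gives $+1$), and there is no symmetric cancellation. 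The ``combinatorial bookkeeping'' you defer to the end is in fact the entire content of the proposition.

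What the paper does instead is a \emph{different} telescoping: it telescopes the \emph{pair} $(M^1,M^2)$ through a chain of configurations differing by single elementary rotations (this uses that the drift, unlike the process $V_t$, is linear in the initial data), and then for a single-rotation discrepancy it performs an explicit case analysis on the position of the bead $b'$ on $\gamma_{i\pm2}$, which determines which of the two constraints (from $\gamma_{i-1}$ or $\gamma_{i+1}$) is binding on each affected bead $b^\pm$. This is exactly how one handles the $\min/\max$ non-linearity: by restricting to discrepancies so local that the binding constraint can be identified by hand, graph by graph. Your proposal skips both the reduction to single rotations and the case analysis, and as a result does not establish the key inequality.
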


Note that, since the volume is expressed as a sum of height differences, it does not matter
whether $M_t^1$ evolves independently of $M^2_t$ or not.
\begin{proof}

  First of all note that  the expected drift 
  \begin{eqnarray}
    \label{eq:frift}
    \lim_{\gep\searrow0}\frac{1}{\gep}[\bbE(V_{\gep})-V_0]
  \end{eqnarray}
  is the same for the synchronous and for the asynchronous fast
  dynamics (as a function of the initial conditions $M^1,M^2$): this is thanks to the
  fact that the volume is a sum of height differences over faces, that
  expectation is linear and that beads in the same thread are updated
  independently in a step of the synchronous dynamics. (This does not
  imply that the process $V_t$ itself or even its average is the same
  for the two dynamics). In the following, we will therefore assume
  that we deal with the synchronous dynamics and prove that \eqref{eq:frift} is
not positive. 

From the proof of Proposition \ref{prop:connect}, we see that there
exists a sequence of configurations $M_{(0)},\dots,M_{(k)}$ such that
$M_{(0)}=M^2,M_{(k)}=M^1$ and $M_{(i)}$ is obtained by $M_{(i-1)}$ via
a rotation that decreases the height at some face $f$. Writing the
volume difference between $M^2$ and $M^1$ as a telescopic sum of
volume differences between $M_{(i)}$ and $M_{(i-1)}$ and using
linearity of the expectation, we see that to prove \eqref{eq:frift} we
can restrict to the case where $M^1$ and $M^2$ differ only by a
rotation on a single face $f$.  We will actually prove that the
expected change of $V$ from a single update is $0$ except when $f$ is
suitably close to the boundary of $G'$, in which case it can be
negative (see Remark \ref{rq:borne_inf_drift} for a more precise
discussion).

The proof will be given only for the
square-hexagon graph since it contains all the difficulties, but the
same method works equally well for the hexagon or square graph.


\begin{figure}[h]
   \centering
    (a)\includegraphics[width=6cm]{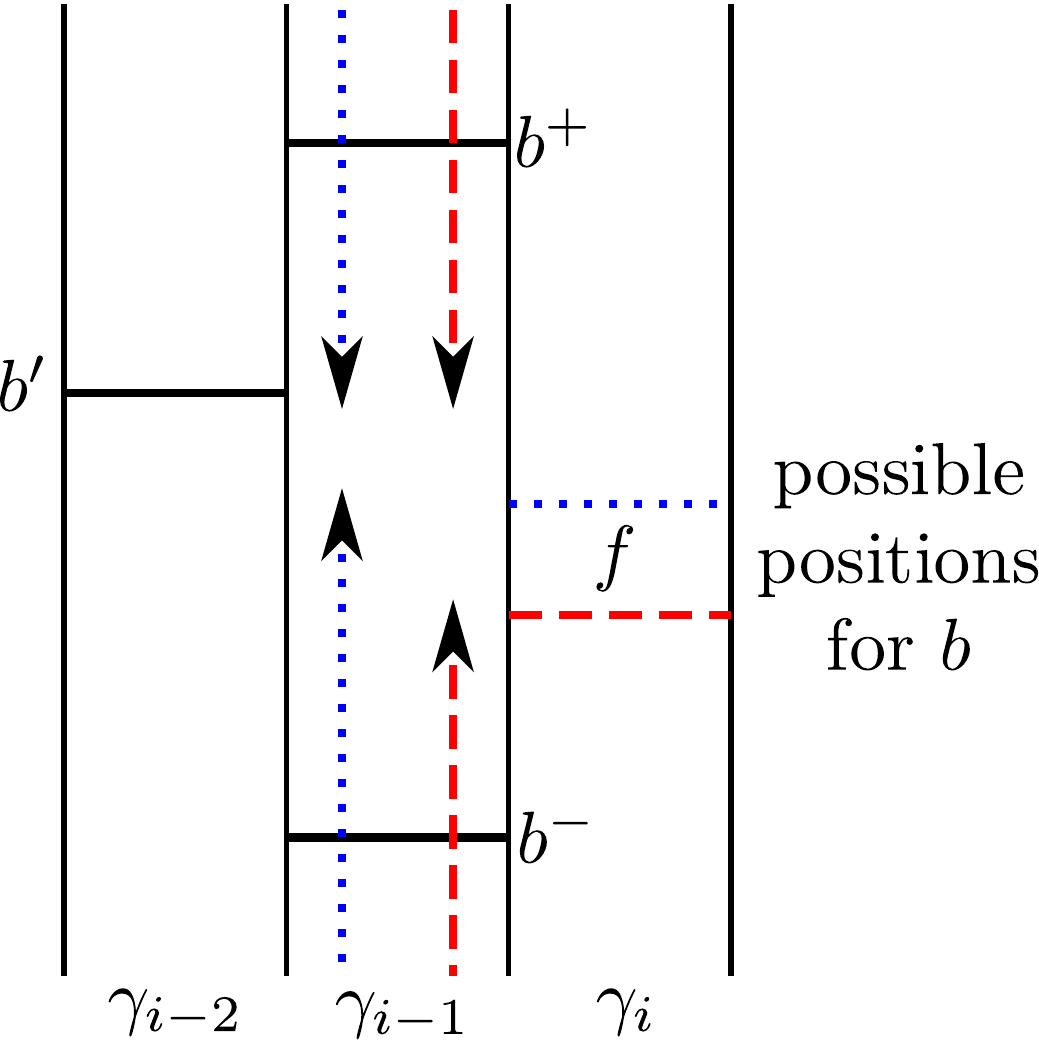}
(b)\includegraphics[width=5cm]{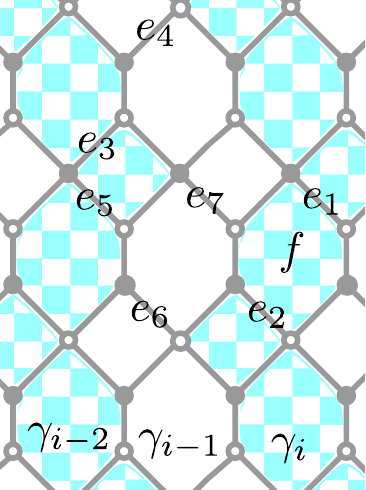}
   \caption{Volume drift when $f$ is a hexagon. (a): a schematic representation of threads $\gamma_i,\gamma_{i-1},\gamma_{ i-2}$
     with the beads $b,b^\pm,b'$. (b): a more detailed view of the
     three threads in question, with the hexagonal face $f$ and the
     transverse edges $e_1,\dots,e_7$ mentioned in the proof.
     \label{calcul_drift_hexagone} }
\end{figure}

\begin{figure}[h]
   \centering
\includegraphics[width=6cm]{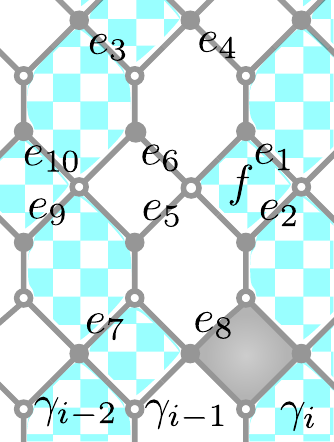}
   \caption{Volume drift when $f$ is a square. The threads
     $\gamma_i,\gamma_{i-1},\gamma_{ i-2}$
with the face $f$ and the transverse edges $e_i, i=1,\dots,10$. A square face like $f$, which is adjacent to hexagons of the thread to its left, is called
a ``Type I'' square, while a square like the shaded one (adjacent to hexagons of the thread to its right) is called ``Type II''.
Note
that if $f$ were a Type II square say again on $\gamma_{i}$, then by symmetry $\gamma_{i-1}$ would
not contribute to the volume change but the thread $\gamma_{i+1}$ would. 
   \label{calcul_drift_carre}}
\end{figure}

{\bf Case 1: $f$ is a hexagon.}  Suppose that the face $f$, whose
rotation brings $M^1$ to $M^2$, is a hexagon on thread $\gamma_i$
(cf. Fig. \ref{calcul_drift_hexagone}). Then, there is a certain
bead $b$ which in the two configurations is on two different adjacent
transverse edges of $\gamma_i$ on the boundary of $f$ (such edges are
called $e_1$ and $e_2$ in the picture).  Now consider a step of the
synchronous dynamics. If threads with the same parity as $\gamma_i$
are updated then clearly the evolved configurations can be coupled in
order to coincide, and the volume decreases by $-1$. We need to show
that when threads of the opposite parity are updated the average
volume increase is at most $+1$. Clearly, only threads $\gamma_{i\pm1}
$ contribute to the average change of volume (because there is no
discrepancy between $M^1$ and $M^2$ on  threads $\gamma_{i\pm 2}$, which therefore
``screen away'' $\gamma_{i\pm n}, n\ge 3$ from the discrepancy at $f$)
and by symmetry we will just show that $\gamma_{i-1} $ contributes at
most $1/2$.

On $\gamma_{i-1}$ there are a certain number of beads: call $b^+$ the
lowest bead which is above $b$ (with the ordering convention of
Definition \ref{def:ordering}) and $b^-$ the highest bead below
$b$. Note that one or both of them could be absent in $M^1,M^2$ (for
instance $\gamma_{i-1}$ could contain no beads at all): however,
suitably changing the dimer configuration outside of $G'$ (which has
no effect on the dynamics) we can always assume that such beads do
exist (possibly outside $G'$).  Thanks to Proposition
\ref{prop:modele_billes}, there exists then a unique bead $b'$ in
$\gamma_{i-2}$, which is lower than $b^+$ and higher than $b^-$, see
Fig. \ref{calcul_drift_hexagone}(a).

A look at Fig. \ref{calcul_drift_hexagone}(b) and Proposition \ref{prop:modele_billes} suffices
to convince that only the following two mutually exclusive cases can occur (Fig. \ref{calcul_drift_hexagone}(a) corresponds to the first one):
\begin{itemize}
\item if $b'$ is at or above transverse edge $e_3$, then $b^+$ is at
  or above edge $e_4$. Then, the distribution of $b^+$ given all the
  other beads does not depend on whether $b$ is at $e_1$ or $e_2$, and
  therefore $b^+$ (and \emph{a fortiori} beads on $\gamma_{i-1}$ above
  $b^+$) gives no contribution to the average volume change. As for
  $b^-$, instead, its set of possible positions (which forms an
  interval of adjacent transverse edges of $\gamma_{i-1}$, recall
  Remark \ref{rem:rotazioni}), includes exactly one edge more (called
  $e_7$ in the picture) when $b$ is at $e_1$ w.r.t when $b$ is at
  $e_2$. Since the position of $b^-$ is uniform among the available
  positions and since the average of a uniform random variable on
  $\{0,\dots,k\}$ is $k/2$, the average volume change arising from
  bead $b^-$ is $(k+1)/2-k/2=1/2$ (recall that moving a bead up by one transverse
  edge results in decreasing the height of a face by $-1$). It is also
  clear that beads in $\gamma_{i-1}$ and below $b^-$ do not contribute
  to the volume change, since their set of available positions is
  disjoint from that of $b^-$ (cf. Remark
  \ref{rq:description_updates}) and does not change when $b$ is moved
  from $e_1$ to $e_2$.

  In this discussion, we ignored so far the fact that rotations for
  the dynamics are allowed only in the sub-graph $G'$. For the bead
  $b^-$ to reach (after the update) its new available position $e_7$
  starting from the present one, all the necessary elementary
  rotations along thread $\gamma_i$ have to involve faces contained in
  $(G')^*$.  Otherwise, the edge $e_7$ is actually \emph{not} a
  possible position for $b^-$: the set of effectively available
  positions for $b^-$ is then the same when $b$ is in $e_1$ or $e_2$,
  so the volume change associated to $b^-$ is not $1/2 $ but $0$. In
  conclusion, taking into account the fact that $G'$ is not the whole
  graph, only decreases the average volume change. A similar reasoning
  shows that the floor/ceiling constraint can only decrease the
  average volume change. This observation will be picked up again in
  Remark \ref{rq:borne_inf_drift}.

\item if instead  $b'$ is at or below transverse edge $e_5$, then $b^-$ is at
  or below edge $e_6$. The argument is then similar to the first case
  (this time $b^-$ does not feel the effect of the discrepancy and $b^+$ has one
  more edge available, again $e_7$, in $M^2$ than in $M^1$). As in the first case, constraints from boundary conditions or from floor/ceiling can only decrease the volume change.
\end{itemize}

Altogether, in both cases the average volume change from  $\gamma_{i-1}$
is either $1/2 $ or $0$ (depending on the position of $f$ with respect
to the boundary of $G'$ and on the presence of floor/ceiling) and, since the same holds for  $\gamma_{i+1}$,
the overall average volume change is at most zero.

{\bf Case 2: $f$ is a square} (cf. Fig. \ref{calcul_drift_carre}).
This time assume that the discrepancy between $M^1$ and $M^2$ is at a
square face $f$ in $\gamma_i$, i.e. that a bead $b$ is either on edge
$e_1$ or $e_2$. Again, when threads with the parity of $\gamma_i$ are
updated the average volume decreases by $-1$. In this case, it is also
clear that the distribution of beads on $\gamma_{i+1}$ after an update
is the same starting from $M^1$ or $M^2$ (because the edges $e_1$ and $e_2$ meet on the same vertex of $\gamma_{i+1}$), so a non-zero contribution
to the average volume change this time can come only from
$\gamma_{i-1}$ and we will show that this contribution is at most
$+1$. With the same conventions as in Case 1 for the beads $b^\pm$ and
$b'$, we distinguish this time three cases (to avoid repetitions, we
ignore effects due to floor/ceiling and to the fact that $G'\ne G$:
exactly as in Case 1, this can only decrease the average volume
change):
\begin{itemize}
\item if $b'$ is at or above $e_3$ then $b^+$ is at $e_4$ or higher
  and does not feel the effect of the discrepancy. At the same time,
  according to whether $b$ is at $e_2$ or $e_1$, the edges $e_5,e_6$
  are available or not for $b^-$. The average volume change induced by
  $b^-$ is then $(k+2)/2-k/2=1$.

\item similarly, if $b'$ is at or below $e_7$ then $b^-$ does not feel
  the discrepancy (it is at or  below $e_8$) and $b^+$ has two extra available positions (again
  $e_5,e_6$) when $b$ is at $e_2$. Again this gives volume change
  $+1$.

\item finally, when $b'$ is either at $e_9$ or $e_{10}$, then both $b^+$
  and $b^-$ feel the discrepancy: indeed, when $b$ is at $e_2$ then
  position $e_5$ is not available for $b^-$ and $e_6$ is available for
  $b^+$, while when $b$ is at $e_1$ then
  position $e_5$ is available for $b^-$ and $e_6$ is not available for
  $b^+$. Again, the average volume change is $+1$.
\end{itemize}
\end{proof}

\begin{remark}\label{rq:borne_inf_drift}
  It is useful to emphasize that the proof of Proposition
  \ref{prop:volume_drift} showed the following. Assume for simplicity
  that there is no floor/ceiling constraint on the dynamics. If $M^1$
  and $M^2$ differ by a single rotation at $f\in (G')^*$, then the
  average volume change after an update is between $-1$ and $0$. To
  decide whether it is zero or non-zero, proceed as follows. The
  rotation around $f$ changes the set of available positions for a
  certain number (at most two, actually) of beads in threads
  neighboring the thread of $f$: some positions which were not allowed
  before the rotation of $f$ become allowed and vice-versa. If all the
  elementary rotations leading such beads along their threads to the
  new available positions are allowed (i.e. if all the faces
  corresponding to such elementary rotations are in the finite domain
  $(G')^*$) then the average volume change is zero. Otherwise, it is
  different from zero.
\end{remark}

\begin{remark}
\label{rem:calore}  From the proof of Proposition \ref{prop:volume_drift} one can also
  understand why the study of the dynamics  on the square or
  square-hexagon lattice is qualitatively more challenging than on the
  hexagonal lattice. The basic observation due to D. Wilson in
  \cite{Wilson} (although it was not formulated in this terms) is the
  following. Consider the ``fast dynamics'' for the hexagonal lattice
  with initial condition given by some matching $m$, and let
  $H^m_t(i)$ be average at time $t$ of the sum of the heights of the
  faces (in $(G')^*$) belonging to thread $i$.  Then, for any pair of
  initial configurations $m^1,m^2$, the function
  $\psi_t(\cdot):=H^{m^1}_t(\cdot)-H^{m^2}_t(\cdot)$ satisfies the
  discrete heat equation
\begin{eqnarray}
  \label{eq:heateq}
  \partial_t \psi_t(i)=\frac12\Delta \psi_t(i)+R_t=\frac12[\psi_t(i+1)-2\psi_t(i)+\psi_t(i-1)]+R_t
\end{eqnarray}
where the ``error term'' $R_t$ is due to the boundary of $G'$ and to
floor/ceiling constraints (if present) and can be ignored for the sake
of this discussion. Call $V_t =\sum_i \psi_t(i)$ the average volume
difference between the two evolving configurations. Since there are
$O(L)$ threads that intersect $G'$ and the lowest eigenvalue of the
discrete Laplacian on $\{1,\dots,L\}$ is of order $L^{-2}$, one
deduces immediately that after a time of order $L^2\times \log V_0$,
the average volume $V_t$ is very small so that the two configurations
have coupled with high probability.

That the same kind of argument \emph{does not} work for other lattices
can be seen as follows. Consider for instance the square-hexagon
lattice, and assume that (with the terminology of the caption of Fig.
\ref{calcul_drift_carre}) $m^1$ and $m^2$ differ only by the rotation
at a square face $f$ of Type I on thread $j$. Then, clearly at time
zero $\psi_0(j)=1$ and $\psi_0(j')=0$, $j'\ne j$. While it is still
true that \eqref{eq:heateq} holds for $t=0$ and $i=j$ (the initial
drift of $\psi(j)$ is $-1=\frac12\Delta\psi_0(j)$), the equation does
not hold (even at $t=0$) for $i=j\pm1$: indeed, the proof of
Proposition \ref{prop:volume_drift} shows that the initial drift of
$\psi(j-1)$ is $1=\grad\psi_0(j)=\psi_0(j)-\psi_0(j-1)$ (instead of
$1/2=\frac12\Delta\psi_0(j-1)$) and that of $\psi(j+1)$ is $0$
(instead of $1/2$). If on the contrary the face $f$ were a square of
Type II, one would find that the initial drift of $\psi(j-1)$ is $0$
and that of $\psi(j+1)$ is $1=-\grad\psi_0(j+1)$.

We believe that, for initial conditions $m^1,m^2$ such that their
height differences are ``smooth'' on the macroscopic scale, Equation
\eqref{eq:heateq} should still (approximately) hold, for $L$
large. Indeed, there are as many Type I as Type II squares in each
thread: if each of the two types contributes approximately equally to
the height differences $\psi_0(i)$, from the above reasoning one finds
that the initial drift of $\psi(j-1)$ is approximately
$1/2\grad\psi_0(j)-1/2\grad\psi_0(j-1)=1/2\Delta\psi_0(j-1)$. However,
trying to pursue this route seems quite hard (one should show that
``smoothness'' is conserved for positive times) and we had to devise
an alternative approach instead, based on Proposition
\ref{prop:volume_drift} and on Theorem \ref{th:wafer} of next section.

\end{remark}

\section{Proof of Theorem
\ref{th:tmixU}}

\subsection{Mixing time upper bound}\label{sec:upper_bound}

Here we prove the mixing time upper bound of Theorem
\ref{th:tmixU}. The crucial step (Theorem \ref{th:wafer}) is to give an almost-optimal
estimate when the dynamics is constrained between floor and ceiling of
small mutual distance $H$ (in the application, we will take $H=L^\epsilon$ with $\epsilon$ small).
Then, an argument developed in \cite{CMT} allows to deduce the mixing time estimate
for the unconstrained dynamics (see Section \ref{sec:meancur} for a sketch).

\subsubsection{A martingale argument}
\label{sec:mart}
The basic step is to prove the following:

\begin{theorem}
\label{th:wafer}
Consider the same setting as in Theorem \ref{th:tmixU}, but assume
that the height function is constrained between ceiling  and floor
 that are almost-planar configurations of slope $(s,t)\in\intN$, of
mutual distance $H$. Then, $\tmix=O(L^2 H^9 \log^4(HL))$.
\end{theorem}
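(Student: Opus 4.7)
The plan is to prove Theorem \ref{th:wafer} by a chain of reductions ending in a martingale argument for the mutual volume between two coupled evolutions. First, Proposition \ref{prop:ts} reduces the statement to a bound of the form $\tmix^s = O(L^2 H^7 \log(HL))$ for the synchronous fast dynamics constrained between the floor $M_-$ and the ceiling $M_+$, since $\abs{(G')^*}=O(L^2)$ and the prefactor there is $C_{s,t}H^2\log^3(HL)$. Using Lemma \ref{lemme:mixage_depuis_maximale} together with the global monotone coupling (Remark \ref{rem:GMC}), one further reduces to bounding the coalescence time of two synchronous evolutions started from $M_+$ and $M_-$ and run on the same probability space. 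These evolutions satisfy $M_t^-\le M_t^+$ for all $t$, and coalesce exactly when the mutual volume $V_t:=\sum_{f\in (G')^*}[h_{M_t^+}(f)-h_{M_t^-}(f)]$ hits $0$. Note $V_0\le \abs{(G')^*}\,H = O(L^2 H)$.

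By Proposition \ref{prop:volume_drift}, $V_t$ is a non-negative supermartingale. A naive application of a hitting-time lemma (Lemma \ref{lemme:temps_martingale}) with variance rate bounded from below merely by a constant would yield a coalescence time of order $V_0^2\sim L^4 H^2$, which is essentially the bound implicit in \cite{LRS} and too weak by a factor $L^2$. The main new input, which I expect to be the principal obstacle, is to establish a $V_t$-dependent lower bound of the form
\[
\lim_{\delta\searrow 0}\frac{1}{\delta}\,\bbE\!\left[(V_{t+\delta}-V_t)^2\,\big|\,\cF_t\right] \;\ge\; \frac{c_{s,t}\,V_t}{H^{k}}
\]
for some fixed exponent $k$ depending only on the lattice. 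The guiding heuristic is that a thread $i$ on which the two coupled configurations disagree by a partial volume $V_t^{(i)}$ must contain at least $V_t^{(i)}/H$ beads whose allowed intervals (in the sense of Remark \ref{rem:rotazioni}) differ between the two configurations, because each such bead contributes at most $H$ to the volume; when the appropriate parity is updated, these discrepant beads are independently resampled to uniform positions in non-identical intervals, and a short calculation gives a variance contribution proportional to their number. Summing over $i$ produces the bound above, up to constants. The delicate point is to make this quantitative in the presence of floor/ceiling and boundary constraints (cf.\ Remark \ref{rq:borne_inf_drift}), to handle the interaction between beads on adjacent threads (which is where the extra power $H^{k}$ comes from), and to rule out degenerate configurations in which most of $V_t$ is carried by isolated discrepancies that the synchronous step cannot efficiently resample.

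With the variance bound of the previous step, I would apply Lemma \ref{lemme:temps_martingale} iteratively along a dyadic sequence of thresholds $V_0, V_0/2, V_0/4,\ldots$: while $V_t$ lies in $(V_0/2^{j+1},V_0/2^j]$ the variance rate is at least $cV_0/(2^{j+1}H^{k})$, so the expected time to further halve $V_t$ is of order $(V_0/2^j)^2/(cV_0/(2^{j+1}H^{k}))=O(V_0 H^{k}/(c\,2^j))$. Summing the geometric series gives an expected coalescence time $O(V_0 H^{k})=O(L^2 H^{k+1})$, and a routine union bound (at the cost of a $\log(HL)$ factor) upgrades this to a high-probability statement. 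Plugging $\tmix^s=O(L^2 H^{k+1}\log(HL))$ into Proposition \ref{prop:ts} and absorbing the $2H\abs{(G')^*}$ factor of Lemma \ref{lemme:mixage_depuis_maximale} into a logarithm via \eqref{eq:sottomol} yields $\tmix=O(L^2 H^{k+3}\log^4(HL))$; the target exponent $9$ of the theorem then corresponds to $k=6$, which is the value I expect to obtain from the bead-model analysis sketched above for the square and square-hexagon lattices (the hexagonal case being strictly easier, in line with Remark \ref{rem:calore}).
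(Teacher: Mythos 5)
Your proposal follows the same architecture as the paper's proof: reduce via Proposition \ref{prop:ts} to the synchronous fast dynamics, track the mutual volume $V_t$ between the extremal evolutions as a supermartingale (Proposition \ref{prop:volume_drift}), establish a $V_t$-proportional lower bound on the infinitesimal variance, and iterate Lemma \ref{lemme:temps_martingale} over dyadic thresholds. You also correctly guess the exponent $k=6$. However, two points deserve attention.

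First, the variance lower bound (the paper's Lemma \ref{lemme:variance_volume}) is the real content of the theorem, and your proposal only sketches the heuristic. The heuristic you give --- that there are at least $V_t/H$ faces with a discrepancy, hence many beads whose accessible intervals differ --- is on the right track, but the actual proof needs a case split between beads whose accessible interval genuinely differs in the two configurations (call them $B_t^{\ne}$) and beads whose interval is the same but whose positions differ ($B_t^=$). The first kind give variance $\ge 1/8$ each after an update; the second kind give \emph{zero} conditional variance (they resample to the same uniform position in both chains), but they do contribute a large negative drift, which via $\bbE[(\Delta V)^2]\ge(\bbE[\Delta V])^2$ again yields a useful lower bound. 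Balancing these two scenarios, with a threshold $|B_t^{\ne}|\gtrless|B_t^=|/H$, is where the $H^6$ comes from. Without this dichotomy the argument does not close: a configuration where $V_t$ is carried entirely by beads of type $B_t^=$ (your ``degenerate configurations'') gives no instantaneous variance at all, only drift, and your sketch as written does not handle it. Also note that the $V_t/H^3$ lower bound on the number of discrepant beads needs Propositions \ref{prop:strictdecr} and \ref{prop:rotationpossible} (a free path of length $O(H)$ to an unfrozen bead), not just the crude ``each face contributes at most $H$'' which only yields $V_t/H$ discrepant \emph{faces}, not distinct movable beads.

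Second, the dyadic hitting-time bookkeeping is off. You estimate the time to cross from level $V_0/2^j$ to $V_0/2^{j+1}$ as $\sim (V_0/2^j)^2/\nu$, implicitly treating $V_0/2^j$ as an absorbing upper barrier. But $V_t$ is only a supermartingale, not monotone, and the a.s.\ upper bound $M=H\,\abs{(G')^*}$ must be used in Lemma \ref{lemme:temps_martingale} at every level. Doing so gives $\E[T_j-T_{j-1}]\le 2M m_{j-1}/\nu_j$ with $\nu_j=c\,m_j/H^6$ and $m_{j-1}/m_j=2$, i.e.\ a \emph{constant} (not geometrically decaying) per-level cost $O(\abs{(G')^*}H^7)$; summing over $i_0\sim\log(H\abs{(G')^*})$ levels produces the $\log(LH)$ factor directly from the number of levels, not from a union bound. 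Your geometric-decay claim is therefore too optimistic, though coincidentally you recover the correct final exponent by then bolting on a logarithmic factor by hand. The conclusion is right, but the intermediate step as written would be rejected.
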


The main idea will be to apply to the volume between two configurations
 the following classical bound on the hitting time for a supermartingale:
\begin{lemma}\label{lemme:temps_martingale}
  Let $X_t$ be a continuous-time supermartingale such that almost surely
  $0 \leq X_t \leq M$ for every $t\in\bbR^+$ and $ \liminf_{\delta \rightarrow 0}\frac{1}{\delta} \E[ (X_{t+\delta}-X_t)^2|\cF_t]  \geq \nu >0$ whenever $X_t>0$.
 Suppose $X_0 = i > 0$ almost surely, fix $0 <
  m < i$ and let $T^{(m)}= \inf \{ t: X_t \leq m \}$ be the hitting time of
  $[0,m]$. Then we have
\begin{equation*}
   \E [T^{(m)}] \leq \frac{ 2Mi}{\nu}.
\end{equation*}
\end{lemma}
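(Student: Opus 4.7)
The plan is to build from $X_t$ a bounded submartingale whose drift is bounded below by $\nu$, and then derive the hitting-time estimate by a one-line optional stopping argument.

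Set $\psi(x) = (M-x)^2$, which on $[0,M]$ is bounded between $0$ and $M^2$. Expanding the square gives the key identity
\[
\E[\psi(X_{t+\delta}) - \psi(X_t) \mid \cF_t] = \E[(X_{t+\delta} - X_t)^2 \mid \cF_t] + 2(X_t - M)\,\E[X_{t+\delta} - X_t \mid \cF_t].
\]
Because $X_t \leq M$ and $X_t$ is a supermartingale, the cross term is a product of two non-positive quantities, hence non-negative; the reason for working with $\psi(x)=(M-x)^2$ rather than the more obvious $x^2$ is precisely to arrange this sign. Combined with the variance hypothesis, on the event $\{X_t > 0\}$ (which contains $\{t < T^{(m)}\}$ since $m>0$),
\[
\liminf_{\delta \to 0^+} \frac{1}{\delta}\,\E[\psi(X_{t+\delta}) - \psi(X_t) \mid \cF_t] \geq \nu.
\]

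From this infinitesimal bound I would deduce that the stopped, translated process $Z_t := \psi(X_{t \wedge T^{(m)}}) - \nu\,(t \wedge T^{(m)})$ is a submartingale. In the application of the lemma $X_t$ is the volume process, a pure-jump process with bounded jump rates, so this passage is routine: the local drift lower bound integrates directly to the global submartingale property. Optional stopping at the bounded time $t \wedge T^{(m)}$ then gives
\[
\nu\, \E[t \wedge T^{(m)}] \leq \E[\psi(X_{t\wedge T^{(m)}})] - \psi(i) \leq M^2 - (M-i)^2 = i(2M-i) \leq 2Mi,
\]
and monotone convergence as $t \to \infty$ yields the announced bound $\E[T^{(m)}] \leq 2Mi/\nu$.

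The only delicate step in the plan is the passage from the $\liminf$ hypothesis to the genuine submartingale property of $Z_t$. In our application this is a tautology, since $X_t$ is piecewise constant with finitely many possible jumps per unit time. In a more abstract setting one would invoke the Doob--Meyer decomposition $X_t = M^X_t - A_t$, apply the semimartingale identity $d(X_t^2) = 2X_{t^-}\,dM^X_t - 2 X_{t^-}\,dA_t + d[X]_t$, and use the inequalities $X_{t^-}(M - X_{t^-}) \geq 0$ and $d[X]_t \geq \nu\,dt$ on $\{X_t > 0\}$ to obtain the same submartingale statement.
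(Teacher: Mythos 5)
Your proof is correct and follows essentially the same route as the paper: your test function $\psi(x)=(M-x)^2$ is the paper's $X_t^2-2MX_t$ shifted by the constant $M^2$, and your stopped process $Z_t=\psi(X_{t\wedge T^{(m)}})-\nu(t\wedge T^{(m)})$ plays the same role as the paper's $X_t^2-2MX_t-\nu\min(t,T^{(0)})$. You are in fact slightly more careful than the one-line hint in the paper, since you address explicitly how the $\liminf$ drift hypothesis upgrades to the submartingale property in the pure-jump setting of the application.
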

(Just note that if $Z(t)=X_t^2-2MX_t-\nu\min(t,T^{(0)})$, then
$Z_t$ is a negative sub-martingale and compute the average of $Z(t)$ for $t=T^{(m)}$).

\medskip
Let $V_t$ denote the volume between the maximal and minimal evolutions
$M^+_t$, $M^-_t$ \emph{under the synchronous fast dynamics}.  Proposition \ref{prop:volume_drift} shows that
$V_t$ is a super-martingale. Because of the floor and ceiling at
distance $H$, we clearly have $0\leq V_t \leq \abs{(G')^*}H$
deterministically. To apply Lemma \ref{lemme:temps_martingale} we only
need a lower bound on $\E[ (V_{t+\delta}-V_t)^2|\cF_t] $. It is
important to remark that such a quantity \emph{does depend} on how $M^+_t$, $M^-_t$
are coupled, while by linearity it is not necessary to specify the
coupling to compute the drift $\E[ V_{t+\delta}-V_t|\cF_t] $.

\begin{lemma}\label{lemme:variance_volume}
   There exists a global monotone coupling under which 
   \begin{eqnarray}
     \label{eq:crochet}
 \liminf_{\delta \rightarrow 0}\frac{1}{\delta} \E[ (V_{t+\delta}-V_t)^2|\cF_t] \geq c \frac{V_t}{H^6}    
   \end{eqnarray}
  where $c$ is a constant depending only on the slope $(s,t)$.
\end{lemma}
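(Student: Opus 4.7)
The plan is to construct a global monotone coupling for the synchronous fast dynamics and, thread by thread, extract an irreducible contribution to the conditional variance of the one-step volume change that scales with the local discrepancy.

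\textbf{Setup and coupling.} Condition on $\cF_t$ and decompose $V_t = \sum_i v_i$, with
\[
v_i = \sum_{k=1}^{N_i}\bigl(p^{+,i}_k - p^{-,i}_k\bigr)\geq 0
\]
the discrepancy contribution of thread $\gamma_i$; here $p^{+,i}_1<\dots<p^{+,i}_{N_i}$ and $p^{-,i}_1<\dots<p^{-,i}_{N_i}$ are the ordered bead positions on $\gamma_i\cap(G')^*$ in $M^+_t$ and $M^-_t$, and $p^{+,i}_k\geq p^{-,i}_k$ by monotonicity (the integer $N_i$ depends only on the boundary condition). By Proposition \ref{prop:strictdecr} applied to both $M^\pm_t$ (they stay between floor and ceiling at distance $H$), every bead's allowed window (given the neighbors on $\gamma_{i\pm1}$) has length at most $C_{s,t}H$. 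When the Poisson clock of parity $\epsilon$ rings, every thread $\gamma_i$ of that parity is resampled from its uniform conditional law on valid bead configurations given $\gamma_{i\pm1}$. We couple the resamplings in $M^+_t$ and $M^-_t$ by Strassen's monotone coupling of these two laws (which exists because, by induction, the configurations on $\gamma_{i\pm 1}$ are ordered), realized by the quantile rule applied to the ordered list of beads. Different threads of the same parity use independent randomness; iterating in time defines the global monotone coupling.

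\textbf{Variance decomposition and per-thread estimate.} Because the couplings on distinct threads of the same parity use independent randomness,
\[
\var(\Delta V^\epsilon\mid \cF_t) = \sum_{i:\,\mathrm{par}(i)=\epsilon} \var(\tilde v_i\mid \cF_t),
\]
where $\Delta V^\epsilon$ is the volume change under an update of parity $\epsilon$. The core step is the per-thread estimate
\[
\var(\tilde v_i\mid \cF_t) \;\geq\; c\,\frac{v_i}{H^6}, \qquad c=c_{(s,t)}>0.
\]
The ingredients are: (i) since each displaced bead contributes at most $C_{s,t}H$ to $v_i$, the number of displaced beads satisfies $n_i \geq v_i/(C_{s,t} H)$; (ii) a displaced bead has genuinely different allowed windows in $M^+_t$ and $M^-_t$, and the quantile coupling of two uniform laws on distinct intervals of length $\leq C_{s,t}H$ gives a contribution of order $H^{-O(1)}$ to $\var(\tilde p^{+,i}_k-\tilde p^{-,i}_k\mid \cF_t)$; (iii) summing these contributions across the $n_i$ displaced beads (and absorbing correlations induced by the ordering constraint on the thread into the final $H$-exponent) yields the bound.

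\textbf{Conclusion and main obstacle.} Combining both parities and using that each parity clock has rate $1$,
\[
\liminf_{\delta\to 0}\frac{1}{\delta}\,\E\bigl[(V_{t+\delta}-V_t)^2\mid\cF_t\bigr] \;\geq\; \sum_\epsilon \var(\Delta V^\epsilon\mid\cF_t) \;\geq\; c\,\frac{V_t}{H^6}.
\]
The hard part is the per-thread variance estimate, since beads on a single thread are tied together through the ordering constraint, so the quantile coupling acts jointly rather than marginally and per-bead variances cannot simply be added. The specific exponent $H^6$ should come from the combination of: the $O(H)$ bound on allowed bead windows, the lower bound $n_i \geq v_i/(C_{s,t} H)$ on displaced beads, the $O(H^{-2})$ per-bead variance after quantile coupling, and the loss incurred in translating marginal contributions into joint contributions through the thread ordering. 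The fact that we restrict in Theorem \ref{th:tmixU} to the square, hexagonal and square--hexagon lattices enters precisely here: the simple structure of the opposite-parity neighborhoods in these graphs (cf. Fig.~\ref{fig:beads} and Proposition~\ref{prop:modele_billes}) ensures that a displaced bead genuinely sees the discrepancy on the adjacent threads, which makes the extraction of the per-bead variance contribution possible.
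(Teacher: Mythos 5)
Your plan gets the coupling and the thread/bead bookkeeping roughly right (and the $|B_t|\gtrsim V_t/H^3$ counting from Proposition~\ref{prop:rotationpossible} is the same starting point as the paper), but step (ii) of your per-thread estimate contains a genuine error that breaks the whole argument. You assert that ``a displaced bead has genuinely different allowed windows in $M^+_t$ and $M^-_t$.'' This is false. A bead $b$ can sit at different transverse edges in $M^+_t$ and $M^-_t$ while its accessible interval (determined by the beads on the two neighbouring threads and the boundary) is \emph{identical} in the two configurations --- this happens whenever the discrepancy near $b$ is purely on $b$'s own thread. Under any monotone coupling whose two marginals are the \emph{same} uniform law (in particular your quantile coupling), the two resampled positions coincide with probability one, so such a bead contributes \emph{zero} to the conditional variance of $\tilde v_i$. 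If a whole thread consists of such beads, $\var(\tilde v_i\mid\cF_t)=0$ even though $v_i$ is large, and your claimed inequality $\var(\tilde v_i\mid\cF_t)\geq cv_i/H^6$ is simply wrong.

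This is precisely the obstruction that the paper circumvents by splitting the displaced-bead set $B_t$ into $B_t^{\neq}$ (windows genuinely differ, giving a per-bead variance $\geq 1/8$) and $B_t^{=}$ (same window, different position, giving zero variance but a deterministic drop of $V^{(b)}$ to $0$). When $B_t^{=}$ dominates, the paper does \emph{not} bound a variance; it bounds the \emph{second moment} $\E[(V_{t+\delta}-V_t)^2\mid\cF_t,A^{(\pm)}]$ from below by the square of the (large, negative) conditional drift $-|B_t^=|/4$, obtaining $\nu_t\gtrsim V_t^2/H^6$, and then uses $V_t\geq 1$ to conclude. A purely variance-based, per-bead-additive argument like yours cannot see this contribution, so the ``absorb correlations into the $H$-exponent'' hedge in your step (iii) cannot be made to work: the issue is not correlations but a whole regime in which the variance vanishes and only the drift saves you.

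Secondary remarks: your decomposition of the one-step variance into a sum of per-thread variances already requires the conditional independence across same-parity threads that the paper states explicitly (it is correct, but you should say why); and your factor $n_i\geq v_i/(C_{s,t}H)$ per thread gives, after summing over threads, only $V_t/H$ rather than the $V_t/H^3$ the paper extracts from the geometric ``nearby non-frozen bead'' argument --- but this is a benign discrepancy in constants, not the core problem. The core problem is the false claim in (ii) and the missing drift/second-moment mechanism.
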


\begin{proof}[Proof of Theorem \ref{th:wafer}]
  Applying Proposition \ref{prop:ts}, it is enough to give the  upper
  bound \[\tmix^s \leq \cst L^2 H^7\log(LH)\] for the
  mixing time of the synchronous fast dynamics.

  Let $m_i= 2^{-i} H \abs{(G')^*}$ and $T_i = \inf \{ t: V_t \leq m_i
  \}$. Remark that, up to time $T_i$, $V_t$ satisfies the hypothesis
  of Lemma \ref{lemme:temps_martingale} with $M=H \abs{(G')^*}$ and
  $\nu=c m_i/H^6$. Thus we have \[\E [T_{i} - T_{i-1}] \leq H^6\frac{
    2Mm_{i-1}}{c m_i} \leq c' \abs{(G')^*} H^7.\]
  Finally, since $V_t$ takes integer values, the hitting time of $0$ is
  equal to the hitting time of $[0,1/2]$ which is $T_{i_0}$ for $i_0 =\lceil
  \log_2 (2H\abs{(G')^*})\rceil$. We have proved
\begin{equation}
 \E T_{i_0} \leq c' \abs{(G')^*} H^7 \log (H\abs{(G')^*}) =
c'' L^2 H^7 \log (LH).
\end{equation}
Therefore, $\bbP(T_{i_0}>2e c''L^2 H^7 \log (LH))<1/(2e)$, which implies
$\tmix^s\le 2e c''L^2 H^7 \log (LH)$: indeed, under a global monotone coupling,
once maximal and minimal evolutions have coalesced, all the evolutions with arbitrary initial conditions have coalesced too.
\end{proof}

\begin{proof}[Proof of  Lemma \ref{lemme:variance_volume}]
  Let $h^\pm_t$ be the height functions corresponding to the extremal
  evolutions $M^\pm_t$.  Each face contributes at most $H$ to the
  volume difference, so there are at least $V_t/H$ faces where the
  height difference $h^+_t(f)-h^-_t(f)$ is at least $1$. For each of
  those, by Proposition \ref{prop:rotationpossible} there exists a
  face $f^-$ at distance at most $C H$ where again
  $h^+_t(f^-)-h^-_t(f^-)\ge 1$ and a rotation in $M^{+}_t$ that
  decreases the height is possible. We can thus find at least
  $CV_t/H^3$ distinct such faces and each of them is the
  face directly above a non-frozen bead for $M^+_t$ (i.e. a bead that
  can be moved upward in $M^+_t$ via an elementary rotation). Call $B_t$ the set of such beads, $|B_t|\ge C V_t/H^3$.

  The global monotone coupling mentioned in the claim is defined as
  follows. 
We take two mean-one independent Poisson clocks: when the first one
rings we update the beads in even threads, when the second one rings
we update beads in odd threads. The beads are updated as
follows. Suppose for instance that the first clock rings. Then, sample
independently for each transverse edge $e$ and each bead in each even thread a
uniform $[0,1]$ variable $U_{b}(e)$ (any continuous law would work the
same). A bead $b$ in an even thread then chooses the accessible
transverse edge $e$ (given the positions of beads in odd threads) with
the lowest value of $U_{b}(e)$. It is easy to check that this defines a
monotone coupling between evolutions with any possible initial
condition (we emphasize that each evolution uses \emph{the same
  realization} of the $U_b(e)$ variables to determine the outcome of an
update).

We now turn to the estimate of $\nu_t := \liminf_{\delta \rightarrow
  0}\frac{1}{\delta} \E[ (V_{t+\delta}-V_t)^2|\cF_t]$. For any bead
$b$, let $V_t^{(b)}$ denote its contribution to the volume, i.e. the
difference of the labels of the transverse edges occupied by $b$ in
$M^+_t$ and $M^-_t$. Finally let $A^{(+)}$ (resp. $A^{(-)}$) denote
the event that there is an update of even parity and no update of odd
parity (resp. an update of odd parity and no update of even parity)
between time $t$ and $t+\delta$ (each has probability
$\delta+o(\delta)$) and for each bead $b$ let $A^{(b)}$ be $A^{(\pm)}$
according to the parity of $b$. We have (since the occurrence of two
updates has probability of order $\delta^2$)
\begin{gather}\label{eq:variance_infinitesimale}
  \frac{1}{\delta} \E [(V_{t+\delta} - V_t)^2|\cF_t ] =
\E [(V_{t+\delta} - V_t)^2|\cF_t,A^{(+)} ]+\E [(V_{t+\delta} - V_t)^2|\cF_t,A^{(-)} ]+o(1)\\
\label{eq:aseconda}
\ge \var [V_{t+\delta}|\cF_t,A^{(+)} ]+\var [V_{t+\delta} |\cF_t,A^{(-)} ]+o(1)
=
\sum_b \var( V^{(b)}_{t+\delta} | \cF_t, A^{(b)}) + o(1)
\end{gather}
(in the last step, we used the fact that conditionally on $A^{(+)}$, the 
variables $(V^{(b)}_{t+\delta}-V_t^{(b)})$ are independent for different $b$ and 
are zero for $b$ of odd parity).
 For each bead $b$ four cases can occur:
\begin{enumerate}[(i)]
\item The set of transverse edges accessible to $b$ in a single
  update (given the beads of the other parity) is different in
  $M^{+}_t$ and $M^{-}_t$ and, at least for one of them, it consists
  of \emph{strictly more than one}  transverse edge. We let $B_t^{\neq}$ be the set of such beads.
  An elementary computation\footnote{One can check that the worst case is when the intervals of
 transverse edges accessible to $b$ in $M_t^+$ and $M_t^-$ are
 of the form $\{a,\dots,a+k-1\}$ and $\{a,\dots,a+k\}$. In
 this case, after an update $V^{(b)}=0$ with probability $k/(k+1)$ and
 its average is $1/2$ so the variance in question is at
 least $(k/4)/(k+1)\ge 1/8$. }  shows that for such bead
  $\var(V_{t+\delta}^{(b)}|\cF_t,A^{(b)}) \geq \frac{1}{8}$.  

\item The accessible domain for $b$ is the same in $M^{+}_t$ and
  $M^{-}_t$ but its positions in the two configurations are
  different. Let $B_t^=$ be
  the set of such beads. Remark that if the event $A^{(b)}$ occurs, then
  $V_{t+\delta}^{(b)}=0$ almost surely while
  $V_{t+\delta}^{(b)}=V_{t}^{(b)}\ge 1$ if it does not. 

\item The accessible domain and the initial position of $b$ are the
  same in $M^{+}_t$ and $M^{-}_t$. In this case
  $V_{t+\delta}^{(b)}=V_{t}^{(b)}=0$ conditionally on $A^{(b)}$, so these beads give no contribution
to the volume variation.

\item The accessible domain for $b$ has only a single edge in both $M^+_t$ and $M^-_t$. In this case there is no movement possible for $b$ until threads of the opposite parity are updated, so $b$  makes again no contribution conditionally on $A^{(b)}$.

\end{enumerate}
Remark that the set $B_t$ introduced above is included in 
$B_t^{\neq}\cup B_t^=$, so we have $\abs{B_t^=} +
\abs{B_t^{\neq}} \geq C V_t/H^3$. Indeed, if $b\in B_t$ than it can be moved in $M_t^+$, which excludes case (iv), and $V^{b}_t\ne 0$, which excludes case (iii).

Suppose that $\abs{B_t^{\ne}} \geq \abs{B_t^=}/(\alpha_{s,t}H)$, with $\alpha_{s,t}$ a slope-dependent constant to be determined later; for $b \in
B_t^{\neq}$ we have $\var(V_{t+\delta}^{(b)}|\cF_t,A^{(b)}) \geq
\frac{1}{8}$ so the right-hand size of
\eqref{eq:aseconda} gives, taking the limit $\delta\to0$, \[\nu_t \geq
\frac{\abs{B_t^{\neq}}}8 \geq  a_{s,t}\frac{V_t}{H^4}\] for some positive
$a_{s,t}$.  

Suppose on the contrary that
$\abs{B_t^{\neq}} \leq \abs{B_t^=}/(\alpha_{s,t}H)$ and (by symmetry) that at least half the
beads $b \in B_t^=$ are on even threads. After an even update they each
contribute $V_{t+\delta}^{(b)} - V^{(b)}_t \leq -1 $ so $V_{t+\delta}
- V_t \leq -\abs{B_t^=}/2 + c_{s,t}H\abs{B_t^{\neq}} \leq -  |B_t^=|/4$ 
(we used the fact that, for every $b$ and in particular for $b\in B_t^{\neq}   $,
$  V^{(b)}_{t+\delta} - V^{(b)}_t  \leq c_{s,t}\,H$ due to the floor/ceiling at mutual 
distance $H$ (cf. Proposition \ref{prop:strictdecr}) and we chose $\alpha_{s,t}=4c_{s,t}$).
As a consequence, in the limit $\delta\to0$ \eqref{eq:variance_infinitesimale} gives
$\nu_t \geq b_{s,t} V_t^2/H^6$ in this case, for some other positive constant $b_{s,t}$. The conclusion follows from $\min(V_t/H^4,V_t^2/H^6)\ge V_t/H^6$.
\end{proof}

\begin{remark}
  The power $H^6$ in the lemma is clearly far from optimal. A finer
  analysis of the contribution of $B_t^{\neq}$ and $B_t^=$ would
  probably improve the power to $H^3$. We do not follow this route
  because ultimately the precision of the upper bound is limited by
  the equilibrium estimate of Theorem \ref{th:clt} and also because
  even the bound $\abs{B_t^=} + \abs{B_t^{\neq}} \geq C V/H^3$ is certainly
  far from optimal for a typical configuration.
\end{remark}


\subsubsection{A mean curvature motion approach}
\label{sec:meancur}
Given Theorem \ref{th:wafer} and Theorem \ref{th:stimeflutt} on the
equilibrium height fluctuations, the proof of the bound $\tmix\le
c(\epsilon)L^{2+\epsilon}$ is essentially identical to the proof that
$\tmix=O( L^2(\log L)^{12})$ for the dynamics on the hexagonal lattice
with almost-planar boundary conditions, see \cite[Th. 2]{CMT}. Indeed,
Theorem \ref{th:wafer} plays the role of \cite[Prop. 3]{CMT} while
Theorem \ref{th:stimeflutt} replaces \cite[Th. 1]{CMT}.  Therefore,
below we will only recall the main ideas, and we skip all details.

\begin{remark}
  The reason why in Theorem \ref{th:tmixU} we get the $L^\epsilon$
  correction to the mixing time instead of a factor $(\log L)^{12}$ as
  in \cite{CMT} is that the fluctuation estimates of Theorem
  \ref{th:stimeflutt} are a bit weaker than those of
  \cite[Th. 1]{CMT}: since anyway the exponent $12$ is certainly
  non-optimal (we conjecture the correct value to be $1$, cf. the
  Introduction), we have not tried to refine Theorem
  \ref{th:stimeflutt} (for instance, one might try to control the
  exponential moments of the height fluctuations).
\end{remark}
The first step is the following (cf. \cite[Prop. 2]{CMT}):

\noindent{\bf Step 1}
\emph{If the the height function of the  initial condition $\xi\in
\Omega_{G',m}$ is within distance $L^{\epsilon/10}$ from the almost-planar
configuration $m$ (i.e. if $|\xi(f)-h_m(f)|\le L^{\epsilon/10}$ for
every $f\in (G')^*$), then (for any given $C$) for all times smaller
than $L^{C}$ the height function stays within distance
$2L^{\epsilon/10}$ from $m$, except with probability $O(L^{-C})$.}

\medskip
This means that, until time $L^C$, the dynamics is essentially identical
to a dynamics with floor/ceiling at mutual distance $O(L^{\epsilon/10})$.
 Together with Theorem \ref{th:wafer} this
implies:

\noindent{\bf Step 2}
\emph{Again if the initial condition $\xi$ is within distance
$L^{\epsilon/10}$ from $m$, after time $L^{2+\epsilon}$ the law of the
configuration has small variation distance from equilibrium.  
}

\medskip
Therefore, to prove $\tmix\le c(\epsilon)L^{2+\epsilon}$ it is
sufficient to prove:
\begin{claim}
At time $c'(\epsilon)L^{2+\epsilon}$ the
evolutions started from maximal/minimal configurations are with high
probability within distance $L^{\epsilon/10}$ from $m$ (this is the
analog of \cite[Prop. 1]{CMT}).  
\label{claim96}
\end{claim}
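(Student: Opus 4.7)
By the order-reversing involution swapping ``$+$'' and ``$-$'' rotations, it suffices to treat the evolution $(M_t)_{t\ge 0}$ started from $h_{\max}$, proving that at time $t^\star := c'(\epsilon) L^{2+\epsilon}$ one has $h_{M_{t^\star}}(f)\le h_m(f) + L^{\epsilon/10}$ for every $f\in (G')^*$ with probability $1-o(1)$. The plan is to mimic the iterative mean-curvature-descent scheme of \cite[Prop.~1]{CMT}, with Theorem~\ref{th:wafer} playing the role of Wilson's spectral estimate on the hexagon and Theorem~\ref{th:stimeflutt} playing the role of the equilibrium concentration bound \cite[Th.~1]{CMT}.

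First, construct a decreasing family of almost-planar ``barrier'' height functions $\xi^{(0)}\ge \xi^{(1)}\ge\cdots\ge\xi^{(K)}$, all of slope $(s,t)$ and coinciding with $h_m$ outside $G'$ up to a thin boundary collar, with $\xi^{(0)}\ge h_{\max}$, $\xi^{(K)}\le h_m + L^{\epsilon/10}/2$ in the bulk, and $\xi^{(j-1)}-\xi^{(j)}$ of order $L^{\epsilon/10}$ pointwise; the existence of such configurations follows from the interpolation construction sketched at the end of Section~\ref{sec:gen}, and one has $K = O(L^{1-\epsilon/10})$. Set $\tau := t^\star/K$ and prove inductively that $\bbP\bigl(h_{M_{j\tau}}\le \xi^{(j)}\bigr)\ge 1 - j L^{-100}$. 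For the inductive step, the global monotone coupling of Remark~\ref{rem:GMC}, combined with a Peres--Winkler censoring argument (Corollary~\ref{th:PW}), allows one to compare the segment $(M_{(j-1)\tau+s})_{s\in[0,\tau]}$ -- on the event of probability $\ge 1-(j-1)L^{-100}$ where the induction hypothesis holds at time $(j-1)\tau$ -- with the slab-constrained dynamics between floor $\xi^{(j)}-L^{\epsilon/10}/4$ and ceiling $\xi^{(j-1)}$, initialised from $\xi^{(j-1)}$. This slab has width $O(L^{\epsilon/10})$, so by Theorem~\ref{th:wafer} it mixes in time $O(L^{2+9\epsilon/10+o(1)})=o(\tau)$, and its law at time $j\tau$ lies within total-variation distance $L^{-200}$ of the corresponding slab equilibrium $\pi_j$.

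It remains to show that $\pi_j$ concentrates on configurations pointwise below $\xi^{(j)}$, with probability $\ge 1-L^{-200}$. By Theorem~\ref{th:stimeflutt} applied with almost-planar boundary condition equal to $\xi^{(j)}$, the \emph{unconstrained} Gibbs measure of slope $(s,t)$ has heights within $L^{\epsilon/100}$ of $\xi^{(j)}$ on all of $(G')^*$; in particular it satisfies the slab constraint with overwhelming probability, so an FKG sandwich (applied to $\pi_j$ viewed as the unconstrained measure conditioned on the slab) transfers this concentration to $\pi_j$. Taking $j=K$ yields the claim. The main obstacle, as in \cite{CMT}, lies in this last ``entropic repulsion'' step: showing that the slab-constrained equilibrium at an intermediate almost-planar barrier concentrates near that barrier, using only monotonicity (Proposition~\ref{prop:monotonie_dynamique}) and Theorem~\ref{th:stimeflutt}. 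A secondary technical point is the censoring argument needed to insert the floor/ceiling pair when the original dynamics is launched from $h_{\max}$, which is handled via Theorem~\ref{th:prePW} in direct analogy with the hexagonal proof of \cite{CMT}.
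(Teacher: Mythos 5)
Your scheme misses the geometric mechanism that makes the paper's iteration work, and as a consequence breaks down at several points. First, the barrier family $\xi^{(0)}\ge\cdots\ge\xi^{(K)}$ cannot exist as described: each $\xi^{(j)}$ must lie in $\Omega_{m,G'}$ (hence $\equiv h_m$ outside $G'$), yet $\xi^{(0)}\ge h_{\max}$ forces $\xi^{(0)}$ to sit $\Theta(L)$ above the plane of $h_m$ in the bulk, so $\xi^{(0)}$ cannot possibly be ``almost planar of slope $(s,t)$'' in the sense of \eqref{eq:almost}. The same objection applies to the intermediate $\xi^{(j)}$ until $j$ is close to $K$. If you drop the almost-planarity requirement, then Theorem~\ref{th:wafer} (whose hypothesis is exactly that the floor and ceiling are almost planar of the same slope) and Theorem~\ref{th:stimeflutt} (which concerns $\pi_{m,G'}$ for an almost-planar $m$) no longer apply, so the two ingredients you invoke in the inductive step are unavailable. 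Second, the timing is wrong: you set $\tau=t^\star/K$ with $K=O(L^{1-\epsilon/10})$, giving $\tau=\Theta(L^{1+11\epsilon/10})$, while Theorem~\ref{th:wafer} applied on all of $G'$ with $H\sim L^{\epsilon/10}$ gives a mixing time of order $L^{2}H^9\log^4(HL)=L^{2+9\epsilon/10+o(1)}$, which is \emph{much larger} than $\tau$ for any small $\epsilon$, not $o(\tau)$ as you claim. There is simply no way to fit $L^{1-\epsilon/10}$ slab-equilibration stages of length $\gtrsim L^2$ each into a total budget of $L^{2+\epsilon}$.

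What the paper does instead is apply Theorems~\ref{th:wafer} and \ref{th:stimeflutt} not on all of $G'$, but on sub-disks $D_{n+1}$ of much smaller radius $R_{u_n}^{1/2}L^{\lambda\epsilon}$, where $R_{u_n}\sim\rho_L^2/(2u_n)$ is the curvature radius of a spherical cap of height $u_n$ over a base disk of radius $\rho_L=L\log L$. On $D_{n+1}$ the (globally curved) barrier $\psi_{u_n}$ is locally approximately planar, so both theorems apply; the constrained mixing time in $D_{n+1}$ is of order $R_{u_n}L^{O(\lambda\epsilon)}$, which \emph{is} $\ll t_{n+1}-t_n = R_{u_n}L^{\epsilon/2}$ for $\lambda$ small. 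And the reason the equilibrium height at the center of $D_{n+1}$ drops strictly below $u_n-1=u_{n+1}$ is geometric, not an FKG/entropic-repulsion argument about slabs: since the barrier on $\partial D_{n+1}$ sits $\approx(1/2)L^{2\lambda\epsilon}$ below its apex, and Theorem~\ref{th:stimeflutt} controls fluctuations at order $L^{\epsilon'}\ll L^{2\lambda\epsilon}$, the equilibrium on $D_{n+1}$ concentrates well below $u_{n+1}$. This curvature mechanism — the rate of descent being $\sim 1/R_{u_n}$ per unit time, consistently with the mean-curvature PDE \eqref{eq:5ter} — is exactly what your planar-slab construction is missing, and without it the iteration has neither the right number of stages nor the right per-stage contraction.
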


Consider for instance the evolution $h^{max}_t$ started from the
maximal initial configuration $h_{max}\in \Omega_{m,G'}$ and assume for
simplicity of exposition that the slope of the quasi-planar boundary condition $m$
is $(s,t)=(0,0)$. Let $C_u$ be a spherical cap whose height is $u$ and
whose base is a disk $D$ of radius $\rho_L=L\log L$ with the finite graph $G'$
approximately at its center (recall that the diameter of $G'$ is of
order $L\ll \rho_L$). Call $R_u$ the radius of curvature of $C_u$,
which satisfies $(2R_u-u)u=\rho_L^2$ and let $\psi_u(f)$ denote the
height of $C_u$ above a face $f$ which is inside $D$. Then, the key to Claim
\ref{claim96} is:
\begin{claim}
With overwhelming probability, the height
function $(G')^*\ni f\mapsto h^{max}_t(f)$ is below the deterministic function
$(G')^*\ni f\mapsto \psi_{u_n}(f)$ for all times $t\in[t_n,L^3]$,
where $u_n=\rho_L-n$ and the deterministic time sequence $t_n$ is
defined by
\[
t_0=0, \quad t_n=t_{n-1}+R_{u_n}L^{\epsilon/2}, \quad n\le M:=\rho_L-L^{\epsilon/10}.
\]
\label{claim97}
\end{claim}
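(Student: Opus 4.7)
The plan is to proceed by induction on $n$, following the mean-curvature-motion scheme of \cite[Prop.~1]{CMT}. The base case $n=0$ is immediate: $\psi_{u_0}$ is a cap of height $\rho_L = L\log L$, while $h_{\max}$ is bounded above by a quantity of order $L \ll \rho_L$ on $(G')^*$. For the induction step, assume $h^{max}_t \leq \psi_{u_n}$ on $(G')^*$ for all $t \in [t_n, L^3]$ with overwhelming probability, and deduce the same with $u_n$ replaced by $u_{n+1}$ for $t \geq t_{n+1}$. The persistence from $t_{n+1}$ to $L^3$ will follow from the induction hypothesis at the next step together with a union bound over an $O(L^3)$-sized grid of times.

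First I would approximate the smooth cap $\psi_{u_n}$ by a valid height function $\Phi_n\in\Omega_{m,G'}$ lying within $O(1)$ of $\psi_{u_n}$ on $(G')^*$: this is possible by the construction of almost-planar height functions in Section \ref{sec:gen}, applied with the locally varying slope $\nabla\psi_{u_n}$ which everywhere stays in $\intN$ (for $n \leq M$ the cap has bounded slope). By the global monotone coupling (Remark \ref{rem:GMC}) and the induction hypothesis, the evolution $h^{max}_t$ for $t\in[t_n, t_{n+1}]$ is dominated by the evolution $\tilde h_t$ starting at time $t_n$ from $\Phi_n$; by monotonicity I may further impose on $\tilde h$ a ceiling at $\Phi_n$ and a floor at some $\Phi_n^-$ lying $H=L^{\epsilon/10}$ below, say $\Phi_n^- = \Phi_n - \lfloor L^{\epsilon/10}\rfloor$ suitably cut off near $\partial G'$. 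It then suffices to show $\tilde h_{t_{n+1}}(f) \leq \psi_{u_{n+1}}(f)$ for every $f\in (G')^*$ with overwhelming probability.

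To do this, I localize. Fix $f_0 \in (G')^*$ and a box $B_{f_0}$ of side length $r$ centered at $f_0$, with $r$ chosen so that $r^2\gg R_{u_n}$ (say $r\sim R_{u_n}^{1/2}L^{\epsilon/4}$) but so that the equilibration time $r^2 H^9\log^4(rH)$ given by Theorem \ref{th:wafer} is still at most $(t_{n+1}-t_n)/2 = R_{u_{n+1}}L^{\epsilon/2}/2$. Over $B_{f_0}$ the cap $\psi_{u_n}$ coincides with its tangent plane at $f_0$ up to an error $O(r^2/R_{u_n}) \gg 1$, and on the boundary of $B_{f_0}$ the ceiling $\Phi_n$ imposes a quasi-planar profile with slope $\nabla\psi_{u_n}(f_0)\in\intN$. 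Applying Theorem \ref{th:wafer} inside $B_{f_0}$ with this floor/ceiling, by time $t_{n+1}$ the configuration is distributed as the uniform constrained Gibbs measure in $B_{f_0}$ (up to total variation error $o(1)$). Theorem \ref{th:stimeflutt} gives $\tilde h_{t_{n+1}}(f_0)\leq \mathbb{E}[\tilde h_{t_{n+1}}(f_0)] + L^{\epsilon/2}$ except on a polynomially small event, and a classical comparison between the expected equilibrium height and the solution of the macroscopic Euler--Lagrange equation (whose boundary value on $\partial B_{f_0}$ is essentially $\psi_{u_n}|_{\partial B_{f_0}}$) yields $\mathbb{E}[\tilde h_{t_{n+1}}(f_0)] \leq \psi_{u_n}(f_0) - c\,r^2/R_{u_n}$ for some constant $c>0$ depending on the mobility and surface tension at the slope $\nabla\psi_{u_n}(f_0)$. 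The choice of $r$ makes $c\,r^2/R_{u_n} \gg 1 + L^{\epsilon/2}$, so $\tilde h_{t_{n+1}}(f_0) \leq \psi_{u_n}(f_0)-1 \leq \psi_{u_{n+1}}(f_0)$. A union bound over $f_0$ and over times completes the step.

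The main obstacle is the quantitative equilibrium comparison used in the last paragraph: showing that the average of $h(f_0)$ under the constrained Gibbs measure is strictly below the ceiling by an amount of order $r^2/R_{u_n}$. This is an intrinsically macroscopic/PDE statement—at equilibrium the mean height should solve (discretely) an elliptic Euler--Lagrange equation with ceiling as obstacle, and for a strictly concave ceiling the solution lies strictly below—so the challenge is to port the PDE intuition to the lattice while controlling errors through Theorems \ref{th:stimeflutt} and \ref{th:wafer}. Up to these errors the quantitative balance between $r$, $H=L^{\epsilon/10}$, the time budget $R_{u_{n+1}}L^{\epsilon/2}$, and the curvature $1/R_{u_n}$ is exactly the one fine-tuned in \cite{CMT}, and I expect no new difficulty beyond the square/square-hexagon analogues of the inputs we already have.
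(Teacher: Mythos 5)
Your overall scheme matches the paper's: induct on $n$, localize to a disk $D_{n+1}$ of radius $\sim R_{u_n}^{1/2}L^{\lambda\epsilon}$ around each face, equilibrate the locally constrained dynamics via Theorem \ref{th:wafer}, and then use an equilibrium height estimate to conclude. However, there are two genuine problems.

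First, your parameter choice is quantitatively inconsistent. You take $r \sim R_{u_n}^{1/2}L^{\epsilon/4}$ (so $\lambda = 1/4$) and $H = L^{\epsilon/10}$, and you require $r^2 H^9 \log^4(rH) \le (t_{n+1}-t_n)/2 = R_{u_{n+1}}L^{\epsilon/2}/2$. But then $r^2 H^9 \log^4(rH) \approx R_{u_n}\,L^{\epsilon/2}\cdot L^{9\epsilon/10}\cdot\log^4(\cdot)$, which is larger than $R_{u_{n+1}}L^{\epsilon/2}$ by a factor $L^{9\epsilon/10}\log^4(\cdot)$, not smaller. Independently of this, with $\lambda=1/4$ the spherical cap drops by $\sim\tfrac12 L^{2\lambda\epsilon}=\tfrac12L^{\epsilon/2}$ across $B_{f_0}$, which is \emph{larger} than your floor distance $H=L^{\epsilon/10}$: the floor $\Phi_n - L^{\epsilon/10}$ then lies \emph{above} the boundary values along $\partial B_{f_0}$ and the floor constraint becomes active at the centre, changing the equilibrium measure in the box and introducing an entropic-repulsion issue that the bare fluctuation estimate of Theorem \ref{th:stimeflutt} does not control. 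The fix, as in the paper, is to take $\lambda$ small ($\lambda<1/40$) and to set $H$ comparable to the cap's local drop $L^{2\lambda\epsilon}$; then the equilibration time $r^2 H^9\log^4(rH)=O(R_{u_n}L^{20\lambda\epsilon}\log^4)$ is $\ll t_{n+1}-t_n=R_{u_{n+1}}L^{\epsilon/2}$, while the floor stays below the local equilibrium profile and is therefore inactive.

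Second, your concluding step relies on a ``classical comparison between the expected equilibrium height and the solution of the macroscopic Euler--Lagrange equation,'' which is not established anywhere in the paper and would require a quantitative large-deviation or variational input at the lattice level (note also that ``mobility'' cannot enter a static equilibrium estimate). The paper sidesteps all of this with a purely elementary geometric observation: because the cap is concave, the \emph{plane} through $\partial D_{n+1}$ with slope $\nabla\psi_{u_n}(f)$ both dominates the boundary data $\psi_{u_n}|_{\partial D_{n+1}}$ and dips below the centre value $\psi_{u_n}(f)$ by $\approx \tfrac12 L^{2\lambda\epsilon}$. Replacing the boundary condition by this plane (a stochastic increase by monotonicity) reduces the problem to an almost-planar equilibrium, where Theorem \ref{th:stimeflutt} gives fluctuations $O(L^{\gep})$ for arbitrarily small $\gep$; choosing $\gep<2\lambda\epsilon$ yields $h(f)\le\psi_{u_n}(f)-\tfrac12 L^{2\lambda\epsilon}+L^{\gep}\le\psi_{u_{n+1}}(f)$. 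This avoids any appeal to an Euler--Lagrange or limit-shape comparison and uses only inputs already proved.
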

Indeed, it is easy to verify that $R_{u_n}\sim \rho_L^2/(2u_n)$ and (similarly to \cite[Eq. (6.14)]{CMT})
that $t_M=O(L^{2+\epsilon})$.  Therefore, if we show the above claim
for $n$ up to $M$, we deduce that at some time $O(L^{2+\epsilon})$ the
configuration is within distance $u_M=L^{\epsilon/10}$ from the flat
configuration $m$, and Claim \ref{claim96} follows.

For $n=0$, the statement of Claim \ref{claim97} is true (deterministically, not just with high
probability, since the maximal height at a face $f\in G'$ is of order
$L\ll \rho_L$). Suppose we want to deduce claim $n+1$ from claim $n$,
and look for definiteness only at a face $f$ at the center of the disk
$D$.  Consider a disk $D_{n+1}$ centered at $f$, of radius
$R_{u_n}^{1/2}L^{\lambda\epsilon}$ with $\lambda$ to be chosen later: by
monotonicity, given the claim at step $n$, we can replace the
evolution $h^{max}_t$ restricted to $D_{n+1}$, in the time interval
$[t_n,L^3]$, by an evolution where:
\begin{itemize}
\item the configuration outside $D_{n+1}$ is frozen and equals some
  height function  which, on the boundary of $D_{n+1}$,
  is within distance $O(1)$ from the function $\psi_{u_n}(\cdot)$;
note that for $f'$ at the boundary of  $D_{n+1}$ one has  $\psi_{u_n}(f')\approx
u_n-(1/2)L^{2\lambda\epsilon}$;

\item the ``initial'' height function at time $t_{n}$ in $D_{n+1}$ approximates
  within $O(1)$ the function $\psi_{u_n}(\cdot)$.
\end{itemize}
By Step 2, the time such dynamics takes to reach equilibrium is
$O(R_{u_n}L^{20 \lambda \epsilon})\ll t_{n+1}-t_n$ if $\lambda<1/40$, so
that at time $t_{n+1}$ the configuration is essentially at
equilibrium (with the above specified boundary conditions around $D_{n+1}$). Next, elementary geometry and Theorem \ref{th:stimeflutt}
shows that, at equilibrium, the height function at $f$ is with
overwhelming probability lower than $u_n-1=u_{n+1}$: this is because, as we
remarked above,
the boundary height around the boundary of $D_{n+1}$ is approximately
$u_n-(1/2)L^{2\lambda\epsilon}$. We deduce therefore that, with high probability,
$h^{max}_t(f)\le u_{n+1}= \psi_{u_{n+1}}(f)$ for $t\in[t_{n+1},L^3]$
and a similar argument works for any other face $f\in (G')^*$. The
claim at step $n+1$ is then proven.

\subsubsection{Gaseous phases and entropic repulsion}

\label{sec:entro}
Theorem \ref{th:tmixU} has been formulated for three specific - though
quite natural - graphs. While, as explained in Remark
\ref{rem:layers}, our method can be extended to a wider class of
graphs, there is no hope it works for a general infinite bipartite
graph. We would like to convince the reader there is a good reason for
this.  One of the main steps of our argument (cf. Section
\ref{sec:drift_volume}) is to prove that, given two height functions
$h^1\le h^2$, after a step of the (fast) dynamics the mutual volume
has not increased in average. The proof of Proposition
\ref{prop:volume_drift} shows that this is true (for the hexagonal,
square and square-hexagon graphs), \emph{independently of the boundary
conditions} (in particular, for almost-planar conditions, independently
of the slope $(s,t)$) \emph{and independently also of the presence of
floor/ceiling constraints}. While obtaining the mixing time upper bound of
order $L^{2+\epsilon}$ requires considerable extra work, the volume decrease result
 implies rather directly \cite{LRS}
that the mixing time is at most polynomial in $L$, since (i) the
maximal volume difference between two configurations is of order $L^3$
and (ii) the ratio of mixing times of the fast and original dynamics
is at most polynomial in $L$.

Now take for instance the square-octagon graph, with almost-planar
boundary conditions of slope $(0,0)$. As we remarked in Section
\ref{sec:phaseclass}, in this situation the infinite-volume Gibbs
measure $\mu_{0,0}$ corresponds to a ``gaseous'' (or ``rigid'') phase:
the height $h(f)$ at a face $f$ has bounded variance and the random
variables $h(f),h(f')$ are essentially independent for $f,f'$ far
away. This is very reminiscent of the situation in the classical
$(2+1)$-dimensional Solid-on-Solid (SOS) interface model \cite{SOS} at low
temperature $1/\beta$ \cite{BW}. Let us just recall that the SOS
model describes an interface with integer heights $\phi(x)$, labelled
by $ x\in \bbZ^2$, with measure proportional to
$\exp(-\beta\sum_{|x-y|=1}|\phi(x)-\phi(y)|)$ and zero boundary
conditions $\phi\equiv0$ around a $L\times L$ box.  Recently, it was
proved in \cite{SOSdyn} that, when a floor at height zero is present
(i.e. when heights are constrained to be non-negative) the mixing time
of the Glauber dynamics for the SOS model is exponentially large in
$L$.  This effect is due to the rigidity of the interface \emph{and}
to the presence of the floor, which pushes the interface to a height
of order $\log L$ (entropic repulsion \cite{BEF}). If a ``gaseous'' phase does behave like a SOS interface, which is very
likely, then that the dimer dynamics for the square-octagon graph with
almost-planar b.c. of $(0,0)$ slope and floor at height zero has also
a mixing time growing exponentially with $L$. As a consequence, for
the reasons exposed above, the volume decrease cannot hold as stated in proposition \ref{prop:volume_drift} (i.e. for general floor/ceiling constraint and boundary condition) for the square-octagon graph (it cannot be true that the mutual volume
between two arbitrary height functions decreases on average, under
some reasonable ``fast dynamics'', for the square-octagon graph with
general boundary conditions and floor/ceiling constraints). If on the
other hand the volume decrease holds only for particular boundary conditions and
floor/ceiling, then the mathematical mechanism for that must be considerably more subtle than in Proposition \ref{prop:volume_drift}.

\subsection{Mixing time lower bound}\label{sec:lower_bound}

In this section we establish the mixing time lower bound of Theorem
\ref{th:tmixU}.  Thanks to Proposition \ref{prop:ta}, it is enough to show
that at time $T_0=\epsilon L^2$ the asynchronous dynamics started from the
maximal configuration is still at variation distance at least
$1/(2e)$ from equilibrium.

The strategy is the following.  First, we define (for each of the
three types of graph in question) a special (very non-planar) boundary
condition $p\in\Omega$ and finite domain $W_L\subset G$ of diameter of
order $L$ for which it is easy to prove that, starting from the
maximal configuration, the drift of the volume is lower bounded by $-c
L$. Therefore, after time $T_0=\epsilon L^2$ the eroded volume is at
most $c \epsilon L^3$ and the configuration (call it $\tilde M_{T_0}$)
is still away from its (non-flat) equilibrium shape.  Next, a
monotonicity/coupling argument allows to deduce that, again at time
$T_0$, the configuration $M_{T_0}$, evolving this time in our original domain $G'$
with the almost-planar boundary condition $m$ we are interested in, is
above $\tilde M_{T_0}$ and that it is also far from its typical (flat,
this time) equilibrium shape.

\subsubsection{Pyramids}\label{sec_existence_pyramide}


\begin{figure}[htp]
   \centering
   \includegraphics[width=5cm]{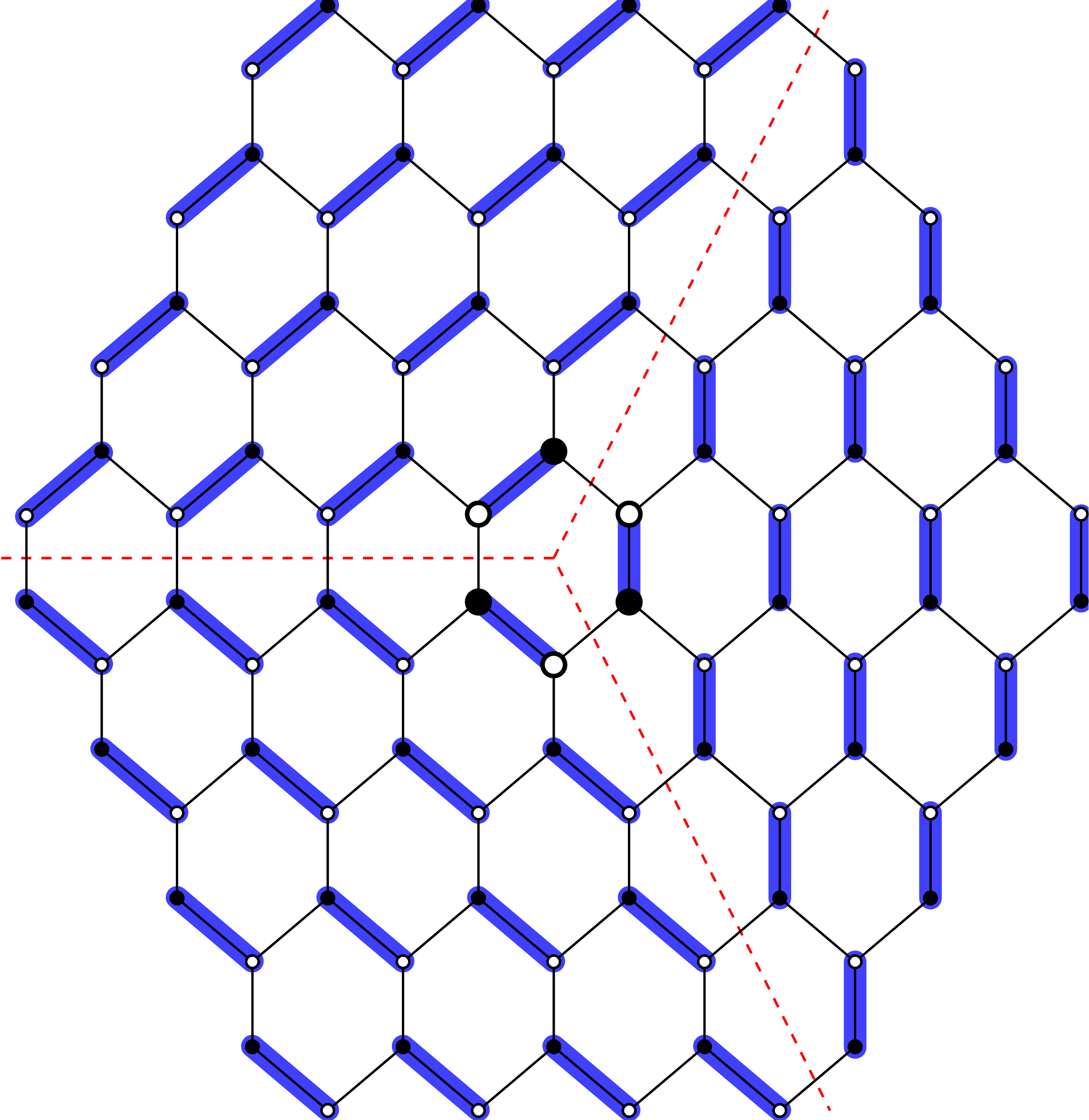}
  \includegraphics[width=5cm]{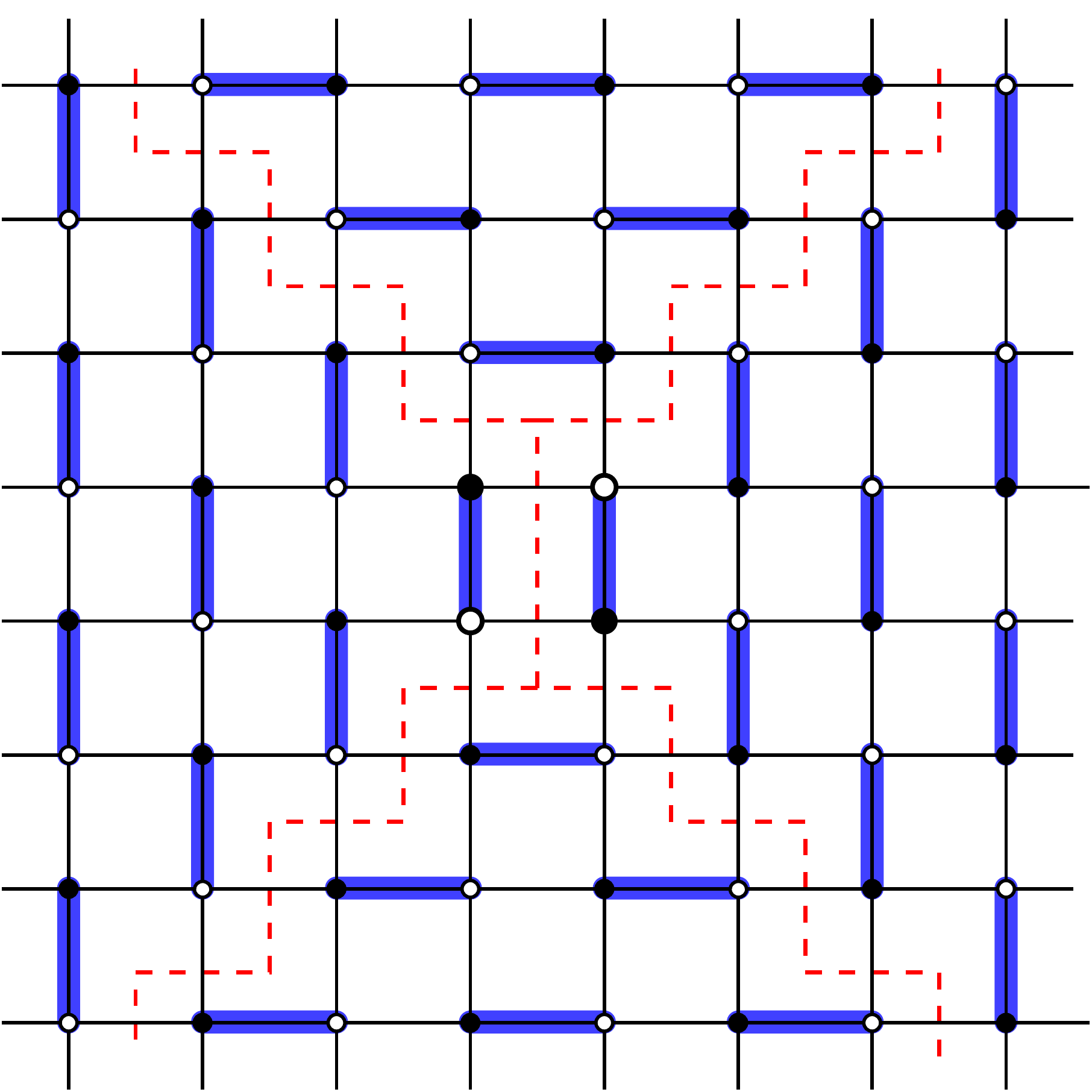}
\includegraphics[width=5cm]{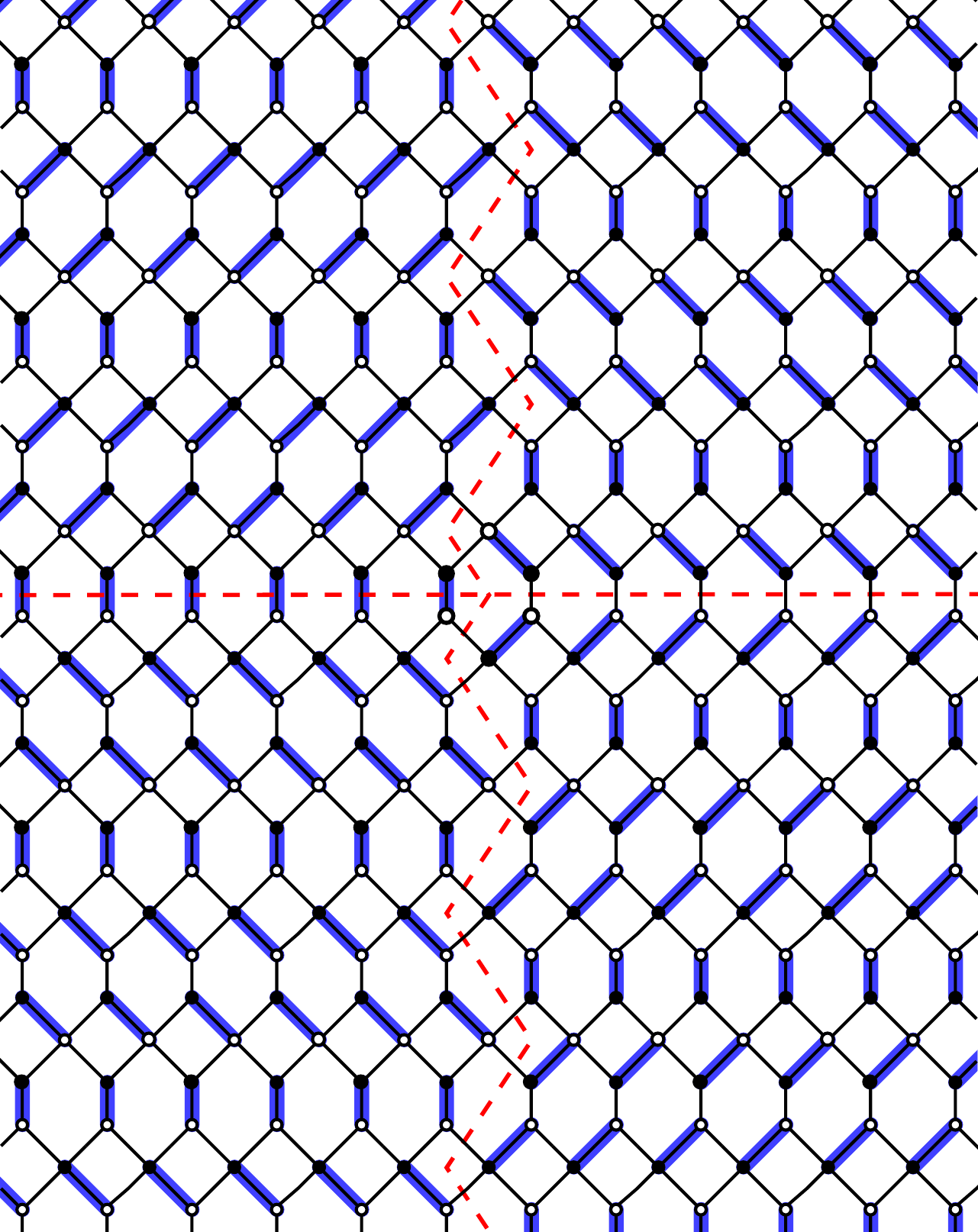}
   \caption{``Pyramids'' for the square, hexagon and square-hexagon
     graphs. The matchings are intended to extend over the whole
     infinite graph. The dotted
     lines separate the domains tiled with different periodic matchings of extremal slope.
     \label{fig:pyramide_matching}}
\end{figure}

For each of the three graphs (square, hexagon, square-hexagon)
consider the special matching $p\in\Omega$ (``$p$'' for ``pyramid'' for
reasons to become clear soon) of the whole $G$, defined through Fig.
\ref{fig:pyramide_matching}. Note that in all three cases $G$ is
divided into a finite number of infinite domains, separated by dotted
lines (three domains for the hexagon graph and four for the square and
square-hexagon graph): each domain corresponds to a vertex in the
Newton polygon $N$ of $G$ and it is tiled with the periodic matching
corresponding to that vertex
(cf. Fig. \ref{graphes_et_polygones_newton}). The central face $f_0$
is assumed to contain the origin of $\bbR^2$ and we fix the height
there to $0$. The large-scale height function $\bbR^2\ni x\mapsto
H(x)$, obtained by rescaling the lattice spacing and the heights by
$1/L\to0$ while keeping $f_0$ at the origin of the plane can be
described as follows. For each vertex $v$ of $N$, take the plane of
the corresponding slope which contains the origin of $ \bbR^3$, and
let $s_v$ be the half-space below it. The intersection of all the
$s_v$ with $v$ ranging over the vertices of $N$ is clearly a pyramid
$\Pi$ with vertex at the origin of $ \bbR^3$. The boundary of $\Pi$
gives the height function $H(\cdot)$.  It is possible to prove (but we
will not need this directly) that the discrete height function
associated to matching $p$ is given by $h(f)=-D(f,f_0)$, cf. Section
\ref{sec:characterisation_lineaire}.  This observation could be used
to build ``pyramids'' in a systematic way, for other graphs.

\begin{remark}
  For the hexagonal lattice, the pyramid $p$ just corresponds (in
  terms of stepped surfaces, cf. Fig. \ref{exemple_chemin_pente}) to
  the surface of the corner of an infinite cube with vertex at the
  origin of $\bbR^3$.
\end{remark}

\begin{figure}[htp]
   \centering
   \includegraphics[width=5cm]{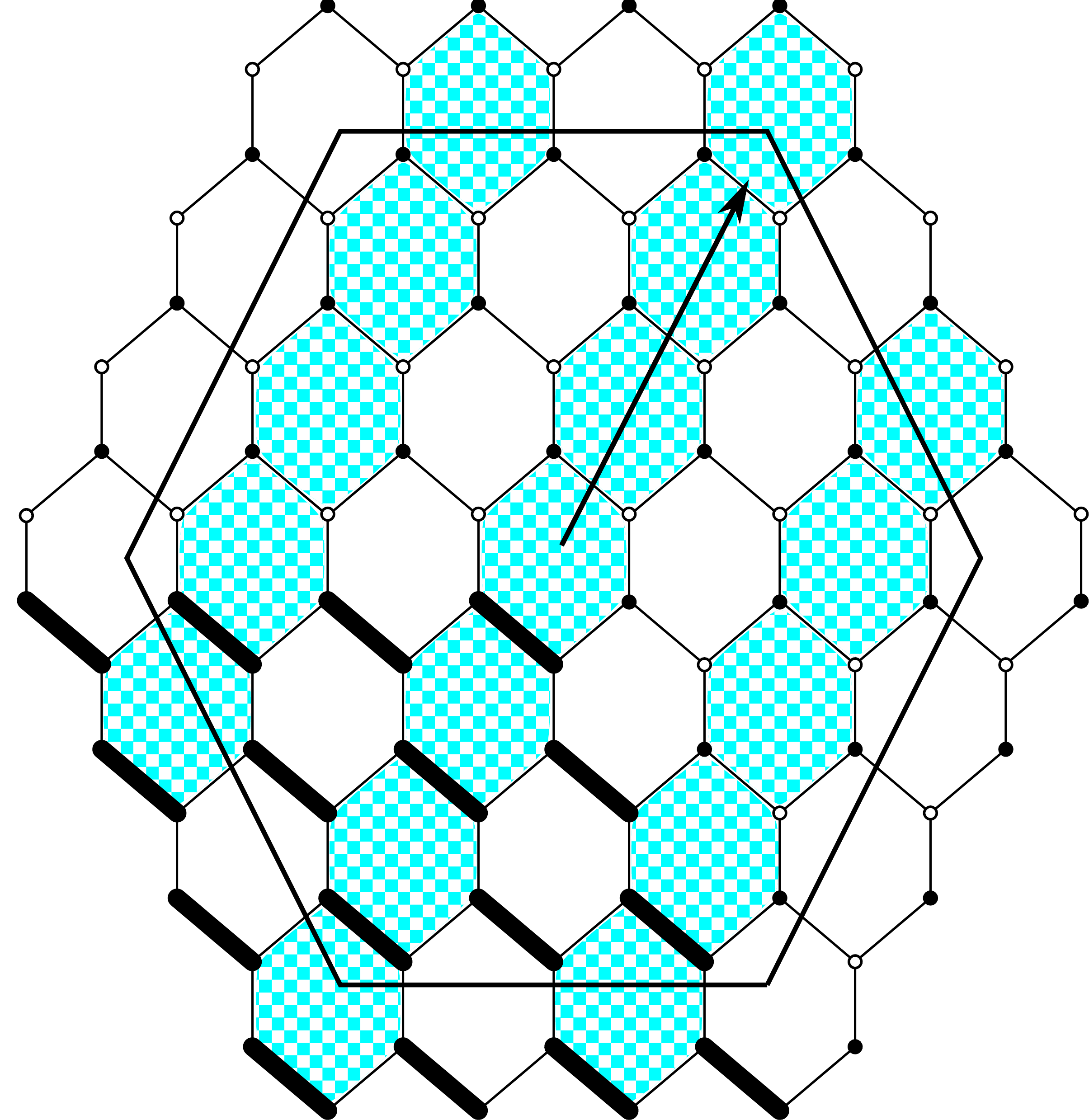}
 \includegraphics[width=5cm]{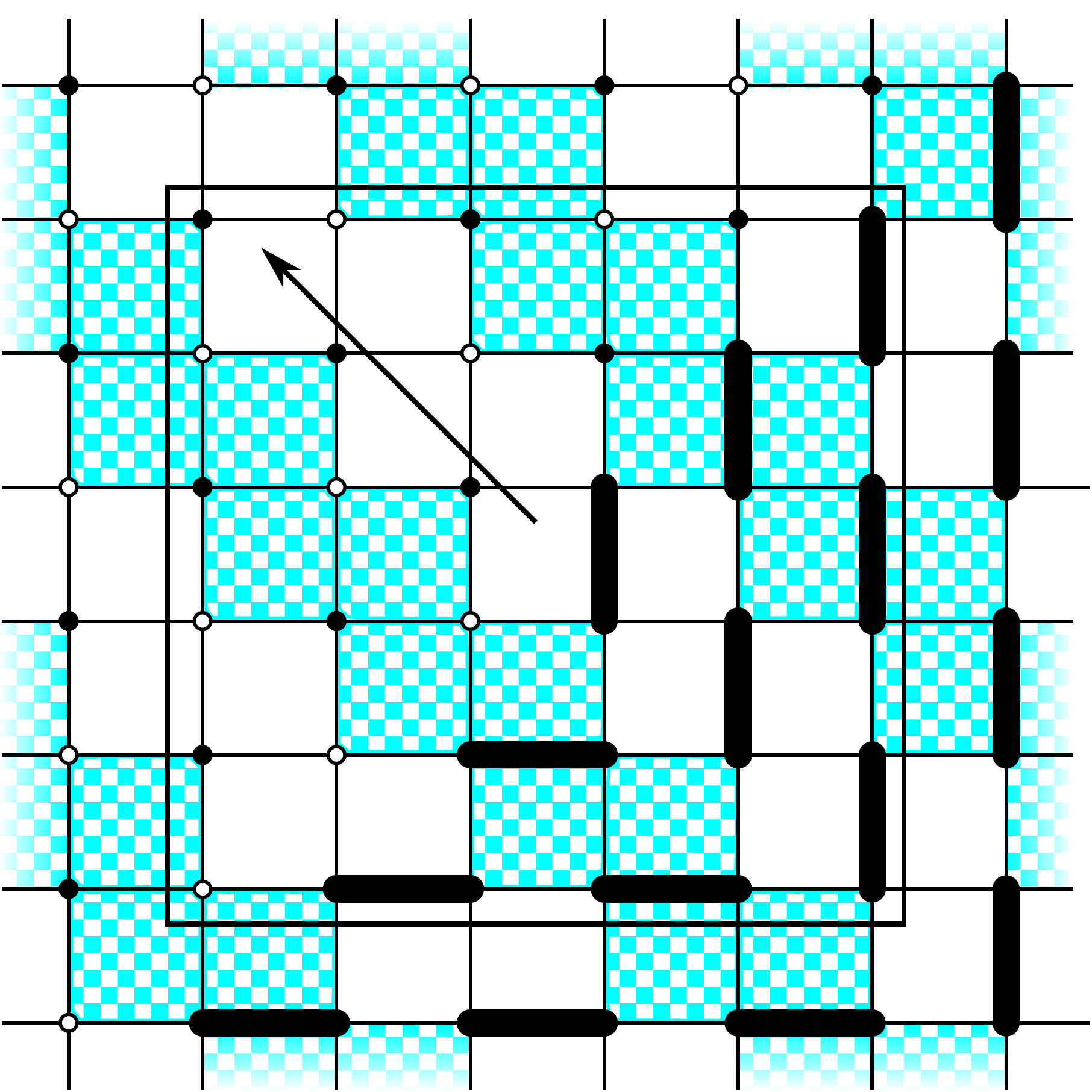}
    \includegraphics[width=5cm]{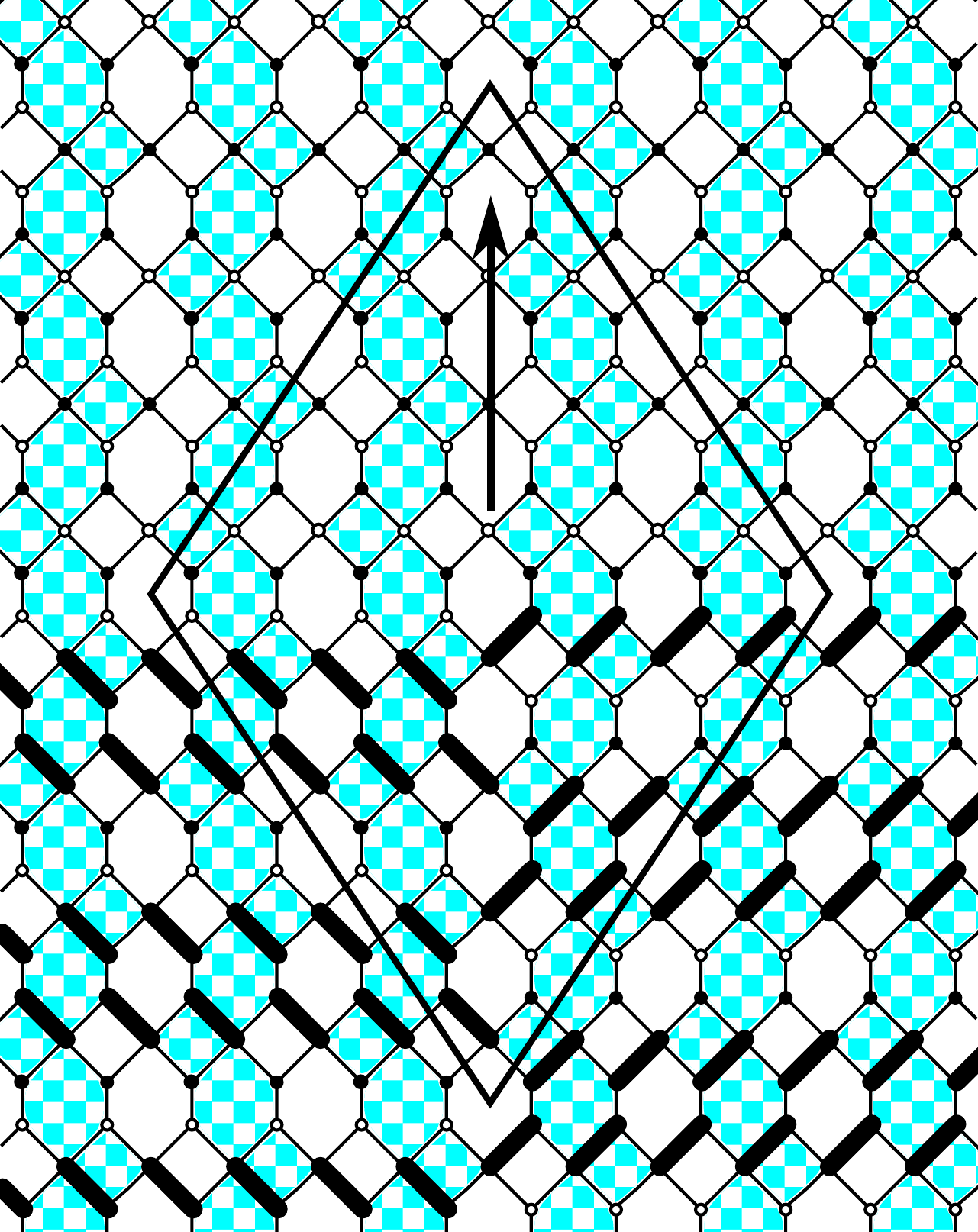}
   \caption{The sub-graph $W_L$ and the associated beads of configuration $p$.
    Threads are in light blue and white and beads are
     in solid black. Arrows mark the orientation of threads.
}
    \label{fig:pyramide_billes}
\end{figure}

Next, we need to introduce a finite sub-graph $W_L\subset G$, with the
face $f_0$ in the center. This is defined through Fig.
\ref{fig:pyramide_billes}: for the hexagonal lattice this is the
portion of $G$ included in a $(2L+1)\times (2L+1)\times (2L+1)$
hexagon, for the square lattice it is a $(2L+1)\times (2L+1)$ square
and for the square-hexagon lattice it is the portion of $G$ contained
in a suitable lozenge (delimited by a full line in the picture) whose
diagonals contain $2L+1$ hexagons. Note that the maximal configuration
in $\Omega_{p,W_L}$ is just $p$: indeed, just observe that none of the
beads in $W_L$ can be moved to a lower transverse edge in the same
thread.
\begin{proposition}
\label{prop:driftpiramide}
Consider the asynchronous fast dynamics on the finite graph $W_L$
defined above, with ``pyramid'' boundary condition $p$. Let
$M^{max}_t$ (resp. $M^{eq}_t$) denote the state at time $t$ of the
dynamics started from the maximal configuration, which is nothing but
$p$ (resp. from the equilibrium uniform measure $\pi_{p,W_L}$) and let
$\cF_t$ be the sigma-algebra generated by $\{M^{max}_s,M^{eq}_s,s\le
t\}$. Remark that $M^{eq}_t$ is stationary. Let $V_t = \sum_{f\in W_L}
[h_{M^{max}_t}(f) - h_{M^{eq}_t}(f)]$. Then, for every $t \geq t'$
\[
   \E \bigl[ V_t - V_{t'} | \cF_{t'} \bigr] \geq -C L(t-t')
\]
for a certain constant $C$.
\end{proposition}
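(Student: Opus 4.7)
The plan is to establish the pathwise instantaneous drift bound
\[
\lim_{\delta \searrow 0}\tfrac{1}{\delta}\bbE[V_{t+\delta}-V_t \mid \cF_t] \;\geq\; -C L
\]
for every possible joint state $(M^{max}_s, M^{eq}_s)$; integrating over $s\in[t',t]$ immediately yields the desired conditional expectation estimate.

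For the pathwise bound I reuse, in the reverse direction, the telescopic decomposition behind the proof of Proposition \ref{prop:volume_drift}. Realize the joint process via the global monotone coupling of Remark \ref{rem:GMC}, starting from $M^{max}_0 = p \geq M^{eq}_0$, so that $M^{max}_t \geq M^{eq}_t$ almost surely. At time $t$, I pick a minimal chain
\[
M^{eq}_t \,=\, M_{(0)} \,\leq\, M_{(1)} \,\leq\, \cdots \,\leq\, M_{(N)} \,=\, M^{max}_t,
\]
in which each $M_{(i)}$ is obtained from $M_{(i-1)}$ by an elementary rotation that increases the height at some face $f_i \in (W_L)^*$. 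By minimality each face $f$ appears exactly $n_f := h_{M^{max}_t}(f) - h_{M^{eq}_t}(f)$ times. Combining the Markov property (to re-express the instantaneous drift at time $t$ as an initial drift starting from the current state) with the linearity-of-expectation argument that opens the proof of Proposition \ref{prop:volume_drift}, the instantaneous drift decomposes as $\sum_{i=1}^N \mathrm{drift}_i$, where $\mathrm{drift}_i$ is the initial drift of the mutual volume for two fast-dynamics evolutions started from the pair $(M_{(i-1)}, M_{(i)})$.

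I then invoke Remark \ref{rq:borne_inf_drift}: each $\mathrm{drift}_i$ lies in $[-1, 0]$, and vanishes unless the rotation at $f_i$ would make a transverse edge $e$ (adjacent to $f_i$ in a neighboring thread) into a ``new available position'' whose corresponding elementary rotation face lies outside $(W_L)^*$. Since this new edge is always within a uniformly bounded distance $R_0 = O(1)$ of $f_i$, and the convexity of the pyramid domain $W_L$ guarantees that each thread intersects $W_L$ in a single connected segment (so accessibility of $e$ along the thread reduces to $e$ itself belonging to $(W_L)^*$), the drift can be nonzero only when $f_i$ lies in the $R_0$-neighborhood $\cB$ of $\partial W_L$, a strip of cardinality $O(L)$.

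To conclude, I count near-boundary rotations in the minimal decomposition. Since $h_{M^{max}_t}$ and $h_{M^{eq}_t}$ both coincide with $h_p$ outside $(W_L)^*$ and discrete height functions are $1$-Lipschitz on the dual graph, every $f \in \cB$ satisfies $n_f \leq 2R_0 = O(1)$; hence
\[
\sum_{i=1}^N |\mathrm{drift}_i| \;\leq\; \sum_{f \in \cB} n_f \;\leq\; O(1)\cdot|\cB| \;=\; O(L),
\]
which is precisely the claimed lower bound on the drift. The main technical obstacle I expect is verifying rigorously that Remark \ref{rq:borne_inf_drift} applies uniformly to all pairs $(M_{(i-1)}, M_{(i)})$ along the chain, in particular that the ``new available edge'' is genuinely close to $f_i$ regardless of how far the nearest bead in the neighboring thread happens to be from $f_i$ in the intermediate configurations; this ultimately reduces to convexity of $W_L$ ensuring each thread segment inside $W_L$ is connected.
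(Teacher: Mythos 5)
Your overall strategy matches the paper's: a global monotone coupling so that $M^{max}_t \geq M^{eq}_t$, a telescopic decomposition of the pair into a chain of single-rotation steps, the observation from Remark \ref{rq:borne_inf_drift} that each step contributes a drift in $[-1,0]$ which vanishes unless a boundary constraint intervenes, and a count of the near-boundary steps giving $O(L)$. The $n_f \leq 2R_0$ bound is exactly the paper's $c(r)=2r+1$ count, via $1$-Lipschitz heights and the fact that both configurations agree with $h_p$ outside $W_L$. The only place where you depart is the paper's Claim \ref{claimvendredi} (drift is exactly zero for $f$ at distance $>r$ from $\partial W_L$), which you replace by the assertion that the drift can be nonzero only when $f_i$ is within $R_0$ of the boundary, justified by the $O(1)$ distance of the new available transverse edge from $f_i$ together with convexity of $W_L$.

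This is where a genuine step is missing. Convexity of $W_L$ gives: \emph{if} the relevant bead $b^\pm$ on $\gamma_{i\mp1}$ and the new edge $e_7$ are \emph{both} inside $W_L$, then the segment of $\gamma_{i\mp1}$ between them lies inside $W_L$, so the bead can reach $e_7$. Your $O(1)$-distance argument handles the second condition ($e_7\in W_L$). The first condition — that the bead which actually receives the new available position lies inside $W_L$ — is not a consequence of convexity and is precisely what the paper's case distinction $1<j<L+1-|i|$ versus $j\in\{1,L+1-|i|\}$ is for. When $b=b^i_j$ is the extreme bead of its thread, the neighbor $b^+$ (or $b^-$) lies outside $W_L$ and is frozen; if it were that bead that received the new edge, its contribution would be $0$ rather than $1/2$ and the step-drift would be strictly negative even with $f_i$ far from the boundary. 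The paper's proof shows, using the position of the ``screening'' bead $b'$ on $\gamma_{i\mp 2}$ together with the staggered bead counts $L+1-|i|$ along the threads of the pyramid, that for $f$ far from $\partial W_L$ the bead receiving $e_7$ is always the one inside $W_L$. Your closing paragraph acknowledges a concern, but locates it at ``the new available edge being genuinely close to $f_i$'' rather than at the relevant bead possibly lying outside $W_L$, so the gap remains.
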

\begin{proof} The two evolutions can be coupled so that the one
  started from $p$ always dominates the stationary one. Therefore, it
  is sufficient to prove that, given two height functions $h^-\le
  h^+$, the initial-time drift of their volume difference, 
see \eqref{eq:frift},  is lower
  bounded by $-C L$. Let $\partial^- W_L$ denote the set of faces in
  $W_L$ at graph-distance at most $r$ from the exterior of $W_L$ and assume
  the following
\begin{claim}
\label{claimvendredi} There exists some finite $r$ such that
the initial-time volume drift is zero whenever $h^+$ and $h^-$
differ at a single face $f\in W_L\setminus\partial^- W_L$.
\end{claim}
As in the proof of Proposition \ref{prop:volume_drift}, take a
sequence of configurations $h_{(0)},\dots,h_{(k)}$ such that
$h_{(0)}=h^+,h_{(k)}=h^-$ and $h_{(i)}$ is obtained by $h_{(i-1)}$ via
a rotation that decreases the height at some face $f$. Along the
sequence $\{h_{(i)}\}_i$, each face in $\partial^-W_L$ can be rotated
at most a number $c(r)$ of times (since its height difference w.r.t. a
face outside $W_L$ at distance $r$ can take at most $c(r)=2r+1$ 
values). Then write the volume difference between
$h^+$ and $h^-$ as a telescopic sum of volume differences between
$h_{(i)}$ and $h_{(i-1)}$: the terms corresponding to a rotation in
$W_L\setminus \partial^-W_L$ give zero drift (cf. Claim
\ref{claimvendredi}) and those with rotation in $\partial^-W_L$ give a
contribution lower bounded by $-1$ (cf. Remark
\ref{rq:borne_inf_drift}).  Then, one gets that the volume drift is
lower bounded by $-c(r)$ times the number of faces in $\partial^-
W_L$, which is of order $L$, and Proposition \ref{prop:driftpiramide}
follows.
\end{proof}

\begin{proof}[Proof of Claim \ref{claimvendredi}]
  
  In order to follow, the reader should have in mind the proof of
  Proposition \ref{prop:volume_drift}: there we proved that the initial-time volume
  drift is zero except for ``boundary effects'', and here we show that
  indeed the boundary effects are not there sufficiently far away from
  the boundary.  As in the proof of Proposition
  \ref{prop:volume_drift}, we consider for definiteness only the
  square-hexagon lattice, the other two cases being very similar. Note
  that there are $2L+1$ threads intersecting $W_L$ (we label them
  $\gamma_i,i=-L,\dots,L$ from left to right) and that thread
  $\gamma_i$ contains $L+1-|i|$ beads in $W_L$ (we label them $b^i_j,
  j=1,\dots,L+1-|i|$ from the lowest to the highest with respect to
  the orientation of the thread, see Fig. \ref{fig:drift0}(a) for a
  schematic drawing). This geometric structure is essential for the
  proof of the Claim and, as the reader can check from
  Fig. \ref{fig:pyramide_billes}, it is common to the ``pyramids'' of
  the three types of graphs.  The effect of the boundary of $W_L$ is
  simply that the beads of thread $\gamma_i$ are constrained to stay
  strictly below some transverse edge $e^+_i\in\gamma_i$ and strictly
  above some transverse edge $e^-_i\in\gamma_i$ (such edges are
  surrounded by circles in Fig. \ref{fig:drift0}(a)).

By symmetry, we assume that the face $f$ belongs to $\gamma_i$ for
some $-L+r\le i\le 0$ (the case $-L\le i<-L+r$ is excluded otherwise
$f\in\partial^-W_L$) and assume for definiteness that $f$ is a
hexagonal face (the argument is essentially identical when $f$ is a
square). Then, $h^+$ is obtained from $h^-$ by moving a bead $b^i_j$ one
``transverse edge'' lower along thread $\gamma_i$ (with the
notations of the proof of Proposition \ref{prop:volume_drift} (Case
1), $b=b^i_j$ moves from $e_1$ to $e_2$).  We need to show that, if
$f\in W_L\setminus \partial^-W_L$, when
threads of the opposite parity than $i$ are updated while the threads
with the same parity as $i$ are frozen, thread $\gamma_{i-1}$
contributes exactly $1/2$ to the change of volume (the same holds for
$\gamma_{i+1}$).

\begin{figure}[htp]
   \centering
   (a)\includegraphics[width=4cm]{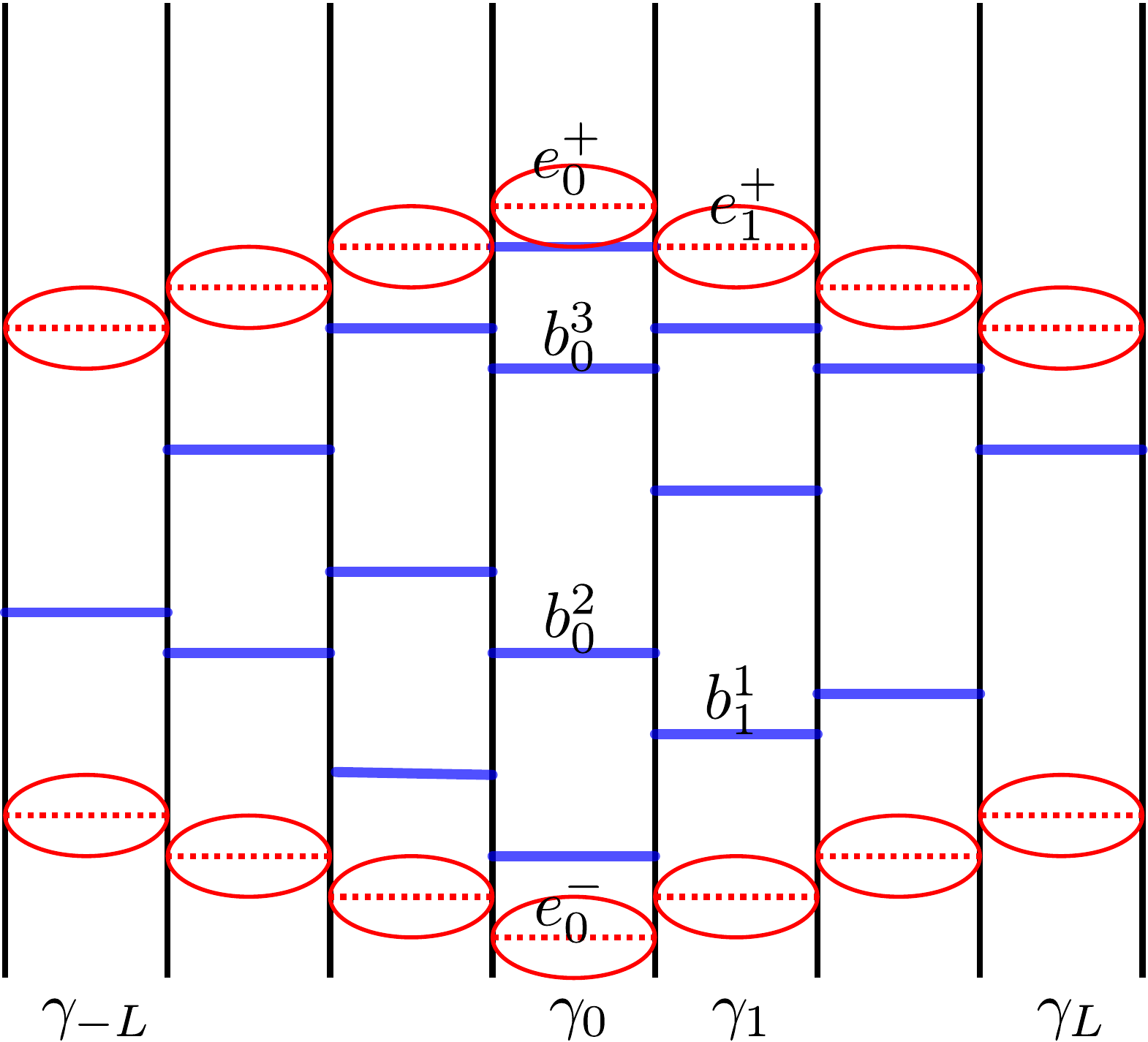}
\medskip
(b)\includegraphics[width=4cm]{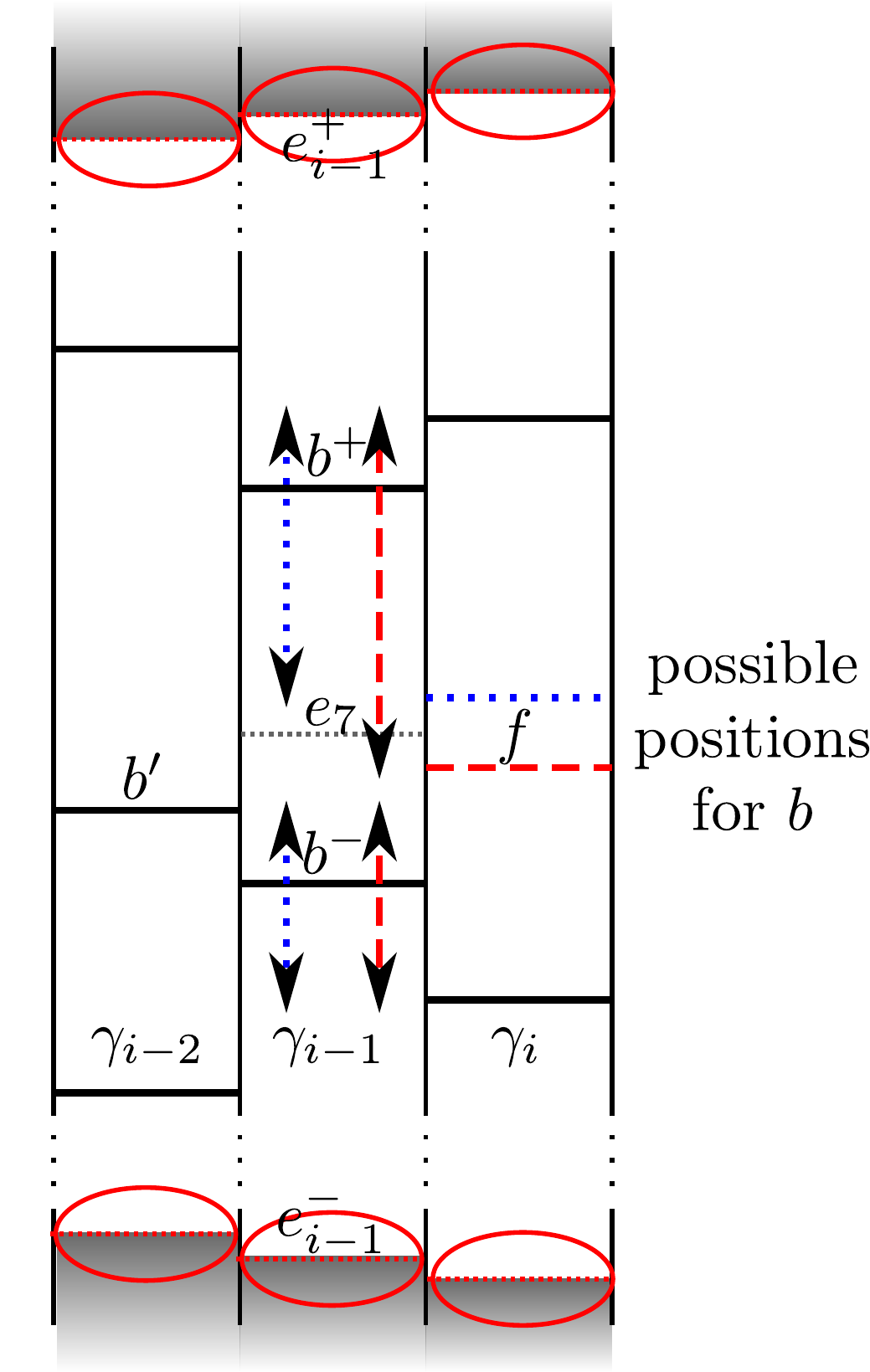}
\medskip
(c)\includegraphics[width=4cm]{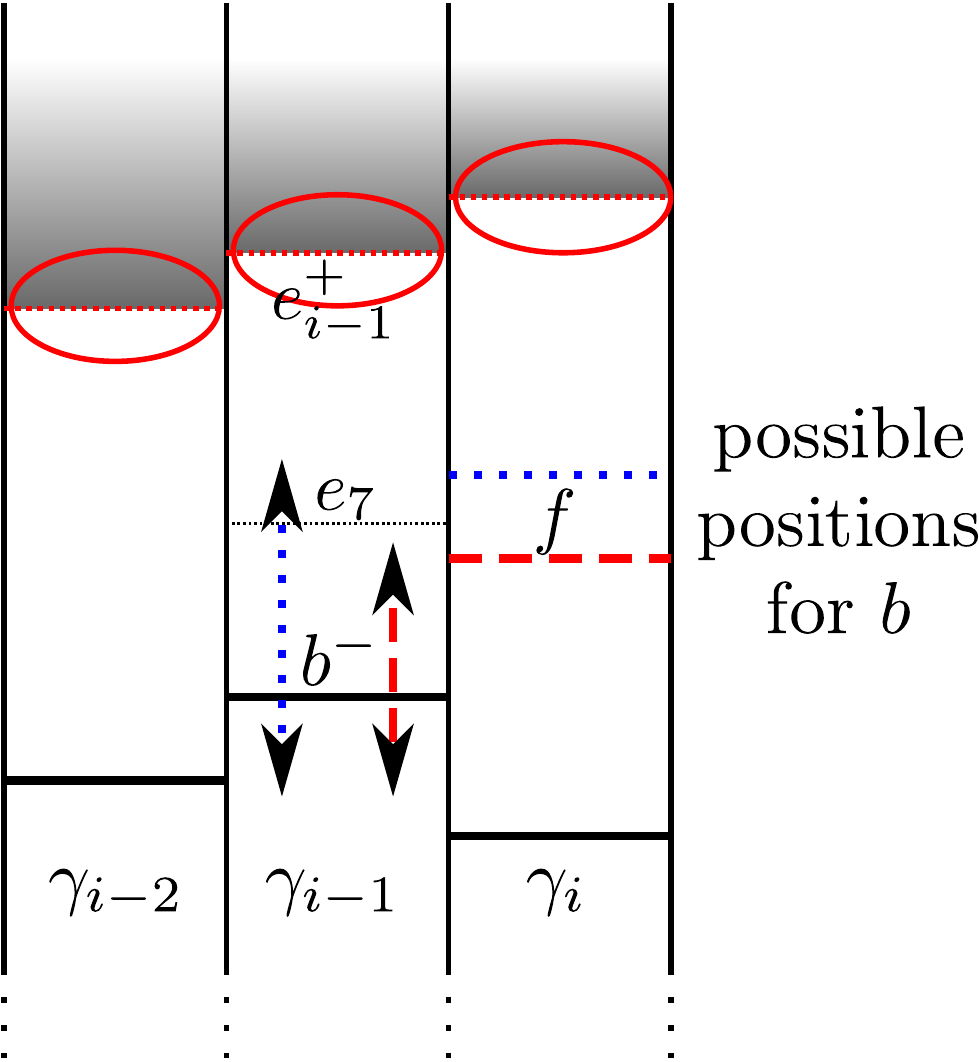}
  \caption{ (a): A schematic view of
    the threads and beads in $W_L$ (here, $L=3$). (b):  The case
    $1<j<L+1-|i|$. Here, $i=0$ and $j=3$. Boundary conditions only prevent
    beads from entering the gray region outside $W_L$, so they cannot
    prevent $b^+$ from moving down to $e_7$.
(c): The case $j=L+1-|i|$. If $f\in \partial^-W_L$ then $e_7$ can be
at $e^+_{i-1}$, in which case it is not an allowed bead position;
otherwise, $e_7$ is in $W_L$ (since it is at distance of order $1$
from $f$) and $b^-$ can reach such position.
\label{fig:drift0}
 }
\end{figure}

We distinguish two cases, represented respectively in Figures \ref{fig:drift0}(b) and \ref{fig:drift0}(c):
\begin{itemize}
\item $1<j<L+1-|i|$.  Recall that from Section \ref{sec:drift_volume} that, according to the position
of $b^{i-2}_{j-1}$ (which was called $b'$ there), either $b^{i-1}_j$
(which was called $b^+$) has one more available transverse edge
(called $e_7$) when the configuration before the update is
$h^+$ rather than $h^-$, or otherwise $b^{i-1}_{j-1}$ (which was
called $b^-$) has one more  available transverse edge (again $e_7$)
starting from $h^-$ rather than from $h^+$. Say that the former is the
case. As discussed in Remark \ref{rq:borne_inf_drift}, the volume change 
due to thread $\gamma_{i-1}$
is $1/2$ unless
the boundary conditions prevent $b^+$ from moving down to $e_7$, i.e. if
$e_7$ is not higher than $e^-_{i-1}$: this
however cannot be the case, since $e_7$ is clearly higher than
$b^-=b^{i-1}_{j-1}$  which is itself higher than $e^-_{i-1}$
(here we use that $j-1\ge1$,
i.e. the bead $b^{i-1}_{j-1}$ is in $W_L$). 

\item $j=L+1-|i|$ (or $j=1$, by symmetry). The argument is slightly
  different since this time neither $b^+=b^{i-1}_j$ nor
  $b'=b^{i-2}_{j-1}$ exist (since $j>L+1-|i-1|$), or equivalently we
  can imagine that $b^+,b'$ are higher than $e^+_{i-1},e^+_{i-2}$
  respectively (i.e. they are outside $W_L$). Since the edge $e_3$ of
  Fig. \ref{calcul_drift_hexagone} is at distance of order $1$ from
  $f$, we deduce that if $f$ is at distance larger than some finite
  $r$ from the boundary of $W_L$ then $b'$ is above $e_3$.  In this
  case, from Section \ref{sec:drift_volume}, we get that $b^-$ has one
  more available position (transverse edge $e_7$) when the update of
  $\gamma_{i-1}$ is performed starting from configuration $h^-$ rather
  than $h^+$. It is clear from Fig. \ref{fig:pyramide_billes} and
  \ref{calcul_drift_hexagone} that, if $f$ is at distance at least $r$, with $r$ sufficiently large,
  from the boundary of $W_L$, the edge $e_7$ is lower than
  $e^+_{i-1}$, so there is no obstruction for $b^-$ to actually reach
  it.
\end{itemize}

\end{proof}





\subsubsection{Lower bound on mixing time}

\begin{figure}[h]
   \centering
  \includegraphics[height=5cm]{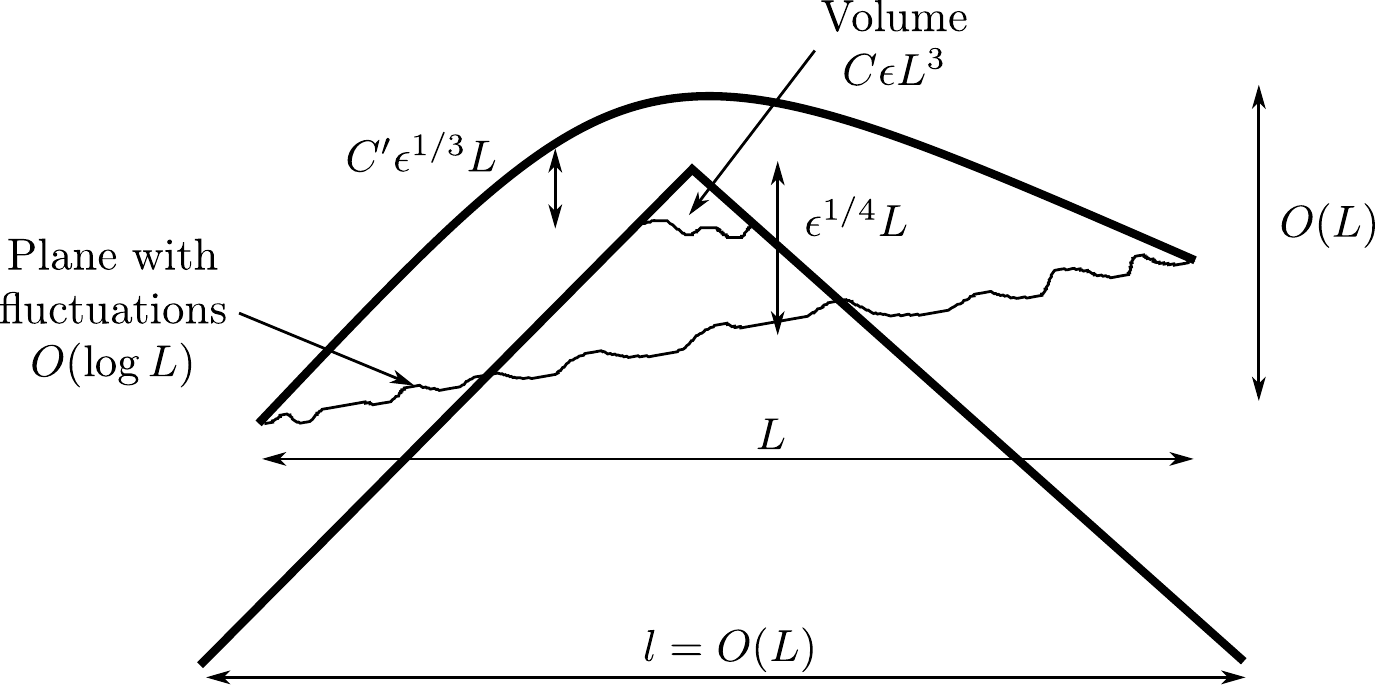}
   \caption{An illustration of the proof of the lower bound. The
     pyramid and, above it, the maximal configuration with planar
     boundary condition $m$. The top of the pyramid is $O(L)$ above
     the typical ``almost-planar'' equilibrium configuration (wiggled
     line). After a time $\epsilon L^2$, the pyramid has only lost a
     volume $\epsilon L^3$, so its top is still well above the
     ``almost-planar'' surface.  By stochastic domination, the same
     holds for the maximal evolution with boundary condition $m$.
   \label{fig:borne_inf}}
\end{figure}

Recall Definition \ref{def:G'}: the finite graph $G'$ we are
interested in is the portion of $G$ enclosed in $L U$, where $U$ is a
smooth bounded domain of $\bbR^2$, which without loss of generality we
can assume to include the origin in its interior (so that $f_0$ is at
distance of order $L$ from the boundary of $G'$). Now consider the
finite graph $W_\ell\subset G$ defined in Section \ref{sec_existence_pyramide},
and choose $\ell$ to be the minimal integer such that $W_\ell \supset G'$
(it is clear that $\ell=O(L)$).

Consider first the evolution in $W_\ell$ with boundary condition $p$
(and started from the maximal configuration $p$):
by Proposition \ref{prop:driftpiramide} and Markov's inequality
we have that at time $T_0=\epsilon L^2$,
\begin{eqnarray}
  \label{eq:vt}
  \mathbb P(h_{T_0}(f_0)\le -\epsilon^{1/4}L )=
O(\epsilon^{1/4})
\end{eqnarray}
 (indeed observe that, if $h(f_0)< -\epsilon^{1/4}L$
then the eroded volume is at least $const \times (\epsilon^{1/4} L)^3\simeq L T_0\epsilon^{-1/4} $ while the average eroded volume is of order $L T_0$).

Now recall that the boundary condition $m$ we are interested in is
almost-planar of slope $(s,t)$: up to an irrelevant global change of
the heights by an additive constant, we can assume that its height
function $h_m(\cdot)$ is within $O(1)$ from a plane of slope $(s,t)$ and
containing the point $(0,0,-2\epsilon^{1/4}L)$. If $h_p(\cdot)$ is the
height function associated to the ``pyramid'' matching $p$ and 
if $\epsilon$ is sufficiently small, $h_m(f)\ge h_p(f)$ for every $f$
along the boundary of $G'$ (the height difference is actually of order $L$). To see this, just recall that the slope of the
height function associated to $p$ is negative and maximal (in absolute
value) along any straight line starting from $f_0$, while the slope
$(s,t)$ is in the interior of the Newton polygon, so it is not an
extremal slope (see Figure \ref{fig:borne_inf}). Note that $\epsilon$ has to be taken very small if the slope $(s,t)$ is very close to $\partial N$.

We have then that, by monotonicity, the evolution in $G'$, with
boundary condition $m$ and started from the maximal configuration, is
stochastically higher than the restriction to $G'$ of the evolution in
$W_\ell$ with boundary condition $p$. As a consequence, for the former
dynamics we have that \eqref{eq:vt} still holds.
Thanks to Theorem \ref{th:stimeflutt}, at equilibrium (with almost-planar
boundary condition $m$) the typical equilibrium height at $f_0$ is around
$-2 L \epsilon^{1/4}$ and the probability that it is higher than
$-L\epsilon^{1/4}$ tends to zero with $L$. This suffices to conclude that
at time $T_0$ the variation distance from equilibrium is still close to $1$.
\qed

\appendix

\section{Gaussian behavior of fluctuations}

\label{sec:moments}

In this section we prove Theorem \ref{th:clt}. The proof is quite
independent of the rest of the paper. The main tools are the results
from \cite{KOS} that were stated in Section \ref{sec:phase_pure}. The
proof is very similar to that of \cite[Section 7]{K_non_planaire} but the setting
here is more general and we give a more explicit control of the
``error terms''.

  From the definition of height function, we know that $h(f)-h(g)$
  is determined by the dimers crossed by a path from $f$ to $g$. In
  particular, for the square, hexagon and square-hexagon graphs,
  assume that $f$ and $g$ are on the same thread (cf. Section
  \ref{sec:billes}): then, $h(f)-h(g)$ is determined just by the
  number $N_{f,g}$ of such dimers. Label $e_1,\dots,e_k$ the
  transverse edges between $f$ and $g$: it is easy to realize,
  using Theorem \ref{thm:proba_evn_fini}, that the set of occupied
  edges among the $e_i$ forms a determinantal point process: in other
  words, the probability $\mu_{s,t}( e_{i_1}\in M, \dots, e_{i_m}\in
  M)$ can be written as $\det(A(i_a,i_b)_{1\le a,b\le m})$ for a
  certain $k\times k$ matrix $A$ directly related to $K^{-1}_{s,t}$. Now, it turns
  out \cite{Kenyon_lectures} that for the hexagonal lattice such
  matrix is Hermitian. In this case, a well-known theorem by Costin
  and Lebowitz (cf. for instance \cite{Sosh}) implies that $N_{f,g}$
  is distributed like the sum of independent Bernoulli random
  variables, whose parameters are the eigenvalues of $A$. In
  particular, if the variance of $N_{f,g}$ diverges as $k\to\infty$,
  the variable $[N_{f,g}-\mu_{s,t}(N_{f,g})]/\sqrt{{\var}(N_{f,g})}$ tends to
  $\cN(0,1)$. Unfortunately, for the square and square-hexagon graph
  the matrix $A$ is not Hermitian and has complex eigenvalues, the
  Costin-Lebowitz theorem does not apply and the asymptotic moments
  have to be computed otherwise.

\subsection{Choice of paths}\label{section_presentation_chemin}

Fix an integer $k$. We are interested in the behavior of
\begin{equation}
  \mu_{s,t}\left[ M(f,f')^k\right]=\mu_{s,t}\left(\Bigl[ \bigl( h(f)-\mu_{s,t} (h(f)) \bigr) - \bigl( h(f') - \mu_{s,t} (h(f')) \bigr)\Bigr]^k\right)
\end{equation}
when the distance between $f$ and $f'$ goes to infinity. Remark that
for any path $C$ on $G^*$ from $f$ to $f'$ that only crosses edges in
the positive direction\footnote{exercise: prove that such a path exists for
every $f,f'$}, $h(f') - h(f)$ is exactly the difference between the
number of dimers crossed by the random matching minus those crossed by
the reference matching, so $M(f,f')=\sum_e[ 1_{e \in M} - \mu_{s,t} (e
\in M)]$ with the sum over all edges crossed by $C$. We will compute
moments of higher order by taking $k$ such paths $C_1, \ldots, C_k$ :
\begin{equation}\label{definition_M_kL}
\mu_{s,t} \left[  M(f,f')^k\right] =\mu_{s,t} \Bigl[ \sum_{e_1 \in C_1} \left(\delta_{e_1} - \mu_{s,t}( \delta_{e_1} )\right) \ldots \sum_{e_k \in C_k}\left(\delta_{e_k} - \mu_{s,t}( \delta_{e_k}) \right)  \Bigr]
\end{equation}
where we write $\delta_e$ for  $1_{e \in M} $.

\begin{figure}[h]
   \centering
   \includegraphics[width=8cm]{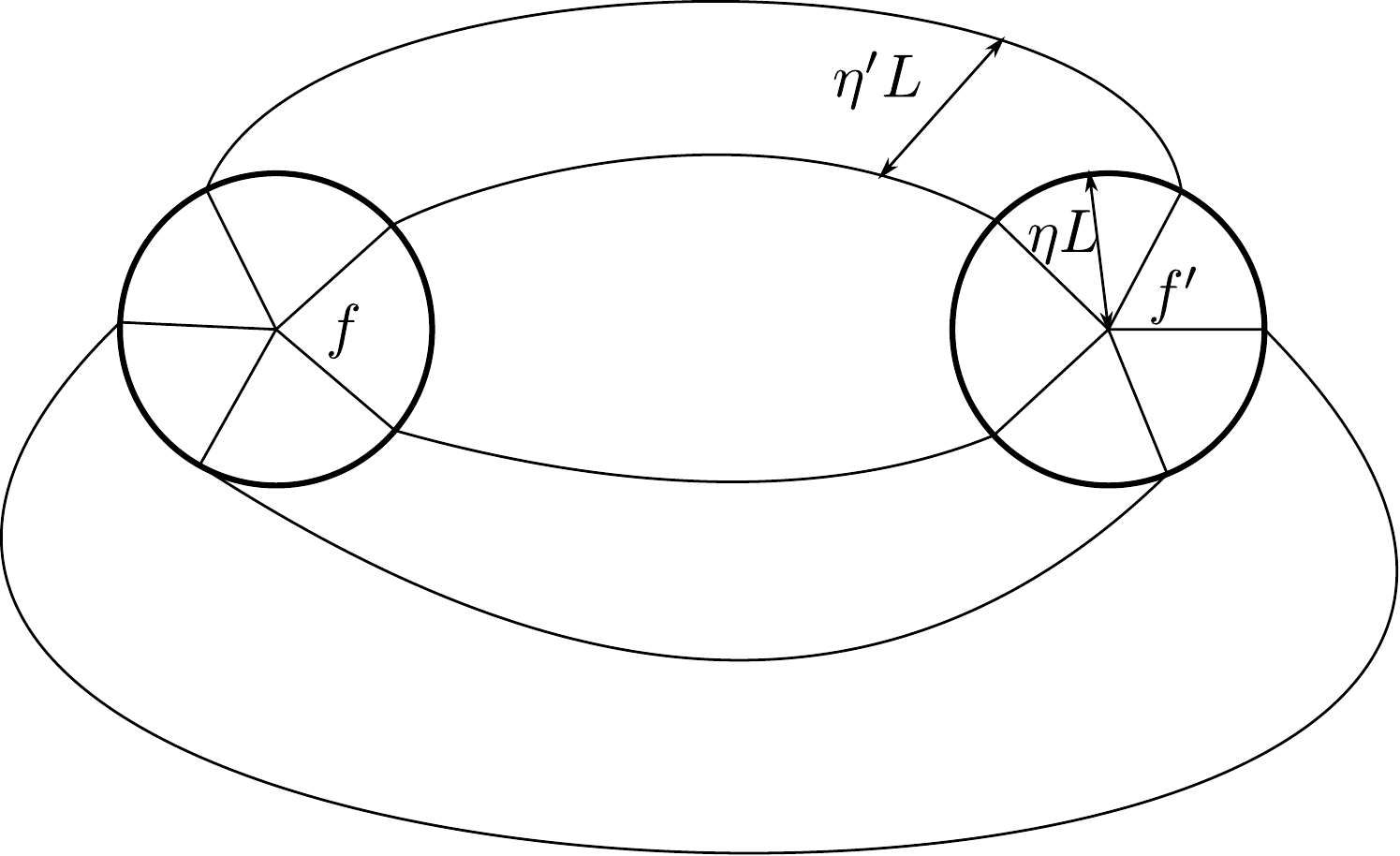}
    \caption{The paths $C_1,\dots,C_k$ (here $k=5$) along which is computed the height,
 displayed at large scale so that they look like continuous curves. They are macroscopically away from each
      other except near $f$ and $f'$ where they are periodic (so that
      on large scale they look linear). \label{fig:chemins_fluctuations}}
\end{figure}
In principle we can choose the paths $C_1 \ldots, C_k$ freely as long
as they only cross edges in the positive direction. In practice, for
reasons that will be clear later we
will adopt the following construction, illustrated
in Fig. \ref{fig:chemins_fluctuations}. 
Fix $\eta,\eta'>0$. Inside balls of radius $\eta L$ around $f$ and
$f'$, the $C_i$ are portions of length $\eta L$ of infinite periodic
paths (cf. Definition \ref{def:periodico}) and have mutually different
asymptotic directions. Outside of these balls the paths stay at
distance at least $\eta' L$ of each other and their length is of order
$L$. Furthermore $\eta, \eta'$ and the infinite periodic paths depend
on $k$ but not on $L$.



\subsection{Exact simplifications}

Fix $k$  edges $e_1=(\rw_1,\rb_1)\in C_1, \ldots
,e_k=(\rw_k,\rb_k)\in C_k$, and consider the corresponding term
(written $\Pi(e_1, \ldots, e_k)$) of the sum \eqref{definition_M_kL}.

We write $\delta_i$ for $\delta_{e_i}$, $K^{-1}_{i,j} $ for
$K_{s,t}^{-1}(\rb_i, \rw_j)$ and $K_{ij}$ for $K_{s,t}(\rw_i,\rb_j)$.  We also
write $S_k$ the set of permutations of $\{ 1, \ldots, k \}$, $\tilde
S_k$ the set of permutations without fixed points and $\tilde S_k^2$
those of order $2$. Given $\sigma\in S_k$, we denote $\epsilon(\sigma)$ it
signature.

First we express each term $\Pi(e_1, \ldots, e_k)$ of the sum \eqref{definition_M_kL} as a sum over permutations with no fixed points.
\begin{lemma}\label{lemme:somme_sans_pt_fixe}
   Given  edges $e_1, \ldots ,e_k$ as above, we have
\begin{equation}\label{eq_somme_sans_pt_fixe}
   \Pi(e_1, \ldots, e_k) \equiv \mu_{s,t}\bigl[ \prod_{n=1}^k \bigl(\delta_n - \mu_{s,t}(\delta_n)\bigr)\bigr] = \prod_{n=1}^k K_{nn} \sum_{\sigma \in\tilde{S}_k} \epsilon(\sigma) \prod_{n=1}^k  K^{-1}_{n \sigma(n)}.
\end{equation}
\end{lemma}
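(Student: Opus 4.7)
The plan is to expand the centered product directly using the determinantal formula of Theorem \ref{thm:proba_evn_fini}, and then reorganize the resulting sum so that the only surviving terms come from permutations with no fixed points. The key observation is that terms involving fixed points cancel via a binomial identity, which is exactly the combinatorial mechanism by which the ``off-diagonal'' part of the $K^{-1}$ matrix carries the centered cumulant-like information.

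First I would expand
\[
\prod_{n=1}^k(\delta_n-\mu_{s,t}(\delta_n))
=\sum_{S\subset\{1,\dots,k\}}(-1)^{k-|S|}\prod_{n\in S}\delta_n\prod_{n\notin S}\mu_{s,t}(\delta_n),
\]
and take expectations. By Theorem \ref{thm:proba_evn_fini},
$\mu_{s,t}(\prod_{n\in S}\delta_n)=\prod_{n\in S}K_{nn}\,\det(K^{-1}_{nm})_{n,m\in S}$
and $\mu_{s,t}(\delta_n)=K_{nn}K^{-1}_{nn}$. Pulling out the (full) factor $\prod_{n=1}^k K_{nn}$, the problem reduces to showing
\[
\sum_{S\subset\{1,\dots,k\}}(-1)^{k-|S|}\det(A_S)\prod_{n\notin S}A_{nn}=\sum_{\sigma\in\tilde S_k}\epsilon(\sigma)\prod_{n=1}^k A_{n\sigma(n)},
\]
where $A_{ij}:=K^{-1}_{ij}$ and $A_S$ is the principal submatrix indexed by $S$.

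Next, I would expand each $\det(A_S)=\sum_{\sigma\in S(S)}\epsilon(\sigma)\prod_{i\in S}A_{i\sigma(i)}$. Any such $\sigma$, extended by the identity outside $S$, yields a permutation $\tau\in S_k$ with $\mathrm{supp}(\tau)\subset S$ and $\epsilon(\tau)=\epsilon(\sigma)$. Conversely, a given $\tau\in S_k$ arises from exactly those subsets $S$ with $\mathrm{supp}(\tau)\subset S\subset\{1,\dots,k\}$; the remaining elements of $S$ are an arbitrary subset $T$ of the fixed-point set $F(\tau)$. Regrouping by $\tau$ gives
\[
\sum_{\tau\in S_k}\epsilon(\tau)\Bigl(\prod_{i=1}^k A_{i\tau(i)}\Bigr)\sum_{T\subset F(\tau)}(-1)^{|F(\tau)|-|T|}.
\]

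The inner sum equals $(1-1)^{|F(\tau)|}$ by the binomial theorem, which vanishes unless $F(\tau)=\emptyset$ (in which case it equals $1$). Hence only derangements $\tau\in\tilde S_k$ survive, and we obtain exactly the right-hand side of \eqref{eq_somme_sans_pt_fixe}. Putting back the prefactor $\prod_n K_{nn}$ concludes the proof.

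The whole argument is a finite algebraic identity, so there is no real analytic obstacle; the only delicate point is the careful bookkeeping to check that the sign and the support condition combine correctly to produce a binomial coefficient sum of the form $(1-1)^{|F(\tau)|}$. I would double-check this on the small cases $k=2$ (where one recovers the classical covariance formula $\mu_{s,t}(\delta_1\delta_2)-\mu_{s,t}(\delta_1)\mu_{s,t}(\delta_2)=-K_{11}K_{22}K^{-1}_{12}K^{-1}_{21}$) and $k=3$, to make sure signs are consistent.
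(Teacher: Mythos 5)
Your argument is correct and is essentially the same combinatorial expansion that the paper invokes by citing Kenyon's Lemma 21 in \cite{Kdom}: expand the centered product over subsets $S$, insert the determinantal formula, regroup by the permutation $\tau$ obtained by extending each $\sigma$ on $S$ by the identity, and observe that the sum over extensions produces $(1-1)^{|F(\tau)|}$, killing everything except derangements. The bookkeeping of signs and supports checks out (and the $k=2$ sanity check you mention is consistent).
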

This is exactly like \cite[Lemma 21]{Kdom}.

Now we replace $K^{-1}_{n \sigma(n)}$ by its asymptotic expression
from Theorem \ref{th:asy}.  To lighten notations we use the following
conventions. Recall that $e_i=(\rw_i,\rb_i)$ and write (in a unique way)
$\rb_i=\hat\rb_i+(x_i,y_i),\rw_i=\hat\rw_i+(x'_i,y'_i)$ with $\hat\rb_i,\hat\rw_i$ in the 
fundamental domain $G_1$ and $(x_i,y_i)\in\bbZ^2,(x'_i,y'_i)\in\bbZ^2$. Note that
$(x_i-x'_i,y_i-y'_i)$ is a vector of order $1$. Then
let $U_i:=U_{s,t}(\hat\rb_i)$, $V_j:=V_{s,t}(\hat\rw_j)$ and 
$\phi_i:=\phi(x_i,y_i)$, $x_{ij} = x'_i-x_j$ and $y_{ij} = y'_i-y_j$. We
then get
\begin{multline}\label{eq_remplacement_K-1}
   \Pi(e_1,\dots,e_k)/\Bigl( \prod_{n=1}^k K_{nn} \Bigr) = \sum_{\sigma \in \tilde{S}_k} \epsilon(\sigma) \prod_{n=1}^k  \left(i\frac{U_n V_{\sigma(n)} w_0^{x_{n\sigma(n)}} z_0^{y_{n\sigma(n)}}}{2\pi(\phi_n-\phi_{\sigma(n)})} + \text{complex conjugate}\right.\\
 \left.+ O\Big(\frac{1}{x_{n\sigma(n)}^2+y_{n\sigma(n)}^2}\Big) \right)\equiv \sum_{\sigma \in \tilde{S}_k} \epsilon(\sigma) \prod_{n=1}^k (A_{n,\sigma(n)}+A^*_{n,\sigma(n)}+R_{n,\sigma(n)}).
\end{multline}
We then expand the product over $n$ and we call ``dominant terms''
those such that no $R$ factors appear and, in addition, such that within each cycle
of $\sigma$ either only $A$ terms or only $A^*$ terms appear.
All other terms of the expansion are called ``error terms''.
We first show that in the dominant terms we can assume that all cycles
of the permutation $\sigma$ are of order two. This comes from the
following result:
\begin{lemma}
\label{lemm:1ciclo}
If $\tilde S^c_\ell$ denotes the set of cyclic permutations of
$\{1,\dots,\ell\}$, then
   \begin{equation}\label{eq:ordre_2}
  \sum_{\sigma \in \tilde S^c_\ell}\epsilon(\sigma)\prod_{n=1}^\ell A_{n\sigma(n)}=     \sum_{\sigma \in \tilde S^c_\ell}\epsilon(\sigma)\prod_{n=1}^\ell \frac
      i{2\pi} \frac{U_n V_{\sigma(n)} w_0^{x_{n\sigma(n)}}
        z_0^{y_{n\sigma(n)}}}{\phi_n-\phi_{\sigma(n)}} = 0\left(= \sum_{\sigma \in \tilde S^c_\ell}\epsilon(\sigma)\prod_{n=1}^\ell A^*_{n\sigma(n)}\right)
   \end{equation}
if $\ell>2$.
\end{lemma}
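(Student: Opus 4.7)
The first equality in \eqref{eq:ordre_2} is just the substitution of the definition of $A_{n\sigma(n)}$, so the content of the lemma is the vanishing. The plan is to factor out everything that is $\sigma$-independent and then prove a purely algebraic identity by a short telescoping argument.

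First, since every cyclic permutation of $\{1,\dots,\ell\}$ has signature $(-1)^{\ell-1}$, the factor $\epsilon(\sigma)$ pulls out of the sum. Next, because $\sigma$ is a bijection of $\{1,\dots,\ell\}$, one has
\[
\prod_{n=1}^\ell V_{\sigma(n)}=\prod_{n=1}^\ell V_n,\qquad
\sum_{n=1}^\ell x_{\sigma(n)}=\sum_n x_n,\qquad
\sum_{n=1}^\ell y_{\sigma(n)}=\sum_n y_n,
\]
so that the quantity $\prod_{n=1}^\ell U_n V_{\sigma(n)} w_0^{x_{n\sigma(n)}}z_0^{y_{n\sigma(n)}}$ is in fact $\sigma$-independent and can be factored out. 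Therefore \eqref{eq:ordre_2} reduces to the purely algebraic statement
\[
I_\ell := \sum_{\sigma\in\tilde S^c_\ell}\;\prod_{n=1}^\ell\frac{1}{\phi_n-\phi_{\sigma(n)}} \;=\;0\qquad(\ell\ge 3),
\]
in which $\phi_1,\dots,\phi_\ell$ are pairwise distinct complex numbers (this distinctness holds under the path construction of Section~\ref{section_presentation_chemin}, where the invertibility of $\phi$ is part of Theorem~\ref{th:asy}).

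The key step is an \emph{insertion / telescoping} identity. Every cyclic $\sigma$ on $\{1,\dots,\ell\}$ is obtained uniquely from a cyclic $\tau$ on $\{1,\dots,\ell-1\}$ by inserting the symbol $\ell$ between $\tau^k(1)$ and $\tau^{k+1}(1)$ in the cycle $(1,\tau(1),\tau^2(1),\dots,\tau^{\ell-2}(1))$, for some $k\in\{0,\dots,\ell-2\}$; concretely, $\sigma(\tau^k(1))=\ell$, $\sigma(\ell)=\tau^{k+1}(1)$, and $\sigma$ agrees with $\tau$ elsewhere. Setting $P_\tau:=\prod_{m=1}^{\ell-1}(\phi_m-\phi_{\tau(m)})^{-1}$, a direct comparison gives
\[
\prod_{n=1}^\ell\frac{1}{\phi_n-\phi_{\sigma(n)}}
\;=\;P_\tau\cdot\frac{\phi_{\tau^k(1)}-\phi_{\tau^{k+1}(1)}}{(\phi_{\tau^k(1)}-\phi_\ell)(\phi_\ell-\phi_{\tau^{k+1}(1)})}.
\]

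Then the elementary partial fraction
\[
\frac{a-b}{(a-c)(c-b)}=\frac{1}{c-b}+\frac{1}{a-c}
\]
transforms the inner sum over $k$ into
\[
\sum_{k=0}^{\ell-2}\!\left[\frac{1}{\phi_\ell-\phi_{\tau^{k+1}(1)}}+\frac{1}{\phi_{\tau^k(1)}-\phi_\ell}\right]
=\sum_{j=1}^{\ell-1}\frac{1}{\phi_\ell-\phi_{\tau^{j}(1)}}-\sum_{k=0}^{\ell-2}\frac{1}{\phi_\ell-\phi_{\tau^k(1)}},
\]
which telescopes to $(\phi_\ell-\phi_{\tau^{\ell-1}(1)})^{-1}-(\phi_\ell-\phi_{\tau^0(1)})^{-1}=0$ because $\tau^{\ell-1}(1)=\tau^0(1)=1$. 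Thus the inner sum vanishes for every $\tau\in\tilde S^c_{\ell-1}$, so $I_\ell=0$ as required. The conjugate identity is proven identically with $(z_0,w_0,\phi)$ replaced by $(\bar z_0,\bar w_0,\bar\phi)$.

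There is no real obstacle here: the only subtle point is to notice that the $\sigma$-dependent data of $A_{n\sigma(n)}$ besides $1/(\phi_n-\phi_{\sigma(n)})$ are all permutation-invariant, so that the problem truly reduces to the combinatorial identity; once that is observed, the insertion and the partial-fraction telescoping are essentially forced. Note also that for $\ell=2$ the inner sum consists of the single $k=0$ term which is \emph{not} cancelled (consistent with the fact that the lemma is false at $\ell=2$).
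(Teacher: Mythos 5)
Your proof is correct, and the reduction is the same as the paper's: since $\epsilon(\sigma)$ is constant on cyclic permutations, and the remaining $\sigma$-dependence in $\prod_n U_n V_{\sigma(n)} w_0^{x_{n\sigma(n)}}z_0^{y_{n\sigma(n)}}$ sits in the products $\prod_n V_{\sigma(n)}$, $w_0^{-\sum_n x_{\sigma(n)}}$, $z_0^{-\sum_n y_{\sigma(n)}}$, which are invariant under relabelling by the bijection $\sigma$, everything pulls out and the lemma reduces to the algebraic identity $\sum_{\sigma\in\tilde S^c_\ell}\prod_{n=1}^\ell(\phi_n-\phi_{\sigma(n)})^{-1}=0$ for $\ell>2$. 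The difference lies in how that identity is settled: the paper simply cites \cite[Lemma 7.3]{K_non_planaire}, whereas you prove it directly via the insertion bijection $\tilde S^c_\ell\cong\tilde S^c_{\ell-1}\times\{0,\dots,\ell-2\}$, the partial fraction $\tfrac{a-b}{(a-c)(c-b)}=\tfrac{1}{c-b}+\tfrac{1}{a-c}$, and a telescoping cancellation that closes up because $\tau^{\ell-1}(1)=\tau^0(1)=1$. This is a short, self-contained argument that removes the appendix's reliance on the external reference, and your remark that the inner sum is a single uncancelled term at $\ell=2$ correctly pinpoints the role of the hypothesis $\ell>2$.
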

\begin{proof}
Recall
that $\epsilon(\sigma)$ is constant for cyclic permutations, 
thus it is enough to show that
\begin{equation}
  \label{eq:1ciclo}
   \sum_{\sigma \in \tilde S^c_\ell} \prod_n \frac{U_nV_{\sigma(n)} w_0^{x_{n\sigma(n)}} z_0^{y_{n\sigma(n)}}}{\phi_n-\phi_{\sigma(n)}}=\prod_n \left(U_n V_n\right)
w_0^{\sum_i (x'_i-x_i)}z_0^{\sum_i (y'_i-y_i)}
\sum_{\sigma \in \tilde{S}_\ell^c} \prod_n
\frac1{\phi_n-\phi_{\sigma(n)}}=0
\end{equation}
as soon as $\ell>2$.
The second equality is purely algebraic and is given in \cite[Lemma 7.3]{K_non_planaire}. 
\end{proof}
Decompose a permutation $\sigma$ into cycles and remark that in equation \eqref{eq_somme_sans_pt_fixe} and \eqref{eq_remplacement_K-1}, the sum over permutations with a given cycle structure can be factorized as a product over cycles.
Then, Lemma \ref{lemm:1ciclo} implies that the dominant terms in
\[
\sum_{\sigma \in \tilde{S}_k\setminus \tilde S^2_k} \epsilon(\sigma) \prod_{n=1}^k (A_{n,\sigma(n)}+A^*_{n,\sigma(n)}+R_{n,\sigma(n)})
\]
exactly cancel each other so only ``error terms'' are left (recall that $\tilde S^2_k$ is the set
of permutations without fixed points, and with only cycles of order $2$).
Altogether, we have proven (cf. \eqref{eq_remplacement_K-1})
\begin{equation}
\label{eq:primaexp}
  \mu_{s,t}(    M(f,f')^k) = \sum_{e_1 \in C_1} \cdots \sum_{e_k \in C_k} \sum_{\sigma \in\tilde{S}_k^2} \epsilon(\sigma) \left(\prod_{n=1}^k K^{-1}_{n \sigma(n)}\right)
\left( \prod_{n=1}^k K_{nn} \right) + \text{error terms}.
\end{equation}
In particular, if $k$ is odd then there are only ``error terms''
because $\tilde S^2_k$ is empty, while, using equation
\eqref{eq_somme_sans_pt_fixe} separately for all pairs,
\begin{align}\label{gaussien_plus_termes_suplementaires}
   \mu_{s,t}(M(f,f')^{2k}) & = \sum_{\sigma \in\tilde{S}_{2k}^2} \mu_{s,t}[M(f,f')^2]^k + \text{error terms} \\
	      & = g_{2k}\left[\var_{\mu_{s,t}}(h(f)-h(f'))\right]^k + \text{error terms}
\end{align}
where $g_{2k} =  (2k)!/(2^k k!)$ is the Gaussian moment of
order $2k$. We will prove in the following section that the error terms are negligible, i.e. give a contribution to $\mu_{s,t}(M(f,f')^k)$ that
is much smaller than $[\var_{\mu_{s,t}}(h(f)-h(f'))]^{k/2}$.

\subsection{Controlling the ``error terms''}
\label{sec:et}
Recall that there are two kinds of ``error terms'' in the expression \eqref{eq:primaexp}
for the $k$-th moment of the height fluctuation: those which contain
both $A$ and $A^*$ factors within the same cycle of $\sigma$ but but no $R$ factor (recall \eqref{eq_remplacement_K-1}), and those which contain
$R$ factors. The former will be shown to be ``small'' because they
oscillate and give a negligible contribution when summed over $e_1,
\dots,e_k$, while the latter are small because their denominator
contains at least one factor $|\phi_j-\phi_{\sigma(j)}|$ more than in
the dominant terms. For simplicity purpose we assume in the following that $\sigma$ contains only a single cycle. The general case is essentially the same since an error term can be factorized as a product over the cycles of $\sigma$.

We will need the following elementary estimates:
\begin{lemma}
\label{lem:F}
  Define
  \begin{eqnarray}
    \label{eq:Fk}
    F_k(L)=\sum_{d_1,\dots, d_k\le
      L}\frac1{(d_1+d_2)\dots(d_k+d_1)},\quad
  \tilde F_k(L)=\sum_{d_1,\dots,d_k\le L}\frac1{(d_1+d_2)\dots(d_{k-1}+d_k)}.
  \end{eqnarray}
One has
$\tilde F_k(L)\le L F_{k-1}(L)   $
and
$F_k(L)=O((\log L)^{\lfloor k/2\rfloor}).  $
\end{lemma}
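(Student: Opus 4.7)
The plan is to prove both bounds by elementary estimates on harmonic-type sums, with the key analytical input being the identity
\[
\sum_{d=1}^L \frac{1}{(a+d)(b+d)} \le \frac{C}{a+b},
\]
valid uniformly in $a, b \ge 1$. This follows by partial fractions: one writes $1/((a+d)(b+d)) = (a-b)^{-1}[1/(b+d)-1/(a+d)]$, and the individual sums $\sum_d 1/(c+d) = O(\log(1+L/c))$ produce logarithms that are absorbed by the denominator $|a-b|$ after telescoping, yielding the logarithm-free bound uniformly in $a, b$.

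For the second bound $F_k(L) = O((\log L)^{\lfloor k/2 \rfloor})$, I would proceed by induction on $k$. The base cases $k=2, 3$ give $F_k(L) = O(\log L)$ by a direct calculation (for $F_2$, sum $d_1$ using $\sum_{d_1} 1/(d_1+d_2)^2 \le C/d_2$ and then $\sum_{d_2} 1/d_2 = O(\log L)$; for $F_3$, apply the displayed identity to sum out $d_2$ to reduce to $CF_2$). For the inductive step, I sum out every other variable in the cyclic chain using the identity above: eliminating a variable $d_j$ appearing in two consecutive factors $1/((d_{j-1}+d_j)(d_j+d_{j+1}))$ produces a factor $C/(d_{j-1}+d_{j+1})$ \emph{with no logarithmic cost}. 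Iterating this over $d_2, d_4, \ldots$ reduces the cyclic chain of length $k$ to one of length $\lceil k/2 \rceil$, i.e.\ essentially to $F_{\lceil k/2 \rceil}(L)$, up to a constant prefactor. Repeating until one reaches the base case gives $F_k(L) = O((\log L)^{\lfloor k/2 \rfloor})$.

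For the first bound $\tilde F_k(L) \le L\,F_{k-1}(L)$, the idea is to close up the acyclic chain of $\tilde F_k$ into a cyclic one at the cost of a prefactor $L$. Since $d_k + d_1 \le 2L$, one has
\[
\tilde F_k(L) \le 2L \sum_{d_1,\dots,d_k \le L} \frac{1}{(d_1+d_2)\cdots(d_{k-1}+d_k)(d_k+d_1)} = 2L\,F_k(L),
\]
which, combined with the second bound, already gives $\tilde F_k(L) = O\bigl(L(\log L)^{\lfloor k/2 \rfloor}\bigr)$. Refining this to the slightly sharper form $L\,F_{k-1}(L)$ requires one preliminary application of the key identity to the two variables $d_{k-1}, d_2$ adjacent to the closing factor before inserting it; this saves exactly one logarithm, so that the ensuing cyclic sum is bounded by $F_{k-1}$ rather than $F_k$. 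The main obstacle throughout is the careful bookkeeping of logarithmic factors in the induction; once one grants the logarithm-free identity displayed above, the rest is routine.
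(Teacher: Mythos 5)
Your proof hinges on the ``key analytical identity''
\[
\sum_{d=1}^{L}\frac{1}{(a+d)(b+d)}\le\frac{C}{a+b}\qquad\text{uniformly in }a,b\ge1,
\]
but this inequality is \emph{false}. Take $a=L$ and $b=1$. The partial-fraction telescoping you invoke gives
\[
\sum_{d=1}^{L}\frac{1}{(a+d)(b+d)}=\frac{1}{a-b}\Bigl[\log\tfrac{b+L}{b}-\log\tfrac{a+L}{a}\Bigr]+O\!\bigl(\tfrac{1}{a-b}\bigr)
=\frac{1}{L-1}\log\frac{L+1}{2}+O\!\bigl(\tfrac1L\bigr)\sim\frac{\log L}{L},
\]
whereas $C/(a+b)\sim C/L$. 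The logarithms from the two harmonic sums do \emph{not} cancel: their difference is $\log\frac{a(b+L)}{b(a+L)}$, which is of order $\log(a/b)$ when $a$ and $b$ are far apart, and is not bounded uniformly. So in general the best one can say is $\sum_d 1/((a+d)(b+d))\le C(1+\log L)/(a+b)$, with a logarithmic loss.

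This is not a repairable cosmetic issue: the logarithm-free identity is the engine of your entire argument. In the inductive step you claim that eliminating every other variable in the cycle is ``without logarithmic cost'', which would give $F_4\le C\,F_2=O(\log L)$, $F_8\le C\,F_4$, etc.; once the identity is corrected, summing out a variable costs a factor $O(\log L)$, and the same iteration yields only $F_{2^n}=O((\log L)^{2^n-1})$, far weaker than the stated $O((\log L)^{2^{n-1}})$. Likewise, your ``refinement'' of the trivial bound $\tilde F_k\le 2L\,F_k$ down to $L\,F_{k-1}$ relies on the same false step and does not go through. (Your base-case computation for $F_2$, using $\sum_{d_1}1/(d_1+d_2)^2\le C/d_2$, is fine -- but note that this is the squared case $a=b$, which genuinely has no logarithm, and it does not generalize to $a\ne b$.) For comparison, the paper avoids this pitfall entirely: the bound $\tilde F_k\le L\,F_{k-1}$ is proved by an exact symmetrization identity showing $2L\,F_{k-1}-2\tilde F_k$ is a sum of manifestly nonnegative terms, and $F_k=O((\log L)^{\lfloor k/2\rfloor})$ is proved by a dyadic-scaling recursion ($F_k(2L)-F_k(L)=O(F_{k-2}(L))$) that uses the first bound as an input -- a fundamentally different structure from your induction.
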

\begin{proof}
  To get $\tilde F_k(L)\le L F_{k-1}(L)   $, writing $F_{k-1}(L)$ either as a sum over $d_1,\dots,
d_{k-1}$ or over $d_2,\dots,d_k$ it is not difficult to see that
 \[
    2L F_{k-1}(L)-2 \tilde F_k(L)=\sum_{d_1,\dots,d_k}\frac1{(d_1+d_2)\dots(d_{k-2}+d_{k-1})}\,\frac{(d_1-d_k)^2}{(d_1+d_{k-1})(d_2+d_k)(d_{k-1}+d_k)}\ge0
  \]
  (use the fact that the denominator is symmetric under the exchange
  of $d_1$ with $d_k$).  As for $F_k(L)=O((\log L)^{\lfloor
    k/2\rfloor})$, this is an easy computation if $k=2$.  By induction
  on $k$, the proof is concluded if we show that $F_k(2L)-F_k(L)=O(k
  F_{k-2}(L))$ for $k\ge 4$ and $F_k(2L)-F_k(L)=O(1)$ for $k=3$.  To see this, one notes that the dominant contribution
  to $F_k(2L)-F_k(L)$ comes from the terms where one of the variables
  is in $[L,2L]$ while all the others are smaller than $L$:
  altogether, this gives
\begin{align*}
   \sum_{i=1}^k \sum_{d_1,\ldots d_k \leq L} \frac{1}{(d_1+d_2)\ldots(d_{i-1}+(L+d_i))((L+d_i)+d_{i+1})\ldots(d_k+d_1)}
      &\leq k L \tfrac{1}{L^2}\tilde F_{k-1}(L).
\end{align*}
For $k>3$ use $\tilde F_{k-1}(L)\le L F_{k-2}(L)$ and for $k=3$ note that
$\tilde F_{2}(L)=O(L)$.
\end{proof}

The first remark about the computation of $\mu_{s,t}(M(f,f')^k)$ is that we can restrict ourselves to the cases where
either all edges $e_i$ are in the ball $B_f(\eta L)$ of radius $\eta L$ around
either $f$ or in the analogous ball around $f'$. Indeed,  call $d_i$ the
distance of $e_i$ from $f$ and observe that, since the map $\phi$ is
non-degenerate and the paths $C_i$ are almost linear in
$B_f(\eta L)$, one can bound $|\phi_i-\phi_j|$, from above and below, by
a constant times $(d_i+d_j)$. Then the sum of \eqref{eq_remplacement_K-1}
with, for example, $e_1,\dots,
e_{k-1}$ in $B_f(\eta L)$ and  $e_k$ out of $B_f(\eta L)$ is of order
$(1/L)\tilde F_{k-1}(\eta L)\le F_{k-2}(\eta L)=O((\log L)^{\lfloor k/2\rfloor
-1})\ll [\var_{\mu_{s,t}}(h(f)-h(f'))]^{k/2}$. Terms with more than one $e_i$ outside of $B_f(\eta L)$ are even smaller. To fix ideas, we will
assume that all $e_i$ are in $B_f(\eta L)$.

Consider now an ``error term'' containing a factor $R$ (if it contains
more than one, the argument is similar). Summing over $e_i$, this
gives a contribution of order
\begin{gather}\label{forme_simple_terme_erreur}
  \sum_{d_1,\dots,d_k \leq \eta L}
  \frac{1}{(d_1+d_2)\ldots(d_k+d_1)}\frac{1}{d_1+d_2}
   \leq \sum_{d_2,\ldots, d_k} \frac{1}{(d_2+d_3)\ldots(d_{k-1}+d_{k})} \sum_{u=d_2}^\infty \frac{1}{d_k u^2} \\\nonumber
   \leq  const\times \sum_{d_2, \ldots, d_k}
   \frac{1}{(d_2+d_3)\ldots(d_k+d_2)}
   \frac{d_2+d_k}{d_2d_k}=O(F_{k-1}(\eta L))\ll
   [\var_{\mu_{s,t}}(h(f)-h(f'))]^{k/2}.
\end{gather}

We still have to deal with the error terms including no $R$ factors but
both $A$ and $A^*$ within the same cycle of $\sigma$. Recall that for
simplicity of exposition that $\sigma$ is assumed to have a single cycle. Omitting the product of the factors $\pm i/(2\pi)$, these terms 
are of the form
\begin{equation}\label{forme_cpx_terme_oscillant}
  \sum_{e_1,\dots,e_k} e^{ i [r(x_1, \ldots, x_k)+  s(y_1,\ldots,y_k)]} C(e_1, \ldots, e_k) \prod_{i\in J} \frac{1}{\phi_i - \phi_{\sigma(i)}} \prod_{i \in J^c} \frac{1}{{\phi^*}_i - {\phi^*}_{\sigma(i)}}
\end{equation}
where $e_j$ runs over the edges crossed by path $C_j$ and:
\begin{itemize}
\item $J$ (resp. $J^c$) is the set of indices $n\in \{1,\dots,k\}$ for which we take
$A_{n\sigma(n)}$ (resp. $A^*_{n\sigma(n)}$) in the expansion of the
product \eqref{eq_remplacement_K-1}. Note that both $J$ and $J^c$ are
non-empty, proper subsets of $\{1,\dots,k\}$;
\item  $C(e_1,\dots,e_k)$ depends only on
the types of the $k$ edges (two edges being of the same type if they are
related by a translation of $\bbZ^2$):
\[
C(e_1,\dots,e_k)=\prod_{i\in J}\left(U_i V_{\sigma(i)} w_0^{(x'_i-x_i)}z_0^{(y'_i-y_i)}\right)
\prod_{i\in J^c}\left(U^*_i V^*_{\sigma(i)}w_0^{-(x'_i-x_i)}z_0^{-(y'_i-y_i)}\right);
\]

\item 
 $r,s$ are linear functions: if $J'=\{i\in J:\sigma^{-1}(i)\in J^c\}$ and $J''=\{i\in J^c:\sigma^{-1}(i)\in J\}$ (remark that $|J'|=|J''|\ne0$ otherwise $\sigma$ would have more than one cycle)
and writing $w_0=\exp(i\theta_w),z_0=\exp(i\theta_z)$, 
\[
r(x_1,\dots,x_k)+s(y_1,\dots,y_k)=2\theta_w(\sum_{a\in J'}x_a-\sum_{a\in J''}x_a)+2\theta_z(\sum_{a\in J'}y_a-\sum_{a\in J''}y_a).
\]

\end{itemize}
Splitting the sum $ \sum_{e_1,\dots,e_k}$ over the different types of edges, we
can assume without loss of generality that all edges in each path are
of the same type (so that $C(e_1,\dots,e_k)$ becomes a constant; edge types can be different in
different paths) and that edges in the  path $j$ are obtained one from the other via translations by an integer multiple of 
some $v^{(j)}=(v^{(j)}_1,v^{(j)}_2)\in\bbZ^2$. The  edges in the $j$-th path will be labeled by an
integer $d_j$, which runs from $1$ to $M_j=O(\eta L)$ and  $r(x_1,
\ldots, x_k)+  s(y_1,\ldots,y_k)$ becomes is a linear function of $d_1,\dots,d_k$.

Assume without loss of generality that $1\in J'$. From the discussion above we obtain that 
\[
(r+s)(d_1,\dots,d_k)=(r+s)(0,d_2,\dots,d_k)+2(\theta_w v^{(1)}_1+\theta_z v^{(1)}_2)d_1.
\]
Despite the fact that $w_0,z_0$ are unit complex numbers with phase different from $0,\pi$
it could happen that $\Theta:=2(\theta_w v^{(1)}_1+\theta_z v^{(1)}_2)$ is a multiple of $2\pi$, so that $\exp(i(r+s))$ is independent of $d_1$: this can however be avoided if
the asymptotic directions and the period of  path $C_1$ in $B_f(\eta L)$  are chosen suitably (we skip
tedious details on this point). 

We separate the sum over $e_1$ from the others and we make a summation
by parts to get:
\begin{gather}
\label{strazio}
  const\times \sum_{e_2,\ldots, e_k} \frac{ e^{i (r+s)|_{d_1=0}} }{(\tilde{\phi}_2 -
    \tilde{\phi}_{3}) \ldots (\tilde{\phi}_{k-1}-\tilde{\phi}_k) } \\
\nonumber\times\left[  \sum_{d_1} \Bigl( \sum_{d \leq d_1} e^{i \Theta d} \Bigr)
  \frac{\Delta \tilde{\phi}_1 ( \tilde\phi_k + \tilde\phi_2
    -2\tilde\phi_1 (d_1)-\Delta \tilde\phi_1 ) }
  {\bigl(\tilde\phi_1(d_1)-\tilde\phi_2\bigr)
\bigl(\tilde\phi_k-\tilde\phi_1(d_1)\bigr)
\bigl(\tilde\phi_1(d_1+1)-\tilde\phi_2\bigr)
\bigl(\tilde\phi_k-\tilde\phi_1(d_1+1)\bigr)}
  + O\left(\frac{1}{d_2d_k}\right) \right]
\end{gather}
where  the $O(...)$ comes from boundary terms in the
summation by parts (note that $\sum_{d \leq d_1} e^{i \Theta
  d}$ is bounded since $\Theta \neq 0 $(mod $2\pi)$), $\Delta\tilde\phi_1 =
\tilde\phi_1(d_1+1)-\tilde\phi_1(d_1)$ is constant by linearity of $\phi$
and for simplicity of notation $\tilde \phi_i$ can denote either
$\phi_i$ or its complex conjugate. We can thus bound
$\abs{\phi_i - \phi_j}$ by $d_i+d_j$ as before and (up to a constant factor) the absolute
value of \eqref{strazio} by 
\begin{gather}
    \sum_{d_2,\ldots,d_k} \frac{1}{(d_2+d_3)\ldots(d_{k-1}+d_k)}\Bigl( \sum_{d_1} \frac{2d_1+d_2+d_k}{(d_1+d_2)^2(d_1+d_k)^2} +O\Bigr(\frac{1}{d_2d_k}\Bigr) \Bigr) \\
	=\sum_{d_2,\ldots,d_k} \frac{1}{(d_2+d_3)\ldots(d_{k-1}+d_k)(d_2+d_k)}\Bigl(\sum_{d_1} \frac{(2d_1+d_2+d_k)(d_2+d_k)}{(d_1+d_2)^2(d_1+d_k)^2} +O\left(\frac{d_2+d_k}{d_2d_k}\right) \Bigr) \\
      = O\bigl(F_{k-1}(\eta L) \bigr) \ll
   [\var_{\mu_{s,t}}(h(f)-h(f'))]^{k/2}
\end{gather}
because the parenthesis in the second line is clearly bounded.





\section*{Acknowledgments}
F. L. T. acknowledges the support of European Research Council through the
“Advanced Grant” PTRELSS 228032 and of Agence Nationale de la Recherche through grant ANR-2010-BLAN-0108.

\end{document}